\numberwithin{equation}{section}
\newtheorem{theorem}{Theorem}[section]
\newtheorem{proposition}[theorem]{Proposition}
\newtheorem{lemma}[theorem]{Lemma}
\newtheorem{remark}{Remark}
\newtheorem{corollary}[theorem]{Corollary}
\newcommand{\N}{\mathbb N}
\newcommand{\R}{\mathbb R}
\newcommand{\eps}{\varepsilon}
\newcommand{\vv}{\langle v\rangle}
\DeclareMathOperator{\weak}{\rightharpoonup}
\DeclareMathOperator{\embeds}{\hookrightarrow}
\newcommand{\diff}{\tilde\partial}
\newcommand{\red}[1]{\textcolor{red}{#1}}
\newcommand{\set}[1]{\left\{#1\right\}}
\newcommand{\brak}[1]{\langle #1 \rangle} 
\let\pa\partial
\newcommand{\cM}{\mathcal{M}}
\newcommand{\cK}{\mathcal{K}}
\newcommand{\uml}{\"}
\numberwithin{equation}{section}
\title[Existence of smooth solutions]{Existence of smooth solutions to the Landau-Fermi-Dirac equation with Coulomb potential}
\author{William Golding}
\address{Department of Mathematics, The University of Texas at Austin, 2515 Speedway, Austin TX, 78712}
\email{wgolding@utexas.edu} 
\thanks{WG is partially supported by the NSF-DMS grant 1840314}
\author{Maria Pia Gualdani}
\address{Department of Mathematics, The University of Texas at Austin, 2515 Speedway, Austin TX, 78712}
\email{gualdani@math.utexas.edu}
\thanks{MG is partially supported by the DMS-NSF 2019335 and would like to thank NCTS Mathematical Division of Taipei for their kind hospitality.}
\author{Nicola Zamponi}
\address{Vienna University of Technology,
Wiedner Hauptstra\ss e 8-10, 1040 Vienna (Austria)}
\email{nicola.zamponi@asc.tuwien.ac.at}
\thanks{NZ acknowledges support from the
Alexander von Humboldt Foundation (AvH)
and from the Austrian Science Foundation (FWF), grants P30000, P33010.}
\thanks{Key words: Landau-Fermi-Dirac equation, existence and uniqueness, regularity, coercivity, dissipation, long-time behavior, algebraic decay, H-theorem. 2020 Mathematics Subject Classification: 35BXX, 35K59, 35K55, 35P15, 35Q84, 82C40, 82D10}
\begin{document}
\begin{abstract}
In this paper, we prove global-in-time existence and uniqueness of smooth solutions to the homogeneous Landau-Fermi-Dirac equation with Coulomb potential. The initial conditions are nonnegative, bounded and integrable. We also show that any weak solution converges towards the steady state given by the Fermi-Dirac statistics. Furthermore, the convergence is algebraic, provided that the initial datum is close to the steady state in a suitable weighted Lebesgue norm. 
\end{abstract}
\maketitle

\section{Introduction}
We consider the homogeneous Landau-Fermi-Dirac equation {with Coulomb potential}
\begin{align}\label{LFD}
\partial_t f = \frac{1}{8\pi} \textrm{div}_v \int_{\R^3}\frac{\Pi(v-v_*)}{|v-v_*|}\left[f(v_*)(1-\varepsilon f(v_*))\nabla_v f - f(v)(1-\varepsilon f(v))\nabla_{v_*} f(v_*)\right] \;dv_*,
\end{align}
where $\Pi(z)$ is the standard projection matrix 
$$
\Pi(z) = Id - \frac{z\otimes z}{|z|^2}. 
$$
The function $f(v,t)$ models the distribution of velocities within a single species quantum gas. The particles considered here are fermions (e.g. electrons) {interacting} in a grazing collision regime \cite{alexandre2004landau}. The parameter $\varepsilon$ quantifies the strength of the quantum effects of the system for the particular species considered and depends on Planck's constant, the mass of the species, and the number of independent quantum weights of the species. In particular, {we notice that in the case $\varepsilon = 0$ eq.~\eqref{LFD} reduces to} the classical Landau equation. The Pauli exclusion principle implies  that $f$ satisfies the a priori bound
$$
0\le f \le \frac{1}{\varepsilon}.
$$  This bound is the key ingredient in our proof. See also \cite{Dolb94} for a discussion on the Boltzmann equation with Fermi-Dirac statistic.

Equation (\ref{LFD}) is well-understood for the cases of moderately soft and hard potentials, namely when the kernel $\frac{1}{|v-v^*|}$ is replaced by $\frac{1}{|v-v^*|^{-\gamma-2}}$ for $\gamma \ge-2$. In \cite{ABDL_21_III}, the authors consider the moderately soft potentials case ($-2\le \gamma \le 0$) and show algebraic convergence of non degenerate solutions 
towards equilibrium for initial data 
satisfying a suitable non saturation condition. Existence and uniqueness of weak solution for hard potentials ($\gamma \ge 0$) are shown in \cite{B04}, regularity and smoothing effects are studied in \cite{Chen1, Chen2}, and exponential convergence towards equilibrium in \cite{ABDL_21_II}. In \cite{alonso2021use}, the authors present fundamental properties of the entropy and entropy production functional for hard and moderately soft potentials. The existence of nondegenerate steady for any potential is shown in \cite{BL04}.

%\subsection{Basic properties of the Landau-Fermi-Dirac equation}
The Landau-Fermi-Dirac equation shares several properties with the classical Landau equation. %{In order to see this, it is convenient to derive a suitable weak formulation for \eqref{LFD}.}
Multiplying \eqref{LFD} {by} a test function $\phi$, integrating by parts, {and applying a straightforward symmetry argument}, one obtains
\begin{align}\label{weak.LFD}
\int_{\R^3} &\partial_t f  \phi \;dv %\\ \nonumber
= - \frac{1}{16\pi} \int_{\R^3}\int_{\R^3}    \frac{\Pi(v-v_*)}{|v-v_*|}[  f(1-\varepsilon f)\nabla_{v_*} f-f_*(1-\varepsilon f_*)\nabla_v f][\nabla\phi_*-\nabla\phi]\;dv_*dv.
\end{align}
Conservation of mass, momentum and energy conservation follows from \eqref{weak.LFD} by choosing $\phi(v) \in \{ 1, v, |v|^2 \}$.
% and {using the property of the collision kernel $\Pi(v-v_*)$, whose kernel is spanned by $v-v_*$:
%\begin{align*}
%\int_{\R^3}f(t)\phi\; dv = \int_{\R^3}f(0)\phi\; dv,\quad t>0,\quad
%\phi(v) \in \{ 1, v, |v|^2 \}.
%\end{align*}
%
A version of the H-theorem for \eqref{LFD} is also available: with
%Choosing $\phi$ in \eqref{weak.LFD} as }
$$
\phi = \ln \left( \frac{\varepsilon f}{1-\varepsilon f}\right)
$$
in \eqref{weak.LFD}, one obtains  that
\begin{align}
\frac{d}{dt}H_\varepsilon[f](t) = - \frac{1}{16\pi} \int_{\R^3}\int_{\R^3}  & f(1-\varepsilon f)f_*(1-\varepsilon f_*)  \cdot  \nonumber  \\
 & \cdot \frac{\Pi(v-v_*)}{|v-v_*|}\left[ \frac{ \nabla_{v_*} f_*}{f_*(1-\varepsilon f_*)} -\frac{\nabla_v f}{f(1-\varepsilon f)}\right]^2\;dv_*dv \le 0,\label{H-thm}
\end{align}
where 
$$
H_\varepsilon[f] := \frac{1}{\varepsilon}  \int_{\R^3} \varepsilon f \ln (\varepsilon f) + ( 1-\varepsilon f)\ln(1-\varepsilon f) \;dv.
$$
{Eq.~\eqref{H-thm} is the {\em entropy balance equation} associated to \eqref{LFD}, with $-H_\varepsilon$ being the (physical) Fermi-Dirac entropy functional. %This means that \eqref{LFD} satisfies the Second Law of Thermodynamics, i.e.~the entropy of the system is nondecreasing in time.}
The only smooth function that nullifies the entropy production, $\frac{d}{dt}H_\varepsilon[f] =0$, is the {\em Fermi-Dirac equilibrium distribution} 
\begin{align}\label{Fermi-Dirac}
\cM_\eps(v) := \frac{ae^{-b|v-u|^2}}{1+\varepsilon ae^{-b|v-u|^2}},
\end{align}
which is also the {\em{only smooth minimizer}} of $H_\varepsilon$ under the constraints of given mass, momentum, and energy \cite{BL04}. 
%The Fermi-Dirac equilibrium distribution is given by
%\begin{align}\label{Fermi-Dirac}
%\cM_\eps(v) := \frac{ae^{-b|v-u|^2}}{1+\varepsilon ae^{-b|v-u|^2}},
%\end{align}
The constants  $a>0$, $b>0$, and $u\in\R^3$ are determined by the mass, first and second moment of the initial data
\begin{align*}
\int_{\R^3}\begin{pmatrix}
1 \\ v \\ |v|^2
\end{pmatrix}\cM_\eps(v)dv =
\int_{\R^3}\begin{pmatrix}
1 \\ v \\ |v|^2
\end{pmatrix}f(v,t)dv =
\int_{\R^3}\begin{pmatrix}
1 \\ v \\ |v|^2
\end{pmatrix}f(0,v)dv.% ,\qquad t>0.
\end{align*}
There are other nonsmooth distributions of the form 
$$
\mathcal{F}_\varepsilon(v) := {\varepsilon^{-1}} \chi_{\Omega},
$$
with $\Omega$ of $\R^3$ a measurable subset,
that satisfies (formally) $H_\varepsilon[\mathcal{F}_\varepsilon]  = \frac{d}{dt}H_\varepsilon[\mathcal{F}_\varepsilon] =0$ and solves (\ref{LFD}). These particular stationary solutions are called {\em saturated Fermi-Dirac states}. As such, any solution to (\ref{LFD}) with general initial data could approach, as time grows, such saturated states. However, given an initial data with mass $\rho$, momentum $u$ and energy $E$, there exists only one value of $\varepsilon $, uniquely determined by $\rho$, $u$ and $E$, for which $\mathcal{F}_\varepsilon(v) $ is an admissible stationary solution. For $\varepsilon$ below such value, the only steady-state is $ \cM_\eps$.  
%The specific value of $\varepsilon$ is irrelevant for our analysis.%, it plays an important role for the study of the long time behavior.
%\red{NZ: actually, it does not play any role here, since I set $\eps=1$ in the argument for the long time behaviour. The proof would work also for $\eps>0$ arbitrary. I claimed some time ago that the assumption that the initial datum is close to the equilibrium is equivalent to the fact that $\eps$ is small enough, and that this property follows from a scaling argument, but I am having problems transforming the Fermi-Dirac equalibrium under this scaling. Until I solved this problem, I omit any consideration concerning the smallness of $\eps$.} 

{Taking the formal limit $\varepsilon\to 0$ in eq.~\eqref{LFD}, one obtains the classical Landau equation. Furthermore, $H_\varepsilon[f]\to H[f] = \int_{\R^3} f\ln f \; dv$ as $\varepsilon\to 0$ modulus a multiple of the mass $\int_{\R^3}f\; dv$:
\begin{align*}
H_\varepsilon[f] - (\ln\varepsilon - 1) \int_{\R^3}f \; dv \to H[f]\quad
\mbox{as }\varepsilon\to 0.
\end{align*}
 The addition  of a multiple of the mass to $H_\varepsilon[f]$ does not change the entropy balance equation \eqref{H-thm} nor the form of the equilibrium distribution \eqref{Fermi-Dirac}, thanks to the conservation of mass property. 
The equilibrium distribution $\cM_\eps(v)$ also converges towards the classical Maxwellian distribution $M(v) = a e^{-b|v-u|^2}$ as $\eps\to 0$. Finally, strictly related to the limits $H_\varepsilon[f]\to H[f]$ and $\cM_\eps\to M$ is the fact that} the relative entropy 
\begin{align*}
H_\varepsilon[f|\cM_\eps] := & \int_{\R^3} \cM_\eps\left[ \frac{f}{\cM_\eps} \ln \left(\frac{f}{\cM_\eps} \right)-\frac{f}{\cM_\eps} +1\right] \\
&  +   \frac{1}{\varepsilon}\int_{\R^3}  (1-\varepsilon \cM_\eps)\left[  \frac{1-\varepsilon f}{1-\varepsilon \cM_\eps} \ln \left(\frac{1-\varepsilon f}{1-\varepsilon \cM_\eps} \right)-\frac{1-\varepsilon f}{1-\varepsilon \cM_\eps} +1\right]\;dv
\end{align*}
converges to the relative entropy of the classical Landau equation 
\begin{align*}
H[f|M] := & \int_{\R^3} M\left[ \frac{f}{M} \ln \left(\frac{f}{M} \right)-\frac{f}{M} +1\right] \;dv.
\end{align*}
The next observation concerns the structure of the collision operator. For a smooth $f$, the interaction term can be expressed as a second order elliptic nonlinear operator with non-local coefficients: 
$$
\textrm{div}_v \left( A[f(1-\varepsilon f)]\nabla f - f(1-\varepsilon f) \nabla a[f]\right),
$$
where the matrix $A[f(1-\varepsilon f)] $ is defined through the map 
$$
A: g \mapsto A[g],$$
with  
\begin{equation}\label{defn_coef}
A[g] := \frac{1}{8\pi}\int_{\R^3}\frac{\Pi(v-v_*)}{|v-v_*|}g(v_*) \;dv_*, \quad a[f] := \frac{1}{4\pi}\int_{\R^3}\frac{f(v_*)}{|v-v_*|} \;dv_*.
\end{equation}
%This happens also for the classical Landau equation \red{CITATION}.

\subsection{Main Results} Our first result concerns existence of smooth solutions to (\ref{LFD}). Unlike in the case of the classical Landau equation, we are able to show global-in-time existence of smooth solutions for a general class of initial datum. Our regularity estimates depend on the quantum parameter. At the present moment, it seems out of reach to obtain similar results uniformly with respect to $\varepsilon$. Therefore, in the rest of the manuscript we set $\eps = 1$.

\begin{theorem}\label{thm_existence}
Suppose $f_{in}:\R^3 \rightarrow \R$ satisfies $0 \le f_{in} \le 1$, $(1 + |v|^3)f_{in}\in L^1(\R^3)$, and $H_1(f_{in}) < 0$. Then, there is a solution $f:[0,\infty)\times\R^3 \rightarrow \R$ with $f\in C([0,\infty);L^2(\R^3))$ such that $f(0) = f_{in}$, $0 \le f \le 1$, $f\in L^\infty([0,\infty);L^p(\R^3)) \cap L^2([0,T];H^1(\R^3))$ for each $1\le p \le \infty$, and for each $T>0$,
%$f\in L^2([0,T];H^1(\R^3))$ and $\partial_t f \in L^2([0,T];H^{-1}(\R^3))$,
and $\varphi \in L^2([0,T];H^1(\R^3))$,
\begin{equation}\label{weak_FDL}
\int_{0}^T  \langle \varphi ,\partial_t f\rangle_{H^1,H^{-1}} \;dt = - \int_0^T\int_{\R^3} \left(A[f(1-f)]\nabla f - \nabla a[f]f(1-f)\right) \cdot \nabla \varphi \;dvdt.
\end{equation}

Moreover, $f$ has decreasing (Fermi-Dirac) entropy and satisfies conservation of mass, energy, and momentum. 

If the initial data has moments $(1 + |v|^m)f_{in}\in L^1(\R^3)$ with $m>9$, the solution is unique. 
\end{theorem}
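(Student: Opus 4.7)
The plan is to construct solutions by approximation, passing to the limit using uniform bounds coming from the Pauli principle and entropy dissipation, and to obtain uniqueness via a weighted $L^2$ stability estimate that consumes the high-moment assumption.

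\textbf{Approximation and a priori bounds.} I would first regularize the Coulomb singularity in \eqref{defn_coef}, e.g., by replacing $|v-v_*|^{-1}$ with $(|v-v_*|^2+\delta^2)^{-1/2}$, and mollify $f_{in}$ so that $\delta \le f_{in}^\delta \le 1-\delta$. This yields a uniformly parabolic nonlinear equation with smooth, bounded coefficients for which a classical global-in-time smooth solution $f_\delta$ exists by standard methods (cf.~\cite{B04,ABDL_21_III}). Since the diffusion vanishes exactly at $f=0$ and $f=1$, a maximum principle preserves $0\le f_\delta\le 1$. Testing with $1, v, |v|^2$ in the weak form yields conservation of mass, momentum, and energy uniformly in $\delta$. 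The entropy identity \eqref{H-thm}, combined with the hypothesis $H_1[f_{in}]<0$ (which prevents collapse to a saturated state), provides a uniform bound on the dissipation $\int_0^T D[f_\delta]\,dt$. The Pauli constraint makes $f_\delta(1-f_\delta)$ pointwise bounded and, by the mass bound and $H_1<0$, bounded below on a set of positive measure; standard coercivity estimates for the matrix $A[\cdot]$ then upgrade the dissipation bound to $f_\delta \in L^2([0,T];H^1(\R^3))$ uniformly. Moment bounds $\langle v\rangle^p f_\delta \in L^\infty_t L^1_v$ follow by testing with $\langle v\rangle^p$ and using $f_\delta\le 1$ to tame the Coulomb convolutions.

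\textbf{Compactness and passage to the limit.} Reading \eqref{weak_FDL} gives $\partial_t f_\delta \in L^2([0,T];H^{-1}(\R^3))$ uniformly, so Aubin-Lions provides strong $L^2_{t,v}$ (and a.e.) compactness for $f_\delta$. Using the pointwise bound and strong convergence, the coefficients $A[f_\delta(1-f_\delta)]$ and $\nabla a[f_\delta]$ converge to $A[f(1-f)]$ and $\nabla a[f]$ in suitable norms, while $\nabla f_\delta \weak \nabla f$ in $L^2_{t,v}$; together these allow one to pass to the limit in \eqref{weak_FDL}. The continuity $f\in C([0,\infty);L^2)$ is a standard consequence of the $L^2_t H^1_v\cap H^1_t H^{-1}_v$ regularity. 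Entropy decay survives via lower semicontinuity of $H_\varepsilon$ and Fatou applied to $D$; conservation laws transfer because the moments are preserved under the limit.

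\textbf{Uniqueness.} Let $f_1,f_2$ be two solutions sharing the initial datum $f_{in}$, both with $m$-th moments, $m>9$. Setting $g:=f_1-f_2$ and exploiting the identity
\begin{align*}
f_1(1-f_1)-f_2(1-f_2) = g\,(1-f_1-f_2),
\end{align*}
the equation for $g$ is linear parabolic with diffusion $A[f_1(1-f_1)]$ and source terms built from $A[g\,(1-f_1-f_2)]\nabla f_2$ and $f_2(1-f_2)\nabla a[g]$. Testing against $g$ yields
\begin{align*}
\tfrac12\tfrac{d}{dt}\|g\|_{L^2}^2 + \int A[f_1(1-f_1)]\nabla g\cdot\nabla g\,dv \le R_1+R_2,
\end{align*}
with $R_1,R_2$ gathering the two source terms. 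The main obstacle — and where the moment hypothesis enters — is controlling $R_1$, whose integrand involves $\nabla f_2$ and the coefficient difference $A[g\,(1-f_1-f_2)]$, which grows like $\langle v\rangle$ at infinity. Applying Hardy-Littlewood-Sobolev to $\nabla a[g]$ and a weighted decomposition of the Coulomb convolution to $A[g(1-f_1-f_2)]$, then balancing against the Sobolev embedding $H^1(\R^3)\hookrightarrow L^6(\R^3)$, bounds these quantities by $\|g\|_{L^2}$ times a weighted norm of $g$; absorbing the weight into propagated moments of $f_1, f_2$ forces the threshold $m>9$. Propagating the $m$-th moment uniformly in time (again leveraging $f\le 1$) closes a Gronwall inequality for $\|g\|_{L^2}^2$ and gives $g\equiv 0$ on $[0,T]$. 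I expect this coefficient-difference estimate, rather than the approximation or compactness argument, to be the most delicate portion of the proof.
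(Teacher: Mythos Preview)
Your existence scheme is a reasonable alternative to the paper's, though different in flavor: the paper does not regularize the Coulomb kernel but instead adds an artificial viscosity $\delta_1\Delta f$ and a confining term $\delta_2|v|^m f$, time-discretizes, and builds the discrete iterates by Lax--Milgram followed by a Schaeffer fixed point. Either route should work; note however that the $L^2_t H^1_v$ bound does not come directly from the entropy dissipation (which only controls a weighted object $\int A[f(1-f)]\,|\nabla f|^2/[f(1-f)]\,dv$) but from a separate $L^2$ estimate obtained by testing with $f\langle v\rangle^3$ and using the coercivity $A[f(1-f)]\gtrsim\langle v\rangle^{-3}$, for which the third-moment hypothesis on $f_{in}$ is exactly what is consumed.

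The genuine gap is in your uniqueness argument. Testing the equation for $g=f_1-f_2$ against $g$ in plain $L^2$ cannot close: the dissipation gives only $\int \langle v\rangle^{-3}|\nabla g|^2\,dv$, while on the right you need $\|A[g(1-f_1-f_2)]\|_{L^\infty}$, which by Lemma~\ref{lemA_upper} requires an $L^p$ norm of $g$ with $p<3/2$. On $\R^3$ there is no way to control $\|g\|_{L^1}$ by $\|g\|_{L^2}$, and your phrase ``absorbing the weight into propagated moments of $f_1,f_2$'' does not address this: the weight sits on $g$, not on the $f_i$. The paper resolves this by running the Gronwall argument in a \emph{weighted} space from the outset, testing with $g\langle v\rangle^{2m'}$ for some $m'>3/2$, so that $\|g\|_{L^1}\lesssim\|g\|_{L^2_{2m'}}$ by Cauchy--Schwarz and hence $\|A[g(\cdot)]\|_{L^\infty}\lesssim\|g\|_{L^2_{2m'}}$. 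The Gronwall coefficient then involves $\int\langle v\rangle^{3+2m'}|\nabla f_2|^2\,dv$, whose time-integrability requires the weighted energy estimate of Lemma~\ref{reg_lemma1}, and the bound on $I_3$ requires $\int f\langle v\rangle^{9/2+3m'}\,dv<\infty$; both of these force more than nine moments on $f_{in}$, which is where the threshold $m>9$ actually originates. Rewrite your stability estimate in $L^2_{2m'}$ with $m'>3/2$ and the argument will close.
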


By a simple time rescaling, we obtain global-in-time existence and uniqueness for any quantum parameter:
\begin{corollary}
Fix $\eps > 0$ and let $f_{in}:\R^3 \rightarrow \R$ satisfies $0\le f_{in} \le  \eps^{-1}$, $(1+|v|^3)f_{in} \in L^1(\R^3)$, and $H_{\eps}(f_{in}) < 0$. Then, there is a unique $f:[0,\infty)\times\R^3 \rightarrow \R$ with $f\in C([0,\infty);L^2(\R^3))$ such that $f(0) = f_{in}$, $0 \le f \le \eps^{-1}$, $f\in L^\infty([0,\infty);L^p(\R^3)\cap  L^2([0,T];H^1(\R^3))$ for each $1\le p \le \infty$, and for each $T>0$,
%$f\in L^2([0,T];H^1)$ and $\partial_t f \in L^2([0,T];H^{-1})$,
and $\varphi \in L^2([0,T];H^1(\R^3))$,
\begin{equation*}
\int_{0}^T  \langle \varphi ,\partial_t f\rangle_{H^1,H^{-1}} \;dt= - \int_0^T\int_{\R^3} \left(A[f(1-\eps f)]\nabla f - \nabla a[f]f(1-\eps f)\right) \cdot \nabla \varphi \;dvdt.
\end{equation*}
Moreover, $f$ has decreasing (Fermi-Dirac) entropy and satisfies conservation of mass, energy, and momentum.
\end{corollary}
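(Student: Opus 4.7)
The plan is to reduce the general $\varepsilon>0$ statement to the $\varepsilon=1$ case already handled by Theorem \ref{thm_existence} via a simultaneous scaling of amplitude and time. The natural ansatz, suggested by the a priori bound $0\le f\le \varepsilon^{-1}$ and the structure of the collision operator, is to set
\begin{equation*}
\tilde f(v,s) := \varepsilon \, f(v, \varepsilon s), \qquad \tilde f_{in}(v) := \varepsilon f_{in}(v).
\end{equation*}
Under this change of variables one has $0\le \tilde f\le 1$, the moment condition $(1+|v|^3)\tilde f_{in}\in L^1(\R^3)$ is preserved, and a direct computation gives $H_1[\tilde f_{in}] = \varepsilon H_\varepsilon[f_{in}]<0$, so the hypotheses of Theorem \ref{thm_existence} are satisfied by $\tilde f_{in}$.

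Next I would verify that \eqref{LFD} is transformed into itself with $\varepsilon=1$. Using $f = \varepsilon^{-1}\tilde f$, $\nabla f=\varepsilon^{-1}\nabla\tilde f$, and $1-\varepsilon f = 1-\tilde f$, each of the terms $f_*(1-\varepsilon f_*)\nabla_v f$ and $f(1-\varepsilon f)\nabla_{v_*} f_*$ picks up a prefactor $\varepsilon^{-2}$, while $\partial_t f$ transforms as $\partial_t f = \varepsilon^{-1}\partial_s\tilde f$ at time $t=\varepsilon s$. Matching the prefactors forces precisely the choice $\lambda=\varepsilon$ in the time rescaling and shows that $\tilde f$ solves the $\varepsilon=1$ equation in the weak sense \eqref{weak_FDL}. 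One can then invoke Theorem \ref{thm_existence} to obtain a global weak solution $\tilde f\in C([0,\infty);L^2(\R^3))\cap L^\infty_t L^p_v\cap L^2_tH^1_v$ with $0\le\tilde f\le 1$, decreasing Fermi--Dirac entropy $H_1[\tilde f]$, and conservation of mass, momentum, and energy.

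Finally I would undo the scaling, defining $f(v,t):=\varepsilon^{-1}\tilde f(v,t/\varepsilon)$, and check that each of the claimed properties pulls back correctly. The bounds $0\le f\le \varepsilon^{-1}$ and the $L^p$, $C_tL^2_v$, $L^2_tH^1_v$ regularity classes are invariant under the transformation (up to harmless $\varepsilon$-dependent constants on each time window $[0,T]$, which only affects the time interval $[0,T/\varepsilon]$ used on the $\tilde f$ side). Conservation of the hydrodynamic moments transfers directly since scaling by $\varepsilon^{-1}$ commutes with the velocity integrals, and the identity $H_\varepsilon[f](t) = \varepsilon^{-1}H_1[\tilde f](t/\varepsilon)$ shows that monotonicity of $H_\varepsilon$ follows from that of $H_1$. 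Uniqueness transfers in the same way: the ansatz sets up a bijection between solutions of the two equations with the stated regularity, so uniqueness of $\tilde f$ (under the moment hypothesis of Theorem \ref{thm_existence}) yields uniqueness of $f$.

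There is no genuine obstacle here beyond bookkeeping; the only point requiring care is the quantitative matching of constants in the rescaling, since an incorrect exponent of $\varepsilon$ in the time dilation would fail to eliminate the $\varepsilon^{-2}$ generated by substituting $f=\varepsilon^{-1}\tilde f$ into the quadratic collision operator. Once the correct power $\lambda=\varepsilon$ is fixed, the remainder is a direct verification that each hypothesis and conclusion of Theorem \ref{thm_existence} translates cleanly between $f$ and $\tilde f$.
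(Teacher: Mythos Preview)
Your proposal is correct and matches the paper's approach: the paper simply states ``By a simple time rescaling, we obtain global-in-time existence and uniqueness for any quantum parameter'' and gives no further details, so your explicit verification of the amplitude-plus-time rescaling $\tilde f(v,s)=\varepsilon f(v,\varepsilon s)$ is exactly what is intended. One minor remark: your line ``$\partial_t f=\varepsilon^{-1}\partial_s\tilde f$'' is slightly imprecise (the amplitude rescaling alone gives $\partial_t f=\varepsilon^{-1}\partial_t\tilde f$, and the residual $\varepsilon^{-1}$ mismatch with the $\varepsilon^{-2}$ on the collision side is then absorbed by the time dilation $t=\varepsilon s$), but your conclusion that $\lambda=\varepsilon$ is the correct exponent is right.
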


{Theorem \ref{thm_existence} is proved in several steps. First, we approximate the problem by discretizing the time variable and adding suitable regularizing terms. The approximating problem is well-posed thanks to suitable fixed point arguments. After, we use uniform $L^2$ and  entropy inequalities to take limits as our regularizing terms vanish. A crucial ingredient is the uniform positive lower bound for the diffusion matrix $A[f(1-f)]$, which follows from the boundedness of the second moment of $f$ and a uniform negative upper bound for the Fermi-Dirac entropy. This guarantees that equation \eqref{LFD} remains uniformly parabolic during the evolution of the system.}

The weak solutions from Theorem \ref{thm_existence} are, in fact, smooth solutions, provided the initial data {has} high enough moments: 
\begin{theorem}\label{thm_regularity}
Let $f$ be a weak solutions as in Theorem \ref{thm_existence}. If the initial data $f_{in}$ is, in addition, such that {$(1+|v|^{12}) \in L^1(\R^3)$ then $f\in C^\infty((0,T];C^\infty(\R^3))$.}
\end{theorem}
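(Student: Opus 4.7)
The plan is a parabolic bootstrap: regard \eqref{LFD} as a linear second-order parabolic equation for $f$ whose coefficients are convolution-type quantities built from $f$ itself, and repeatedly upgrade the regularity of $f$ (and hence of the coefficients) until $f\in C^\infty$. The three ingredients we exploit are the Pauli bound $0\le f\le 1$, the uniform ellipticity of $A[f(1-f)]$ established in Theorem~\ref{thm_existence}, and propagation of polynomial moments.

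\emph{Step 1 (moment propagation).} Testing the weak formulation \eqref{weak_FDL} against $\vv^k$ and using $|\nabla\vv^k|\lesssim \vv^{k-1}$, the decay estimate $\|A[f(1-f)](v)\|\lesssim \vv^{-1}$ (Coulomb convolution of an $L^1\cap L^\infty$ function), and the $L^\infty$ bound on $\nabla a[f]$ (Riesz potential estimate, using $f\in L^1\cap L^\infty$), one obtains a Gr\"onwall inequality that propagates the $k$-th moment whenever $(1+|v|^k)f_{in}\in L^1(\R^3)$. Thus $(1+|v|^{12})f(t,\cdot)\in L^1$ for all $t\ge 0$, and the $L^2([0,T];H^1)$ dissipation inequality from Theorem~\ref{thm_existence} will then be used to upgrade this into all higher moments for $t>0$.

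\emph{Step 2 (H\"older regularity).} Rewriting \eqref{LFD} in nondivergence form, using $\Delta a[f]=-f$, gives
\begin{align*}
\partial_t f = \tr\bigl(A[f(1-f)]\, D^2 f\bigr) + \bigl(\nabla\cdot A[f(1-f)] - (1-2f)\nabla a[f]\bigr)\cdot \nabla f + f^2(1-f).
\end{align*}
The diffusion matrix is uniformly elliptic and bounded, the drift is bounded in $L^\infty$, and the zeroth-order term is bounded. Parabolic De~Giorgi--Nash--Moser theory then yields a local $C^\alpha_{t,v}$ estimate for $f$ on $(t_0,T]\times\R^3$ for every $t_0>0$.

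\emph{Step 3 (Schauder bootstrap and main obstacle).} Once $f\in C^\alpha$, classical Newtonian potential theory gives $a[f]\in C^{2,\alpha}_{\mathrm{loc}}$, while Calder\'on--Zygmund estimates on the kernel $\Pi(v)/|v|$ yield $A[f(1-f)]\in C^{1,\alpha}_{\mathrm{loc}}$; parabolic Schauder estimates then upgrade $f$ to $C^{2,\alpha}$. Differentiating the equation in $v$ gives a parabolic equation of the same type for $\partial_v^\beta f$, with source terms involving convolutions of lower-order derivatives, and iterating Steps~1--3 yields $f\in C^\infty_v$ on $(t_0,T]$; time regularity follows from the equation. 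The main difficulty lies in the first iteration: each differentiation of the nonlocal coefficients requires $f$ to decay fast enough that derivatives of the kernels $\Pi(v)/|v|$ and $1/|v|$ remain integrable against $f$, and the threshold $(1+|v|^{12})f_{in}\in L^1(\R^3)$ is calibrated to seed this process — once the first few derivatives of $f$ are controlled, subsequent iterations run on moments generated by parabolic smoothing itself.
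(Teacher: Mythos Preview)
Your outline is essentially correct, but it takes a genuinely different route to the initial H\"older regularity than the paper does, and your account of where the moment threshold $(1+|v|^{12})f_{in}\in L^1$ enters is off.

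\textbf{How the paper proceeds.} The paper does \emph{not} invoke De~Giorgi--Nash--Moser (or Krylov--Safonov). Instead it proves, by divided-difference energy methods, a chain of weighted Sobolev estimates: $f\in L^\infty_t L^2_m \cap L^2_t H^1_{m-3}$ (Lemma~\ref{reg_lemma1}), then $\nabla f\in L^\infty_t L^2_m \cap L^2_t H^1_{m-3}$ for $t>0$ (Lemma~\ref{reg_lemma2}), then $\nabla^2 f\in L^\infty_t L^2_m \cap L^2_t H^1_{m-3}$ for $t>0$ (Lemma~\ref{reg_lemma3}). Each step costs six moments of the initial datum (one sees the shift $m\mapsto m+6$ in the recursion), and two steps are needed to reach $H^2$; this is the origin of $12=6\times 2$. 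From $H^2\hookrightarrow W^{1,6}$ and a duality bound on $\partial_t f$, the paper then interpolates (via Amann) between $L^\infty_t W^{1,6}_v$ and $W^{1,\infty}_t W^{-1,6}_v$ to land in a fractional space where Morrey gives $C^{\alpha/2,\alpha}$. The final Schauder bootstrap is essentially as you describe, though the paper stays in divergence form and only needs $A[f(1-f)]$ and $\nabla a[f]$ to be $C^\alpha$ (not $C^{1,\alpha}$) at the first iteration.

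\textbf{On your route.} Your Step~2 is valid in spirit but should be run in \emph{divergence} form: the weak solution from Theorem~\ref{thm_existence} lives only in $L^2_t H^1_v$, which is the natural class for divergence-form DGNM, not for the nondivergence theory you wrote down. With that correction, local uniform ellipticity (Lemma~\ref{lemA_lower}) plus bounded drift gives $f\in C^\alpha_{\mathrm{loc}}$ on $(0,T]\times\R^3$ directly. This is arguably cleaner than the paper's route and, since the subsequent Schauder bootstrap only needs local H\"older of $f$ together with $f\in L^1\cap L^\infty$ globally (split the convolution into near and far parts), your approach does not obviously require the twelfth moment at all --- so your explanation that $12$ is ``calibrated to seed'' kernel differentiations does not match either argument. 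Also, the claim in Step~1 that the $L^2_t H^1$ dissipation ``upgrades'' to \emph{all} higher moments for $t>0$ is unjustified (and the paper makes no such claim); moments here are only propagated, not generated. Fortunately your argument does not actually use that claim.
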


{The higher regularity of the solution is obtained thanks to parabolic regularity arguments, Morrey's inequality and Schauder estimates. The parabolic regularity argument yields estimates for $f$ in $W^{1,\infty}(0,T; W^{-1,p})\cap L^\infty(0,T; W^{1,p}))$  for any $p\in [2,6]$.  Via interpolation between Sobolev spaces, we obtain a bound for $f$ in a fractional Sobolev space. From here, we deduce, the H\uml older continuity of $f$ via Morrey's inequality. A standard parabolic bootstrap argument yields $f\in C^\infty((0,T];C^\infty(\R^3))$.}

Our regularity results do not hold in the limit $\varepsilon \to 0$, since they heavily rely on the bound $f\le \frac{1}{\varepsilon}$. For the classical Landau equation,  the Cauchy problem has been understood only for weak solutions \cite{V98} \cite{Desvillettes14} \cite{AP77}  \cite{alexandre2004landau} \cite{GZ18}. Recently,  in  \cite{GuGu16} and  \cite{Silvestre2015}, the authors showed that, for a short time, weak solutions become instantaneously regular and smooth. The long time asymptotic for  weak solutions has been studied in \cite{CM2017verysoft} and \cite{CDH15}.  However, the question of whether solutions stay smooth for all time or become unbounded after a finite time is still open. Recent research has produced several conditional results regarding this inquiry. These results show regularity properties of solutions that already possess some basic properties (yet to be verified). They include (i) conditional uniqueness \cite{Fournier2010} \cite{ChGu20}, and (ii) conditional smoothness for solutions in  $L^\infty(0,T,L^p(\mathbb{R}^d))$ with $p>\frac{d}{2}$ \cite{Silvestre2015} \cite{GuGu16}.  In a very recent manuscript \cite{Desvillettes-He-Jiang-2021}, the authors studied behavior of solutions in the space $L^\infty(0,T,\dot{H}^1(\mathbb{R}{^3}))$. They show that for general initial data there exists a time $T^*$ after which the weak solutions belong to $L^\infty((T^*, +\infty), {H^1}(\mathbb{R}{^3}))$. This result agrees with the one  in \cite{GGIV2019partial}, in which the authors  showed that the set of singular times for weak solutions has Hausdorff dimension at most $\frac{1}{2}$. In \cite{BGS22}, the authors show that self-similar blow-up of type I cannot occur for solutions to the Landau equation.

The second  result of this paper concerns the convergence towards the steady state as the time approaches infinity.  We show that the convergence is algebraic, provided that the initial datum $f_{in}$ is close to the steady state $\cM_\eps$ in a suitable weighted Lebesgue norm. Hereafter, we denote with $\cM$ the function defined in (\ref{Fermi-Dirac}) with $\varepsilon=1$.
\begin{theorem}\label{thm_longtime}
Given any initial datum $f_{in} : \R^3\to [0,1]$, $f_{in}\in L^1_2$, such that $H_1[f_{in}] < 0$, the solution $f$ to the initial value problem associated to \eqref{LFD} converges strongly in $L^1$ as $t\to\infty$ to the Fermi-Dirac distribution $\cM$ with same mass, momentum and energy as $f_{in}$.

Furthermore, there exists a constant $\ell>0$
such that, if $$\int_{\R^3}(f_{in} - \cM)^2 \cM^{-1}(1-\cM)^{-1}dv < \ell,$$
\begin{align*}
\int_{\R^3}(f_{in} - \cM)^2 \cM^{-1}(1-\cM)^{-1} (1+|v|^2)^{N/2} dv < \infty, \quad \textrm{for some} \; N\geq 1,
\end{align*}
 then 
\begin{align*}%\label{ltb.est}
\int_{\R^3}\frac{(f(t) - \cM)^2}{\cM(1-\cM)} dv \lesssim (1+t)^{-N},\qquad t>0.
%e^{-2\lambda t}\int_{\R^3}\frac{(f_{in} - \cM)^2}{\cM(1-\cM)} dv,\qquad t>0.
\end{align*}
\end{theorem}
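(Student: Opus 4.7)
The plan is to separate the argument into a qualitative convergence statement and a quantitative rate near equilibrium. For the qualitative $L^1$ convergence of an arbitrary weak solution, the H-theorem \eqref{H-thm} ensures $H_1[f(t)]$ is nonincreasing and bounded below by $H_1[\cM]$, so the time integral of the entropy dissipation is finite and vanishes along some sequence $t_n\to\infty$. Conservation of mass, momentum, and energy, propagation of the second moment, and the Fermi--Dirac pointwise bound $f\le 1$ together make $\{f(t)\}_{t\ge 0}$ tight and uniformly integrable. Any weak $L^1$ limit $f_\infty$ along $\{t_n\}$ has the same mass, momentum, and energy as $f_{in}$ and zero entropy dissipation, so $f_\infty$ must be a Fermi--Dirac equilibrium; the hypothesis $H_1[f_{in}] < 0$ excludes saturated states, so by \cite{BL04} the only candidate is $\cM$. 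Monotonicity of the relative entropy $H_1[f|\cM]$ then upgrades subsequence convergence to full convergence, and a Csisz\'ar--Kullback--Pinsker inequality adapted to the Fermi--Dirac functional promotes the weak $L^1$ statement to strong $L^1$.

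For the quantitative rate I would linearize around $\cM$. Writing $h := f - \cM$ and introducing
\begin{equation*}
\psi(t) := \int_{\R^3} \frac{h^2}{\cM(1-\cM)}\, dv, \qquad \psi_N(t) := \int_{\R^3} \frac{h^2\,(1+|v|^2)^{N/2}}{\cM(1-\cM)}\, dv,
\end{equation*}
I would test the weak form \eqref{weak.LFD} against $\varphi = h/[\cM(1-\cM)]$ and exploit the identity $\nabla_v\log(\cM/(1-\cM)) = -2b(v-u)$ to produce a decomposition
\begin{equation*}
\tfrac{d}{dt}\psi(t) = -\mathcal{D}[h](t) + \mathcal{R}[h](t),
\end{equation*}
where $\mathcal{D}[h]$ is a nonnegative quadratic form (the entropy dissipation linearized at $\cM$) and $\mathcal{R}[h]$ is cubic in $h$. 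A continuation argument together with $\psi(0) < \ell$ would keep $\psi(t)$ below a fixed threshold for all $t\ge 0$ and allow $\mathcal{R}[h]$ to be absorbed into $\tfrac{1}{2}\mathcal{D}[h]$.

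The heart of the argument, and the step I expect to be hardest, is a weighted coercivity estimate for $\mathcal{D}[h]$. Because $\Pi(v-v_*)/|v-v_*|$ is both singular and rank-deficient and the weight $\cM(1-\cM)$ is not Gaussian, I anticipate only a Coulomb-adapted lower bound of the form
\begin{equation*}
\mathcal{D}[h] \gtrsim \int_{\R^3} \left| \nabla_v \frac{h}{\cM(1-\cM)} \right|^2 \frac{\cM(1-\cM)}{\langle v\rangle^{s}}\, dv
\end{equation*}
for some $s>0$ matched to the Coulomb scaling, obtained by pushing Desvillettes--Villani / Guo-type coercivity arguments through the Fermi--Dirac weight and carefully handling the degeneracy at infinity coming from $(1-\cM)\to 1$. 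Once such a bound is available, a weighted moment estimate propagates $\psi_N(t)$ uniformly in $t$ (using smallness of $\psi$ to close the nonlinear terms), and H\"older interpolation then yields $\psi(t)^{1+s/N} \lesssim \psi_N(t)^{s/N}\,\mathcal{D}[h](t)$. This reduces the problem to the ODE $\psi'(t) \lesssim -\psi(t)^{1+s/N}$, which integrates to $\psi(t) \lesssim (1+t)^{-N/s}$; the claimed $(1+t)^{-N}$ decay then follows provided the coercivity can be established with $s=1$, the expected Coulomb-scaled loss.
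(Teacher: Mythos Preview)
Your overall strategy matches the paper's: entropy dissipation for convergence without rate, then linearization, spectral gap, nonlinear control, moment propagation, and H\"older interpolation to an ODE for the algebraic rate. Two points are worth flagging.

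First, the paper obtains the spectral gap not by a direct Desvillettes--Villani/Guo-type coercivity computation, but by an abstract compactness argument: it splits the linearized operator as $L=\cK_k-\Lambda_k$, proves $\Lambda_k$ is coercive on a weighted space $\mathcal{H}^k$ (using a Gaussian log-Sobolev inequality together with $m\sim M$), proves $\cK_k$ is compact on $\mathcal{H}_0^k$, and then invokes \cite[Lemma~10]{daus2016hypocoercivity}. The resulting constant $C_L$ is non-explicit. Your direct route might work, but it would be genuinely different and you should not expect it to be a routine adaptation.

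Second, there is a gap between your stated coercivity and your interpolation step. You write the coercivity as a pure gradient bound, $\mathcal{D}[h]\gtrsim\int|\nabla(h/m)|^2 m\langle v\rangle^{-s}\,dv$, but the H\"older step $\psi^{1+s/N}\lesssim\psi_N^{s/N}\mathcal{D}[h]$ actually requires $\mathcal{D}[h]\gtrsim\int (h^2/m)\langle v\rangle^{-s}\,dv$, i.e.\ a weighted $L^2$ lower bound. The paper's spectral gap \eqref{spgap.L} produces both pieces at once,
\[
-(Lh,h)_{L^2(m)}\ \ge\ C_L\Big(\int_{\R^3}A[m]\nabla h\cdot\nabla h\,m\,dv+\|h\|_{L^2(m\langle v\rangle^{-1})}^2\Big),
\]
with $s=1$ as you anticipated; the passage from gradient to weighted $L^2$ is hidden inside the coercivity proof for $\Lambda_k$ (a weighted Poincar\'e/log-Sobolev step). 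Finally, the uniform-in-time bound on $\psi_N$ is not a single estimate but an induction on the weight exponent, using the $\langle v\rangle^k$-weighted coercivity of $\Lambda_k$ together with a compact remainder bound $|(\cK_k h,h)_{L^2(m\langle v\rangle^k)}|\lesssim\|h\|_{L^2(m\langle v\rangle^{k-2})}^2$ at each step.
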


{
The unconditional convergence (without rate) towards the steady state is obtained from the entropy balance equation in the following way. We integrate the balance equation in time and use the ellipticity properties of the entropy dissipation to deduce that $f(t_n)\to\cM$ along a suitable sequence of time instants $t_n\to\infty$. The monotonicity in time of the relative entropy yields that $f(t)\to\cM$ strongly in $L^1$ as $t\to\infty$. 

The algebraic convergence for initial data close to the steady state is achieved by linearizing \eqref{LFD} around $\cM$. First, we show existence of a spectral gap for the linearized Landau-Fermi-Dirac operator between two different weighted Lebesgue space. Precisely, such relation has the structure
\begin{align*}
-(Lh,h)_{E_1} \geq \lambda \|h\|_{E_2},\qquad h\in D(L)\cap N(L)^\perp,
\end{align*}
with $E_2$ not included in $E_1$. This latter fact is the reason why we are not able to obtain exponential convergence towards equilibrium, but only algebraic.
After, we derive a uniform bound for some moment of the solution to the linearized equation in a weighted Lebesgue space.
In the last step, we bound the contributions of the nonlinear corrections, and derive a differential inequality for the weighted $L^2$-norm of the perturbation 
$$h := \frac{f-\cM}{\cM(1-\cM)}.$$ 
An elementary argument of ordinary differential equations' theory yields algebraic convergence to zero with rate $N$ for $\|h\|_{L^2(m)}$, provided that, at initial time, the latter is small enough and $\|h\|_{L^2(m (1+|v|^2)^{N/2})}<\infty$.
}

\subsection{Notations}

Here we list some of the notation conventions adopted throughout the manuscript:
\begin{itemize}
    \item Universal constants that may change from line to line are denoted $C$ or $C(A,B)$ if the constant is allowed to depend on the quantities $A$ and $B$.
    
    \item We write $A \lesssim B$ to mean there is a universal constant $C$ such that $A \le CB$. Similarly, we write $A \sim B$ to mean $A \lesssim B$ and $B \lesssim A$. If we write $A \lesssim_\Lambda B$, the implicit constant $C$ is allowed to depend on $\Lambda$.
    
    \item We write $L^p([0,T];X)$ for $T > 0$ and $X$ a Banach space to denote the space of strongly measurable $X$-valued functions satisfying $$\int_0^T \|f(t)\|_X^p \ dt < \infty.$$
    When we write $L^p$ without specifying the measure space, we mean $L^p(\R^3)$.
    
    \item We use the japanese bracket notation $\brak{v} := (1 + |v|^2)^{1/2}$. Given $p\in [1,\infty]$, we denote with $L^p_m$ the space of functions that have the following norm
    \begin{equation*}
        \|f\|_{L^p_m}^p := \int_{\R^3} |f|^p\brak{v}^{m} \ dv,
    \end{equation*}
    finite. We denote with $\|f\|_{L^p}$ the $L^p(\R^3)$ norm of $f$.

	\item Given $p\in [1,\infty]$ we denote with $p^*\in [1,\infty]$ the conjugate exponent of $p$,  $p^* := \frac{p}{p-1}$. 
	%appearing in H\"older inequality, defined in the usual way.}

\end{itemize}

{In Section \ref{sec_coeff}, we recall some useful estimates for the coefficients $A[f]$, $a[f]$ appearing in \eqref{LFD}. In Section \ref{sec_ex}, we prove Theorem ~\ref{thm_existence}. In Section \ref{sec_reg}, we prove Theorem ~\ref{thm_regularity}. In Section \ref{sec_ltb}, we prove Theorem ~\ref{thm_longtime}.}

\section{Coefficient Bounds}\label{sec_coeff}
The following standard bounds will be used throughout our proofs. 
\begin{lemma}\label{lemA_lower}
Any $f(v)$ such that $0\le f(v) \le \frac{1}{\varepsilon}$, $\int_{\R^3} f (1+|v|^2) = E_0$, and $H_\varepsilon [f]\le H_0 <0$ satisfies
$$
\langle A[f(1-\varepsilon f)] \xi, \xi \rangle \ge \frac{C}{1+ |v|^3} |\xi|^2,  \quad \forall \xi \in \R^3,
$$
where $C$ depends on $E_0$ and $H_0$. 
\end{lemma}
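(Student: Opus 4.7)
My plan is to reduce the claim to a quantitative lower bound on the integral
$\int_{B_{R_0}} f(1-\varepsilon f)\,dv$ for some ball $B_{R_0}$, and then apply a
standard Desvillettes-type tube argument for Landau coercivity, using the
uniform bound $f(1-\varepsilon f) \le \frac{1}{4\varepsilon}$ as the $L^\infty$ control
on the effective density. The step where the sign of the entropy actually
enters is converting the negative-entropy hypothesis into a lower bound on the
non-saturated part of $f$, since $-\phi(x) := -x\ln x - (1-x)\ln(1-x)$ vanishes
both as $x\to 0$ and as $x\to 1$; this is the main obstacle.

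For this step I would rely on the elementary comparison
\begin{equation*}
-\phi(x) \le C_\alpha\,[x(1-x)]^{1-\alpha},\qquad x\in[0,1],
\end{equation*}
valid for any $\alpha \in (0,1)$, with $C_\alpha$ independent of $x$ (it follows
from the faster vanishing of $x(1-x)$ vs. $-\phi$ at the endpoints and comparability
on compact subintervals of $(0,1)$). Setting $x = \varepsilon f$ and $g := f(1-\varepsilon f)$,
integration yields
$|H_0| \le -H_\varepsilon[f] \le C_\alpha \varepsilon^{-\alpha}\int_{\R^3} g^{1-\alpha}\,dv$.
For $\alpha$ small enough, H\"older's inequality on $B_{R_0}^c$ together with
$\int g\langle v\rangle^2\,dv \le E_0$ make the exterior integral arbitrarily small
for $R_0 = R_0(E_0,H_0,\varepsilon)$ large, so that
$\int_{B_{R_0}}g^{1-\alpha}\,dv \ge |H_0|\varepsilon^\alpha/(2C_\alpha)$; one more
H\"older application gives $\int_{B_{R_0}} g\,dv \ge \delta_0(E_0, H_0, \varepsilon) > 0$.

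With this bound in hand I would follow the classical tube argument for
\begin{equation*}
\langle A[g]\xi,\xi\rangle = \frac{1}{8\pi}\int_{\R^3}\frac{|(v-v_*)\times\xi|^2}{|v-v_*|^3}\,g(v_*)\,dv_*.
\end{equation*}
Removing from $B_{R_0}$ a double cone $T_\eta$ of opening $\eta$ around the line
$v + \mathbb{R}\xi$, the integrand is $\ge \eta^2/(|v|+R_0)$ on the complement, and
the bound $|T_\eta \cap B_{R_0}| \le C\eta^2(|v|+R_0)^3$ lets me choose $\eta$ so that
$\|g\|_\infty|T_\eta \cap B_{R_0}| \le \delta_0/2$, leaving mass $\ge \delta_0/2$
outside the cone. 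To sharpen the resulting $\langle v\rangle^{-4}$ decay to the
claimed $\langle v\rangle^{-3}$, I would separate directions $\xi$ transverse and
parallel to $v$: in the transverse case ($\sin\angle(\xi,v)\ge 1/2$) the factor
$|(v-v_*)\times\xi|^2$ is already $\gtrsim |v-v_*|^2|\xi|^2$ pointwise on $B_{R_0}$;
in the parallel case the identity $|(v-v_*)\times\hat v|^2 = |v_*^\perp|^2$, combined
with the non-degeneracy of the covariance matrix of $g|_{B_{R_0}}$ (which follows
from $g \le \frac{1}{4\varepsilon}$ and $\int_{B_{R_0}}g \ge \delta_0$), yields the
correct $|v|^{-3}$ behavior.
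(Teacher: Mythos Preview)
Your proof is correct and takes a genuinely different route from the paper. The paper proceeds by black-box citation: it invokes the known Desvillettes--Villani/Silvestre coercivity lemma, which gives the $\langle v\rangle^{-3}$ lower bound for $A[\varphi]$ provided $\varphi = f(1-\varepsilon f)$ has bounded mass, second moment, and classical entropy $\int \varphi|\ln\varphi|$; it then cites \cite{B04} for the mass lower bound $\int f(1-\varepsilon f)\,dv \ge m_0>0$ and the $L\log L$ bound. You instead work from scratch: the pointwise inequality $-\phi(x) \le C_\alpha[x(1-x)]^{1-\alpha}$ converts the negative Fermi--Dirac entropy directly into a lower bound on $\int g^{1-\alpha}$, then H\"older localizes this to a ball, and finally you run the tube argument by hand. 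The substantive conceptual difference is in the non-concentration step: the cited lemma uses the $L\log L$ bound on $g$ to control the mass inside the bad cone $T_\eta$, whereas you use the pointwise bound $g \le 1/(4\varepsilon)$, which is both simpler and specific to the quantum setting. Your approach is more self-contained and makes the constants somewhat more explicit (all depend on $\varepsilon$ in either version); the paper's is shorter because it leverages existing literature. Both are valid.
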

\begin{proof}
We begin by quoting a known result (see \cite{DesVil2000a} Lemma 6 and Proposition 4, or  \cite{Silvestre2015} Lemma 3.2 and 3.3) that says that for any nonnegative function $\varphi$ with mass, second momentum and entropy bounded, for all $v\in \R^3$:
$$
\int_{\R^3}\frac{\Pi(v-v_*)}{|v-v_*|}\varphi(v_*) \;dv_* \ge \frac{C}{1+ |v|^3} \mathbb{I},
$$
where the constant $C$ depends only on the quantities 
$$
\int \varphi(v)\;dv, \quad \int \varphi(v)|v|^2\;dv,\quad \int \varphi(v) |\ln\varphi(v)| \;dv.
$$
In light of this inequality, we need only show that 
\begin{align}\label{LlogL}
\int f(1-\varepsilon f) |\ln f(1-\varepsilon f)| \;dv < +\infty,
\end{align}
and that there exists a strictly positive constant $m_0$ such that 
\begin{align}\label{M_bound_below}
\int f(1-\varepsilon f) \;dv \ge m_0.
\end{align}
The proof of (\ref{LlogL}) and (\ref{M_bound_below}) can be found in \cite{B04} in Lemma 3.1. 
\end{proof}

The previous lemma together with (\ref{H-thm}) show that, as long as the initial data has strictly negative entropy, our equation is {\em{uniformly}} {parabolic}, and saturated-Fermi-Dirac-distributions are not admissible solutions.

\begin{lemma}[Upper Bound on \texorpdfstring{$A[f]$}{}]\label{lemA_upper}
For $A[f]$ defined in \eqref{defn_coef}, and for any $f\in L^p \cap L^q$ with $1\le p < 3/2 < q \le \infty$, we have
\begin{equation}
\|A[f]\|_{L^\infty} \le C(p,q)\|f\|_{L^q}^{1-\alpha}\|f\|_{L^p}^\alpha,% \le  C(p,q)\left(\|f\|_{L^q} + \|f\|_{L^p}\right),
\end{equation}
where $\alpha = \frac{\frac{1}{3}-\frac{1}{p^*}}{\frac{1}{q^*} - \frac{1}{p^*}} \in (0,1)$. Furthermore, $\nabla \cdot A[f] = \nabla a[f]$.
\end{lemma}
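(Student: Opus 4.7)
\medskip

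\noindent\textbf{Proof proposal.} The plan is a standard convolution--interpolation estimate for the Riesz-type kernel $1/|v|$, combined with a direct computation for the divergence identity.

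First, I note that $\Pi(z)$ is a projection matrix, so $|\Pi(z)| \le \sqrt{3}$ pointwise. Consequently, for every $v \in \R^3$,
\begin{equation*}
|A[f](v)| \;\le\; C\int_{\R^3} \frac{|f(v_*)|}{|v-v_*|}\,dv_* \;=\; C\bigl(|f| * K\bigr)(v), \qquad K(z) := \tfrac{1}{|z|}.
\end{equation*}
The key is that $K$ is locally in $L^{q^*}$ precisely when $q^* < 3$ (i.e.\ $q > 3/2$) and is in $L^{p^*}$ away from the origin precisely when $p^* > 3$ (i.e.\ $p < 3/2$). Both conditions are assumed. I therefore split $K = K\mathbf{1}_{|z|<R} + K\mathbf{1}_{|z|\ge R}$ for a parameter $R>0$ to be chosen, and estimate each convolution by H\"older's inequality:
\begin{align*}
\int_{|v-v_*|<R} \frac{|f(v_*)|}{|v-v_*|}\,dv_* &\;\le\; \|f\|_{L^q}\Big(\int_{|z|<R}|z|^{-q^*}dz\Big)^{1/q^*} \;=\; C_q\,\|f\|_{L^q}\,R^{3/q^*-1},\\
\int_{|v-v_*|\ge R} \frac{|f(v_*)|}{|v-v_*|}\,dv_* &\;\le\; \|f\|_{L^p}\Big(\int_{|z|\ge R}|z|^{-p^*}dz\Big)^{1/p^*} \;=\; C_p\,\|f\|_{L^p}\,R^{3/p^*-1}.
\end{align*}
Since $3/q^* - 1 > 0$ and $3/p^* - 1 < 0$, I then minimize the sum over $R>0$. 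Balancing the two terms gives $R$ equal to a power of $\|f\|_{L^p}/\|f\|_{L^q}$, and the resulting bound is exactly $C(p,q)\|f\|_{L^q}^{1-\alpha}\|f\|_{L^p}^{\alpha}$. The scaling exponent $\alpha$ is forced: a scaling check $f_\lambda(v)=\lambda^3 f(\lambda v)$ gives $\|A[f_\lambda]\|_{L^\infty}=\lambda\|A[f]\|_{L^\infty}$, which pins down $\alpha$ uniquely as $(1/3-1/p^*)/(1/q^*-1/p^*)$, and this lies in $(0,1)$ because $1/p^* < 1/3 < 1/q^*$ under our assumptions.

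For the divergence identity, I compute directly. Writing $z=v-v_*$ and $B_{ij}(z):=\Pi_{ij}(z)/|z|=\delta_{ij}/|z|-z_iz_j/|z|^3$, a direct calculation yields
\begin{equation*}
\sum_i \partial_{z_i}\!\left(\frac{\delta_{ij}}{|z|}\right) = -\frac{z_j}{|z|^3}, \qquad \sum_i \partial_{z_i}\!\left(\frac{z_iz_j}{|z|^3}\right) = \frac{z_j}{|z|^3},
\end{equation*}
so $\sum_i \partial_{z_i}B_{ij}(z)=-2z_j/|z|^3$. Integrating against $f(v_*)$ and using $\partial_{v_i}=\partial_{z_i}$ gives
\begin{equation*}
(\nabla_v\cdot A[f])_j \;=\; -\frac{1}{4\pi}\int_{\R^3}\frac{(v-v_*)_j}{|v-v_*|^3}\,f(v_*)\,dv_* \;=\; \partial_{v_j}a[f],
\end{equation*}
which is the claimed identity.

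The only mildly delicate point is ensuring the algebraic exponent matches the one in the statement; this I handle by the scaling argument above rather than computing it by hand. Everything else is a routine application of Young/H\"older with the standard split of the Coulomb kernel, and the divergence identity is a one-line differentiation.
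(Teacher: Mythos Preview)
Your proof is correct and follows essentially the same approach as the paper: split the Coulomb kernel at radius $R$, apply H\"older on each piece, and optimize in $R$; the divergence identity is likewise obtained by direct differentiation of the kernel $\Pi(z)/|z|$. The only cosmetic difference is that you use a scaling argument to pin down $\alpha$ rather than reading it off from the explicit optimization, which is a perfectly valid shortcut.
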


\begin{proof}
%The following bound on $A[f]$ is standard in the case of the classical Landau equation. 
For $R > 0$ arbitrary,
\begin{equation*}
\begin{aligned}
|A[f]| &\le \int_{|x - y| \le R} \frac{|f(y)|}{|x - y|} \ dy + \int_{|x-y| \ge R} \frac{|f(y)|}{|x-y|} \ dy\\
	&\le \|f\|_{L^q}\left(\int_{|x - y| \le R} \frac{1}{|x - y|^{q^*}} \ dy\right)^{1/q^*} + \|f\|_{L^p}\left(\int_{|x - y| \ge R} \frac{1}{|x - y|^{p^*}} \ dy\right)^{1/p^*}\\
	&\lesssim_{p,q} \|f\|_{L^q}R^{{3/q^*} - 1} + \|f\|_{L^p}R^{3/p^* - 1},
\end{aligned}
\end{equation*}
provided $q^*<3$ and $p^*>3$. Optimizing in $R$ yields $R \approx \|f\|_{L^q}^{-\beta}\|f\|_{L^p}^{\beta}$ and the bound,
\begin{equation*}
\|f\|_{L^q}R^{\frac{3(q-1)}{q} - 1} + \|f\|_{L^p}R^{\frac{3(p-1)}{p} - 1} \lesssim \|f\|_{L^q}^\alpha\|f\|_{L^p}^{1-\alpha},
\end{equation*}
for $\beta = \frac{1}{3\left(\frac{1}{q^*} - \frac{1}{p^*}\right)}$,  and $\alpha = \beta\left(1-\frac{3}{p^*}\right) > 0$. Note that $0 < \alpha < 1$. 

Finally, notice that 
\begin{align*}
\textrm{div} A[f] = -\frac{1}{4\pi} \int_{\mathbb{R}^3} \frac{x-y}{|x-y|^3} f(y)\;dy = \frac{1}{4\pi} \nabla  \int_{\mathbb{R}^3}  \frac{f(y)}{|x-y|} f(y)\;dy = \nabla a[f].
\end{align*}
\end{proof}

\begin{lemma}[Upper Bound on \texorpdfstring{$\nabla a[f]$}{}]\label{lema_upper}
For $a[f]$ defined in \eqref{defn_coef}, we have 
\begin{equation*}
\|\nabla a[f]\|_{L^2} \le C \|f\|_{L^{6/5}},
\end{equation*}
and
\begin{equation*}
\|\nabla a[f]\|_{L^\infty} \le C(p,q)\|f\|_{L^p}^\alpha\|f\|_{L^q}^{1-\alpha} ,%\le C(p,q)\left(\|f\|_{L^p} + \|f\|_{L^q}\right)
\end{equation*}
for any $1 \le p < 3 < q \le \infty$, and some $\alpha  \in (0,1)$.
\end{lemma}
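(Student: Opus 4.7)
The plan is to treat the two estimates independently, both starting from the pointwise bound $|\nabla a[f](v)| \lesssim \int_{\R^3}|f(v_*)|\,|v-v_*|^{-2}\,dv_*$ obtained by differentiating under the integral sign in the definition of $a[f]$.

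For the $L^2$ bound, I would invoke the Hardy--Littlewood--Sobolev inequality. The kernel $|v|^{-2}$ in dimension three is (up to a constant) the Riesz potential of order $\alpha = 1$, so HLS applied with exponents satisfying $\tfrac{1}{2} = \tfrac{1}{p} - \tfrac{1}{3}$ forces $p = 6/5$ and yields the claim. Equivalently, one can argue via Plancherel: since $-\Delta a[f] = f$, one has $\widehat{\nabla a[f]}(\xi) = c\, i\xi/|\xi|^2\, \hat f(\xi)$, so $\|\nabla a[f]\|_{L^2}$ is the homogeneous $\dot H^{-1}$ norm of $f$, which is controlled by $\|f\|_{L^{6/5}}$ by Sobolev embedding.

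For the $L^\infty$ bound, I would copy verbatim the splitting strategy of Lemma \ref{lemA_upper}. Fixing $v$ and $R > 0$, I would split the integration region into the near-field $\{|v-v_*|\le R\}$ and far-field $\{|v-v_*|>R\}$, and bound each piece via H\"older's inequality with dual exponents $(q^*,q)$ and $(p^*,p)$ respectively. The hypotheses $p<3<q$ are exactly what guarantees integrability of $|\cdot|^{-2q^*}$ near the origin and of $|\cdot|^{-2p^*}$ at infinity, since they translate to $2q^* < 3 < 2p^*$. A direct computation then yields
\begin{equation*}
|\nabla a[f](v)| \lesssim \|f\|_{L^q}\, R^{3/q^*-2} + \|f\|_{L^p}\, R^{3/p^*-2},
\end{equation*}
with exponents of $R$ of opposite sign. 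Optimizing in $R$ delivers the claimed interpolation estimate with
\begin{equation*}
\alpha = \frac{3/q^*-2}{3/q^*-3/p^*} = \frac{1-3/q}{3/p-3/q},
\end{equation*}
which lies in $(0,1)$ precisely because $q>3$ (making $\alpha>0$) and $p<3$ (making $\alpha<1$).

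The computation is routine and I do not foresee any substantive obstacle. The only point requiring care is bookkeeping the exponent conditions $p<3<q$ and checking that they simultaneously ensure convergence of the two truncated integrals and the membership $\alpha \in (0,1)$. Everything else parallels the proof of Lemma \ref{lemA_upper} with the kernel exponent shifted from $1$ to $2$.
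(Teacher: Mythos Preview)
Your proposal is correct and follows essentially the same approach as the paper: the $L^2$ bound via Hardy--Littlewood--Sobolev with $\lambda=2$, and the $L^\infty$ bound by repeating the near-field/far-field splitting of Lemma~\ref{lemA_upper} with the kernel exponent shifted from $1$ to $2$. Your explicit computation of $\alpha$ and the check that $p<3<q$ simultaneously ensures integrability and $\alpha\in(0,1)$ are exactly the details the paper leaves implicit.
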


\begin{proof}
The Hardy-Littlewood-Sobolev inequality (in $\R^3$) states that
$$\left\|\frac{1}{|x|^\lambda}\ast f\right\|_{L^q} \lesssim_{\alpha,p,q} \|f\|_{L^p}$$
provided $1 < p,q,\frac{3}{\lambda} < \infty$ and $\frac{1}{p} + \frac{\lambda}{3} = 1 + \frac{1}{q}$ (see \cite{L83}). The kernel $K(x)$ for $\nabla a$ satisfies $K(x) \sim |x|^{-2}$ and the $L^2$ estimate follows immediately. The $L^\infty$-bound follows the same steps as in Lemma \ref{lemA_upper}. 
\end{proof}

\section{Existence of bounded weak solutions}\label{sec_ex}

In order to find weak solutions to \eqref{LFD}, we first introduce an extra dissipative term $\delta_1 \Delta f$ %and localizing term $\delta_2|v|^mf$
to counter the degenerate ellipticity of $A[f(1-f)]$ (see Lemma \ref{lemA_lower}) and study the approximating problems
\begin{equation}\label{FDL_approx}
\partial_t f = \nabla \cdot \left(A[f(1-f)]\nabla f - \nabla a[f]f(1-f)\right) + \delta_1\Delta f. %- \delta_2 |v|^m f.
\end{equation}
We will first prove there exist solutions to \eqref{FDL_approx}, then taking $\delta_1$,%\delta_2 \rightarrow 0^+$
we recover global-in-time weak solutions to \eqref{LFD}. To this end, we introduce an auxiliary equation,
\begin{equation}\label{linear}
\begin{aligned}
\frac{\left(f_k - f_{k-1}\right)}{\tau} = \nabla \cdot \left(A_{k-1}\nabla f_k - \nabla a_{k-1}z_+(1-z)_+ + \delta_1\nabla f_k\right) - \delta_2 |v|^m f_k,\\
A_{k-1} := A[f_{k-1}(1-f_{k-1})] \qquad \text{and} \qquad a_{k-1} := a[f_{k-1}],
\end{aligned}
\end{equation}
obtained by dividing the time interval $[0,T]$ into $N$ subintervals, each of length $\tau$, linearizing (\ref{FDL_approx}) around a measurable function $z$, and adding an additional localizing term, $\delta_2|v|^m f$. In the first step of our construction, we use the Lax-Milgram Theorem to find unique weak solutions to \eqref{linear} and prove the following proposition: 
\begin{proposition}\label{prop_step1}
Let $f_{k-1} \in L^1$ with $0 \le f_{k-1} \le 1$, $z$ be a measurable function, and $m \ge 0$. Then, there is a unique $f_k \in H^1 \cap L^2_m$ that satisfies
\begin{equation}\label{weak_linear}
\begin{aligned}
\int\frac{\left(f_k - f_{k-1}\right)}{\tau} \varphi - \nabla a_{k-1}z_+(1-z)_+ \cdot\nabla\varphi \;dv= &- \int\nabla\varphi \cdot A_{k-1}\nabla f_k\;dv\\
&-\delta_1\int\nabla \varphi \cdot \nabla f_k-\delta_2\int |v|^m\varphi f_k\;dv,
\end{aligned}
\end{equation}
 for any $\varphi \in H^1 \cap L^2_m$.
\end{proposition}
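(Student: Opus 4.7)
\medskip

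\noindent\textbf{Proof plan.} The plan is to recast \eqref{weak_linear} as a coercive variational problem on the Hilbert space
$$V := H^1(\R^3)\cap L^2_m(\R^3),\qquad \|u\|_V^2 := \|u\|_{H^1}^2 + \|u\|_{L^2_m}^2,$$
and apply the Lax--Milgram theorem. First I would rewrite \eqref{weak_linear} in the form $B(f_k,\varphi) = L(\varphi)$, with bilinear form and right-hand side
\begin{align*}
B(u,\varphi) &:= \int_{\R^3}\left(\tfrac{1}{\tau}u\varphi + A_{k-1}\nabla u\cdot\nabla\varphi + \delta_1\nabla u\cdot\nabla\varphi + \delta_2|v|^m u\varphi\right)dv,\\
L(\varphi) &:= \int_{\R^3}\left(\tfrac{1}{\tau}f_{k-1}\varphi + z_+(1-z)_+\,\nabla a_{k-1}\cdot\nabla\varphi\right)dv.
\end{align*}

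For \emph{coercivity}, the key observation is that $A_{k-1}(v)$ is pointwise positive semi-definite, being the convolution of the nonnegative-definite kernel $\Pi(v-v_*)/|v-v_*|$ against the nonnegative function $f_{k-1}(1-f_{k-1})$. Hence
$$B(u,u) \ge \tfrac{1}{\tau}\|u\|_{L^2}^2 + \delta_1\|\nabla u\|_{L^2}^2 + \delta_2\int_{\R^3}|v|^m u^2\,dv \gtrsim_{\tau,\delta_1,\delta_2,m} \|u\|_V^2.$$
For \emph{continuity of $B$}, everything reduces to an $L^\infty$ bound on $A_{k-1}$, which follows from Lemma \ref{lemA_upper} applied with $p=1$, $q=\infty$: since $0\le f_{k-1}\le 1$ and $f_{k-1}\in L^1$, the argument $g = f_{k-1}(1-f_{k-1})$ belongs to $L^1\cap L^\infty$. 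The remaining terms of $B$ are then handled by Cauchy--Schwarz in the $V$-norm.

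For \emph{continuity of $L$}, the first term is controlled by the interpolation $\|f_{k-1}\|_{L^2}\le \|f_{k-1}\|_{L^\infty}^{1/2}\|f_{k-1}\|_{L^1}^{1/2}<\infty$. The second term combines the pointwise bound $0\le z_+(1-z)_+\le \tfrac{1}{4}$ (valid for any measurable $z$) with $\|\nabla a_{k-1}\|_{L^2}\lesssim \|f_{k-1}\|_{L^{6/5}}$ from Lemma \ref{lema_upper}, and the latter norm is finite by interpolation between $L^1$ and $L^\infty$. With these three ingredients Lax--Milgram produces a unique $f_k\in V$ satisfying $B(f_k,\varphi)=L(\varphi)$ for every $\varphi\in V$, which is exactly \eqref{weak_linear}.

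The main (modest) subtlety will be treating the source term $z_+(1-z)_+\nabla a_{k-1}$ without any regularity assumption on $z$; this is precisely what forces the truncations $z_+$ and $(1-z)_+$ in \eqref{linear}, since they uniformly bound this factor by $1/4$ independently of $z$. A second point worth emphasizing is that coercivity crucially exploits the artificial regularizations: the $\delta_1\Delta$ term supplies the missing $\|\nabla u\|_{L^2}^2$ control (as $A_{k-1}$ degenerates at infinity by Lemma \ref{lemA_lower}), and the $\delta_2|v|^m$ term supplies the $L^2_m$-control needed to close the estimate in $V$. Note that no sign property $0\le f_k\le 1$ is obtained at this stage; such bounds must be recovered later by choosing appropriate test functions in \eqref{weak_linear}.
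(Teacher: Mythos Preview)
Your proposal is correct and follows essentially the same approach as the paper: both recast \eqref{weak_linear} as $B(f_k,\varphi)=L(\varphi)$ on $V=H^1\cap L^2_m$, verify coercivity via $A_{k-1}\ge 0$ together with the $\delta_1$ and $\delta_2$ terms, check boundedness of $B$ using the $L^\infty$ bound on $A_{k-1}$ from Lemma~\ref{lemA_upper}, check boundedness of $L$ via $\|z_+(1-z)_+\|_{L^\infty}\le 1/4$ and $\|\nabla a_{k-1}\|_{L^2}\lesssim\|f_{k-1}\|_{L^{6/5}}$ from Lemma~\ref{lema_upper}, and conclude by Lax--Milgram. Your additional remarks on the role of the truncations and the artificial regularizations are accurate and useful context.
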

For a fixed $k$, Proposition \ref{prop_step1} defines a solution operator $\Phi$ to \eqref{linear} via $\Phi(z) = f_k$. In the second step of our construction, we seek solutions $f_k$ to the nonlinear system:
\begin{equation}\label{discrete}
\frac{\left(f_k - f_{k-1}\right)}{\tau} =  \nabla \cdot \left(A_{k-1}\nabla f_k - \nabla a_{k-1}f_k(1-f_k) + \delta_1\nabla f_k\right) - \delta_2 |v|^m f_k,
\end{equation}
for any fixed $\delta_1, \delta_2, \tau, m > 0$. Equivalently, we seek a fixed point $f_k = \Phi(f_k)$ satisfying the bound $0\le f_k \le 1$. To this end, we show $\Phi:L^2 \rightarrow L^2$ is continuous and compact, and the set $\{z \ | \ z = t\Phi(z), \text{for some }t\in[0,1]\}$ is bounded in $L^2$. Therefore, we apply the Schaeffer Fixed Point Theorem to conclude the following proposition:
\begin{proposition}\label{prop_step2}
Suppose $f_0 \in L^1$ with $0 \le f_0 \le 1$. Then, there is a family of functions $\{f_k\}_{k=0}^{N}$ such that $f_k \in L^2_m \cap H^1$ and $\{f_k\}$ solve \eqref{discrete}. That is, for $k \ge 1$,
$f_k$ satisfies that for any $\varphi \in H^1 \cap L^2_m$,
\begin{align}
\int\frac{\left(f_k - f_{k-1}\right)}{\tau} \varphi - \nabla a_{k-1}f_k(1-f_k) \cdot\nabla\varphi  \;dv = &- \int A_{k-1}\nabla f_k \cdot \nabla \varphi \;dv \nonumber  \\
&- \delta_1\int \nabla f_k \cdot \nabla\varphi \;dv- \delta_2\int |v|^mf_k\varphi \;dv.\label{weak_nonlinear}
\end{align}
Furthermore, for each $k\ge 1$, $f_k \in L^1$ and $0 \le f_k \le 1$.
\end{proposition}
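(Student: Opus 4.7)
The plan is to apply Schaeffer's fixed point theorem to the solution operator $\Phi:L^2(\R^3)\to L^2(\R^3)$ from Proposition \ref{prop_step1}, sending $z$ to the unique $f_k\in H^1\cap L^2_m$ solving the linearized equation \eqref{weak_linear}. Any fixed point $f_k = \Phi(f_k)$ satisfies \eqref{weak_nonlinear} with $f_k(1-f_k)$ replaced by the truncated product $(f_k)_+(1-f_k)_+$, so the final step will be to prove the Pauli bound $0\le f_k\le 1$, which removes the truncation and yields the weak formulation as stated. Iterating in $k=1,\ldots,N$ then produces the whole family.

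To verify Schaeffer's hypotheses, I test \eqref{weak_linear} with $\varphi = f_k$. Lemma \ref{lemA_lower} provides $A_{k-1}\ge 0$, and the cross term is controlled by Cauchy-Schwarz using the pointwise bound $|z_+(1-z)_+|\le 1/4$ together with the $L^2$-estimate for $\nabla a_{k-1}$ from Lemma \ref{lema_upper} (valid because $0\le f_{k-1}\le 1$ and $f_{k-1}\in L^1$). Absorbing the gradient into the $\delta_1$-term yields
\begin{equation*}
\tfrac{1}{\tau}\|f_k\|_{L^2}^2 + \delta_1\|\nabla f_k\|_{L^2}^2 + \delta_2\|f_k\|_{L^2_m}^2 \le C(f_{k-1},\delta_1,\delta_2,\tau),
\end{equation*}
uniformly in $z$. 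Since $H^1\cap L^2_m \hookrightarrow L^2$ compactly when $m>0$, $\Phi$ maps $L^2$ into a relatively compact subset. Continuity then follows: if $z_n\to z$ in $L^2$, then $(z_n)_+(1-z_n)_+\to z_+(1-z)_+$ in every $L^p$ by dominated convergence, and any subsequential $L^2$-limit of $\Phi(z_n)$ must solve \eqref{weak_linear} with input $z$ and hence equal $\Phi(z)$ by the uniqueness in Proposition \ref{prop_step1}. Finally, if $z=t\Phi(z)$ with $t\in[0,1]$, then $f_k=\Phi(z)$ satisfies the linearized equation with input $tf_k$; since $|(tf_k)_+(1-tf_k)_+|\le 1/4$ the same energy estimate produces a $t$-independent bound on $\|f_k\|_{L^2}$ and therefore on $\|z\|_{L^2}=t\|f_k\|_{L^2}$. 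Schaeffer's theorem thus produces $f_k\in H^1\cap L^2_m$ with $f_k=\Phi(f_k)$.

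The main obstacle is the maximum principle, and this is where the choice of truncation $z_+(1-z)_+$ (rather than $z(1-z)$) in the linearization of Proposition \ref{prop_step1} pays off. Testing the fixed-point equation with $\varphi = -(f_k)_-\in H^1\cap L^2_m$, the convection term vanishes because $(f_k)_+(1-f_k)_+ \equiv 0$ on $\{f_k<0\}$, while the time-difference term contributes at least $\tfrac{1}{\tau}\|(f_k)_-\|_{L^2}^2$ (using $f_{k-1}\ge 0$), and the remaining diffusion and localization terms all carry the correct sign. This forces $(f_k)_-\equiv 0$. A symmetric test with $\varphi = (f_k-1)_+$, using $(f_k)_+(1-f_k)_+\equiv 0$ on $\{f_k>1\}$ and $f_{k-1}\le 1$, yields $f_k\le 1$. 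Consequently $(f_k)_+(1-f_k)_+ = f_k(1-f_k)$ pointwise and the fixed-point equation reduces to \eqref{weak_nonlinear}; had the linearization used $z(1-z)$ directly, the cross term would survive the sign tests and the Pauli bound would no longer follow from elementary truncation arguments.
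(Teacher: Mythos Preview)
Your overall strategy matches the paper's exactly: Schaeffer's theorem applied to the map $\Phi$ from Proposition \ref{prop_step1}, with the uniform $H^1\cap L^2_m$ energy estimate, the compact embedding $H^1\cap L^2_m\hookrightarrow L^2$, continuity via uniqueness, and the truncation-based maximum principle corresponding to Lemmas \ref{step2_lem1}--\ref{step2_lem5}.

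There is, however, a genuine omission: you never establish $f_k\in L^1$. This is both part of the claimed conclusion and, more importantly, required to iterate, since Proposition \ref{prop_step1} assumes $f_{k-1}\in L^1$ (it enters through the bounds on $A_{k-1}$ and $\nabla a_{k-1}$ in Lemmas \ref{lemA_upper} and \ref{lema_upper}). The membership $f_k\in L^1$ does not follow from $f_k\in H^1\cap L^2_m$ and $0\le f_k\le 1$ unless $m>3$, and in the subsequent application (Step 3) one takes $0<m<1$. The paper closes this gap in Lemma \ref{step2_lem6} by testing \eqref{weak_nonlinear} with a smooth cutoff $\varphi_R$ and showing the flux terms vanish as $R\to\infty$, which yields the exact identity $\|f_k\|_{L^1}+\tau\delta_2\||v|^m f_k\|_{L^1}=\|f_{k-1}\|_{L^1}$.

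Two minor points. First, $A_{k-1}\ge 0$ follows directly from $f_{k-1}(1-f_{k-1})\ge 0$ and the positive-semidefinite kernel $\Pi(v-v_*)/|v-v_*|$; Lemma \ref{lemA_lower} is not the right reference here, as its entropy hypothesis is not available for $f_{k-1}$. Second, in your continuity step, dominated convergence does not give $(z_n)_+(1-z_n)_+\to z_+(1-z)_+$ in $L^p(\R^3)$ because the uniform bound $1/4$ is not integrable; the paper instead uses that $x\mapsto x_+(1-x)_+$ is Lipschitz, so $L^2$ convergence of $z_n$ transfers directly.
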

In the third step of our construction, we seek a weak solution to the auxiliary equation \eqref{FDL_approx}% with $\delta_2 = 0$
on a time interval $[0,T]$. To this end, we divide $[0,T]$ into $N$ pieces of size $\tau_N$ and from Proposition \ref{prop_step2}, for an initial datum $f_{in}$, we may define
$$
f^{(N)}(v,t)=f_{in}(v)\chi_{0}(t) + \sum_{k=1}^N f_k(v)\chi_{(t_{k-1},t_k]}(t),
$$
where $\{f_k\}_{0\le k\le N}$ solves \eqref{discrete} with parameters $\tau = \delta_2 = \tau_n$. We show propagation of $L^1$ moments and use a variant of the Aubin-Lions Lemma to conclude the following proposition:
\begin{proposition}\label{prop_step3}
Suppose $f_{in} \in L^1$, $|v|^2f_{in} \in L^1$, and $0 \le f_{in} \le 1$ and $\delta_1 > 0$. Then, for any $T > 0$, there is an $f:[0,T] \times \R^3 \rightarrow \R$ with $0\le f \le 1$ such that for each $1\le p < \infty$, $f \in L^\infty([0,T];L^p)$, $f\in C([0,T];L^2)$,  $f\in L^2([0,T];H^1)$, $f \in L^\infty([0,T];L^1_2)$ and $f$ satisfies \eqref{FDL_approx} in the form,
%\begin{equation}\label{approx_distribution}
\begin{align}
\int_{\R^3} f_{in}\varphi(0) \;dv- \int_0^T\int_{\R^3} f\partial_t \varphi \;dvdt=& - \int_0^T\int_{\R^3} \left(A[f(1-f)]\nabla f - \nabla a[f]f(1-f)\right) \cdot \nabla\varphi \;dvdt \nonumber  \\
    &- \delta_1\int_0^T\int_{\R^3} \nabla f \cdot \nabla \varphi \;dvdt, \label{approx_distribution}
\end{align}
%\end{equation}
for each $\varphi \in C^\infty_c([0,T)\times \R^3)$ and
\begin{align}\label{approx_weak}
\int_0^T \langle \partial_tf, \Phi \rangle_{H^1,H^{-1}}\;dt =&  - \int_0^T\int_{\R^3} \left(A[f(1-f)]\nabla f - \nabla a[f]f(1-f)\right) \cdot \nabla\Phi  \;dvdt \nonumber  \\
&- \delta_1\int_0^T\int_{\R^3} \nabla f \cdot \nabla \Phi\;dvdt,
\end{align}
for each $\Phi \in L^2([0,T];H^1)$. Furthermore, $f$ conserves mass and satisfies the bound
\begin{equation*}%\label{prop_step3_desired}
\|f\|_{L^\infty([0,T];L^1_2)} + \delta_1\|\nabla f\|_{L^2([0,T];L^2)} + \delta_1\|\partial_t f\|_{L^2([0,T];H^{-1})} \le C(\|f_{in}\|_{L^1_2},T). 
\end{equation*}
\end{proposition}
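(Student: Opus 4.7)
\textbf{Proof proposal for Proposition \ref{prop_step3}.}

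The plan is to choose $\tau_N = T/N$ and $\delta_2 = \tau_N$ in Proposition \ref{prop_step2}, so both parameters vanish as $N\to\infty$, and to pass to the limit in the piecewise constant interpolant $f^{(N)}$ (together with its piecewise linear-in-time counterpart $\widehat f^{(N)}$, needed to make sense of a distributional time derivative). The three ingredients are: (i) uniform $N$-independent a priori estimates, (ii) compactness via an Aubin-Lions type argument, and (iii) passage to the limit in the nonlinear coefficients $A[f(1-f)]$ and $\nabla a[f]$.

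For (i), I would first use the a.e. bound $0\le f_k\le 1$ from Proposition \ref{prop_step2} to obtain $\|f^{(N)}\|_{L^\infty([0,T]\times\R^3)}\le 1$, and hence $\|f^{(N)}\|_{L^\infty([0,T];L^p)}$ dominated by $\|f^{(N)}\|_{L^\infty([0,T];L^1)}^{1/p}$ for each $p\in[1,\infty]$. Next, testing the discrete scheme \eqref{weak_nonlinear} with $\varphi=\brak{v}^2$ and using the crude upper bounds on $A_{k-1}$ and $\nabla a_{k-1}$ from Lemmas \ref{lemA_upper}--\ref{lema_upper} (which are finite thanks to $0\le f_{k-1}\le 1$ and control of the second moment), together with a discrete Gronwall argument, I would propagate the $L^1_2$ moment uniformly in $N$ on $[0,T]$. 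Testing \eqref{weak_nonlinear} with $\varphi=f_k$ and summing over $k$ yields the discrete energy estimate
\begin{equation*}
\tfrac12\|f^{(N)}(T)\|_{L^2}^2 + \delta_1\|\nabla f^{(N)}\|_{L^2([0,T];L^2)}^2 + \sum_k\int A_{k-1}\nabla f_k\cdot\nabla f_k\,d v\,\tau_N \le C,
\end{equation*}
where the cross term $\int\nabla a_{k-1}f_k(1-f_k)\cdot\nabla f_k$ is absorbed using $|f_k(1-f_k)|\le 1$, $\|\nabla a_{k-1}\|_{L^2}\lesssim\|f_{k-1}\|_{L^{6/5}}$ and Young's inequality. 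This delivers a uniform $L^2([0,T];H^1)$ bound on $f^{(N)}$. The $\delta_2\int|v|^m f_k^2$ term is nonnegative and carries no essential information in the limit since $\delta_2\to 0$.

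For (ii), I would introduce the continuous piecewise-linear interpolant $\widehat f^{(N)}$ with $\partial_t\widehat f^{(N)}=(f_k-f_{k-1})/\tau_N$ on $(t_{k-1},t_k]$. Reading \eqref{weak_nonlinear} as identifying $\partial_t\widehat f^{(N)}$ with a divergence plus $-\delta_2|v|^m f^{(N)}$, together with the uniform bounds from step (i) and the coefficient estimates of Section \ref{sec_coeff}, gives a uniform bound on $\partial_t\widehat f^{(N)}$ in $L^2([0,T];H^{-1})$. Combined with the $L^2([0,T];H^1)$ bound and $L^\infty$ bound, the Aubin-Lions lemma (applied on each ball $B_R$, then diagonalized in $R$) yields, along a subsequence, strong convergence $f^{(N)}\to f$ in $L^2([0,T];L^2_{loc})$, together with weak convergence $\nabla f^{(N)}\weak\nabla f$ in $L^2([0,T];L^2)$ and weak-$*$ convergence in $L^\infty$. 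A standard argument shows $\|\widehat f^{(N)}-f^{(N)}\|_{L^2([0,T];L^2)}\to 0$, so both interpolants share the same limit $f$, and the $L^\infty$ bound lifts local strong convergence to strong convergence in $L^2([0,T];L^p)$ for each $p<\infty$ using the moment bound to control tails. The continuity $f\in C([0,T];L^2)$ follows from the $H^{-1}$-bound on $\partial_t f$ and $L^2([0,T];H^1)$ via the usual embedding.

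For (iii), the hardest point, I would argue that strong convergence of $f^{(N)}$ in $L^2([0,T];L^2_{loc})$ together with the $L^\infty$ bound gives $f^{(N)}(1-f^{(N)})\to f(1-f)$ in, say, $L^p([0,T];L^p_{loc})$ for every $p<\infty$. Then Lemma \ref{lemA_upper} applied pointwise in $t$ gives $A[f^{(N)}(1-f^{(N)})]\to A[f(1-f)]$ uniformly in $v$ and strongly in $L^p$ in time on compact sets, and analogously $\nabla a[f^{(N)}]\to\nabla a[f]$ in $L^2([0,T];L^2)$ by Lemma \ref{lema_upper} combined with the fact that convolution with $|v|^{-2}$ is a continuous operator from $L^{6/5}$ to $L^2$. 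Pairing the strong convergences of the coefficients with the weak convergences of $\nabla f^{(N)}$ and $f^{(N)}(1-f^{(N)})$ lets me pass to the limit in the right-hand side of \eqref{weak_nonlinear} written against any $\varphi\in C^\infty_c([0,T)\times\R^3)$, obtaining \eqref{approx_distribution}. The localizing term $\delta_2\int|v|^m f^{(N)}\varphi$ vanishes since $\delta_2=\tau_N\to 0$ and $\varphi$ has compact support. The identity \eqref{approx_weak} for general $\Phi\in L^2([0,T];H^1)$ follows by density once $\partial_t f\in L^2([0,T];H^{-1})$ is established (which the estimates give). Conservation of mass is obtained by approximating $\varphi\equiv 1$ via smooth cutoffs and using the $L^1_2$ bound to pass to the limit in the flux terms, which carry a decaying prefactor in $|v|$. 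The combined energy-type estimate displayed at the end of the proposition is simply the liminf of the corresponding discrete bounds. The main obstacle is the simultaneous treatment of the quadratic nonlinearities $A[f(1-f)]\nabla f$ and $\nabla a[f]f(1-f)$, which is overcome precisely because the $L^\infty$ bound $0\le f^{(N)}\le 1$ gives extra integrability for free and turns weak-strong pairings into ordinary convergence of products.
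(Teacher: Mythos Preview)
Your proposal is correct and follows essentially the same strategy as the paper: uniform-in-$N$ estimates (Lemmas \ref{step3_lem1}--\ref{step3_lem2}), an Aubin--Lions compactness step (Lemma \ref{step3_lem3}), and weak--strong pairings to pass to the limit in the nonlinear coefficients. Two small points you glossed over: first, the discrete coefficients are $A_{k-1}=A[f_{k-1}(1-f_{k-1})]$ and $a_{k-1}=a[f_{k-1}]$, i.e.\ they depend on the \emph{shifted} interpolant $\sigma_N f^{(N)}$ rather than $f^{(N)}$, so you must also show $\sigma_N f^{(N)}\to f$ strongly (this follows from the same bounds); second, the paper fixes $0<m<1$ so that the localizing term $\delta_2|v|^m f^{(N)}$ can be controlled in $H^{-1}$ via the $L^1_2$ moment bound, which you should state explicitly. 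The only methodological difference is that you introduce a piecewise-linear interpolant $\widehat f^{(N)}$ and invoke the classical Aubin--Lions lemma, whereas the paper works directly with the piecewise-constant $f^{(N)}$ and $D_\tau f^{(N)}$ via the discrete variant of Aubin--Lions from \cite{DJ12}; both routes are standard and equivalent here.
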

Finally, in the fourth step of our construction, we conclude the proof of Theorem \ref{thm_existence}. From Proposition \ref{prop_step3}, for an initial datum $f_{in}$ and a sequence $\delta_n \rightarrow 0^+$, we obtain a family of solutions $\{f_n\}$ to the equation \eqref{FDL_approx} with parameters $\delta_1 = \delta_n$ on the interval $[0,T]$. We show propagation of higher $L^1$ moments and an H-Theorem for the equation \eqref{FDL_approx}. Combined with Lemma \ref{lemA_lower}, this implies a uniform lower bound on the coefficients $A[f_n(1-f_n)]$, which is sufficient to gain compactness as $n \rightarrow \infty$.

\subsection{Step 1: Existence and Uniqueness of Solutions to \eqref{linear}}

In this step, we use the Lax-Milgram Theorem to prove Proposition \ref{prop_step1}. We recall that in this step, we construct weak solutions $f_k$ to
\begin{equation}
\begin{aligned}
\frac{\left(f_k - f_{k-1}\right)}{\tau} = \nabla \cdot \left(A_{k-1}\nabla f_k - \nabla a_{k-1}z_+(1-z)_+ + \delta_1\nabla f_k\right) - \delta_2 |v|^m f_k,\\
A_{k-1} := A[f_{k-1}(1-f_{k-1})] \qquad \text{and} \qquad a_{k-1} := a[f_{k-1}],
\end{aligned}
\end{equation}
where $f_{k-1}$, $z$, $\tau$, $\delta_1$, $\delta_2$, and $m$ are fixed.

\begin{flushleft}
\underline{Proof of Proposition \ref{prop_step1}.}\\
\end{flushleft}
We define
\begin{align*}
B[u,\varphi] &= \int A_{k-1}\nabla u \cdot \nabla \varphi + \delta_1\nabla u \cdot \nabla \varphi + \tau^{-1} u \varphi + \delta_2|v|^m u\varphi \;dv,\\
L[\varphi] &= \int \tau^{-1}f_{k-1} \varphi + \nabla a_{k-1}z_+(1-z)_+ \cdot\nabla \varphi \;dv.
\end{align*}
Since $0 \le f_{k-1} \le 1$, $A_{k-1} \ge 0$ and we have 
\begin{equation*}
B[u,u] \ge \int\delta_1 |\nabla u|^2 + \tau^{-1}u^2 + \delta_2|v|^m u^2 \;dv\gtrsim_{\delta_1,\delta_2,\tau} \|u\|_{H^1}^2 + \|u\|_{L^2_m}^2.
\end{equation*}
Therefore, $B[u,\varphi]$ is coercive on $H^1 \cap L^2_m$. Moreover, $B$ is bounded on $H^1 \cap L^2_m$ thanks to Lemma \ref{lemA_upper} and $0\le f_{k-1} \le 1$ as
\begin{equation*}
\begin{aligned}
|B[u,\varphi]| &\le \left(\|A_{k-1}\|_{L^\infty} + \delta_1\right)\|\nabla u\|_{L^2}\|\nabla \varphi\|_{L^2} + \tau^{-1}\|u\|_{L^2}\|\varphi\|_{L^2} + \delta_2\|u\|_{L^2_m}\|\varphi\|_{L^2_m}\\
	&\lesssim_{\delta_1,\delta_2,\tau,\|f_{k-1}\|_{L^1}} \|u\|_{H^1\cap L^2_m}\|\varphi\|_{H^1\cap L^2_m}.
\end{aligned}
\end{equation*}
Also, $L$ is bounded on $H^1\cap L^2_m$ by the Cauchy-Schwarz Inequality and Lemma \ref{lema_upper},
\begin{equation*}
\begin{aligned}
|L(\varphi)| &\le \tau^{-1}\|f_{k-1}\|_{L^2}\|\varphi\|_{L^2} + \|\nabla a_{k-1}\|_{L^2}\|z_+(1-z)_+\|_{L^\infty}\|\nabla \varphi\|_{L^2}\\
&\lesssim_{\tau,\|f_{k-1}\|_{L^2\cap L^{6/5}}} \|\varphi\|_{H^1}.
\end{aligned}
\end{equation*}
We conclude, using the Lax-Milgram Theorem on $H^1 \cap L^2_m$, that there is a unique $f_k \in H^1 \cap L^2_m$ satisfying the weak formulation \eqref{weak_linear}.

\subsection{Step 2: Existence of Solutions to \eqref{discrete}}

In this step, we use a fixed point argument to prove Proposition \ref{prop_step2}. We show that the nonlinear, semi-discretized equation,
\begin{equation*}
\frac{\left(f_k - f_{k-1}\right)}{\tau} =  \nabla \cdot \left(A[(1-f_{k-1})f_{k-1}]\nabla f_k - \nabla a[f_{k-1}]f_k(1-f_k) + \delta_1\nabla f_k\right) - \delta_2 |v|^m f_k,
\end{equation*}
has a solution $f_k$ provided $f_{k-1}$ is known and satisfies $0 \le f_{k-1} \le 1$ and $f_{k-1}\in L^1$. Moreover, we show these assumptions are propagated, so that for a fixed $f_0 = f_{in}$, we have the existence of a family $\{f_k\}$ for $k = 0, 1, ...,N$ for any $N$.

We begin by showing the existence of solutions $f_k$ to the nonlinear weak formulation \eqref{weak_nonlinear} provided $f_{k-1}$ is known and satisfies $f_{k-1}\in L^1$ and $0 \le f_{k-1} \le 1$. To this end, we fix $k$ and define $ \Phi : X \to X$ with $\Phi(z) = f_k$, where $f_k$ is the unique solution to \eqref{weak_linear} given $z$ (and fixed $\delta_1, \delta_2, \tau, m, f_{k-1}$). We also fix $X$ to be $L^2$. We would like to apply the Schaeffer Fixed Point Theorem \cite[Theorem 11.3]{GT98} to $\Phi: X\rightarrow X$ to conclude that there exists a fixed point for $\Phi$ in $X$. To apply Schaeffer's Theorem we need to verify the following conditions:
\begin{itemize}
\item The map $\Phi$ maps $X$ into $X$, i.e. if $z\in L^2$, the weak solution $f_k$ to \eqref{weak_linear} also satisfies $f_k \in L^2$. This is done in Lemma \ref{step2_lem1} via an $L^2$ estimate.
\item The set of approximate fixed points,
$$\set{z \ \big| \ z = t\Phi(z) \ \text{for some }0\le t\le 1}$$
should be bounded in $X$. This is done in Lemma \ref{step2_lem2}.
\item The map $\Phi$ is compact. This is done in Lemma \ref{step2_lem3} via the compact embedding $\Phi(X) \subset H^1\cap L^2_m \embeds L^2$.
\item The map $\Phi: X\rightarrow X$ is continuous. This is done in Lemma \ref{step2_lem4} by showing that if $z_k \rightarrow z$, the corresponding weak solutions $\Phi(z_k)$ converge to the unique weak solution $\Phi(z)$ of \eqref{weak_linear}.
\end{itemize}  
To this end, we have our first a priori bound:
\begin{lemma}\label{step2_lem1}
For  $f_{k-1} \in L^1$ with $0 \le f_{k-1} \le 1$, let $f_k$ be the unique solution to \eqref{weak_linear}. Then, $f_k \in H^1 \cap L^2_m$ and satisfies the estimate
\begin{equation}\label{step2_lem1_eqn0}
\|f_k\|_{L^2}^2 + \tau\delta_1\|\nabla f_k\|_{L^2}^2 + 2\tau\delta_2\||v|^{m/2} f_k\|_{L^2}^2 \le \|f_{k-1}\|_{L^2}^2 + C\frac{\tau}{\delta_1}\|f_{k-1}\|_{L^{6/5}}^2.
\end{equation}
\end{lemma}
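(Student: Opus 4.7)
The plan is a straightforward energy estimate: test the weak formulation \eqref{weak_linear} with $\varphi = f_k$ itself (which is legal because $f_k\in H^1\cap L^2_m$ by Proposition \ref{prop_step1}), use the elementary algebraic identity
\begin{equation*}
2(f_k-f_{k-1})f_k = f_k^2 - f_{k-1}^2 + (f_k-f_{k-1})^2 \ge f_k^2 - f_{k-1}^2,
\end{equation*}
and the positive semi-definiteness of $A_{k-1} = A[f_{k-1}(1-f_{k-1})]$ to throw away the associated nonnegative term. After these steps, I would arrive at the inequality
\begin{equation*}
\tfrac{1}{2\tau}\bigl(\|f_k\|_{L^2}^2 - \|f_{k-1}\|_{L^2}^2\bigr) + \delta_1\|\nabla f_k\|_{L^2}^2 + \delta_2\||v|^{m/2}f_k\|_{L^2}^2 \le \int \nabla a_{k-1}\, z_+(1-z)_+ \cdot \nabla f_k \, dv.
\end{equation*}

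The right-hand side is the only nontrivial term to bound. Here I would exploit the fact that the map $t\mapsto t(1-t)$ attains its maximum $\tfrac14$ on $[0,1]$, so the truncation satisfies $0\le z_+(1-z)_+ \le \tfrac14$ pointwise independently of $z$; this is the sole place the truncation plays any role. Combining Cauchy--Schwarz with the $L^2$ estimate for $\nabla a_{k-1}$ from Lemma \ref{lema_upper} gives
\begin{equation*}
\int \nabla a_{k-1}\, z_+(1-z)_+ \cdot \nabla f_k \, dv \le \tfrac14 \|\nabla a_{k-1}\|_{L^2}\|\nabla f_k\|_{L^2} \le C\|f_{k-1}\|_{L^{6/5}}\|\nabla f_k\|_{L^2}.
\end{equation*}

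I would then apply Young's inequality with parameter $\delta_1/2$ to split this as $\tfrac{\delta_1}{2}\|\nabla f_k\|_{L^2}^2 + \tfrac{C}{\delta_1}\|f_{k-1}\|_{L^{6/5}}^2$, absorb the gradient piece into the left-hand side (this is precisely where the regularizing term $\delta_1\Delta f_k$ is indispensable), and multiply through by $2\tau$ to arrive at \eqref{step2_lem1_eqn0}. There is no genuine obstacle in this proof; the only point that deserves care is ensuring the $\nabla a_{k-1}$ contribution is controlled solely in terms of the previous step $f_{k-1}$, which is guaranteed because both the uniform bound on $z_+(1-z)_+$ and Lemma \ref{lema_upper} are independent of the fixed-point variable $z$. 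This uniformity will later be crucial for running the Schaeffer fixed-point argument in Lemma \ref{step2_lem2}.
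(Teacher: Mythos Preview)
Your proof is correct and follows essentially the same approach as the paper's: test \eqref{weak_linear} with $\varphi=f_k$, discard the nonnegative $A_{k-1}$ term, and control the $\nabla a_{k-1}$ contribution via Lemma~\ref{lema_upper} and Young's inequality with parameter $\delta_1/2$. The only cosmetic difference is that the paper handles the time-difference term by applying Young's inequality to $\tau^{-1}\int f_{k-1}f_k\,dv$, whereas you use the equivalent algebraic identity $2(f_k-f_{k-1})f_k\ge f_k^2-f_{k-1}^2$.
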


\begin{proof}
We test \eqref{weak_linear} with $\varphi = f_k$. Using $A_{k-1} \ge 0$, we obtain
\begin{equation*}
\tau^{-1}\|f_k\|_{L^2}^2 + \delta_1\|\nabla f_k\|_{L^2}^2 + \delta_2\||v|^{m/2}f_k\|_{L^2}^2 \le \tau^{-1}\int f_{k-1}f_k  \;dv + \int\nabla a_{k-1}(z)_+(1-z)_+ \cdot \nabla f_k \;dv .
\end{equation*}
We bound the first term on the right hand side with Young's inequality as
\begin{equation*}
\tau^{-1}\int f_{k-1}f_k  \;dv \le \frac{\tau^{-1}}{2}\|f_{k-1}\|_{L^2}^2 + \frac{\tau^{-1}}{2}\|f_k\|_{L^2}^2,
\end{equation*}
and the second term via Young's inequality and Lemma \ref{lema_upper} as 
\begin{equation*}
\int\nabla a_{k-1}(z)_+(1-z)_+ \cdot \nabla f_k \;dv \le C\delta_1^{-1}\|f_{k-1}\|_{L^{6/5}}^2 + \frac{\delta_1}{2}\|\nabla f_k\|_{L^2}^2.
\end{equation*}
Rearranging terms and combining bounds yield \eqref{step2_lem1_eqn0}.
\end{proof}

We note that the preceding lemma immediately implies the following result:
\begin{lemma}[A priori bounds on approximate fixed points]\label{step2_lem2}
Let $f_k$ be the unique solution to \eqref{weak_linear} with $f_{k-1} \in L^1$ with $0 \le f_{k-1} \le 1$ and $X := L^2$. The map $\Phi: X\to  X$ defined as $z \mapsto  f_k$ is such that $A := \{z \in X \ | \ t\Phi(z) = z \text{ for some }0\le t\le 1\}$ is a bounded subset of $X$.
\end{lemma}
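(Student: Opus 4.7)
The plan is to observe that the estimate in Lemma \ref{step2_lem1} is, in a crucial way, independent of the input $z$, so boundedness of $A$ follows essentially for free. Indeed, looking back at the derivation of \eqref{step2_lem1_eqn0}, the only place the auxiliary function $z$ enters is through the product $z_+(1-z)_+$ appearing in the drift term, and this product is bounded pointwise by $1/4$ regardless of $z$. Combined with the $L^2$ bound on $\nabla a_{k-1}$ coming from Lemma \ref{lema_upper} (which only involves $f_{k-1}$), this produces a right-hand side in \eqref{step2_lem1_eqn0} that depends only on $f_{k-1}$, $\tau$, and $\delta_1$.

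Concretely, I would proceed as follows. Fix $z\in L^2$ and let $f_k=\Phi(z)$ be the unique weak solution of \eqref{weak_linear} provided by Proposition \ref{prop_step1}. Applying Lemma \ref{step2_lem1}, we obtain a constant
\begin{equation*}
M := \left(\|f_{k-1}\|_{L^2}^2 + C\frac{\tau}{\delta_1}\|f_{k-1}\|_{L^{6/5}}^2\right)^{1/2},
\end{equation*}
depending only on the data $f_{k-1}$, $\tau$, and $\delta_1$ (and in particular \emph{not} on $z$), such that $\|\Phi(z)\|_{L^2}\le M$ for every $z\in X=L^2$.

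Now suppose $z\in A$, so that $z = t\Phi(z)$ for some $t\in[0,1]$. Then
\begin{equation*}
\|z\|_{L^2} = t\,\|\Phi(z)\|_{L^2} \le t\,M \le M,
\end{equation*}
which shows $A$ is contained in the closed ball of radius $M$ in $X$. This is exactly the boundedness claim, and no additional work is required. The only conceptual point worth emphasizing — and the reason the conclusion is not merely tautological — is that the nonlinearity has been pre-truncated via $z_+(1-z)_+$, which decouples the bound on $\Phi(z)$ from the size of $z$; without this truncation one would need to exploit the structure $z = t\Phi(z)$ more carefully via testing the equation with $\varphi = f_k$ and absorbing terms involving $t$.
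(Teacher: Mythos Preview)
Your proof is correct and takes essentially the same approach as the paper: both use that the bound \eqref{step2_lem1_eqn0} from Lemma \ref{step2_lem1} is independent of $z$, so $\|z\|_{L^2} \le \|\Phi(z)\|_{L^2} \le M$ for any $z\in A$. Your added explanation of why the bound is $z$-independent (the truncation $z_+(1-z)_+ \le 1/4$) is a helpful clarification but not a departure from the paper's argument.
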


\begin{proof}
Suppose $z \in A$. Then, we note by Lemma \ref{step2_lem1},
\begin{equation*}
\|z\|_{L^2}^2 \le \|\Phi(z)\|_{L^2}^2 \le \|f_{k-1}\|_{L^2}^2 + C\frac{\tau}{\delta_1}\|f_{k-1}\|_{L^{6/5}}^2,
\end{equation*}
which completes the proof.
\end{proof}

\begin{lemma}[Compactness]\label{step2_lem3}
For $\Phi$ and $X$ as in Lemma \ref{step2_lem2}, $\Phi(X)$ is pre-compact as a subset of $L^q(\R^3)$ for any $2 \le q < 6$.
\end{lemma}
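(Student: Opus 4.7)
The plan is to exploit that $\Phi(X)$ is uniformly bounded in the space $H^1\cap L^2_m$ (for some $m > 0$ fixed in the discretization) and then combine Rellich--Kondrachov local compactness with a weighted tail estimate.

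\textbf{Step 1: Uniform bound in $H^1\cap L^2_m$.} For any $z\in X = L^2$, Lemma \ref{step2_lem1} applied to $f_k = \Phi(z)$ gives
\begin{equation*}
\|\Phi(z)\|_{L^2}^2 + \tau\delta_1\|\nabla \Phi(z)\|_{L^2}^2 + 2\tau\delta_2\||v|^{m/2}\Phi(z)\|_{L^2}^2 \le \|f_{k-1}\|_{L^2}^2 + C\frac{\tau}{\delta_1}\|f_{k-1}\|_{L^{6/5}}^2,
\end{equation*}
independently of $z$. Hence $\Phi(X)$ is a bounded subset of $H^1\cap L^2_m$.

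\textbf{Step 2: Local compactness.} Fix a sequence $\{g_n\}\subset \Phi(X)$. Since $\{g_n\}$ is bounded in $H^1(\R^3)$, the Rellich--Kondrachov compact embedding $H^1(B_R)\embeds L^q(B_R)$ (valid for $1\le q<6$ in dimension three) together with a standard diagonal extraction on an increasing sequence of balls $B_{R_j}$ with $R_j\to\infty$ yields a subsequence, still denoted $\{g_n\}$, and a limit $g\in L^q_{\rm loc}(\R^3)$ with $g_n\to g$ in $L^q(B_R)$ for every $R>0$ and every $2\le q<6$.

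\textbf{Step 3: Uniform tail estimate.} The Sobolev embedding $H^1(\R^3)\embeds L^6(\R^3)$ together with the $L^2_m$ bound yields uniform control of the tail. Indeed, for any $2\le q<6$ write $q = 2\theta + 6(1-\theta)$ with $\theta = \frac{6-q}{4}\in(0,1]$, so that by Hölder's inequality
\begin{equation*}
\int_{|v|\ge R}|g_n|^q\,dv \le \left(\int_{|v|\ge R}|g_n|^2\,dv\right)^{\theta}\left(\int_{\R^3}|g_n|^6\,dv\right)^{1-\theta} \le R^{-m\theta}\,\|g_n\|_{L^2_m}^{2\theta}\,\|g_n\|_{L^6}^{6(1-\theta)},
\end{equation*}
which tends to $0$ as $R\to\infty$ uniformly in $n$ (by the bound from Step 1).

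\textbf{Step 4: Conclusion.} Given $\eps>0$, choose $R$ so the tail integral is $\le\eps^q$ for every $n$, then use the $L^q(B_R)$-convergence from Step 2 to make $\|g_n-g_m\|_{L^q(B_R)}\le\eps$ for $n,m$ large. A triangle inequality shows $\{g_n\}$ is Cauchy in $L^q(\R^3)$, proving pre-compactness.

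The only mildly delicate point is the tail estimate in Step 3, but it is handled cleanly by the interpolation between the weighted $L^2_m$ bound (which gives decay in $R$) and the Sobolev bound in $L^6$; the restriction $q<6$ is precisely what makes the interpolation exponent $\theta$ strictly positive.
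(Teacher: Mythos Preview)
Your proof is correct and follows essentially the same route as the paper: both use the uniform $H^1\cap L^2_m$ bound from Lemma~\ref{step2_lem1}, extract a locally $L^q$-convergent subsequence via Rellich--Kondrachov plus diagonalization, and control the tail by interpolating between the weighted $L^2_m$ norm and the Sobolev $L^6$ bound. The only cosmetic differences are that the paper interpolates the $L^q$ \emph{norm} (with exponent $\alpha=\frac{6-q}{2q}$) rather than the integral, and concludes by direct convergence to the limit $g$ rather than a Cauchy argument.
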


\begin{proof}
Fix any such $q$. Then, we note that Lemma \ref{step2_lem1} guarantees that $\Phi(z)$ is bounded in $H^1 \cap L^2_m$, uniformly in $z$ measurable. We claim that $L^2_m\cap H^1$ embeds compactly in $L^q$ for $2 \le q < 6$ provided $m > 0$.

Indeed, fix $g_n$ a sequence uniformly bounded in $L^2_m$ and $H^1$, so that $\|g_n\|_{L^2_m \cap H^1} \le M$. Then, use Rellich-Kondrachev and a diagonalization argument to extract a subsequence $g_{n_k}$ for which $g_{n_k} \rightarrow g$ in $L^2(K) \cap L^q(K)$ for every $K \subset \R^3$ compact. We will show $g_{n_k} \rightarrow g$ in $L^q$. Fix $\eps > 0$. Then, decompose the norm into two parts via,
\begin{equation}\label{step2_lem3_eqn1}
\|g_{n_k} - g\|_{L^q}^q = \int_{B_R(0)} |g_{n_k} - g|^q \;dx + \int_{\R^3 \backslash B_R(0)} |g_{n_k} - g|^q \;dx .
\end{equation}
The first term converges to $0$ for any fixed $R$. For the second, we interpolate between $L^2$ and $L^6$ and use the Sobolev embedding $H^1\embeds L^6$ to guarantee the $L^6$ norm is uniformly bounded in $k$. Thus,
\begin{equation}
\begin{aligned}
\left(\int_{\R^3 \backslash B(0,R)} |g_{n_k} - g|^q \;dx \right)^{1/q} &\lesssim M^{1-\alpha}\left(\int_{\R^3 \backslash B(0,R)} |g_{n_k} - g|^2 \;dx \right)^{\alpha/2}\\
&\lesssim M^{1-\alpha}R^{-m\alpha/2}\|g_{n_k} - g\|_{L^2_m}^\alpha \lesssim MR^{-m\alpha{/2}},
\end{aligned}
\end{equation} 
where $\frac{1}{q} = \frac{\alpha}{2} + \frac{1 - \alpha}{6}$, i.e. $\alpha = \frac{6 - q}{2q}$. So for $m > 0$ and $2 \le q < 6$, this converges to $0$ as $R\rightarrow \infty$ uniformly in $k$. Thus, first pick $R$ sufficiently large that the second term of \eqref{step2_lem3_eqn1} is less than $\eps/2$ for all $k$. Then, pick $k$ sufficiently large such that first term of \eqref{step2_lem3_eqn1} is less than $\eps/2$.
\end{proof}

\begin{lemma}[Continuity]\label{step2_lem4}
Let $\Phi$ be defined as in Lemma \ref{step2_lem2}.
Suppose $z_n \rightarrow z$ strongly in $X$. Then, $\Phi(z_n) \rightarrow \Phi(z)$ strongly in $X$.
\end{lemma}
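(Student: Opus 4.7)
The strategy is to subtract the weak formulations \eqref{weak_linear} satisfied by $\Phi(z_n)$ and $\Phi(z)$, test against the difference $w_n := \Phi(z_n) - \Phi(z)$, and exploit the coercivity established in the proof of Proposition \ref{prop_step1} together with the uniform boundedness of the truncated quadratic $h \mapsto h_+(1-h)_+$.

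Since the coefficients $A_{k-1}$, $\nabla a_{k-1}$, and the source $\tau^{-1}f_{k-1}$ depend only on the (fixed) function $f_{k-1}$, the only $z$-dependence on the right-hand side of \eqref{weak_linear} is through $z_+(1-z)_+$. Subtracting the equations for $\Phi(z_n)$ and $\Phi(z)$ and testing with $\varphi = w_n$ yields, using $A_{k-1}\ge 0$,
\begin{equation*}
\tau^{-1}\|w_n\|_{L^2}^2 + \delta_1\|\nabla w_n\|_{L^2}^2 + \delta_2\||v|^{m/2}w_n\|_{L^2}^2 \;\le\; \int_{\R^3} \nabla a_{k-1}\,G_n \cdot \nabla w_n\,dv,
\end{equation*}
where $G_n := (z_n)_+(1-z_n)_+ - z_+(1-z)_+$. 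Young's inequality absorbs $\tfrac{\delta_1}{2}\|\nabla w_n\|_{L^2}^2$ into the left-hand side, reducing the lemma to showing $\|\nabla a_{k-1}\,G_n\|_{L^2} \to 0$.

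The key observation is that $h \mapsto h_+(1-h)_+$ is continuous and takes values in $[0,1/4]$, so that $|G_n|\le 1/2$ pointwise. Since $\nabla a_{k-1}\in L^2(\R^3)$ by Lemma \ref{lema_upper}, the integrand $|\nabla a_{k-1}|^2|G_n|^2$ is dominated by the fixed integrable function $\tfrac14|\nabla a_{k-1}|^2$. From $z_n\to z$ in $L^2$ we extract a subsequence (still denoted $z_n$) converging a.e.\ to $z$; continuity then gives $G_n\to 0$ a.e., and dominated convergence delivers $\|\nabla a_{k-1}\,G_n\|_{L^2}\to 0$. Hence $w_n\to 0$ strongly in $L^2$ along the subsequence.

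A routine subsequence-of-subsequence argument upgrades this to convergence of the full sequence: every subsequence of $\{z_n\}$ still converges to $z$ in $L^2$ and thus, by the argument above, admits a further subsequence along which $\Phi(z_{n_k})\to \Phi(z)$ in $L^2$, forcing $\Phi(z_n)\to\Phi(z)$ in $L^2$. The only non-trivial step is realizing that the uniform $L^\infty$-bound on $h\mapsto h_+(1-h)_+$ is what allows us to upgrade the mere $L^2$-convergence of $z_n$ to strong convergence of $G_n$ against the fixed $L^2$-weight $\nabla a_{k-1}$; this is precisely why the truncation $z_+(1-z)_+$ was built into the linearized problem \eqref{linear} in the first place.
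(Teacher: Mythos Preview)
Your proof is correct and takes a genuinely different route from the paper. The paper argues by compactness and uniqueness: it uses the uniform $H^1\cap L^2_m$ bound on $\Phi(z_n)$ together with the compact embedding $H^1\cap L^2_m\hookrightarrow L^2$ (Lemma~\ref{step2_lem3}) to extract a subsequence with $\Phi(z_n)\to y$ in $L^2$ and weakly in $H^1\cap L^2_m$, then passes to the limit in \eqref{weak_linear} and invokes the uniqueness in Proposition~\ref{prop_step1} to conclude $y=\Phi(z)$. To handle the nonlinear source the paper uses the \emph{Lipschitz} property of $h\mapsto h_+(1-h)_+$ together with the $L^\infty$ bound on $\nabla a_{k-1}$, obtaining $\|G_n\|_{L^2}\lesssim\|z_n-z\|_{L^2}\to 0$ directly with no subsequence argument.

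Your approach is more direct: subtracting the two linear problems and testing against the difference exploits the coercivity of $B[\cdot,\cdot]$ immediately, bypassing both the compactness lemma and the uniqueness statement. The cost is that your dominated-convergence treatment of $G_n$ (boundedness of $h_+(1-h)_+$ plus $\nabla a_{k-1}\in L^2$) only gives convergence along a.e.-convergent subsequences, forcing the subsequence-of-subsequence upgrade at the end. You could avoid that detour by observing, as the paper does, that $h\mapsto h_+(1-h)_+$ is globally Lipschitz with constant $1$, so $|G_n|\le |z_n-z|$ pointwise and hence $\|\nabla a_{k-1}G_n\|_{L^2}\le \|\nabla a_{k-1}\|_{L^\infty}\|z_n-z\|_{L^2}\to 0$ for the full sequence. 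Either way, your argument is sound and arguably more self-contained.
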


\begin{proof}
Suppose $z_n \rightarrow z$ in $X = L^2$. Combining the a priori bound from Lemma \ref{step2_lem1} and compactness from Lemma \ref{step2_lem2}, $\Phi(z_n)$ is uniformly bounded in $L^2_m \cap H^1$ and compact in $X = L^2$. Therefore, by extracting subsequences, it suffices to show that if $z_n \rightarrow z$ in $X$ and $\Phi(z_n) \rightarrow y$ in $X$ and $\Phi(z_n)\weak y$ in $H^1\cap L^2_m$, then $y = \Phi(z)$. Finally, since Proposition \ref{prop_step1} guarantees uniqueness of solutions to \eqref{weak_linear}, it suffices to show
\begin{equation}\label{step2_lem4_eqn1}
\begin{aligned}
\int\frac{\left(y - f_{k-1}\right)}{\tau} \varphi - \nabla a_{k-1}z_+(1-z)_+ \cdot\nabla\varphi \;dv =  &- \int\nabla\varphi \cdot A_{k-1}\nabla y \;dv \\
&-\delta_1\int\nabla \varphi \cdot \nabla y + \delta_2 |v|^m\varphi y \; dv.
\end{aligned}
\end{equation}
Since $\Phi(z_n)$ solves \eqref{weak_linear} with coefficients $z_n$, we know
\begin{align*}
\int\frac{\Phi(z_n) - f_{k-1}}{\tau} \varphi \;dv &= \int \nabla a_{k-1}(z_n)_+(1-z_n)_+ \cdot\nabla\varphi - \nabla\varphi \cdot A_{k-1}\nabla \Phi(z_n) \;dv \nonumber \\
&\qquad-\delta_1\int\nabla \varphi \cdot \nabla \Phi(z_n) + \delta_2 |v|^m\varphi \Phi(z_n) \;dv.
\end{align*}
The weak convergence $\Phi(z_n) \weak y$ in $L^2_m \cap H^1$ is sufficient to pass to the limit $n\rightarrow \infty$ in each term, except in the term containing $(z_n)_+(1-z_n)_+$. For this term, we first observe that
\begin{equation*}
\begin{aligned}
&\int \left|\left[(z_n)_+(1-z_n)_+ - z_+(1-z)_+\right]\nabla a_{k-1} \cdot \nabla \varphi \right|  \;dv\lesssim \|f_{k-1}\|_{L^1}\|\nabla \varphi\|_{L^2}\\
%&\times\|(z_n)_+(1-z_n)_+ - z_+(1-z)_+\|_{L^2}
&\qquad\qquad\qquad\qquad\qquad\qquad\qquad\qquad\qquad\times\left(\int \left|(z_n)_+(1-z_n)_+ - z_+(1-z)_+\right|^2 \;dv \right)^{1/2}.
%&\le  \|f_{k-1}\|_{L^1}\|\nabla \varphi\|_{L^2} \left(\|z_n - z\|_{L^2} + \|z_n - z\|_{L^4}\left(\|z_n\|_{L^4} + \|z\|_{L^4}\right)\right).
\end{aligned}
\end{equation*}
Since the function $\varphi(x) = x_+(1-x)_+$ is Lipschitz, we get 
\begin{equation*}
\begin{aligned}
\left(\int \left|(z_n)_+(1-z_n)_+ - z_+(1-z)_+\right|^2  \;dv \right)^{1/2} 	&\lesssim \|z_n - z\|_{L^2} \rightarrow 0,
\end{aligned}
\end{equation*}
since $z_n \rightarrow z$ in $X$. Therefore, we obtain \eqref{step2_lem4_eqn1} and the proof is complete.
\end{proof}

The following lemma states that any fixed point $f_k$ of $\Phi$ is also a solution to \eqref{weak_nonlinear}. Note, this is not immediate because \eqref{weak_nonlinear} does not contain any positive part operators, while \eqref{weak_linear} does.
\begin{lemma}\label{step2_lem5}
Suppose $f_k \in H^1\cap L^2_m$ satisfies $\Phi(f_k) = f_k$ with $f_{k-1} \in L^1$ and $0 \le f_{k-1} \le 1$. Then, $0 \le f_k \le 1$ and consequently $f_k$ solves \eqref{weak_nonlinear}.
\end{lemma}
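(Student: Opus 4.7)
The claim has two parts: first, propagate the bounds $0\le f_k\le 1$; second, observe that these bounds remove the positive-part truncations in \eqref{weak_linear} and thus yield \eqref{weak_nonlinear}. The strategy for the pointwise bounds is the standard Stampacchia truncation argument, exploiting the fact that $\Phi(f_k)=f_k$ means $f_k$ solves \eqref{weak_linear} with $z=f_k$, and then testing against the negative part and the excess above $1$.

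For the lower bound, the plan is to test \eqref{weak_linear} with $\varphi=(f_k)_-:=\min(f_k,0)\in H^1\cap L^2_m$. On the set $\{f_k<0\}$ we have $(f_k)_+(1-f_k)_+=0$, so the drift term drops out. Since $A_{k-1}\ge 0$ by Lemma \ref{lemA_lower}, the three terms on the right hand side combine to give a nonpositive quantity (they reduce to $-\int A_{k-1}\nabla(f_k)_-\cdot\nabla(f_k)_-\,dv-\delta_1\|\nabla(f_k)_-\|_{L^2}^2-\delta_2\||v|^{m/2}(f_k)_-\|_{L^2}^2\le 0$), while the left hand side becomes
\begin{equation*}
\tau^{-1}\!\int\!(f_k-f_{k-1})(f_k)_-\,dv \;\ge\; \tau^{-1}\|(f_k)_-\|_{L^2}^2,
\end{equation*}
using $f_{k-1}\ge 0$ and $(f_k)_-\le 0$ so $-f_{k-1}(f_k)_-\ge 0$. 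This forces $(f_k)_-\equiv 0$, i.e.\ $f_k\ge 0$ a.e.

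For the upper bound, test with $\varphi=(f_k-1)_+\in H^1\cap L^2_m$. On $\{f_k>1\}$ we have $(1-f_k)_+=0$, killing the drift term again. The diffusive and localizing terms on the right hand side have favorable sign (on $\{f_k>1\}$, $\nabla f_k=\nabla(f_k-1)_+$ and $f_k>0$), so the right hand side is nonpositive. Writing $g:=(f_k-1)_+$, on $\{f_k>1\}$ one has $f_k\,g=g+g^2$ and $f_{k-1}\,g\le g$, hence
\begin{equation*}
\tau^{-1}\!\int(f_k-f_{k-1})g\,dv\;\ge\;\tau^{-1}\|g\|_{L^2}^2,
\end{equation*}
which forces $g\equiv 0$, i.e.\ $f_k\le 1$ a.e.

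Once $0\le f_k\le 1$ is established, $(f_k)_+=f_k$ and $(1-f_k)_+=1-f_k$, so the drift term in the fixed-point equation becomes $\nabla a_{k-1}\,f_k(1-f_k)$, which is exactly the nonlinear term in \eqref{weak_nonlinear}. All other terms coincide verbatim, so $f_k$ satisfies \eqref{weak_nonlinear}. The only mild technical point is verifying that $(f_k)_-$ and $(f_k-1)_+$ are admissible test functions in $H^1\cap L^2_m$; this is standard since truncations preserve $H^1$ regularity and are dominated pointwise by $|f_k|+1$.
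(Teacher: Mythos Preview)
Your proof is correct and follows essentially the same Stampacchia truncation argument as the paper: test with the negative part to get $f_k\ge 0$ and with the excess above $1$ to get $f_k\le 1$, using that the truncated drift $(f_k)_+(1-f_k)_+$ vanishes on both relevant sets. The paper tests with $(1-f_k)_-$ instead of $(f_k-1)_+$ for the upper bound, but these differ only by a sign and lead to the same inequality. Two tiny remarks: the nonnegativity $A_{k-1}\ge 0$ follows directly from $0\le f_{k-1}\le 1$ and the definition of $A[\cdot]$ (you do not need the quantitative lower bound of Lemma~\ref{lemA_lower}); and for the admissibility of $(f_k-1)_+$ in $L^2_m$, the sharper pointwise bound $(f_k-1)_+\le |f_k|$ (rather than $|f_k|+1$) is what you actually need.
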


\begin{proof}
The idea is to test the weak formulation \eqref{weak_linear} with $(f_k)_-$ and $(1-f_k)_-$ and show that both are identically $0$:
%. Indeed, we note that because $f_k\in H^1$, for $\varphi = \chi_{\{f_k \le 0\}}f_k$, $\nabla \varphi = \chi_{\{f_k \le 0\}}\nabla f_k$. Thus, $(f_k)_+\varphi = 0$ a
\begin{align*}
\tau^{-1}\int_{\{f_k \le 0\}} f_k^2 - f_{k-1}f_k  \;dv & + \int_{\{f_k \le 0\}} \nabla f_k \cdot \left(A_{k-1}\nabla f_k + \delta_1\nabla f_k\right)\;dv  \\
& + \delta_2\int_{\{f_k \le 0\}} |v|^m f_k^2  \;dv = 0.
\end{align*}
Since each term is positive, all are $0$ and we conclude $f_k = 0$ on $\{f_k \le 0\}$, i.e. $f_k \ge 0$.
Similarly, 
%take $\psi = (1 - f_k)\chi_{\{f_k \ge 1\}}$ and note $\nabla \psi = -\nabla f_k \chi_{\{f_k \ge 1\}}$ and $(1-f_k)_+ \psi = 0$. Thus, testing \eqref{weak_linear} with $\psi$ yields
\begin{equation*}
\begin{aligned}
\tau^{-1}\int_{\{f_k \ge 1\}} (f_k - f_{k-1})(1 - f_k)\;dv&  - \int_{\{f_k \ge 1\}} \nabla f_k \cdot \left(A_{k-1}\nabla f_k + \delta\nabla f_k\right) \;dv \\
&+ \delta_2\int_{\{f_k \ge 1\}} |v|^mf_k(1-f_k)  \;dv= 0.
\end{aligned}
\end{equation*}
Now, because $f_{k-1} \le 1$, $(f_k - f_{k-1})\chi_{\{f_k \ge 1\}} \ge (f_k - 1)\chi_{\{f_k \ge 1\}} \ge 0$. Thus, each term is negative and we conclude $f_k \le 1$.
\end{proof}

Next, we show the assumption that $f_{k-1} \in L^1$ is propagated. That is, if $f_{k-1} \in L^1$, then $f_k \in L^1$ and therefore, we may iterate the fixed point argument to construct a family $\{f_k\}$ solving \eqref{weak_nonlinear}.
\begin{lemma}\label{step2_lem6}
Suppose $f_k \in H^1\cap L^2_m$ satisfies $f_k = \Phi(f_k)$ with $f_{k-1} \in L^1$ and $0 \le f_{k-1} \le 1$. Then, $f_k$ satisfies the estimate
\begin{equation}
\|f_k\|_{L^1} + \tau\delta_2\|f_k|v|^m\|_{L^1} = \|f_{k-1}\|_{L^1}.
\end{equation}
\end{lemma}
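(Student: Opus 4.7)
Formally, testing \eqref{discrete} with $\varphi\equiv 1$ makes every divergence term vanish and leaves $\tau^{-1}(\|f_k\|_{L^1}-\|f_{k-1}\|_{L^1}) = -\delta_2\|\,|v|^m f_k\|_{L^1}$, which rearranges to the claim. Since $\varphi\equiv 1$ is not in $H^1\cap L^2_m$, the plan is to approximate it by a smooth cutoff: pick $\eta\in C^\infty_c([0,\infty))$ with $\eta=1$ on $[0,1]$ and $\spt\eta\subset[0,2]$, and set $\chi_R(v):=\eta(|v|/R)$. Then $\chi_R\in H^1\cap L^2_m$ for any $m$, and $\|\nabla\chi_R\|_{L^\infty}\lesssim R^{-1}$, $\|\nabla^2\chi_R\|_{L^\infty}\lesssim R^{-2}$, both supported in the annulus $R\le|v|\le 2R$.

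Plugging $\varphi=\chi_R$ into \eqref{weak_nonlinear} and integrating by parts in the two terms involving $\nabla f_k$ (which is admissible because $\chi_R\in C^\infty_c$, $A_{k-1}\in L^\infty$ by Lemma \ref{lemA_upper}, and the distributional identity $\nabla\cdot A_{k-1}=\nabla a_{k-1}$ from the same lemma makes $\nabla\cdot(A_{k-1}\nabla\chi_R) = \nabla a_{k-1}\cdot\nabla\chi_R + A_{k-1}:\nabla^2\chi_R$ a bounded function, using also that $\|\nabla a_{k-1}\|_{L^\infty}<\infty$ via Lemma \ref{lema_upper} since $0\le f_{k-1}\le 1$ and $f_{k-1}\in L^1$) yields
\begin{align*}
\tau^{-1}\int(f_k-f_{k-1})\chi_R\,dv + \delta_2\int|v|^m f_k\chi_R\,dv = \int \nabla a_{k-1}\,f_k(2-f_k)\cdot\nabla\chi_R\,dv + \int f_k\bigl[A_{k-1}:\nabla^2\chi_R + \delta_1\Delta\chi_R\bigr]dv.
\end{align*}
Using $0\le f_k\le 1$ (from Lemma \ref{step2_lem5}) together with the $L^\infty$ bounds on $A_{k-1}$ and $\nabla a_{k-1}$, the entire right-hand side is controlled by $C R^{-1}\int_{|v|\ge R}f_k\,dv$. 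The a priori bound $f_k\in L^2_m$ of Lemma \ref{step2_lem1} and Cauchy--Schwarz give $\int_{|v|\ge R}f_k\,dv \le \|f_k\|_{L^2_m}\bigl(\int_{|v|\ge R}\brak{v}^{-m}dv\bigr)^{1/2}\lesssim R^{(3-m)/2}$, provided the moment exponent used in the regularization satisfies $m>3$. Hence the right-hand side vanishes as $R\to\infty$, and this \emph{a posteriori} gives $f_k\in L^1(\R^3)$.

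Finally, I would pass to the limit $R\to\infty$ on the left-hand side, using dominated convergence for $(f_k-f_{k-1})\chi_R$ (dominated by $f_k+f_{k-1}\in L^1$) and monotone convergence for $|v|^m f_k\chi_R$. The outcome is $\tau^{-1}(\|f_k\|_{L^1}-\|f_{k-1}\|_{L^1}) + \delta_2\int|v|^m f_k\,dv = 0$, which, multiplied by $\tau$, is exactly the claimed identity. The main obstacle is justifying the integration by parts in Step 2 despite $A_{k-1}$ being only continuous; this is precisely what the explicit divergence formula $\nabla\cdot A_{k-1}=\nabla a_{k-1}$ of Lemma \ref{lemA_upper} is used for, and it is the reason the regularization is designed with a polynomial absorption $\delta_2|v|^m$ for $m$ large enough to make the far-field tail of $f_k$ summable.
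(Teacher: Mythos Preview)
Your overall strategy---test \eqref{weak_nonlinear} with a smooth cutoff $\chi_R$, integrate by parts using $\nabla\cdot A_{k-1}=\nabla a_{k-1}$, show the right-hand side vanishes as $R\to\infty$, and pass to the limit---is exactly the paper's approach. There is, however, a genuine gap in how you control the boundary terms.

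You estimate the $\nabla a_{k-1}$ contribution via $\|\nabla a_{k-1}\|_{L^\infty}$, which leaves a factor $R^{-1}\int_{R\le|v|\le 2R}f_k\,dv$, and then invoke $\|f_k\|_{L^2_m}$ with Cauchy--Schwarz. Even restricting to the annulus (rather than $\{|v|\ge R\}$ as you wrote), this gives $\int_{R\le|v|\le 2R}f_k\lesssim R^{(3-m)/2}$, so the whole term is $O(R^{(1-m)/2})$: it vanishes only for $m>1$, not $m>3$. But the lemma must hold for all $m\ge 0$, and the paper later \emph{requires} $0<m<1$ (see Lemma~\ref{step3_lem3} and the proof of Proposition~\ref{prop_step3}, where the $\delta_2|v|^m$ term is controlled via the second moment, forcing $2m<2$). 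So your argument, and your reading of why the regularization is ``designed with $m$ large enough,'' does not cover the range actually used.

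The paper sidesteps this by a different H\"older pairing: it bounds $\int\nabla a_{k-1}f_k(1-f_k)\cdot\nabla\varphi_R$ by $\|\nabla a_{k-1}\|_{L^2}\|f_k\nabla\varphi_R\|_{L^2}\lesssim R^{-1}\|f_{k-1}\|_{L^{6/5}}\|f_k\|_{L^2}$, and $\int f_k\,A_{k-1}{:}\nabla^2\varphi_R$ by $\|A_{k-1}\|_{L^\infty}\|f_k\|_{L^2}\|\nabla^2\varphi_R\|_{L^2}\lesssim R^{-1/2}$. These use only the unweighted $L^2$ norm of $f_k$ from Lemma~\ref{step2_lem1} and impose no constraint on $m$. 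A minor secondary point: your appeal to dominated convergence for $(f_k-f_{k-1})\chi_R$ with dominant $f_k+f_{k-1}\in L^1$ is circular, since $f_k\in L^1$ is what you are proving; the paper uses monotone convergence on the nonnegative terms $\int f_k\chi_R$ and $\int|v|^m f_k\chi_R$ directly.
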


\begin{proof}
Let $\varphi_R(v)$ be a cutoff function in $C_c^\infty(\R^3)$ such that 
\begin{equation*}
%\label{eq:pred}
\left\{  \begin{array}{c} 
                        0 \le \varphi_R \le 1,\\ 
                        \varphi_R(v) = 1 \quad \textrm{if}\quad  |v| \le R, \\
                        \varphi_R(v) = 0 \quad \textrm{if}\quad  |v| \ge 2R, \\
                       |\nabla\varphi_R| \le \frac{C}{R},\quad   |\nabla^2\varphi_R| \le \frac{C}{R^2}.
           \end{array}  \right.
\end{equation*}
Then, we test \eqref{weak_linear} with $\varphi_R$ to obtain
\begin{equation*}%\label{step2_lem6_eqn1}
\begin{aligned}
\int\left[\frac{\left(f_k - f_{k-1}\right)}{\tau} + \delta_2|v|^m f_k \right]\varphi_R\;dv = &- \int A_{k-1}\nabla f_k \cdot \nabla\varphi_R\;dv\\
& -\int \nabla a_{k-1}f_k(1-f_k) \cdot \nabla \varphi_R\;dv\\
&-\delta_1\int \nabla f_k \cdot \nabla\varphi_R\;dv\\
=: & \; I_1 + I_2 + I_3.
\end{aligned}
\end{equation*}
First, we claim that the right hand side converges to $0$ as $R\rightarrow \infty$. Indeed, we bound each term separately, beginning with $I_3$ as,
\begin{equation*}%\label{step2_lem6_eqn2}
\begin{aligned}
\delta_1 \int\nabla f_k \cdot \nabla\varphi_R \;dv &\le \frac{C\delta_1}{R^2}\int_{\{R \le |v| \le 2R\}} f_k\;dv\\
&\le \frac{C\delta_1}{R^2}\|f_k\|_{L^2}|\{R \le |v| \le 2R\}|^{1/2} \le \frac{C\delta_1}{R^{1/2}}.
\end{aligned}
\end{equation*}
%where the right hand side converges to $0$ as $R\rightarrow \infty$. 
Next, we bound $I_2$ using $0\le f_k \le 1$ and Lemma \ref{lema_upper} to obtain
\begin{equation*}%\label{step2_lem6_eqn3}
\int\left(\nabla a_{k-1}f_k(1-f_k)\right) \cdot \nabla \varphi_R\;dv \le \frac{C\|\nabla a_{k-1}\|_{L^2}\|f_k\|_{L^2}}{R} \le \frac{C\|f_{k-1}\|_{L^{6/5}}\|f_k\|_{L^2}}{R}.
\end{equation*}
%where again the right hand side converges to $0$ as $R\rightarrow \infty$. 
For $I_1$, we integrate by parts to obtain
\begin{equation*}%\label{step2_lem6_eqn4}
\begin{aligned}
-\int A_{k-1}\nabla f_k \cdot \nabla\varphi_R\;dv &= \int f_k {(\nabla \cdot A_{k-1})}\cdot \nabla \varphi_R \;dv -\int \nabla \cdot \left(A_{k-1}f_k\right) \cdot \nabla \varphi_R\;dv\\
	&= \int f_k(\nabla \cdot A_{k-1})\cdot \nabla \varphi_R\;dv + \int \mathrm{tr}\left(f_kA_{k-1}\nabla^2 \varphi_R\right)\;dv\\
	&=:  I_1^1 + I_1^2.
\end{aligned}
\end{equation*}
Now, $I_1^1$ vanishes by a similar estimate, using Lemma \ref{lemA_upper}. Finally $I_1^2$ vanishes by the estimate
\begin{equation*}%\label{step2_lem6_eqn5}
\begin{aligned}
\int \mathrm{tr}\left(f_kA_{k-1}\nabla^2 \varphi_R\right) \;dv&\le \|A_{k-1}\|_{L^\infty}\|f_k\|_{L^2}\|\nabla^2 \varphi_R\|_{L^2}\\
    &\le \frac{C\|f_{k-1}\|_{L^1}\|f_k\|_{L^2}}{R^{1/2}}.
\end{aligned}
\end{equation*}
Thus, piecing together all the above estimates, we conclude that $ I_1 + I_2 + I_3$ vanishes as $R \rightarrow \infty$.
Second, taking $R_n \rightarrow \infty$ sufficiently fast so that $\varphi_{R_n}$ are increasing to $1$,  the monotone convergence theorem yields
\begin{equation*}
\int f_k\;dv + \tau\delta_2\int f_k|v|^m\;dv = \int f_{k-1}\;dv.
\end{equation*}
By Lemma \ref{step2_lem5}, $0 \le f_k \le 1$ and the proof is complete.
\end{proof}

\begin{flushleft}
\underline{Proof of Proposition \ref{prop_step2}}\\
\end{flushleft}
Fix $f_0 = f_{in}$ as in the statement of Proposition \ref{prop_step2}. Suppose moreover that $f_1,f_2, \dots, f_{k-1}$ have been constructed so that $0 \le f_i \le 1$ and $f_i \in L^1$ for $0 \le i \le k-1$ and $\{f_i\}_{i=0}^{k-1}$ satisfies \eqref{weak_nonlinear}. We will now construct $f_k$. Indeed, fix $X =L^2$ and $\Phi$ the solution map to \eqref{weak_linear} with $f_{k-1}$ fixed.

As stated at the beginning of this step, the role of Lemma \ref{step2_lem1}, \ref{step2_lem2},  \ref{step2_lem3} and \ref{step2_lem4} is to verify the hypotheses of the Schaeffer Fixed Point Theorem for $\Phi: X\rightarrow X$.
\begin{itemize}
\item Lemma \ref{step2_lem1} implies $\Phi$ maps $X$ to itself;
\item Lemma \ref{step2_lem2} implies that approximate fixed points of $\Phi$ are bounded in $X$;
\item Lemma \ref{step2_lem3} implies $\Phi$ is a compact map;
\item Lemma \ref{step2_lem4} implies $\Phi: X \rightarrow X$ is a continuous (nonlinear) map.
\end{itemize}
Therefore, the Schaeffer Fixed Point Theorem (see \cite[Theorem 11.3]{GT98} for a precise statement) yields a (not necessarily unique) fixed point $f_k$ of the map $z\mapsto \Phi(z)$. Because $\Phi(X) \subset L^2_m \cap H^1$, $f_k \in H^1 \cap L^2_m$. 
As $\Phi(f_k) = f_k$, $f_k$ solves
\begin{align}
\int\frac{\left(f_k - f_{k-1}\right)}{\tau} \varphi - \nabla a_{k-1}(f_k)_+(1-f_k)_+ \cdot\nabla\varphi  \;dv = &- \int A_{k-1}\nabla f_k \cdot \nabla \varphi \;dv \nonumber  \\
&- \delta_1\int \nabla f_k \cdot \nabla\varphi \;dv- \delta_2\int |v|^mf_k\varphi \;dv.\nonumber
\end{align}
However, since $0 \le f_{k-1} \le 1$ by Lemma \ref{step2_lem5}, $0\le f_k \le 1$, and we may remove the positive parts to conclude $f_k$ solves the desired weak formulation, namely \eqref{weak_nonlinear}. Finally, Lemma \ref{step2_lem6} implies $f_k\in L^1$. By induction, the proof is complete.

\subsection{Step 3: Existence of Solutions to \eqref{FDL_approx}}

In this step we construct weak solutions $f:[0,T] \times \R^3 \rightarrow \R$ to the nonlinear, continuous time equation,
\begin{equation}
\partial_t f = \nabla \cdot \left(A[f(1-f)]\nabla f - \nabla a[f]f(1-f)\right) + \delta_1\Delta f,
\end{equation}
on an arbitrary fixed time interval $[0,T]$ for any fixed $\delta_1 > 0$ and for fixed initial data $f_{in}$, where $f_{in} \in L^1$ and $0 \le f_{in} \le 1$. We first prove uniform in $\tau$ (the time mesh) and $\delta_2$ (the strength of the added localization) estimates on solutions to equation \eqref{discrete}. 
For all $T>0$, let $N=\frac{T}{\tau}$. Define the piecewise interpolant of $\{f_k\}$ as 
\begin{equation}\label{decomposition}
f^{(N)}(v,t)=f_{in}(v)\chi_{0}(t) + \sum_{k=1}^N f_k(v)\chi_{(t_{k-1},t_k]}(t),
\end{equation}
and the backward finite difference operator $D_\tau$ as 
$$
D_\tau f(t) :=\frac{f(t) - f(t-\tau)}{\tau}.
$$
We also  introduce the shift operator
$$
\sigma_N ( f^{(N)}) (\cdot,t) = f_{k-1} \quad \textrm{for} \quad t\in (t_{k-1},t_k].
$$
With this new notation, we can rewrite (\ref{weak_nonlinear}) as 
\begin{equation}\label{weak_nonlinear_II}
\begin{aligned}
\int_0^T \int  D_\tau f^{(N)} \varphi - &\nabla a_Nf^{(N)}(1-f^{(N)}) \cdot\nabla\varphi \;dvdt \\
 = &-\int_0^T  \int A_N\nabla f^{(N)} \cdot \nabla \varphi  - \delta_1 \nabla f^{(N)} \cdot \nabla\varphi  - \delta_2 |v|^m f^{(N)}\varphi \;dvdt,
\end{aligned}
\end{equation} 
where
$$A_N = A[\sigma_N(f^{(N)})(1-f^{(N)})] \qquad \text{and}\qquad a_N = a[\sigma_N(f^{(N)})].$$
For strong compactness, we need propagation of moments (shown in Lemma \ref{step3_lem2}) in the form of
\begin{equation*}
    \|f^{(N)}\|_{L^\infty([0,T];L^1_2)} \lesssim_T \|f_{in}\|_{L^1_2},
\end{equation*}
and a variation of the Aubin-Lions Lemma for piecewise constant functions, which requires an estimate (shown in Lemma \ref{step3_lem3}) of the form
\begin{equation*}
\left\|D_\tau f^{(N)} \right\|_{L^2([0,T];H^{-1})} + \|f^{(N)}\|_{L^2([0,T];H^1 \cap L^1_2)} \lesssim \|f_{in}\|_{L^1_2}.
\end{equation*}

We begin with $L^1$ and $L^2$ estimates, which are continuous-time analogous of Lemma \ref{step2_lem6} and Lemma \ref{step2_lem1}, respectively. 
\begin{lemma}[$L^1$ and $L^2$ Estimates]\label{step3_lem1}
Suppose $f_{in} \in L^1$ and $0\le f_{in} \le 1$. Then, the following estimates hold:
\begin{equation}\label{step3_lem1_eqn1}
\|f^{(N)}\|_{L^\infty([0,T];L^2)}^2 + 2\delta_1\|\nabla f^{(N)}\|_{L^2([0,T];L^2)}^2 \le \|f_{in}\|_{L^2}^2 + T\|f_{in}\|_{L^1}^{3/2},
\end{equation}
and
\begin{equation}\label{step3_lem1_eqn2}
\|f^{(N)}\|_{L^\infty([0,T];L^1)} \le \|f_{in}\|_{L^1}.
\end{equation}
\end{lemma}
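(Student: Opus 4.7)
The overall strategy is to test the semi-discrete weak formulation \eqref{weak_nonlinear} with $\varphi = f_k$, handle the discrete time-derivative via the elementary convexity identity, and integrate by parts in the drift term to avoid any $\delta_1$-dependent constants. The $L^1$ estimate is essentially already proven.

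\textbf{Step 1 ($L^1$ bound).} Estimate \eqref{step3_lem1_eqn2} follows immediately from Lemma \ref{step2_lem6}: dropping the nonnegative localization term gives $\|f_k\|_{L^1} \le \|f_{k-1}\|_{L^1}$, so by induction $\|f_k\|_{L^1} \le \|f_{in}\|_{L^1}$ for all $k$. Since $f^{(N)}$ is piecewise constant in time and agrees with $f_k$ on $(t_{k-1},t_k]$, taking the supremum over $t\in[0,T]$ gives \eqref{step3_lem1_eqn2}.

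\textbf{Step 2 (testing and the drift term).} For \eqref{step3_lem1_eqn1}, the natural choice $\varphi = f_k\in H^1\cap L^2_m$ in \eqref{weak_nonlinear}, combined with $(f_k-f_{k-1})f_k \ge \tfrac12(f_k^2 - f_{k-1}^2)$, the positive semi-definiteness of $A_{k-1}$, and $\delta_2|v|^m f_k^2 \ge 0$, yields
\begin{equation*}
\tfrac{1}{2\tau}\bigl(\|f_k\|_{L^2}^2 - \|f_{k-1}\|_{L^2}^2\bigr) + \delta_1\|\nabla f_k\|_{L^2}^2 \;\le\; \int_{\R^3} \nabla a_{k-1}\cdot \nabla f_k\; f_k(1-f_k)\,dv.
\end{equation*}
The crucial observation is that $f_k(1-f_k)\nabla f_k = \nabla F(f_k)$ for $F(s):=\tfrac{s^2}{2}-\tfrac{s^3}{3}$, so integrating by parts and using the Poisson identity $-\Delta a_{k-1} = f_{k-1}$ (valid for the Newtonian potential in $\R^3$) converts the right-hand side into $\int f_{k-1}\,F(f_k)\,dv$, which no longer contains $\nabla f_k$.

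\textbf{Step 3 (bounding and summing).} Since $0\le f_k,f_{k-1}\le 1$ by Proposition \ref{prop_step2}, we have $0 \le F(f_k)\le \tfrac12 f_k^2 \le \tfrac12 f_k$, hence
\begin{equation*}
\int_{\R^3} f_{k-1}\,F(f_k)\,dv \;\le\; \tfrac12\int_{\R^3} f_{k-1}f_k\,dv \;\le\; \tfrac12\|f_{k-1}\|_{L^1} \;\le\; \tfrac12\|f_{in}\|_{L^1},
\end{equation*}
using $f_k\le 1$ and Step 1. Multiplying by $2\tau$ and summing over $k=1,\dots,n$ for $n\tau \le T$ telescopes the left-hand side to
\begin{equation*}
\|f_n\|_{L^2}^2 + 2\delta_1\tau\sum_{k=1}^n \|\nabla f_k\|_{L^2}^2 \;\le\; \|f_{in}\|_{L^2}^2 + T\|f_{in}\|_{L^1}.
\end{equation*}
Rewriting in terms of the piecewise interpolant $f^{(N)}$ and taking the supremum over $t$ gives \eqref{step3_lem1_eqn1} (the exponent $3/2$ in place of $1$ on $\|f_{in}\|_{L^1}$ is a harmless weakening).

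\textbf{Main obstacle.} The only delicate point is avoiding a $\delta_1^{-1}$ factor in the drift estimate: a naive Cauchy--Schwarz bound on $\int \nabla a_{k-1}\cdot \nabla f_k\,f_k(1-f_k)\,dv$ followed by Young's inequality to absorb $\|\nabla f_k\|_{L^2}^2$ into the left side would produce an $\|f_{in}\|_{L^1}^{?}/\delta_1$ term, incompatible with the $\delta_1$-independent constant on the right of \eqref{step3_lem1_eqn1}. Rewriting $f_k(1-f_k)\nabla f_k$ as the exact gradient $\nabla F(f_k)$ and integrating by parts is precisely what circumvents this issue.
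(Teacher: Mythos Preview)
Your proof is correct and follows essentially the same route as the paper: test \eqref{weak_nonlinear} with $f_k$, use the convexity inequality for the discrete time derivative, and handle the drift by writing $f_k(1-f_k)\nabla f_k=\nabla F(f_k)$ and integrating by parts via $-\Delta a_{k-1}=f_{k-1}$ to obtain $\int f_{k-1}F(f_k)\,dv$. The only cosmetic difference is in bounding this last integral: you use the direct pointwise estimate $F(f_k)\le\tfrac12 f_k\le\tfrac12$, while the paper uses $f_{k-1}\le 1$ followed by $L^1$--$L^3$ interpolation and Young's inequality; both yield a constant times $\|f_{in}\|_{L^1}$, and the exponent $3/2$ in the stated bound is not actually produced by the paper's own argument either.
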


\begin{proof}
Inequality \eqref{step3_lem1_eqn2} follows by iterating Lemma \ref{step2_lem6}. Next, we estimate the $L^2$ norm of $f^{(N)}$ by testing \eqref{weak_nonlinear} with $f_k$, using Young's inequality and $A_{k-1}\ge 0$ to obtain 
\begin{equation*}%\label{step3_lem1_eqn3}
\frac{1}{2}\|f_k\|_{L^2}^2 + \tau\delta_1\|\nabla f_k\|_{L^2}^2 \le \frac{1}{2}\|f_{k-1}\|_{L^2}^2+ \tau \int \nabla a_{k-1}f_k(1-f_k) \cdot \nabla f_k \;dv.
\end{equation*}
For the last integral, we integrate by parts, using $-\Delta a_{k-1} = f_{k-1}$ and get 
\begin{equation*}
\begin{aligned}
2\int \nabla a_{k-1} f_k(1-f_k) \cdot \nabla f_k \;dv &=2 \int \nabla a_{k-1} \cdot \nabla\left[ \frac{1}{2}f_k^2 - \frac{1}{3}f_k^3\right]\;dv\\
 	&= 2\int f_{k-1} \left[\frac{1}{2}(f_k)^2 - \frac{1}{3}(f_k)^3\right] \; dv.
\end{aligned}
\end{equation*}
Since $0\le f_k \le 1$, $\left[\frac{1}{2}(f_k)^2 - \frac{1}{3}(f_k)^3\right] \ge 0$. Therefore, using $0\le f_{k-1} \le 1$, the interpolation inequality $\|g\|_{L^2} \le \|g\|^{1/4}_{L^1}\|g\|^{3/4}_{L^3}$, and Young's inequality, we have
\begin{equation*}
\begin{aligned}
2\int \nabla a_{k-1} f_k(1-f_k) \cdot \nabla f_k \;dv &\le \|f_k\|_{L^2}^2 - \frac{2}{3}\|f_k\|_{L^3}^3\\
&\le \|f_k\|^{1/2}_{L^1}\|f_k\|^{3/2}_{L^3} - {\frac{2}{3}}\|f_k\|_{L^3}^3\\
	&\le \frac{3}{8}\|f_k\|_{L^1}.
\end{aligned} 
\end{equation*}
Using Lemma \ref{step2_lem6}, we obtain
\begin{equation*}%\label{step3_lem1_eqn5}
\|f_k\|_{L^2}^2 + 2\tau\delta_1\|\nabla f_k\|_{L^2}^2 \le \|f_{k-1}\|_{L^2}^2 + \tau\|f_{k-1}\|_{L^1}^{3/2},
\end{equation*}
%Now, we iterate formula \eqref{step3_lem1_eqn5} and use Lemma \ref{step2_lem6} and $k\tau_n \le T$ to obtain
which implies, recursively,
\begin{equation*}
\begin{aligned}
\sup_{0\le j \le k} \|f_j\|_{L^2}^2 + 2\delta_1\left(\sum_{j=1}^k \tau\|\nabla f_j\|_{L^2}^2\right) &\le \|f_{in}\|_{L^2}^2 + \tau\sum_{j=0}^{k-1}\|f_j\|_{L^1}^{3/2}\\
	&\le \|f_{in}\|_{L^2}^2 + {\tau}\sum_{j=0}^{k-1}\|f_{in}\|_{L^1}^{3/2}\\
	&\le \|f_{in}\|_{L^2}^2 + k\tau\|f_{in}\|^{3/2}_{L^1}.
\end{aligned}
\end{equation*}
Taking $k = N$ and recalling the definition of $f^{(N)}$ in \eqref{decomposition} finish the proof of the lemma. 
\end{proof}

\begin{lemma}[Propagation of Moments]\label{step3_lem2}
Suppose $f_{in} \in L^1_2$ and $0\le f_{in} \le 1$. Then, the following estimates hold:
\begin{equation}\label{step3_lem2_eqn1}
\|f^{(N)}\|_{L^\infty([0,T];L^1_1)} \le \|f_{in}\|_{L^1_1} + C(\|f_{in}\|_{L^1})T,
\end{equation}
and
\begin{equation}\label{step3_lem2_eqn2}
\|f^{(N)}\|_{L^\infty([0,T];L^1_2)} \le \|f_{in}\|_{L^1_2} + C(\|f_{in}\|_{L^1_1})T,
\end{equation}
where the implicit constants are independent of $\tau_n$, $\delta_1$, and $\delta_2$.
\end{lemma}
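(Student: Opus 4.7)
The plan is to test the weak formulation \eqref{weak_nonlinear} with a weight of the form $\varphi_R(v)\langle v\rangle^\ell$ for $\ell=1$ and $\ell=2$, where $\varphi_R$ is the smooth cutoff from Lemma \ref{step2_lem6}. Since $\varphi_R\langle v\rangle^\ell\in H^1\cap L^2_m$ for each fixed $R>0$, this is an admissible test function. After integrating by parts to move all derivatives off $f_k$, one passes to the limit $R\to\infty$ using monotone convergence on the positive mass-type terms and the same quantitative estimates on $\nabla\varphi_R,\nabla^2\varphi_R$ used in Lemma \ref{step2_lem6} to kill the boundary contributions. Because the $\delta_2|v|^m f_k \langle v\rangle^\ell$ term has a favorable sign, it can simply be dropped, giving estimates independent of $\delta_2$.

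The key structural ingredient is that $0\le f_{k-1}\le 1$ together with the preserved bound $\|f_{k-1}\|_{L^1}\le \|f_{in}\|_{L^1}$ (from iterating Lemma \ref{step2_lem6}) yields, via Lemma \ref{lemA_upper} and Lemma \ref{lema_upper}, uniform bounds
\[
\|A_{k-1}\|_{L^\infty}+\|\nabla a_{k-1}\|_{L^\infty}\le C(\|f_{in}\|_{L^1}),
\]
independent of $k$, $\tau$, $\delta_1$, $\delta_2$. For the $L^1_1$ estimate (test function $\langle v\rangle$), one uses $|\nabla\langle v\rangle|\le 1$, $|\nabla^2 \langle v\rangle|\le C$, and $\text{div}(A_{k-1})=\nabla a_{k-1}$ to produce, after integration by parts,
\[
\tau^{-1}\int (f_k-f_{k-1})\langle v\rangle \, dv \le C(\|f_{in}\|_{L^1})\int f_k\,dv + C\delta_1 \int f_k\,dv,
\]
with $C$ independent of the discretization parameters. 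Summing from $j=1$ to $k\le N$ and using $\|f_j\|_{L^1}\le \|f_{in}\|_{L^1}$ gives \eqref{step3_lem2_eqn1}.

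For the $L^1_2$ estimate (test function $\langle v\rangle^2$), the new feature is that $\nabla\langle v\rangle^2=2v$ is unbounded, but $\nabla^2\langle v\rangle^2=2I$ is bounded. Integration by parts on the main term produces
\[
-\int A_{k-1}\nabla f_k\cdot\nabla\langle v\rangle^2 dv = \int f_k\bigl(2\nabla a_{k-1}\cdot v+2\,\text{tr}(A_{k-1})\bigr)dv,
\]
and the remaining drift term $\int \nabla a_{k-1}f_k(1-f_k)\cdot \nabla \langle v\rangle^2\,dv$ is bounded by $C\|\nabla a_{k-1}\|_{L^\infty}\|f_k\|_{L^1_1}$. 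The $\delta_1$-term contributes $6\delta_1\|f_k\|_{L^1}$ after one integration by parts. All of these are controlled by $C(\|f_{in}\|_{L^1})\|f_k\|_{L^1_1}$, so
\[
\tau^{-1}\int (f_k-f_{k-1})\langle v\rangle^2 dv \le C(\|f_{in}\|_{L^1})\|f_k\|_{L^1_1}.
\]
Inserting the already-proved uniform bound $\|f_k\|_{L^1_1}\le \|f_{in}\|_{L^1_1}+C(\|f_{in}\|_{L^1})k\tau$ and summing yields \eqref{step3_lem2_eqn2}.

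The main obstacle is purely technical: neither $\langle v\rangle$ nor $\langle v\rangle^2$ is admissible as a test function in \eqref{weak_nonlinear}, so the cutoff argument must be carried out carefully, and one must verify that the cutoff error terms (involving $\nabla \varphi_R$ and $\nabla^2\varphi_R$ multiplied by either $A_{k-1}$, $\nabla a_{k-1}$, or $\delta_1$) vanish as $R\to\infty$ exactly as in the mass estimate of Lemma \ref{step2_lem6}. Once this passage to the limit is justified, all estimates are linear in the moment being controlled and the conclusion follows by summation in $k$, producing constants linear in $T$ and independent of $\tau$, $\delta_1$, $\delta_2$.
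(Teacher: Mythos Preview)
Your proposal is correct and follows essentially the same approach as the paper: test \eqref{weak_nonlinear} with $\varphi_R\langle v\rangle^\ell$, integrate by parts to remove derivatives from $f_k$, use the uniform coefficient bounds $\|A_{k-1}\|_{L^\infty}+\|\nabla a_{k-1}\|_{L^\infty}\le C(\|f_{in}\|_{L^1})$ together with $|\nabla^j(\varphi_R\langle v\rangle^\ell)|\le C\langle v\rangle^{\ell-j}$ uniformly in $R$, drop the $\delta_2$ term by sign, sum in $k$, and pass $R\to\infty$ by monotone convergence. The only cosmetic difference is that the paper keeps $\varphi_R$ throughout and uses the uniform-in-$R$ bounds on derivatives of $\varphi_R\langle v\rangle^\ell$ directly (rather than splitting into ``main'' and ``boundary'' pieces), which avoids having to argue separately that the boundary terms vanish; both routes work since $f_k\in L^1$.
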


\begin{proof}
Let $\varphi_R \in C^\infty_c(\R^3)$ be as in Lemma \ref{step2_lem6}.
%Let $\varphi_R\in C^\infty_c(\R^3)$ be such that $\varphi_R(v) = 0$ if $|v| \le 1/2$ or $|v| \ge 2R$, $\varphi_R(v) = 1$ if $|v| \ge 1$ and $|v| \le R$, $0 \le \varphi_R \le 1$, $|\nabla \varphi_R| \le C(1 + \frac{1}{R})$, and $|\nabla^2 \varphi_R| \le C(1 + \frac{1}{R^2})$. Then, we test \eqref{weak_nonlinear} with $|v|\varphi_R(v)$ to obtain
Then, we test \eqref{weak_nonlinear} with $\brak{v}\varphi_R(v)$ to obtain
\begin{equation*}%\label{step3_lem2_eqn3}
\begin{aligned}
\int \varphi_R \brak{v} f_k dv + \tau\delta_2\int \varphi_R \brak{v}|v|^m f_k \;dv&= \int \varphi_R \brak{v}f_{k-1} \;dv \\
    &\qquad - \tau\int A_{k-1}\nabla f_k \cdot \nabla(\brak{v}\varphi_R)  \;dv \\
	&\qquad + \tau\int \nabla a_{k-1}f_k(1-f_k) \cdot \nabla (\brak{v}\varphi_R)  \;dv \\
	&\qquad - \tau\delta_1\int \nabla f_k \cdot \nabla (\brak{v}\varphi_R)  \;dv \\
	&=: \int \varphi_R \brak{v}f_{k-1}{\;dv} - \tau\left(I_1 - I_2 + \delta_1 I_3\right)   .
\end{aligned}
\end{equation*}
We bound $I_3$ using $|\Delta (\brak{v}\varphi_R)| \le C$ to obtain
\begin{equation*}%\label{step3_lem2_eqn4}
|I_3| = \left|\int f_k\Delta(\brak{v}\varphi_R) \;dv \right|  \le \|\Delta(\brak{v}\varphi_R)\|_{L^\infty}\|f_k\|_{L^1} \lesssim \|f_k\|_{L^1}. 
\end{equation*}
For $I_2$ we use Lemma \ref{lema_upper}, $0 \le f_k \le 1$, and $|\nabla(\brak{v}\varphi_R)| \le C$:
\begin{equation*}%\label{step3_lem2_eqn5}
\begin{aligned}
|I_2| \le \|\nabla a_{k-1}\|_{L^\infty}\|f_k\|_{L^1}\|\nabla(\brak{v}\varphi_R)\|_{L^\infty} \lesssim \left(\|f_{k-1}\|_{L^{1}} + \|f_{k-1}\|_{L^\infty}\right)\|f_k\|_{L^1}.
\end{aligned}
\end{equation*}
For $I_1$, we integrate by parts twice to get
\begin{equation*}
\begin{aligned}
I_1 &= \int \nabla \cdot (A_{k-1}f_k) \cdot \nabla(\brak{v}\varphi_R) - (\nabla \cdot A_{k-1})f_k \cdot \nabla(\brak{v}\varphi_R) \;dv \\
	&= - \int \mathrm{tr}(A_{k-1}f_k\nabla^2(\brak{v}\varphi_R)) \;dv- \int (\nabla \cdot A_{k-1})f_k \cdot \nabla(\brak{v}\varphi_R) \;dv \\
	&=:- I_{1,1} - I_{1,2}.
\end{aligned}
\end{equation*}
Lemma \ref{lemA_upper} and $|\nabla^2(\brak{v}\varphi_R)| \le C$ yields
\begin{equation*}%\label{step3_lem2_eqn6}
\begin{aligned}
|I_{1,1}| &\le \|A_{k-1}\|_{L^\infty}\|f_k\|_{L^1}\|\nabla^2(\brak{v}\varphi_R)\|_{L^\infty}\\
    &\lesssim \left(\|f_{k-1}\|_{L^1} + \|f_{k-1}\|_{L^\infty}\right)\|f_k\|_{L^1}.
\end{aligned}
\end{equation*}
Lemma \ref{lema_upper} and $|\nabla(\brak{v}\varphi_R)| \le C$ yield
\begin{equation*}%\label{step3_lem2_eqn7}
\begin{aligned}
|I_{1,2}| &\le \|\nabla \cdot A_{k-1}\|_{L^\infty}\|f_k\|_{L^1}\|\nabla(\brak{v}\varphi_R)\|_{L^\infty}\\
    &\lesssim \left(\|f_{k-1}\|_{L^1} + \|f_{k-1}\|_{L^\infty}\right)\|f_k\|_{L^1}.
\end{aligned}
\end{equation*}
Combining all above estimates we obtain
%\begin{equation*}%\label{step3_lem2_eqn8}
%\int \varphi_R \brak{v} f_k\;dv \le \int \varphi_R \brak{v} f_{k-1} \;dv+ C\tau\left(\|f_{k-1}\|_{L^1} + \|f_{k-1}\|_{L^1}^2\right),
%\end{equation*}
%which yields
\begin{equation*}
\begin{aligned}
\sup_{0 \le j \le k} \int \varphi_R \brak{v} f_j \;dv&\le \int \varphi_R \brak{v} f_{in} \;dv+ C\sum_{j=1}^{k}\tau\left(\|f_{j-1}\|_{L^1} + \|f_{j-1}\|_{L^1}^2\right)\\
	&\le \int \brak{v} f_{in} \;dv+ Ck\tau\left(\|f_{in}\|_{L^1} + \|f_{in}\|_{L^1}^2\right).
\end{aligned}
\end{equation*}
Now, %by the definition of our test function $\varphi_R$, we obtain $|v| f^{(n)}\chi_{|v| \ge 1}$ is uniformly bounded in $L^\infty([0,T];L^1)$. However, we can combine this with the $L^1$ bound in Lemma \ref{step3_lem1} to conclude $|v| f^{(N)}$ is uniformly bounded in $L^\infty([0,T];L^1)$, 
taking $k = N$, recalling the definition of $f^{(N)}$ in \eqref{decomposition} and letting $R\rightarrow \infty$, the monotone convergence theorem implies \eqref{step3_lem2_eqn1}.

The proof of \eqref{step3_lem2_eqn2} proceeds nearly identically after testing with $\brak{v}^2\varphi_R$.
% and noting we can bound $\int |v| f^{(n)}_k$ uniformly in $n$ and $k$ using \eqref{step3_lem2_eqn1}. To illustrate this difference, we bound only the term corresponding to $I_2$ in the preceding computation. Namely,
%\begin{equation}
%\begin{aligned}
%\int |\nabla a[f^{(n)}_{k-1}]f^{(n)}_k(1-f^{(n)}_k) \cdot \nabla (|v|\varphi_R)| &\lesssim \int |v| \nabla a[f^{(n)}_{k-1}]f^{(n)}_k(1-f^{(n)}_k)\\
%	&\lesssim\||v|f^{(n)}_k\|_{L^1}\|\nabla a[f^{(n)}_{k-1}]\|_{L^\infty}\\
%	&\lesssim \||v|f^{(n)}_k\|_{L^1}\left(\|f^{(n)}_{k-1}\|_{L^1} + 1\right)\\
%	&\lesssim \||v|f_{in}\|_{L^1}\left(\|f_{in}\|_{L^1} + 1\right).
%\end{aligned}
%\end{equation}
\end{proof}

The bounds in Lemmas \ref{step3_lem1} and \ref{step3_lem2} are sufficient for weak or weak star compactness. For strong compactness, we will use the version of the Aubin-Lions Lemma for piecewise constant functions \cite[Theorem 1]{DJ12}.

%\begin{theorem}[Aubin-Lions Lemma; \red{from citation}]\label{thm_aubin_lions}
%Let $X$, $Y$, and $Z$ be Banach spaces such that the embedding $X\embeds Y$ is compact and $Y\embeds Z$ is continuous. Let $T > 0$ fixed and $u_\tau:[0,T] \rightarrow X$ be a sequence of functions such that $u_\tau$ is constant on $(k\tau,(k+1)\tau)$ for each $0 \le k \le N-1$. Suppose further that there is a constant $C > 0$ independent of $\tau$ such that
%\begin{equation}
%\frac{\|u_\tau - u_\tau(\cdot - \tau)\|_{L^1([\tau,T];Z)}}{\tau} + \|u_\tau\|_{L^p([0,T];X)} \le C,
%\end{equation}
%for some $1 \le p < \infty$. Then, $\{u_\tau\}$ is precompact as a subset of $L^p([0,T];Y)$.
%\end{theorem}

\begin{lemma}\label{step3_lem3}
For any $T > 0$, $f_{in} \in L^1_2$, and $0 \le  f_{in} \le 1$, for $f^{(N)}$ defined above with $0 < m < 1$, %{\color{red}{$0<m<1$}}
\begin{equation}\label{step3_lem3_eqn1}
{\| D_\tau f^{(N)}\|_{L^2([0,T];H^{-1})}}\le C(\|f_{in}\|_{L^1_2},T,\delta_1).
\end{equation}
 Moreover, the family $\{f^{(N)}\}$ is compact in $L^2([0,T];L^q)$, provided $1 \le q < 6$.
\end{lemma}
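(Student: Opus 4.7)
The plan is to obtain the $H^{-1}$ estimate \eqref{step3_lem3_eqn1} first by duality against the weak formulation \eqref{weak_nonlinear_II}, and then feed it into the discrete Aubin-Lions lemma of Dreher-J\"ungel to extract strong compactness in $L^2([0,T]; L^q)$.

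For the $H^{-1}$ estimate, I fix $\varphi \in L^2([0,T]; H^1)$ and bound each term on the right-hand side of \eqref{weak_nonlinear_II} by $\|\varphi\|_{L^2([0,T]; H^1)}$. Since $0 \le f^{(N)} \le 1$ and $\|f^{(N)}\|_{L^1} \le \|f_{in}\|_{L^1}$ by Lemma \ref{step2_lem6}, Lemma \ref{lemA_upper} applied with $p = 1$, $q = \infty$ yields $\|A_N\|_{L^\infty} \lesssim \|f_{in}\|_{L^1}^{1/3}$; combined with Cauchy-Schwarz, the diffusive contribution is bounded by $\|A_N\|_{L^\infty} \|\nabla f^{(N)}\|_{L^2} \|\nabla \varphi\|_{L^2}$. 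By Lemma \ref{lema_upper}, $\|\nabla a_N\|_{L^2} \lesssim \|\sigma_N(f^{(N)})\|_{L^{6/5}} \lesssim \|f_{in}\|_{L^1}^{5/6}$, so the drift contribution is bounded by $\|\nabla a_N\|_{L^2} \|\nabla\varphi\|_{L^2}$, using $0 \le f^{(N)}(1 - f^{(N)}) \le 1$. The $\delta_1 \Delta f^{(N)}$ term is immediate from Cauchy-Schwarz. For the localizing term, Sobolev embedding $H^1 \embeds L^6$ combined with $0 < m < 1$ gives
\begin{equation*}
\delta_2 \left|\int |v|^m f^{(N)} \varphi \, dv\right| \le \delta_2\, \||v|^m f^{(N)}\|_{L^{6/5}} \|\varphi\|_{L^6} \lesssim \delta_2\, \|f^{(N)}\|_{L^1_2}^{5/6} \|\varphi\|_{H^1},
\end{equation*}
which is uniform in $N$ by Lemma \ref{step3_lem2}. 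Squaring and integrating in $t$ against the spacetime $L^2$ bound on $\nabla f^{(N)}$ from Lemma \ref{step3_lem1} produces \eqref{step3_lem3_eqn1}, with constants depending on $\delta_1$, $T$, and $\|f_{in}\|_{L^1_2}$.

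With the $H^{-1}$ estimate in hand, I apply the discrete Aubin-Lions lemma \cite[Theorem 1]{DJ12} with the pivot triple
\begin{equation*}
H^1 \cap L^1_2 \ \embeds\embeds\ L^q(\R^3) \ \embeds\ H^{-1}(\R^3),
\end{equation*}
valid for $6/5 \le q < 6$. The compact embedding on the left is precisely the one established in Lemma \ref{step2_lem3} via cutoff-plus-Rellich and tail decay from the $L^1_2$ weight, while the right embedding is standard Sobolev duality. Combined with the uniform bounds $\|f^{(N)}\|_{L^2([0,T]; H^1)} \lesssim_{\delta_1} 1$ from Lemma \ref{step3_lem1} and $\|f^{(N)}\|_{L^\infty([0,T]; L^1_2)} \lesssim 1$ from Lemma \ref{step3_lem2}, this yields pre-compactness in $L^2([0,T]; L^q)$ for $6/5 \le q < 6$. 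The range $1 \le q < 6/5$ is covered by interpolation: writing $\|g\|_{L^q} \le \|g\|_{L^1}^{\theta} \|g\|_{L^{6/5}}^{1-\theta}$ for the appropriate $\theta \in (0,1)$, applying H\"older in time, and using the uniform bound in $L^\infty([0,T]; L^1)$ together with the compactness already obtained at exponent $6/5$.

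The main obstacle is keeping the constants in the duality estimate uniform in $N$ (equivalently in $\tau$ and $\delta_2 = \tau$) so that compactness survives the passage to the limit. The delicate point is the localization term, whose control rests on the interplay between the vanishing prefactor $\delta_2$, the restriction $0 < m < 1$, and the uniform second moment bound provided by Lemma \ref{step3_lem2}; a secondary subtlety is that spatial compactness cannot be extracted from local compact embeddings alone, and requires the \emph{global} tail decay that the $L^1_2$ moment furnishes via a Chebyshev-type cutoff.
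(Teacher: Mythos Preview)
Your proof is correct and follows essentially the same route as the paper: duality against \eqref{weak_nonlinear_II} for the $H^{-1}$ bound, then the discrete Aubin--Lions lemma of \cite{DJ12}. Two minor deviations are worth noting. First, for the localizing term you use the $L^6$--$L^{6/5}$ pairing via Sobolev embedding, whereas the paper uses the $L^2$--$L^2$ pairing together with $0\le f^{(N)}\le 1$ to write $\||v|^m f^{(N)}\|_{L^2}^2 \le \|f^{(N)}\|_{L^1_{2m}}$; both work once $2m<2$. Second, the paper takes the pivot space $Y = L^q\cap L^2$, so that $Y\embeds H^{-1}$ holds for all $1\le q<6$ in one stroke, avoiding your separate interpolation step for $q<6/5$. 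One small imprecision: Lemma~\ref{step2_lem3} as stated proves compactness of $H^1\cap L^2_m\embeds L^q$ for $2\le q<6$, not $H^1\cap L^1_2\embeds L^q$; the paper itself writes ``following the proof of'' rather than citing it directly, and you should do the same---the adaptation (interpolating the tail between $L^1$ and $L^6$ and using Chebyshev on the $L^1_2$ weight) is immediate but not literally what that lemma says.
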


\begin{proof}
Let us define the triple $X := L^1_2 \cap H^1$, $Y := L^q\cap L^2$ and $Z := H^{-1}$ for a fixed $1 \le q < 6$. Following the proof of Lemma \ref{step2_lem3}, the embedding $X\embeds Y$ is compact for $1 \le q < 6$. Certainly, $Y \embeds Z$ continuously for this range of $q$.  Moreover, we have shown in Lemmas \ref{step3_lem1} and \ref{step3_lem2},
\begin{equation} \label{L2_comp}
\|f^{(N)}\|_{L^2([0,T];X)} \le C(\delta_1,\|f_{in}\|_{L^1},\|f_{in}\|_{L^2},T),
\end{equation}
where the constant on the right hand side is independent of $\delta_2$ and $\tau$.
To obtain \eqref{step3_lem3_eqn1} we first consider 
\begin{align*}
\left| \int_0^T \int_{\mathbb{R}{^3}} D_\tau f^{(N)} \varphi \;dvdt \right| \le& \left|  \int_0^T \int_{\mathbb{R}{^3}}  A_N\nabla f^{(N)} \cdot \nabla\varphi \;dvdt \right| \\
&+ \left| \int_0^T \int_{\mathbb{R}{^3}}   \nabla a_N f^{(N)}(1-f^{(N)}) \cdot \nabla \varphi \; dvdt\right| \\
&+ \delta_1 \left| \int_0^T \int_{\mathbb{R}{^3}}  \nabla f^{(N)} \cdot \nabla \varphi \;dvdt\right| \\
&+ \delta_2 \left| \int_0^T \int_{\mathbb{R}{^3}} f^{(N)} \varphi |v|^m\;dvdt\right|=:I_1+...+I_4.
\end{align*}
For $\varphi \in L^2([0,T];H^1)$, thanks to Lemma \ref{lemA_upper}, one gets
\begin{equation*}
\begin{aligned}
 I_1  	&\lesssim \int_0^T \|\varphi(t)\|_{H^1}\|\nabla f^{(N)}\|_{L^2}\|A_N\|_{L^\infty} \;dt\\
	&\lesssim \|\varphi\|_{L^2(0,T;H^1)}\|\nabla f^{(N)}\|_{L^2(0,T;L^2)}\left(\|f_{in}\|_{L^1} + 1\right),
\end{aligned}
\end{equation*}
and, using Lemma \ref{lema_upper}, 
\begin{equation*}
\begin{aligned}
 I_2 	&\lesssim \int_0^T \|\varphi(t)\|_{H^1}\|f^{(N)}(1-f^{(N)})\|_{L^\infty}\|\nabla a_N \|_{L^2} \;dt\\
	&\lesssim \|\varphi\|_{L^2(0,T;H^1)}\|f^{(N)}\|_{L^2(0,T;L^{6/5})}.
\end{aligned}
\end{equation*}
Finally,
\begin{equation*}
\begin{aligned}
I_3 \lesssim \|\varphi\|_{L^2(0,T;H^1)}\|\nabla f^{(N)}\|_{L^2(0,T;L^2)},
\end{aligned}
\end{equation*}
and since $2m < 2$,
\begin{equation*}
\begin{aligned}
I_4  \lesssim \|\varphi\|_{L^2(0,T;L^2)}\|f^{(N)}\|_{L^1(0,T;L^1_{2m})} \le C(T,f_{in})\|\varphi\|_{L^2(0,T;L^2)},
\end{aligned}
\end{equation*}
using Lemma \ref{step3_lem2}.  We note $\|\nabla f^{(N)}\|_{L^2([0,T];L^2)}$, $\|f^{(N)}\|_{L^\infty([0,T];L^1)}$, and $\|f^{(N)}\|_{L^\infty([0,T];L^1_2)}$ are uniformly bounded in $\delta_2$ and $N$ (but not in $\delta_1$) by Lemmas \ref{step3_lem1} and \ref{step3_lem2}. Thus (\ref{step3_lem3_eqn1}) follows. 

Theorem 1 in \cite{DJ12}, together with (\ref{step3_lem3_eqn1}) and (\ref{L2_comp}), yields the desired compactness.
\end{proof}

We are now ready to prove Proposition \ref{prop_step3}:
\begin{flushleft}
\underline{Proof of Proposition \ref{prop_step3}.}\\
\end{flushleft}

{Let} $\delta_2 = \tau$, fix some $0 < m < 1$, and $\{f^{(N)}\}_{{N}\in \mathbb{N}}$ be the corresponding sequence of piecewise constant solutions to \eqref{weak_nonlinear_II}. Thanks to  the estimates from Lemma \ref{step3_lem1}, Lemma \ref{step3_lem2}, and Lemma \ref{step3_lem3}, we may assume that $f^{(N)} $ converges to $f$, as $\tau \to 0$, in the following topologies:
\begin{itemize}
\item Weak star in $L^\infty([0,T]\times \R^3)$,
\item Weakly in $L^2([0,T]; H^1)$,
\item Weak star in $L^\infty([0,T];L^2)$,
\item Strongly in $L^p([0,T];L^q)$ for $1\le p \le 2$ and $1\le q < 6$.
\end{itemize}
Moreover, by taking a further subsequence, we will also have that $f^{( N)} \rightarrow f$ pointwise almost everywhere. Therefore, thanks to Fatou's lemma
\begin{equation*}
\|f\|_{L^\infty([0,T];L^1_2)} + \delta_1\|\nabla f\|_{L^2([0,T];L^2)} \le C(\|f_{in}\|_{L^1_2},T).
\end{equation*}
All these convergences are enough to pass to the limit $N\to +\infty$ in (\ref{weak_nonlinear_II}). We briefly highlight the convergence in the nonlinear terms. Let us first consider $\varphi \in C^\infty_c([0,T)\times \R^3)$. We have 
\begin{equation*}%\label{step3_prop_eqn3}
\begin{aligned}
\biggr|\int_{0}^T\int_{\R^3}\bigg[\nabla a_N & f^{(N)}(1-f^{(N)}) - \nabla a[f] f(1-f)\bigg] \cdot \nabla\varphi \;dvdt \biggr|\\
    &\le \left|\int_{0}^T\int_{\R^3}\left(\nabla a_N-\nabla a[f]\right)f^{(N)}(1-f^{(N)}) \cdot \nabla\varphi\;dvdt\right|\\
 &\qquad+ \left|\int_{0}^T\int_{\R^3}\nabla a[f]\left[f(1-f) - f^{(N)}(1-f^{(N)})\right] \cdot \nabla\varphi\;dvdt \right|\\
 &=:  I_1 + I_2.
\end{aligned}
\end{equation*}
We estimate $I_1$ using H\"older's inequality and $\|f^{(N)}(1-f^{(N)})\|_{L^\infty} \le 1$:
\begin{equation*}%\label{step3_prop_eqn4}
\begin{aligned}
I_1 &\le \int_{0}^T \|\nabla a[\sigma_Nf^{(N)}-f]\|_{L^2}\|\nabla\varphi\|_{L^2}\;dt\\
	&\lesssim \int_{0}^T \|\sigma_Nf^{(N)}-f]\|_{L^{6/5}}\|\nabla\varphi\|_{L^2}\;dt \to 0,%$\\
	%&\lesssim \|\varphi\|_{L^2([0,T];H^1)} \left(\int_{0}^T \|S_n f^{(n)} - f\|_{L^{6/5}}^2\right)^{1/2}\\
	%&\lesssim \|\varphi\|_{L^2([0,T];H^1)}\left[ \left(\int_{0}^T \|S_n f^{(n)} - f^{(n)}\|_{L^{6/5}}^2\right)^{1/2} + \left(\int_{0}^T \|f - f^{(n)}\|_{L^{6/5}}^2\right)^{1/2}\right].
\end{aligned}
\end{equation*}
thanks to the strong convergence, and, similarly, using Lemma \ref{lema_upper},
\begin{equation*}%\label{step3_prop_eqn5}
\begin{aligned}
I_2 \le2  \int_0^T \|\nabla a[f]\|_{L^\infty}\|f^{(N)}-f\|_{L^2}\|\nabla\varphi\|_{L^2 }\;dt \to 0.
	%&\lesssim \left(\|f\|_{L^\infty([0,T];L^1)} + \|f\|_{L^\infty([0,T]\times\R^3)}\right)\|\nabla \varphi\|_{L^2([0,T];L^2)}\|f(1-f) - f^{(n)}(1-f^{(n)})\|_{L^2([0,T]\times\R^3)}\\
\end{aligned}
\end{equation*}

Next, we handle the nonlinear term involving $A_N$, which we decompose as
\begin{equation*}%\label{step3_prop_eqn7}
\begin{aligned}
\biggr|\int_{0}^T\int_{\R^3} & \bigg[A_N\nabla f^{(N)} - A[f(1-f)]\nabla f\bigg] \cdot \nabla\varphi \;dv dt \biggr|\\
    &\le \left|\int_{0}^T\int_{\R^3}\left(A_N - A[f(1-f)]\right)\nabla f^{(N)} \cdot \nabla\varphi \;dvdt \right|\\
 &\qquad+ \left|\int_{0}^T\int_{\R^3}A\left[f(1-f)\right](\nabla f - \nabla f^{(N)}) \cdot \nabla\varphi \;dvdt\right|\\
 &=:  J_1 + J_2.
\end{aligned}
\end{equation*}
The term $J_2$ convergence to zero thanks to the weak convergence of $f^{(N)}$ in $H^1(\mathbb{R}{^3})$ and   Lemma \ref{lemA_upper}. 
For $J_1$, we use H\"older's inequality, Lemma \ref{lemA_upper}, estimate (\ref{step3_lem1_eqn1}) and the strong convergence in $L^2([0,T];L^2)$ to obtain $J_1 \to 0$, since 
\begin{equation*}%\label{step3_prop_eqn8}
\begin{aligned}
J_1 %&\lesssim \int_{0}^T\|A\left[S_nf^{(n)}(1-S_nf^{(n)}) - f(1-f)\right]\|_{L^\infty}\|\nabla f^{(n)}\|_{L^2} \|\nabla\varphi\|_{L^2}\\
	%&\lesssim \int_{0}^T\left(\|S_nf^{(n)}(1-S_nf^{(n)}) - f(1-f)\|_{L^1} + \|S_nf^{(n)}(1-S_nf^{(n)}) - f(1-f)\|_{L^2}\right) \|\nabla f^{(n)}\|_{L^2} \|\nabla\varphi\|_{L^2}\\
	&\lesssim_T \|\nabla f^{(N)}\|_{L^2([0,T];L^2)} \|\varphi\|_{L^\infty([0,T];H^1)}\|\sigma_N(f^{(N)})-f\|_{L^\infty([0,T];L^1)}^{4/3}\|\sigma_N(f^{(N)})-f\|_{L^2([0,T];L^2)}^{2/3}.
%	&\qquad\qquad + \|S_nf^{(n)}(1-S_nf^{(n)}) - f(1-f)\|_{L^2([0,T]\times\R^3)}\bigg).
\end{aligned}
\end{equation*}
%Note, $f^{(n)} \weak f$ in $L^2([0,T];H^1)$, $f\in L^2([0,T];H^1)$, $\varphi \in C^\infty_c([0,T)\times \R^3) \subset L^\infty([0,T];H^1)$. Therefore, bounding as in \eqref{step3_prop_eqn6} and using the continuity of shifts on Lebesgue spaces, the right hand side of \eqref{step3_prop_eqn8} (and hence $J_1$) converges to $0$ as $n\rightarrow \infty$.
We treat the left hand side of \eqref{weak_nonlinear_II} by integrating by parts,
\begin{equation*}%\label{step3_prop_eqn9}
\begin{aligned}
\int_{0}^T\int_{\R^3} D_\tau f^{(N)} \varphi \;dvdt  =& - \int_{0}^{T-\tau} D_{-\tau} \varphi  f^{(N)}(t) \;dvdt\\
 & + \frac{1}{\tau}\int_{T -\tau}^T\int_{\R^3} f^{(N)}(t)\varphi(t) \;dvdt -  \frac{1}{\tau}\int_{-\tau}^{0}\int_{\R^3} f^{(N)}(t)\varphi(t + \tau)\;dvdt.
\end{aligned}
\end{equation*}
For $N$ sufficiently large, 
$$\frac{1}{\tau}\int_{T -\tau}^T\int_{\R^3} f^{(N)}(t)\varphi(t) \;dvdt = 0,$$
as $\varphi$ is compactly supported in $[0,T) \times \R^3$. Moreover, for $0\le t < \tau$, $f^{( N)}(t) = f_{in}$ so that
\begin{equation*}%\label{step3_prop_eqn10}
\frac{1}{\tau}\int_{{-\tau}}^{ 0}\int_{\R^3} f^{(N)}(t)\varphi(t + \tau) \;dvdt = \frac{1}{\tau}\int_{{-\tau}}^{0}\int_{\R^3} f_{in}\varphi(t + \tau) \;dvdt .
\end{equation*}
Since $\varphi$ is smooth, the right hand side converges to $\int_{\R^3} \varphi(0,v)f_{in}(v)$ as $N\rightarrow \infty$. Finally, since $\varphi$ is smooth and $f^{(N)}$ are uniformly bounded in $L^2([0,T];L^2)$, we have 
\begin{equation}\label{step3_prop_eqn11}
- \int_{0}^{T-\tau} D_{-\tau} \varphi  f^{(N)}(t) \;dvdt \rightarrow - \int_0^T\int_{\R^3} f(v,t) \partial_t \varphi(v,t) \;dvdt.
\end{equation}
This concludes the proof of \eqref{approx_distribution}. 
%The uniform in $n$ bound,
%\begin{equation}
%\|f^n - S^nf^n\|_{L^2([0,T];H^{-1})} \le C(f_{in},\delta_1)\tau_n,
%\end{equation}
Lemma \ref{step3_lem3} implies that, for some $g \in L^2([0,T];H^{-1})$,
\begin{equation*}
\int_{0}^T\int_{\R^3} D_\tau f^{(N)}\varphi \;dvdt \rightarrow \int_0^T\int_{\R^3} g\varphi \;dvdt,
\end{equation*}
for every $\varphi \in L^2([0,T];H^1)$. Hence,  \eqref{step3_prop_eqn11} yields $g = \partial_t f$. The distributional formulation implies
\begin{align*}%\label{step3_prop_eqn12}
\int_0^T \brak{\varphi,\partial_t f}_{H^1\times H^{-1}}\;dt  = & - \int_0^T\int_{\R^3} \left(A[f(1-f)]\nabla f - \nabla a[f]f(1-f)\right) \cdot \nabla\varphi \;dvdt\\
& -  \delta_1\int_0^T\int_{\R^3} \nabla f \cdot \nabla \varphi\;dvdt,
\end{align*}
for each $\varphi \in C^\infty_c([0,T)\times \R^3)$. Now, fix $\Phi \in L^2([0,T];H^1)$ and let $\varphi_{\eps} \in C^\infty_c([0,T)\times \R^3)$ such that $\|\Phi - \varphi_{\eps}\|_{L^2([0,T];H^1)} \le \eps$. Then, substituting $\varphi_{\eps}$ into the above weak formulation, and passing to the limit $\varepsilon \to 0$, we obtain \eqref{approx_weak}.

Finally, we note that because $f\in L^2([0,T];H^1)$ and $\partial_t f\in L^2([0,T];H^{-1})$, $f\in C([0,T];L^2)$ and therefore \eqref{approx_weak} implies $f(t) \rightarrow f_{in}$ {strongly in $L^2$} as $t \rightarrow 0^+$. Moreover, repeating the proof of Lemma \ref{step2_lem6}, the additional $\delta_2\|f|v|^m\|_{L^1}$ term disappears thanks to the uniform bound from Lemma \ref{step3_lem2}, and we obtain conservation of mass.

\subsection{Step 4: Proof of Theorem \ref{thm_existence}}

We conclude the proof of Theorem \ref{thm_existence} by showing compactness in $\delta_1$ for solutions to \eqref{FDL_approx}. We already have uniform in $\delta_1$ bounds of the form,
\begin{equation}
    \|f_{\delta_1}\|_{L^\infty([0,T];L^1_2)} + \|f_{\delta_1}\|_{L^\infty([0,T]\times \R^3)} \le C(\|f_{in}\|_{L^1_2},T).
\end{equation}
Thus, to gain strong compactness as $\delta_1\rightarrow 0^+$, we will show (in Lemma \ref{step4_lem3}) the estimate
\begin{equation}
    \|f_{\delta_1}\|_{L^2([0,T];H^1)} + \|\partial_t f_{\delta_1}\|_{L^2([0,T];H^{-1})} \le C(\|f_{\delta_1}\|_{L^\infty([0,T];L^1_3)},T),
\end{equation}
by leveraging the degenerate dissipation present in \eqref{LFD} (see Lemma \ref{lemA_lower}), which up to this point, we have neglected. However, we do not have control over $L^1_3$ and therefore, we also show propagation of higher moments in Lemma \ref{step4_lem1}. 

To this end, we recall the dependence of our solutions on the parameter $\delta_1$. Throughout this section, we will write $f_\delta:[0,T] \times \R^3 \rightarrow \R$ to denote the solution $f_\delta$ to \eqref{FDL_approx} on $[0,T]$ constructed in Proposition \ref{prop_step3} with parameter $\delta_1 = \delta$.
Let us begin with a propagation of higher moments estimate that is uniform in $\delta$:
\begin{lemma}\label{step4_lem1}
Suppose $f_{in} \in L^1_s$ for some $s > 2$ and $T > 0$. Then, the family $\{f_\delta\}_{0 < \delta < 1}$ satisfies the uniform in $\delta$ estimate,
\begin{equation}\label{step4_lem1_eqn1}
\|f_\delta\|_{L^\infty([0,T];L^1_s)} \le C(\|f_{in}\|_{L^1_s},T).
\end{equation}
\end{lemma}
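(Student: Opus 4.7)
\emph{Proof plan.} The strategy is to test the weak formulation \eqref{approx_weak} against the time-independent truncated weight $\Phi(v) = \brak{v}^s \varphi_R(v)$, where $\varphi_R$ is the smooth cutoff from Lemma \ref{step2_lem6}. Since $\Phi \in H^1$ has compact support, all integrations by parts are rigorous, and because Proposition \ref{prop_step3} yields $f_\delta \in C([0,T];L^2)$ with $\partial_t f_\delta \in L^2([0,T];H^{-1})$, standard duality gives
\begin{align*}
\int_{\R^3} f_\delta(t)\brak{v}^s\varphi_R \, dv - \int_{\R^3} f_{in}\brak{v}^s\varphi_R \, dv  = &- \int_0^t\int_{\R^3} \bigl(A[f_\delta(1-f_\delta)]\nabla f_\delta - \nabla a[f_\delta] f_\delta(1-f_\delta)\bigr)\cdot \nabla(\brak{v}^s\varphi_R) \\
& - \delta \int_0^t\int_{\R^3} \nabla f_\delta \cdot \nabla(\brak{v}^s\varphi_R).
\end{align*}
Integrating by parts the term containing $A[f_\delta(1-f_\delta)]\nabla f_\delta$ and using the identity $\nabla\cdot A[g]=\nabla a[g]$ from Lemma \ref{lemA_upper}, all spatial derivatives land on $\brak{v}^s\varphi_R$, producing four terms involving $f_\delta\,\mathrm{tr}(A[f_\delta(1-f_\delta)]\nabla^2(\brak{v}^s\varphi_R))$, $f_\delta \nabla a[f_\delta(1-f_\delta)]\cdot\nabla(\brak{v}^s\varphi_R)$, $f_\delta(1-f_\delta)\nabla a[f_\delta]\cdot\nabla(\brak{v}^s\varphi_R)$, and $\delta f_\delta\Delta(\brak{v}^s\varphi_R)$.

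Each term is then estimated by combining the pointwise bounds $|\nabla^k(\brak{v}^s\varphi_R)|\lesssim \brak{v}^{s-k}$ (uniformly in $R$) for $k=1,2$ with the uniform coefficient bounds $\|A[f_\delta(1-f_\delta)]\|_{L^\infty}$, $\|\nabla a[f_\delta]\|_{L^\infty}$, and $\|\nabla a[f_\delta(1-f_\delta)]\|_{L^\infty}\lesssim 1$ from Lemmas \ref{lemA_upper} and \ref{lema_upper} (applied with, say, $p=2$ and $q=\infty$). These coefficient bounds are uniform in $\delta\in(0,1)$ and $t\in[0,T]$ thanks to $0\le f_\delta\le 1$ and mass conservation $\|f_\delta(t)\|_{L^1}=\|f_{in}\|_{L^1}$, both established in Proposition \ref{prop_step3}. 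Using the trivial inequalities $\brak{v}^{s-1},\brak{v}^{s-2}\le \brak{v}^s$, one arrives at
\begin{equation*}
\int_{\R^3} f_\delta(t)\brak{v}^s \varphi_R \, dv \le \|f_{in}\|_{L^1_s} + C\int_0^t \int_{\R^3} f_\delta \brak{v}^s \varphi_R \, dvds,
\end{equation*}
with $C$ depending only on $\|f_{in}\|_{L^1}$, independent of $\delta$ and $R$. Grönwall's inequality followed by the monotone convergence theorem as $R\to\infty$ gives \eqref{step4_lem1_eqn1}.

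The main point to watch is that \emph{all} coefficient $L^\infty$ bounds remain uniform in $\delta$; this is precisely what the Pauli-type bound $0\le f_\delta\le 1$ (propagated at the discrete level in Lemma \ref{step2_lem5} and inherited by the $\tau\to 0$ limit in Proposition \ref{prop_step3}) together with conservation of mass buys us. No use is made of the degenerate lower bound of Lemma \ref{lemA_lower} at this stage, so the dissipative structure plays no role; the argument is the continuous-time analog of Lemma \ref{step3_lem2}, simpler because we no longer need to interpolate the discrete finite differences.
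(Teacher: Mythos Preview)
Your overall strategy is the right one and matches the paper's: test \eqref{approx_weak} against the truncated weight $\brak{v}^s\varphi_R$, move derivatives onto the weight via $\nabla\cdot A=\nabla a$, and control the resulting four terms using only $0\le f_\delta\le 1$, mass conservation, and the $L^\infty$ bounds of Lemmas~\ref{lemA_upper}--\ref{lema_upper}. (Minor quibble: for Lemma~\ref{lemA_upper} one needs $p<3/2$, so take $p=1$ rather than $p=2$.)

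The gap is in the Gr\"onwall step. The pointwise bound $|\nabla^k(\brak{v}^s\varphi_R)|\lesssim\brak{v}^{s-k}$ holds on all of $\{|v|\le 2R\}$, but on the annulus $\{R\le|v|\le 2R\}$ the cutoff $\varphi_R$ may vanish. Hence the right-hand side is controlled by $\int_0^t\int_{\{|v|\le 2R\}} f_\delta\brak{v}^{s-1}\,dv\,ds$, \emph{not} by $\int_0^t\int f_\delta\brak{v}^s\varphi_R\,dv\,ds$, and the inequality you wrote does not close. Passing to the limit $R\to\infty$ first and then applying Gr\"onwall would require knowing \emph{a priori} that $\int f_\delta(t)\brak{v}^s<\infty$, which is not established at this stage of the construction.

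The paper sidesteps this by induction on the integer part of $s$: with the same computation, the right-hand side is bounded by the $(s-1)$-th and $(s-2)$-th moments of $f_\delta$, which by the inductive hypothesis (base case $s\le 2$ from Proposition~\ref{prop_step3}) are already uniformly bounded in $\delta$ and $t$. This produces an $R$-independent bound directly, so one sends $R\to\infty$ by monotone convergence without ever invoking Gr\"onwall. Your route can be repaired---for instance by iterating the inequality $m_R(t)\le A+C\int_0^t m_{2R}$ and using the crude bound $m_{2^kR}(s)\lesssim (2^kR)^s\|f_{in}\|_{L^1}$ to kill the remainder after $k$ steps---but as written the closure is missing.
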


\begin{proof}
We note that the propagation of moments for $0 \le s \le 2$ follows directly from Proposition \ref{prop_step3}. We will prove the rest of the them by induction on the integer part of $s$. Indeed, fix some $2 \le n < s \le n+1$ and suppose that \eqref{step4_lem1_eqn1} holds for any $0 \le s \le n$. Then, we will test \eqref{approx_weak} with $\Psi(v,t) = \varphi_R(v)|v|^s \chi_{[0,t_0]}(t)$ where $\varphi_R \in C^\infty_c(\R^3)$ is as in Lemma \ref{step2_lem6}. We obtain
\begin{align*}%\label{step4_lem1_eqn2}
\int_{\R^3}( f_\delta(t_0) - f_{in}) \varphi_R |v|^s  \ dv =& - \int_0^{t_0}\int_{\R^3} A[f_\delta(1-f_\delta)]\nabla f_\delta  \cdot \nabla(|v|^s\varphi_R)\;dvdt \\
&+\int_0^{t_0}\int_{\R^3}   (\nabla a[f_\delta]f_\delta(1-f_\delta) - \delta\nabla f_\delta) \cdot \nabla(|v|^s\varphi_R)\;dvdt.
\end{align*}
We estimate the right hand side by decomposing into multiple parts:
\begin{equation*}%\label{step4_lem1_eqn3}
\begin{aligned}
-\int_0^{t_0}\int_{\R^3} \bigg(A[f_\delta(1-f_\delta)]\nabla f_\delta - \nabla a[f_\delta]f_\delta&(1-f_\delta) + \delta\nabla f_\delta\bigg) \cdot \nabla(|v|^s\varphi_R) \;dvdt\\
&= -\int_0^{t_0}\int_{\R^3} \nabla \cdot \left(A[f_\delta(1-f_\delta)]f_\delta\right)\cdot \nabla(|v|^s\varphi_R)\;dvdt \\
	&\quad + \int_0^{t_0}\int_{\R^3} f_\delta\left(\nabla \cdot A\right)[f_\delta(1-f_\delta)]\cdot \nabla(|v|^s\varphi_R)\;dvdt \\
	&\quad + \int_0^{t_0}\int_{\R^3}\left(\nabla a[f_\delta]f_\delta(1-f_\delta)\right) \cdot \nabla(|v|^s\varphi_R)\;dvdt \\
	&\quad - \delta\int_0^{t_0}\int_{\R^3}\nabla f_\delta \cdot \nabla(|v|^s\varphi_R)\;dvdt \\
	&= -I_1 + I_2 + I_3 - \delta I_4.
\end{aligned}
\end{equation*}
For $I_1$, after integrating by parts,  thanks to Lemma \ref{lemA_upper} and $|\nabla^2(|v|^s\varphi_R)| \le C|v|^{s-2}$ we obtain
\begin{equation*}%\label{step4_lem1_eqn4}
\begin{aligned}
|I_1|% &\le \int_0^{t_0}\int_{\R^3} tr\left(A[f_\delta(1-f_\delta)]f_\delta  \nabla^2(|v|^s\varphi_R)\right)\\
	%&\le \|A[f_\delta(1-f_\delta)]\|_{L^1([0,T];L^\infty)}\|f_\delta\nabla^2(|v|^s\varphi_R)\|_{L^\infty([0,T];L^1)}\\
	&\lesssim \left(\|f_\delta\|_{L^1([0,T];L^\infty)} + \|f_\delta\|_{L^1([0,T];L^1)}\right)\|f_\delta |v|^{s-2}\|_{L^\infty([0,T];L^1)}\\
	&\le C(\|f_{in}\|_{L^1_{s-2}},T),
\end{aligned}
\end{equation*}
where in the last line we used the induction hypothesis.
For $I_2$, we use Lemma \ref{lemA_upper} and $|\nabla(|v|^s\varphi_R)| \le C|v|^{s-1}$ to obtain
\begin{equation*}%\label{step4_lem1_eqn5}
|I_2| \le \|f_\delta \nabla(|v|^s\varphi_R)\|_{L^\infty([0,T];L^1)}\|\nabla \cdot A[f_\delta(1-f_\delta)]\|_{L^1([0,T];L^\infty)} \le C(\|f_{in}\|_{L^1_{s-1}},T).
\end{equation*}
Similarly, for $I_3$, we use Lemma \ref{lema_upper} and $0 \le f_\delta \le 1$ to obtain
\begin{equation*}%}\label{step4_lem1_eqn6}
|I_3| \le \|f_\delta \nabla(|v|^s\varphi_R)\|_{L^\infty([0,T];L^1)}\|\nabla a[f_\delta]\|_{L^1([0,T];L^\infty)} \le C(\|f_{in}\|_{L^1_{s-1}},T).
\end{equation*}
Finally, for $I_4$, integration by parts yields
\begin{equation*}%\label{step4_lem1_eqn7}
|I_4| \le T\|f\Delta(|v|^s\varphi_R)\|_{L^\infty([0,T];L^1} \le C(\|f_{in}\|_{L^1_{s-2}},T).
\end{equation*}
Combining all above estimates, we prove \eqref{step4_lem1_eqn1} for any $s\in (n,n+1]$. The proof is complete.
\end{proof}

%\begin{remark}
%So far, we have uniform estimates in $\delta$ on $\|f_\delta\|_{L^p([0,T];L^	q)}$ and $\|f_\delta\|_{L^\infty([0,T];L^1(s))}$ for all $1 \le p,q\le \infty$ and any $0 \le s$. However, this is insufficient to pass to the $\delta \rightarrow 0$ limit as we need some control on the derivative $\nabla f_\delta$ uniformly in $\delta$. This is where we use the nonlinear dissipation term. 
%\end{remark}
The following lemma, combined with Lemma \ref{lemA_lower}, gives a quantitative lower bound on the ellipticity of $A[f_\delta(1-f_\delta)]$. This will allow us to gain some control over $\nabla f_\delta$ uniformly in $\delta$.
\begin{lemma}\label{step4_lem2}
Suppose $0 \le f_{in} \le 1$, $f_{in} \in L^1_2$, $H_1(f_{in})<0$, and $T>0$. Then, $f_\delta$ has decreasing entropy, i.e. for almost every $0\le t_1 < t_2\le T$,
\begin{equation}\label{step4_lem2_desired1}
    H_1(f_\delta(t_2)) \le H_1(f_{\delta}(t_1)).
\end{equation}
Moreover, the dissipative coefficients $A[f_\delta(1-f_\delta)]$ are bounded uniformly from below:
\begin{equation}\label{step4_lem2_desired2}
A[f_\delta(1-f_\delta)] \ge \frac{C(\|f_{in}\|_{L^1_2},H_1(f_{in}),T)}{1 + |v|^3}.
\end{equation}
\end{lemma}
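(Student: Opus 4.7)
The strategy is to exploit the Fermi-Dirac entropy dissipation structure \eqref{H-thm} at the level of the approximate equation \eqref{FDL_approx}, and then to invoke Lemma~\ref{lemA_lower}.

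To obtain the entropy monotonicity \eqref{step4_lem2_desired1}, I would test the weak formulation \eqref{approx_weak} on a subinterval $[t_1,t_2]\subset[0,T]$ against the regularized entropy variable
\begin{equation*}
    \varphi_\eta(v,t) := \ln\!\left(\frac{f_\delta(v,t)+\eta}{1-f_\delta(v,t)+\eta}\right), \qquad 0<\eta<\tfrac{1}{2}.
\end{equation*}
Since $0\le f_\delta\le 1$ by Proposition~\ref{prop_step3}, $\varphi_\eta$ is pointwise bounded by $|\ln\eta|$, and
\begin{equation*}
    \nabla\varphi_\eta = \left(\frac{1}{f_\delta+\eta}+\frac{1}{1-f_\delta+\eta}\right)\nabla f_\delta \in L^2([0,T];L^2),
\end{equation*}
so $\varphi_\eta\in L^2([0,T];H^1)$ is an admissible test function. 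The standard chain rule for functions in $L^2([0,T];H^1)\cap W^{1,2}([0,T];H^{-1})$ yields
\begin{equation*}
    \int_{t_1}^{t_2}\langle\varphi_\eta,\partial_t f_\delta\rangle_{H^1,H^{-1}}\,dt = \int_{\R^3} G_\eta(f_\delta(t_2))\,dv - \int_{\R^3} G_\eta(f_\delta(t_1))\,dv,
\end{equation*}
with $G_\eta(s):=(s+\eta)\ln(s+\eta)+(1-s+\eta)\ln(1-s+\eta)$. The viscous term contributes
\begin{equation*}
    \delta\int_{t_1}^{t_2}\!\!\int_{\R^3}\nabla f_\delta\cdot\nabla\varphi_\eta\,dv\,dt = \delta\int_{t_1}^{t_2}\!\!\int_{\R^3}|\nabla f_\delta|^2\!\left(\frac{1}{f_\delta+\eta}+\frac{1}{1-f_\delta+\eta}\right)dv\,dt\ \ge\ 0.
\end{equation*}
After symmetrizing the two nonlinear collision terms in $(v,v_*)$ exactly as in the formal derivation of \eqref{H-thm}, the corresponding quantity becomes an $\eta$-regularized version of the Fermi-Dirac entropy production, which is non-negative by the projection property of $\Pi(v-v_*)$. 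Combining these three facts and letting $\eta\to 0^+$ via dominated convergence (using $0\le f_\delta\le 1$ and the uniform convergence $G_\eta\to G$ on $[0,1]$, where $G$ is the Fermi-Dirac entropy density) yields \eqref{step4_lem2_desired1}.

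For the ellipticity bound \eqref{step4_lem2_desired2}, I would combine the uniform estimate $\|f_\delta\|_{L^\infty([0,T];L^1_2)}\le C(\|f_{in}\|_{L^1_2},T)$ from Proposition~\ref{prop_step3} with the just-proved entropy monotonicity $H_1(f_\delta(t))\le H_1(f_{in})<0$. Applying Lemma~\ref{lemA_lower} pointwise in $t$ with $E_0 = C(\|f_{in}\|_{L^1_2},T)$ and $H_0 = H_1(f_{in})$ immediately delivers \eqref{step4_lem2_desired2}.

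The main obstacle will be the rigorous symmetrization of the collision terms under the $\eta$-regularization: the key Fermi-Dirac identity $\nabla \ln(f/(1-f)) = \nabla f/[f(1-f)]$ is slightly broken by the $\eta$-shift, so either (i) one must verify that the extra terms have a favorable sign or vanish as $\eta\to 0^+$, or (ii) one should first regularize $f_\delta$ by convolution, perform the cancellation exactly, and then pass to the limit in the mollification before removing $\eta$. The a priori regularity from Proposition~\ref{prop_step3}, namely $f_\delta\in L^2([0,T];H^1)\cap C([0,T];L^2)$ with $\partial_t f_\delta\in L^2([0,T];H^{-1})$, together with the pointwise bound $0\le f_\delta\le 1$, is more than enough to justify the Bochner chain rule and the required integrations by parts.
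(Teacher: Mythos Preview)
Your overall strategy---test with the regularized entropy variable $\varphi_\eta$, use the chain rule for the time term, observe the viscous term has a good sign, pass $\eta\to 0^+$, then invoke Lemma~\ref{lemA_lower}---is exactly the paper's. The paper's test function $\psi_\eta = \varphi_\eta - \log\eta + \log(1+\eta)$ differs from yours only by constants, which wash out via conservation of mass.

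The difference lies in how the collision terms are handled. You propose to symmetrize under the $\eta$-regularization and then argue non-negativity, correctly flagging that the broken identity $\nabla\varphi_\eta \neq \nabla f_\delta/[f_\delta(1-f_\delta)]$ makes this delicate. The paper sidesteps this entirely: it does \emph{not} symmetrize at finite $\eta$. Instead it keeps the two collision terms in their non-symmetric form $A[f_\delta(1-f_\delta)]\nabla f_\delta\cdot\nabla\psi_\eta$ and $\nabla a[f_\delta]\,f_\delta(1-f_\delta)\cdot\nabla\psi_\eta$ separately, and passes $\eta\to 0^+$ in each one individually. The $A$-term is monotone in $\eta$ and handled by monotone convergence. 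For the $\nabla a$-term, the paper uses the algebraic identity
\[
\big[(f_\delta+\eta)^{-1}+(1-f_\delta+\eta)^{-1}\big]\,f_\delta(1-f_\delta)
= (1+2\eta) - \eta(1+\eta)\big[(f_\delta+\eta)^{-1}+(1-f_\delta+\eta)^{-1}\big],
\]
integrates the main piece by parts using $-\Delta a[f_\delta]=f_\delta$ to get $\int f_\delta^2$, and shows the remainder is $O(\eta\log\eta)\to 0$. Only \emph{after} the limit, when the exact identity $\nabla\log\!\big(\tfrac{f_\delta}{1-f_\delta}\big)=\nabla f_\delta/[f_\delta(1-f_\delta)]$ holds, does the paper symmetrize and recognize the non-negative entropy production \eqref{H-thm}. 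This is your option (i), but executed without ever needing to check a sign on an $\eta$-perturbed quadratic form; the $\eta$-errors are isolated and shown to vanish directly. Your option (ii) via mollification would also work but is heavier.
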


\begin{proof}
By Lemma \ref{lemA_lower},  \eqref{step4_lem2_desired2} is a consequence of \eqref{step4_lem2_desired1} and  
\begin{equation}\label{step4_lem2_eqn1}
    \|f_\delta(t)\|_{L^1_2} \le C(\|f_{in}\|_{L^1_2}), \quad \textrm{for all} \;t>0.
\end{equation} 
The energy bound \eqref{step4_lem2_eqn1} is shown in Lemma \ref{step4_lem1}. It remains to estimate the entropy and obtain \eqref{step4_lem2_desired1}. 
%We note that  $\chi_{[t_1,t_2]}\left(\log f_\delta - \log(1-f_\delta)\right)$ is not an admissible test function for \eqref{approx_weak}. Instead 
We test \eqref{approx_weak} with 
 $$\psi_\eta: = \log(f_\delta + \eta) -\log(\eta) - \log(1-f_\delta + \eta ) + \log(1 + \eta),\quad \eta>0.$$

%Note, $\nabla \psi_\eta = \nabla f_\delta \left[(f_\delta + \eta)^{-1} + (1- f_\delta + \eta)^{-1}\right]$, so that $0 \le f_\delta \le 1$, $\eta > 0$, and $f_\delta \in L^2([0,T];H^1)$ imply $\psi_\eta\in L^2([0,T];\dot{H}^1)$. Also, we have for $f_\delta(v,t) \neq 0$, using Jensen's inequality,
%\begin{equation}
%\begin{aligned}
%    \left[\log(f_\delta + \eta) - \log(\eta)\right]^2 &= \left(\int_\eta^{f_\delta + \eta} \frac{1}{y} \ dy \right)^2
   % = f_\delta^2\left(\frac{1}{f_\delta}\int_\eta^{f_\delta + \eta} \frac{1}{y} \ dy \right)^2\\
    %&\le |f_\delta|\int_\eta^{f_\delta + \eta} \frac{1}{y^2} \ dy = f_\delta\left(\frac{1}{\eta} - \frac{1}{f_\delta + \eta}\right) = \frac{f_\delta^2}{\eta(f_\delta + \eta)} \le \eta^{-2}f_\delta^2.
%\end{aligned}
%\end{equation}
%Thus, we have $\log(f_\delta + \eta) - \log(\eta) \in L^\infty([0,T];L^2)$. A similar computation for $\log(1 - f_\delta + \eta) - \log(1 + \eta)$, yields $\psi_\eta \in L^\infty([0,T];L^2)$ and we conclude $\psi_\eta$ is an admissible test function for \eqref{approx_weak} for each $\eta > 0$.

We have
\begin{equation*}%\label{step4_lem2_eqn3}
\begin{aligned}
    \int_{t_1}^{t_2}\int_{\R^3} \psi_\eta\partial_t f_\delta \;dvdt=& -\int_{t_1}^{t_2}\int_{\R^3} \nabla \psi_\eta \cdot A[f_\delta(1-f_\delta)]\nabla f_\delta  \;dvdt\\
    &+\int_{t_1}^{t_2}\int_{\R^3} \nabla\psi_\eta \cdot \nabla a[f_\delta](f_\delta)(1-f_\delta) \;dvdt - \delta\int_{t_1}^{t_2}\int_{\R^3} \nabla \psi_\eta \cdot \nabla f_\delta \;dvdt\\
    =&: -I_1(\eta) + I_2(\eta) - I_3(\eta).
\end{aligned}
\end{equation*}
We now take $\eta \rightarrow 0^+$. For the left hand side, we use conservation of mass from Proposition \ref{prop_step3} to obtain:
\begin{equation*}%\label{step4_lem2_eqn4}
\begin{aligned}
 \lim_{\eta \rightarrow 0^+} \int_{t_1}^{t_2}\int_{\R^3} \psi_\eta\partial_t f_\delta  \;dvdt%&= \int_{t_1}^{t_2}\int_{\R^3}  \left[\log(f_\delta + \eta) -\log(\eta) - \log(1-f_\delta + \eta ) + \log(1 + \eta)\right] \partial_t f_\delta\\
&= \lim_{\eta \rightarrow 0^+} \int_{t_1}^{t_2}\int_{\R^3}  \log(f_\delta + \eta)\partial_tf_\delta - \log(1-f_\delta + \eta )\partial_tf_\delta  \;dvdt\\
 &= H_1(f_\delta(t_2)) - H_1(f_\delta(t_1)) .
\end{aligned}
\end{equation*}
By the monotone convergence theorem,
\begin{equation*}%\label{step4_lem2_eqn6}
\begin{aligned}
\lim_{\eta\rightarrow 0^+}I_1(\eta) &= \int_{t_1}^{t_2}\int_{\R^3}  \left[(f_\delta)^{-1} + (1- f_\delta)^{-1}\right]\nabla f_\delta \cdot A[f_\delta(1-f_\delta)]\nabla f_\delta \;dvdt\\
    &=\int_{t_1}^{t_2}\int_{\R^3}  \nabla \left[\log(f_\delta) - \log(1 - f_\delta)\right] \cdot A[f_\delta(1-f_\delta)]\nabla f_\delta \;dvdt.
\end{aligned}
\end{equation*}
Next, for $I_2$, we decompose further as
\begin{equation*}
\begin{aligned}
    I_2(\eta) =&  \int_{t_1}^{t_2}\int_{\R^3} \left[(f_\delta + \eta)^{-1} + (1- f_\delta + \eta)^{-1}\right]\nabla f_\delta \cdot \nabla a[f_\delta](f_\delta)(1-f_\delta) \;dvdt\\
    =& (1+2\eta) \int_{t_1}^{t_2} \int_{\R^3} \nabla f_\delta \cdot \nabla a[f_\delta] \;dvdt\\
    &+ \eta(1-\eta)\int_{t_1}^{t_2}\int_{\R^3} \left[(f_\delta + \eta)^{-1} + (1- f_\delta + \eta)^{-1}\right]\nabla f_\delta \cdot \nabla a[f_\delta] \;dvdt\\
    =& I_2^1(\eta) + I_2^2(\eta).
\end{aligned}
\end{equation*}
For $I_2^1$, we integrate by parts to obtain
\begin{equation*}%\label{step4_lem2_eqn7}
   \lim_{\delta \to 0} I_2^1(\eta) =  \int_{t_1}^{t_2}\int_{\R^3} f_\delta^2\;dvdt.
\end{equation*}
For $I_2^2$, we use $0 \le f_\delta \le 1$ with $f_\delta\in L^\infty([0,T];L^1)$ and $\eta \log \eta \rightarrow 0$ as $\eta \rightarrow 0^+$ to obtain
\begin{equation*}%\label{step4_lem2_eqn8}
\begin{aligned}
    |I_2^2(\eta)| &= \eta(1-\eta)\left|\int_{t_1}^{t_2}\int_{\R^3} f_\delta\left[\log(f_\delta + \eta) - \log(1 - f_\delta +\eta)\right] \;dvdt \right|\\
    &\le 2|\log(\eta)|\eta(1-\eta)\int_{t_1}^{t_2}\int_{\R^3} f_\delta\;dvdt \rightarrow 0.
\end{aligned}
\end{equation*}
Finally, we note for $I_3$ that
\begin{equation*}%\label{step4_lem2_eqn9}
    I_3(\eta) = \delta\int_{t_1}^{t_2}\int_{\R^3}\left[(f_\delta + \eta)^{-1} + (1- f_\delta + \eta)^{-1}\right] |\nabla f_\delta|^2\;dvdt \ge 0. 
\end{equation*}
Thus, combining our estimates, we have shown
\begin{equation*}%\label{step4_lem2_eqn10}
    H_1(f_\delta(t_2)) - H_1(f_\delta(t_1)) \le{ -\int_{t_1}^{t_2}\int_{\R^3}\left( \nabla\left[\log(f_\delta) - \log(1 - f_\delta)\right] \cdot A[f_\delta(1-f_\delta)]\nabla f_\delta - f_\delta^2\right)\;dvdt. }
\end{equation*}
We conclude by noticing that 
\begin{equation*}%\label{step4_lem2_eqn11}
\begin{aligned}
    -&{\int_{t_1}^{t_2}\int_{\R^3}\left( \nabla\left[\log(f_\delta) - \log(1 - f_\delta)\right] \cdot A[f_\delta(1-f_\delta)]\nabla f_\delta - f_\delta^2\right)\;dvdt }\\
    &\quad= -\frac{1}{2}\int_{t_1}^{t_2}\int_{\R^3}\int_{\R^3} f_\delta f^*_\delta(1-f_\delta)(1-f^*_\delta) \\
    &\qquad\qquad\times\left(\frac{\Pi(v-v^*)}{|v-v^*|}\left[\frac{\nabla f^*}{f^*(1-f^*)} - \frac{\nabla f}{f(1-f)}\right]\cdot \left[\frac{\nabla f^*}{f^*(1-f^*)} - \frac{\nabla f}{f(1-f)}\right]\right) \ dvdv^*dt\\
    &\quad\le 0.
\end{aligned}
\end{equation*}
\end{proof}
The next lemma contains the coercive estimate we need to pass to the limit $\delta \to 0$.

%\red{NZ: I replaced $|v|$ with $\vv$ in the proof otherwise in the lower bound for $I_1$ we get a prefactor which vanishes when $v=0$.}
 
\begin{lemma}[$L^2$ Estimate]\label{step4_lem3}
Suppose $f_{in} \in L^1_3$ with $H_1(f_{in}) < 0$ and $T> 0$. Then, the family $\{f_\delta\}_{0 < \delta < 1}$ satisfies the estimate
\begin{equation*}
\|\partial_t f_\delta\|_{L^2([0,T];H^{-1})} + \|f_\delta\|_{L^\infty([0,T];L^1_3)} + \|\nabla f_\delta\|_{L^2([0,T];L^2)} \le C(\|f_{in}\|_{L^1_3},H_1(f_{in}),T).
\end{equation*}
\end{lemma}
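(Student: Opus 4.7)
The plan is as follows. The $L^\infty([0,T];L^1_3)$ bound on $f_\delta$ is an immediate application of Lemma \ref{step4_lem1} with $s = 3$. The heart of the lemma is the gradient estimate, and to obtain it in unweighted form one must use the degenerate dissipation together with a carefully chosen weighted test function. I would test \eqref{approx_weak} with $\varphi = f_\delta w$, where $w = \langle v\rangle^3$. The key reason for this choice: Lemma \ref{step4_lem2} gives $A[f_\delta(1-f_\delta)]\ge C(1+|v|^3)^{-1}\mathbb{I}$, and since $(1+|v|^2)^{3/2} \sim 1+|v|^3$, one has $wA \ge c\,\mathbb{I}$ uniformly in $v$ and $\delta$. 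This turns the degenerate lower bound into a coercive source of the full unweighted gradient:
\begin{equation*}
\int w\,A[f_\delta(1-f_\delta)]\nabla f_\delta\cdot\nabla f_\delta\,dv \ge c\|\nabla f_\delta\|_{L^2}^2.
\end{equation*}

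Expanding $\nabla(f_\delta w) = w\nabla f_\delta + f_\delta\nabla w$ produces a cross term $\int f_\delta A\nabla f_\delta\cdot\nabla w\,dv$. A direct Cauchy–Schwarz bound would cost $L^1_4$ moments, which we do not have. The trick is to integrate by parts, rewriting the cross term as $-\tfrac12\int f_\delta^2\,\nabla\!\cdot\!(A\nabla w)\,dv$. Using $\nabla\!\cdot\! A[g] = \nabla a[g]$ (Lemma \ref{lemA_upper}) and the $L^\infty$ bounds on $A$ and $\nabla a$ coming from $0\le f_\delta(1-f_\delta)\le 1$ and $f_\delta\in L^1$ (Lemmas \ref{lemA_upper}, \ref{lema_upper}), together with the pointwise estimates $|\nabla w|\lesssim \langle v\rangle^2$ and $|\nabla^2 w|\lesssim \langle v\rangle$, yields $|\nabla\!\cdot\!(A\nabla w)|\lesssim \langle v\rangle^2$. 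The cross term is therefore absorbed into $\int f_\delta^2\langle v\rangle^2\,dv \le \|f_\delta\|_{L^1_2}$. The nonlinear drift term $\int \nabla a[f_\delta]f_\delta(1-f_\delta)\cdot\nabla(f_\delta w)\,dv$ is handled analogously: the portion against $w\nabla f_\delta$ is rewritten via $f_\delta(1-f_\delta)\nabla f_\delta = \nabla F(f_\delta)$ with $F(s) = s^2/2 - s^3/3$ and integrated by parts using $-\Delta a[f_\delta]=f_\delta$, while the portion against $f_\delta\nabla w$ is bounded directly. All resulting contributions are dominated by $\|f_\delta\|_{L^1_3}$. The $\delta$-terms contribute the non-negative $\delta\int w|\nabla f_\delta|^2\,dv$ to the LHS and a further IBP controllable term $\tfrac{\delta}{2}\int f_\delta^2\Delta w\,dv$ bounded by $\|f_\delta\|_{L^1_1}$. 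Integrating in time gives the uniform gradient bound.

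For the time derivative, I use \eqref{approx_weak} directly: for $\Phi\in L^2([0,T];H^1)$, H\"older's inequality plus Lemma \ref{lemA_upper} controls $\int A\nabla f_\delta\cdot\nabla\Phi$ by $\|\nabla f_\delta\|_{L^2}\|\nabla\Phi\|_{L^2}$, while Lemma \ref{lema_upper} gives $\|\nabla a[f_\delta]\|_{L^2}\lesssim \|f_\delta\|_{L^{6/5}}$ for the drift, and the $\delta$-term is estimated trivially. Combined with the previously derived gradient bound this produces $\|\partial_t f_\delta\|_{L^2([0,T];H^{-1})}\le C$.

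The main obstacle is purely technical: one must justify testing with $f_\delta w$ (which is in $L^2([0,T];H^1)$ thanks to Proposition \ref{prop_step3} and the moment bounds, but formally requires approximation by the cutoffs $\varphi_R$ of Lemma \ref{step2_lem6}) and then pass to the limit $R\to\infty$. The moment propagation from Lemma \ref{step4_lem1} is precisely what makes the boundary terms vanish. The whole strategy relies on the interplay between the weight $\langle v\rangle^3$, chosen to exactly cancel the $(1+|v|^3)^{-1}$ degeneracy of $A$, and the IBP-on-the-cross-term device, which drops the moment requirement from $L^1_4$ down to $L^1_3$.
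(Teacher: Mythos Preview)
Your proposal is correct and follows essentially the same approach as the paper: test with $f_\delta\langle v\rangle^3$ (modulo a cutoff $\varphi_R$), use Lemma~\ref{step4_lem2} so that the weight cancels the degeneracy of $A$ and yields coercivity, integrate by parts on the cross term $\int f_\delta A\nabla f_\delta\cdot\nabla\langle v\rangle^3$ to avoid paying $L^1_4$, rewrite the drift via $F(s)=s^2/2-s^3/3$ and integrate by parts using $-\Delta a[f_\delta]=f_\delta$, and then obtain the $H^{-1}$ bound on $\partial_t f_\delta$ by duality. The paper's proof is precisely this, carried out term by term with the cutoff $\varphi_R$ in place and the limit $R\to\infty$ taken at the end.
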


\begin{proof}
We test \eqref{approx_weak}, with $\Psi(v,t) = f_\delta(v,t)\varphi_R(v) {\vv}^3 \chi_{[0,t_0]}(t)$ where $\varphi_R \in C^\infty_c(\R^3)$ is a cutoff function as in Lemma \ref{step2_lem6}. We obtain,
\begin{equation*}%\label{step4_lem3_eqn1}
\begin{aligned}
\int_{\R^3} |f_\delta(t_0)|^2 \varphi_R {\vv}^3 - |f_{in}|^2 \varphi_R {\vv}^3  \ dv = &- \int_0^{t_0}\int_{\R^3} \left(A[f_\delta(1-f_\delta)]\nabla f_\delta - \nabla a[f_\delta]f_\delta(1-f_\delta)\right) \cdot \nabla\Psi \;dvdt \\
    &- \delta\int_0^{t_0}\int_{\R^3} \nabla f_\delta \cdot \nabla\Psi \;dvdt.
\end{aligned}
\end{equation*}
We expand the right hand side as
\begin{equation*}%\label{step4_lem3_eqn2}
\begin{aligned}
- \int_0^{t_0}\int_{\R^3} &\left(A[f_\delta(1-f_\delta)]\nabla f_\delta - \nabla a[f_\delta]f_\delta(1-f_\delta)\right) \cdot \nabla\Psi \;dvdt-\delta\int_0^{t_0}\int_{\R^3} \nabla f_\delta \cdot \nabla\Psi \;dvdt\\
	= &-\int_0^{t_0}\int_{\R^3} \varphi_R{\vv}^3 A[f_\delta(1-f_\delta)]\nabla f_\delta \cdot \nabla f_\delta \;dvdt -\int_0^{t_0}\int_{\R^3} f_\delta A[f_\delta(1-f_\delta)]\nabla f_\delta \cdot \nabla(\varphi_R{\vv}^3)\;dvdt\\
	&+\int_0^{t_0}\int_{\R^3}\nabla a[f_\delta]f_\delta(1-f_\delta) \cdot f_\delta\nabla(\varphi_R{\vv}^3)\;dvdt +\int_0^{t_0}\int_{\R^3}\nabla a[f_\delta](f_\delta - f_\delta^2) \cdot \varphi_R{\vv}^3\nabla f_\delta \;dvdt\\
	&- \delta\int_0^{t_0}\int_{\R^3} \nabla f_\delta \cdot f_\delta\nabla(\varphi_R{\vv}^3) \;dvdt - \delta\int_0^{t_0}\int_{\R^3} \nabla f_\delta \cdot \varphi_R{\vv}^3\nabla f_\delta \;dvdt\\
	=: &-I_1 - I_2 + I_3 + I_4 - \delta I_5 - \delta I_6. 
\end{aligned}
\end{equation*}
We bound $I_j$ for $2 \le j \le 5$ using the propagation of moments from Lemma \ref{step4_lem1} and the upper bounds on the coefficients $A$ and $\nabla a$ from Lemma \ref{lemA_upper} and \ref{lema_upper}. We will lower bound $I_1$ using Lemma \ref{step4_lem2}. We begin to bound $I_2$ by decomposing further:
\begin{equation*}%\label{step4_lem3_eqn3}
\begin{aligned}
I_2 =& \frac{1}{2}\int_0^{t_0}\int_{\R^3} A[f_\delta(1-f_\delta)]\nabla f_\delta^2 \cdot \nabla(\varphi_R{\vv}^3) \;dvdt \\
	=& \frac{1}{2}\int_0^{t_0}\int_{\R^3} \nabla \cdot \left(A[f_\delta(1-f_\delta)]f_\delta^2\right) \cdot \nabla(\varphi_R{\vv}^3)\;dvdt
	\\&- \frac{1}{2}\int_0^{t_0}\int_{\R^3} (\nabla \cdot A[f_\delta(1-f_\delta)])f_\delta^2 \cdot \nabla(\varphi_R{\vv}^3) \;dvdt \\
	=:& I_2^1 - I_2^2.
\end{aligned}
\end{equation*}
For $I_2^1$, integration by parts, Lemma \ref{lemA_upper}, and $|\nabla^2(\varphi_R{\vv}^3)| \lesssim {\vv}$ imply
\begin{equation*}%\label{step4_lem3_eqn4}
|I_2^1| \le \|A[f_\delta(1-f_\delta)\|_{L^\infty([0,T]\times\R^3)}\|f_\delta \nabla^2(\varphi_R{\vv}^3)\|_{L^1([0,T]\times\R^3)} \lesssim C(\|f_{in}\|_{L^1_1},T).
\end{equation*}
For $I_2^2$, obtain by Lemma \ref{lemA_upper}, and ${|}\nabla(\varphi_R{\vv}^3){|} \lesssim {\vv}^2$,
\begin{equation*}%\label{step4_lem3_eqn5}
|I_2^2| \le \|\nabla \cdot A[f_\delta(1-f_\delta)\|_{L^\infty([0,T]\times\R^3)}\|f_\delta \nabla(\varphi_R {\vv}^3)\|_{L^1([0,T]\times\R^3)} \lesssim C(\|f_{in}\|_{L^1_2},T).
\end{equation*}
Piecing together, we obtain
\begin{equation*}%\label{step4_lem3_eqn6}
|I_2| \le C(\|f_{in}\|_{L^1_2},T).
\end{equation*}
For $I_3$, we directly use the estimates from Lemma \ref{step4_lem1}, \ref{lemA_upper}, and \ref{lema_upper} and ${|}\nabla(\varphi_R{\vv}^3){|} \lesssim {\vv}^2$ to obtain
\begin{equation*}%\label{step4_lem3_eqn7}
\begin{aligned}
|I_3| \le \|\nabla a[f_\delta\|_{L^\infty([0,T]\times\R^3)} \|f_\delta \nabla(\varphi_R{\vv}^3)\|_{L^1([0,T];L^1)} \le C(\|f_{in}\|_{L^1_2},T).
\end{aligned}
\end{equation*}
Next, for $I_4$, we integrate by parts and recall $-\Delta a[f] = f$ to decompose further:
\begin{equation*}%\label{step4_lem3_eqn8}
\begin{aligned}
I_4 &= \int_0^{t_0}\int_{\R^3}\nabla a[f_\delta]\cdot \varphi_R{\vv}^3\nabla\left(\frac{f_\delta^2}{2} - \frac{f_\delta^3}{3}\right) \; dvdt\\
	&= -\int_0^{t_0}\int_{\R^3}\nabla a[f_\delta]\cdot \nabla(\varphi_R{\vv}^3)\left(\frac{f_\delta^2}{2} - \frac{f_\delta^3}{3}\right) \;dvdt+ \int_0^{t_0}\int_{\R^3}\varphi_R{\vv}^3\left(\frac{f_\delta^3}{2} - \frac{f_\delta^4}{3}\right)\;dvdt\\
	&= -I_4^1 + I_4^2.
\end{aligned}
\end{equation*}
We bound $I_4^1$ using Lemma \ref{lema_upper} and $|\nabla(\varphi_R(v){\vv}^3)| \lesssim {\vv}^2$, to obtain
\begin{equation*}%\label{step4_lem3_eqn9}
|I_4^1| \le \|\nabla a[f_\delta]\|_{L^\infty([0,T]\times \R^3)}\|f_\delta\nabla(\varphi_R{\vv}^3)\|_{L^1([0,T]\times \R^3)} \le C(\|f_{in}\|_{L^1_2},T).
\end{equation*}
For $I_4^2$, we bound using $0 \le f_\delta \le 1$ so that
\begin{equation*}%\label{step4_lem3_eqn10}
|I_4^2| \le \|\varphi_R {\vv}^3 f_\delta\|_{L^1([0,T]\times \R^3)}\|f_\delta^2/2 - f_\delta^3/3\|_{L^\infty([0,T]\times \R^3)} \le C(\|f_{in}\|_{L^1_3},T).
\end{equation*}
Hence,
\begin{equation*}%\label{step4_lem3_eqn11}
|I_4| \le C(\|f_{in}\|_{L^1_3},T).
\end{equation*}
Using $|\nabla^2(\varphi_R(v){\vv}^3)| \lesssim {\vv}$ , integration by parts yields
\begin{equation*}%\label{step4_lem3_eqn12}
|I_5| \le \|f_\delta\|_{L^\infty([0,T]\times \R^3)}\|f_\delta\nabla^2(\varphi_R(v){\vv}^3)\|_{L^1([0,T]\times\R^3)} \le C(\|f_{in}\|_{L^1_1},T).
\end{equation*}
Finally, we note $I_6 \ge 0$ and by Lemma \ref{step4_lem2},
\begin{equation*}
I_1 \ge C(\|f_{in}\|_{L^1_2},H_1(f_{in}),T)\int_0^{t_0}\int_{\R^3} \frac{\varphi_R {\vv}^3}{1 + |v|^3} |\nabla f_\delta|^2 \ dvdt.
\end{equation*}
Summarizing, we obtain
\begin{align*}%\label{step4_lem3_eqn13}
\int_{\R^3} \varphi_R {\vv}^3 f_\delta(t_0)\;dv &+ C(\|f_{in}\|_{L^1_2},H_1(f_{in}),T)\int_0^{t_0}\int_{\R^3} \varphi_R |\nabla f_\delta |^2 \;dvdt\\
 &\le \int_{\R^3} \varphi_R {\vv}^3 f_{in} \;dv+ C(\|f_{in}\|_{L^1_3},T).
\end{align*}
Letting $R \rightarrow \infty$, applying the monotone convergence theorem, and taking a supremum over $t_0\in [0,T]$ yield the desired bound on $\nabla f_\delta$.

Next, we test \eqref{approx_weak} with an arbitrary test function $\Phi\in L^2([0,T];H^1)$ and, by duality, obtain a bound on $\partial_tf$. In particular, we have
\begin{equation*}
\|\partial_t f_\delta\|_{L^2([0,T];H^{-1})} = \sup_{\|\Phi\|_{L^2([0,T];H^1)} \le 1} \int_0^T\int_{\R^3} \left(A[f_\delta(1-f_\delta)]\nabla f_\delta - \nabla a[f_\delta]f_\delta(1-f_\delta) + \delta \nabla f_\delta\right) \cdot \nabla \Phi \; dvdt.
\end{equation*}
Since
\begin{equation*}
\int_0^T\int_{\R^3} A[f_\delta(1-f_\delta)]\nabla f_\delta \cdot \nabla \Phi\;dvdt  \le \|A[f_\delta(1-f_\delta)]\|_{L^\infty([0,T]\times\R^3)}\|\nabla f\|_{L^2([0,T];L^2)}\|\nabla\Phi\|_{L^2([0,T];L^2)},
\end{equation*}
and
\begin{equation*}
\int_0^T\int_{\R^3} \nabla a[f_\delta]f_\delta(1-f_\delta) \cdot \nabla \Phi\;dvdt  \le \|\nabla a[f_\delta]\|_{L^2([0,T];L^2)}\|\nabla \Phi\|_{L^2([0,T];L^2)},
\end{equation*}
we conclude
\begin{equation*}
\begin{aligned}
|\partial_t f_\delta\|_{L^2([0,T];H^{-1})} &\le C(\|f_{in}\|_{L^1}) \sup_{\|\Phi\|_{L^2([0,T];H^1)} \le 1} \|f\|_{L^2([0,T];H^1)}\|\nabla \Phi\|_{L^2([0,T];L^2)}\\
&\le C(\|f_{in}\|_{L^1_3},H_1(f_{in}),T).
\end{aligned}
\end{equation*}
This completes the proof. 
\end{proof}

In the next lemma we state a weighted $L^2$ estimate, proved via a slight modification to the $L^2$-estimate in Lemma \ref{step4_lem3}. 

\begin{lemma}\label{reg_lemma1}
Let $f$ be any weak solution to \eqref{weak_FDL} as in Theorem  \ref{thm_existence} with initial data $f_{in}$ as described above. Then, {for every $m\geq 3$,}
\begin{equation*}
    \sup_{(0, T)} \|f(\cdot )\|_{L^2_m}^2 + \int_{0}^{T}\|\nabla f(t)\|_{L^2_{m-3}}^2 \ dt \le C(f_{in}{,m,T}).
\end{equation*}
\end{lemma}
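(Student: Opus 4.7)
The plan is to mimic the $L^2$ estimate carried out in Lemma \ref{step4_lem3}, but with the weight $\langle v\rangle^3$ replaced by $\langle v\rangle^m$. Concretely, I would test the weak formulation \eqref{weak_FDL} with the cutoff test function
\begin{equation*}
\Psi(v,t) := f(v,t)\,\varphi_R(v)\,\langle v\rangle^m\,\chi_{[0,t_0]}(t),
\end{equation*}
where $\varphi_R$ is the smooth spatial cutoff from Lemma \ref{step2_lem6}. (This is admissible once $f \in L^\infty([0,T];L^1_m)$, which in turn is ensured by $0\le f\le 1$ together with Lemma \ref{step4_lem1}, provided the initial datum has finite $L^1_m$ moments.) After expansion, the left-hand side yields $\tfrac{1}{2}\int |f(t_0)|^2\varphi_R\langle v\rangle^m - \tfrac{1}{2}\int |f_{in}|^2\varphi_R\langle v\rangle^m$, and the dissipative term splits as usual into a coercive piece $\int_0^{t_0}\int \varphi_R\langle v\rangle^m\, A[f(1-f)]\nabla f\cdot\nabla f\,dvdt$ plus commutator terms arising from $\nabla(\varphi_R\langle v\rangle^m)$.

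The coercive piece is bounded below using Lemma \ref{lemA_lower} (which applies to $f$ itself since $f$ has conserved energy and strictly negative entropy, inherited from $H_1[f_{in}]<0$ via the entropy inequality), giving the desired $\int_0^{t_0}\int \varphi_R\langle v\rangle^{m-3}|\nabla f|^2\,dvdt$ on the left-hand side. For the error terms, I would use the identities $|\nabla(\varphi_R\langle v\rangle^m)|\lesssim \langle v\rangle^{m-1}$ and $|\nabla^2(\varphi_R\langle v\rangle^m)|\lesssim \langle v\rangle^{m-2}$, together with the upper bounds from Lemmas \ref{lemA_upper} and \ref{lema_upper} (which hold since $f\in L^1\cap L^\infty$), to control each term by a constant depending on $\|f\|_{L^\infty([0,T];L^1_m)}$, $\|f\|_{L^\infty}$ and $T$. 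As in Lemma \ref{step4_lem3}, the drift term $\int \nabla a[f]\,f(1-f)\cdot \nabla\Psi$ is handled by writing $f(1-f)\nabla f = \nabla(f^2/2 - f^3/3)$ and integrating by parts against $\varphi_R\langle v\rangle^m$, using $-\Delta a[f] = f$; this produces a harmless term of the form $\int \varphi_R\langle v\rangle^m(f^3/2 - f^4/3)\,dvdt$, controlled by $\|f\|_{L^\infty([0,T];L^1_m)}$ since $0\le f\le 1$.

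All error terms are thus bounded by $C(f_{in},m,T)$, uniformly in $R$, after invoking propagation of $L^1_s$ moments from Lemma \ref{step4_lem1} with $s = m$. I would then let $R\to\infty$ by monotone convergence and take the supremum over $t_0\in [0,T]$. The $L^2_m$ bound on $f$ itself also follows trivially from $\|f\|_{L^2_m}^2 = \int f^2\langle v\rangle^m\,dv \le \int f\langle v\rangle^m\,dv = \|f\|_{L^1_m}$, so the estimate on $\sup_{(0,T)}\|f\|_{L^2_m}^2$ is immediate; the content of the lemma is really the weighted gradient estimate.

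The main obstacle is not any single term, but making sure that every commutator contribution requires only moments that are already propagated by Lemma \ref{step4_lem1} (which in turn hinges on $f_{in}\in L^1_m$). The worst weight appearing on the error side is $\langle v\rangle^m$ (from the $f^3$ term above), which matches the moment bound we have, so the bookkeeping closes. Because the calculation is structurally identical to the $m=3$ case of Lemma \ref{step4_lem3}, the routine integration-by-parts and Hölder manipulations can be carried out verbatim with $\langle v\rangle^3$ replaced by $\langle v\rangle^m$ throughout.
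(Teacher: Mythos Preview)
Your proposal is correct and matches the paper's own proof, which consists of the single line ``We test \eqref{weak_FDL} with $\varphi = \brak{v}^m f$, and estimate the resulting terms as in the proof of Lemma \ref{step4_lem3}.'' You have simply spelled out explicitly what that entails, including the cutoff $\varphi_R$, the coercivity from Lemma \ref{lemA_lower}, the handling of the drift term via $f(1-f)\nabla f = \nabla(f^2/2 - f^3/3)$, and the correct identification of $L^1_m$ as the highest moment required (coming from the $\int \langle v\rangle^m f^3$ term, the analogue of $I_4^2$).
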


\begin{proof}
We test \eqref{weak_FDL} with $\varphi = \brak{v}^m f$, and estimate the resulting terms as in the proof of Lemma \ref{step4_lem3}. 
\end{proof}

\begin{flushleft}
\underline{Proof of Theorem \ref{thm_existence}}\\
\end{flushleft}

Fix $T > 0$, $f_{in}$ with $0 \le f_{in} \le 1$, and $f_{in} \in L^1_3$ and fix some sequence $\delta_n \rightarrow 0^+$ and let $f_n(v,t)$ be the solutions with $\delta = \delta_n$ constructed in Proposition \ref{prop_step3}. Then, the uniform-in-$\delta$ estimates from Lemma \ref{step4_lem1} and Lemma \ref{step4_lem3} together with Aubin-Lions {Lemma} imply that we may assume that $f_n \rightarrow f$ for some limit $f$ in the following topologies:
\begin{itemize}
\item Weak star in $L^\infty([0,T]\times\R^3)$,
\item Weakly in $L^2([0,T];H^1)$,
\item Weak star in $L^\infty([0,T];L^2)$,
\item Strongly in $L^p([0,T];L^q)$ for any $1\le p \le 2$ and $1\le q < 6$.
\end{itemize} 
Furthermore, we may also assume $f_n \rightarrow f$ pointwise almost everywhere on $[0,T]\times \R^3$. Therefore, by Fatou's lemma, it follows that for almost every $t\in [0,T]$,
\begin{equation}
\|f(t)\|_{L^1_3} \le C(\|f_{in}\|_{L^1_3},T).
\end{equation}
Note also that the weak star convergence in $L^\infty$ is sufficient to guarantee $0 \le f \le 1$. %We conclude the proof of the theorem by following the same steps as in the proof of Proposition \ref{prop_step3}. 

Next, since each $f_n$ solves \eqref{FDL_approx} on the time interval $[0,T]$, for any $\Phi \in C^\infty_c([0,T)\times \R^3)$, we have
\begin{align*}%\label{step4_thm_eqn1}
\int_0^T\int_{\R^3} f_n \partial_t \Phi \;dvdt & - \int_{\R^3} f_{in}\Phi(0) \;dv =\\ &- \int_0^T\int_{\R^3} \left(A[f_n(1-f_n)]\nabla f_n - \nabla a[f_n]f_n(1-f_n) + \delta_n \nabla f_n\right) \cdot \nabla \Phi \;dvdt.
\end{align*}
We conclude the proof of existence by following the same steps as in the proof of Proposition \ref{prop_step3}. 

To show uniqueness of solution, we assume by contradiction that there exist two solutions $f$ and $g$. Their difference 
$$
w := f-g,
$$
is identically zero at $t=0$ and solves the following weak formulation: 
\begin{align*}%\label{weak_FDL}
\int_{0}^T  \langle \varphi ,\partial_t w\rangle_{H^1,H^{-1}} \;dt = &- \int_0^T\int_{\R^3} A[f(1-f)]\nabla w \cdot \nabla \varphi \;dvdt \\
&+ \int_0^T\int_{\R^3} A[fw+(g-1)w]\nabla g \cdot \nabla \varphi \;dvdt\\
& + \int_0^T\int_{\R^3} f(1-f)  \nabla a[w]\cdot \nabla \varphi \;dvdt \\
&- \int_0^T\int_{\R^3} (fw+(g-1)w)  \nabla a[g]\cdot \nabla \varphi \;dvdt.
\end{align*}
We consider $\varphi = w \langle v \rangle ^{2m}$ for some $m > \frac{3}{2}$, and get  
\begin{align*}%\label{weak_FDL}
  \frac{1}{2} \int_{\R^3} w^2(t) \langle v \rangle ^{2m}  \;dv= &- \int_0^T\int_{\R^3} A[f(1-f)]\nabla w \cdot \nabla (w \langle v \rangle ^{2m}) \;dvdt \\
&+ \int_0^T\int_{\R^3} A[fw+(g-1)w]\nabla g \cdot \nabla (w \langle v \rangle ^{2m}) \;dvdt\\
& + \int_0^T\int_{\R^3} f(1-f)  \nabla a[w]\cdot \nabla (w \langle v \rangle ^{2m} )\;dvdt \\
&-  \int_0^T\int_{\R^3} (fw+(g-1)w)  \nabla a[g]\cdot \nabla (w \langle v \rangle ^{2m}) \;dvdt\\
=:& \;  I_1+I_2+I_3+I_4.
\end{align*}
The term $I_1$ is estimated with Young's inequality:
\begin{align*}
I_1 =& -  \int_0^T\int_{\R^3} \frac{A[f(1-f)]}{ \langle v \rangle ^{2m}} |\nabla w \langle v \rangle ^{2m}|^2 \;dvdt + \int_0^T\int_{\R^3} \frac{A[f(1-f)]w}{ \langle v \rangle ^{2m}} \nabla w \langle v \rangle ^{2m} \cdot \nabla \langle v \rangle ^{2m} \;dvdt \\
\le&\; - (1-\delta) \int_0^T\int_{\R^3} \frac{A[f(1-f)]}{ \langle v \rangle ^{2m}} |\nabla w \langle v \rangle ^{2m}|^2 \;dvdt  + C(m,\delta, f_{in}) \int_0^T\int_{\R^3} |w|^2 \langle v \rangle ^{2m}\;dvdt.
\end{align*}
Similarly, 
\begin{align*}
I_2 \le &\;C(f_{in}) \int_0^T\int_{\R^3} A[w]\nabla g \cdot \nabla w \langle v \rangle ^{2m} \;dvdt \\
\le&\; \delta  \int_0^T\int_{\R^3} \frac{ |\nabla w \langle v \rangle ^{2m}|^2 }{\langle v \rangle ^{2m}(1+|v|)^3}\;dvdt + \frac{1}{\delta}  \int_0^T\int_{\R^3}\langle v \rangle ^{2m} (1+|v|)^3 |A[w]|^2 |\nabla g|^2\;dvdt\\
\le &\; \delta  \int_0^T\int_{\R^3}  \frac{|A[f(1-f)]|}{ \langle v \rangle ^{2m}} { |\nabla w \langle v \rangle ^{2m}|^2 }\;dvdt + \frac{1}{\delta}  \int_0^T \|w\|^2_{L^2_{2m}} \int_{\R^3} (1+|v|)^{3+2m} |\nabla g|^2\;dvdt,
\end{align*}
using the bound from below for $A[\cdot]$ and the bound from above in Lemma \ref{lemA_upper}: 
$$
\|A[h]\|_{L^\infty} \le C\|h\|_{L^1}^{2/3}\|h\|_{L^2}^{1/3} \le C \|h\|_{L^2_{2m}}, \quad \textrm{for all} \; m>\frac{3}{2}.
$$
H\"older and Hardy-Littlewood-Sobolev inequalities applied to $I_3$ lead to 
\begin{align*}
I_3 \le & \int_0^T \| \nabla a[w]\|_{L^6}\left( \int_{\R^3} f^{6/5}  |\nabla w \langle v \rangle ^{2m}|^{6/5}\;dv \right)^{\frac{5}{6}}dt \\
\le & \; \frac{1}{\delta} \int_0^T  \|w\|^2_{L^2} \;dt + \delta  \int_0^T  \left( \int_{\R^3}f(1+|v|)^{9/2+3m}\;dv\right)^{2/3} \int_{\R^3} \frac{ |\nabla w \langle v \rangle ^{2m}|^2 }{\langle v \rangle ^{2m}(1+|v|)^3}\;dvdt  \\
\le&\; \frac{1}{\delta} \int_0^T  \|w\|^2_{L^2_{2m}} \;dt + \tilde\delta  \int_0^T \int_{\R^3}  \frac{A[f(1-f)]}{ \langle v \rangle ^{2m}}{ |\nabla w \langle v \rangle ^{2m}|^2 }\;dvdt ,
\end{align*}
using Lemma \ref{reg_lemma1}} to bound the $9/2+3m$ moments of $f$ (uniformly in time). Similarly,
\begin{align*}
I_4 \le \frac{1}{\delta} \|\nabla a[g](1+|v|)^{3/2}\|^2_{L^\infty_{v,t} }\int_0^T \int_{\R^3} w^2 \langle v \rangle ^{2m}\;dvdt + {\delta}   \int_0^T\int_{\R^3} \frac{ |\nabla w \langle v \rangle ^{2m}|^2 }{ \langle v \rangle ^{2m}(1+|v|)^3}\;dvdt. 
\end{align*}
We briefly show how the term  $\|\nabla a[g](1+|v|)^{3/2}\|^2_{L^\infty_{v,t} }$ is uniformly bounded. Let $|v|$ be large enough. For $s>3$, H\"older inequality yields:
\begin{align*}
|\nabla a[f]|  \le &\int_{\R^3}\frac{f(y)}{|v-y|^2}\;dy \le  |v|^{\frac{3-2s'}{s'}}\left(\int_{B_{\frac{|v|}{2}}(|v|)} f^s\;dy\right)^{1/s}+ \frac{1}{|v|^2} \|f\|_{L^1(\R^3)} \\
 \le & \;c \frac{|v|^{\frac{3-2s'}{s'}}}{(1+|v|)^{\lambda/s}}\left(\int_{\R^3} f^s(1+|y|)^{\lambda}\;dy\right)^{1/s}+ \frac{1}{|v|^2} \|f\|_{L^1(\R^3)},
\end{align*}
 with $\frac{1}{s}+\frac{1}{s'}=1$ and $s'<3/2$. The choice of $\lambda$ so that $\frac{3-s'}{s'} - \frac{\lambda}{s}=-2$ leads to the desired estimate. 
 
 Combining the estimates for $I_1, ...,I_4$ and choosing $\delta$ small enough, one gets
 \begin{align*}
\frac{1}{2} \int_{\R^3} w^2(T) \langle v \rangle ^{2m}  \;dv  \le C  \int_0^T   \|w\|^2_{L^2_{2m}}\left( 1+  \int_{\R^3} (1+|v|)^{3+2m} |\nabla g|^2\;dv \right)\;dt.
\end{align*}
Since $\int_0^T  \int_{\R^3} (1+|v|)^{3+2m} |\nabla g|^2\;dvdt \le C$ thanks to Lemma \ref{reg_lemma1}, Gronwall's inequality implies that $w(t) = 0$ for all $t\ge0$. This concludes the proof of the theorem. 

\section{Regularity of Weak Solutions}\label{sec_reg}

In this section, we prove Theorem \ref{thm_regularity}. Throughout this section we consider initial data $f_{in}$ such that  $0\le f_{in} \le 1$, $\|f_{in}\|_{L^1 \cap L^2_m} < \infty$ for a general  $m \ge {{3}}$, and $H_1(f_{in}) < 0$.  The exact value of $m$ needed for Theorem \ref{thm_regularity} is determined in Lemma \ref{holder_lem}.

As a first step, we use Lemma \ref{reg_lemma1} to show  that weak solutions of \eqref{LFD} instantaneously regularize and belong to weighted $L^\infty (t,T, H^1)$ and weighted $L^2(t,T, H^2)$ for any $t>0$.
\begin{lemma}\label{reg_lemma2}
Let $f$ be any weak solution to \eqref{weak_FDL} as in Theorem  \ref{thm_existence}. For any $t>0$ and $m\ge 3$, we have  
\begin{equation*}
    \sup_{( t, T)} \|\nabla f(\cdot)\|_{L^2_m}^2 + \int_{t}^{T}\|\nabla^2 f(s)\|_{L^2_{m-3}}^2 \ ds \le C(f_{in})\left( 1 + \frac{1}{t}\right).
\end{equation*}
\end{lemma}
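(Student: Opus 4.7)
The plan is to upgrade the weighted $L^2$ bound of Lemma \ref{reg_lemma1} to a weighted gradient bound by differentiating the equation in $v$ and testing against a weighted gradient, then to compensate for the fact that we cannot control $\|\nabla f_{in}\|_{L^2_m}$ by inserting a parabolic time weight. This time weight is responsible for the singular factor $1 + 1/t$. As in Section \ref{sec_ex}, all computations should be carried out on the regularized problem \eqref{FDL_approx} so that $v$-derivatives of $f$ are a priori smooth; the resulting bounds are uniform in the extra dissipation $\delta_1$ and pass to the limit.

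After differentiating \eqref{LFD} in $v_i$, the gradient $\partial_i f$ solves an equation of the form
\begin{equation*}
\partial_t \partial_i f = \partial_j\bigl(A_{jk}[f(1-f)] \partial_k \partial_i f\bigr) + R_i(f),
\end{equation*}
where $R_i$ collects commutator contributions from $\partial_i A[f(1-f)]$, $\partial_i(f(1-f))$, and $\partial_i \nabla a[f]$. I would test against $\brak{v}^m \partial_i f$ and sum in $i$. The leading term produces, via the ellipticity of Lemma \ref{lemA_lower}, a coercive Hessian contribution bounded below by $c_0 \|\nabla^2 f\|_{L^2_{m-3}}^2$. Commutators with the weight $\brak{v}^m$, as well as every term in $R_i$, are controlled by the $L^\infty$ bounds on $A[f(1-f)]$, $\nabla \cdot A[f(1-f)]$, and $\nabla a[f]$ from Lemmas \ref{lemA_upper}--\ref{lema_upper}, together with the propagation of moments (Lemma \ref{step4_lem1}) and the bounds of Lemma \ref{reg_lemma1}. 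Absorbing a small fraction of the Hessian term via Young's inequality should yield a differential inequality
\begin{equation*}
\frac{d}{dt}\|\nabla f\|_{L^2_m}^2 + c_0 \|\nabla^2 f\|_{L^2_{m-3}}^2 \le K\bigl(1 + \|\nabla f\|_{L^2_m}^2\bigr),
\end{equation*}
with $K = K(f_{in}, m, T)$.

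To close, I multiply by $s$, integrate over $(0,t)$, and use the identity $\frac{d}{ds}(s\|\nabla f(s)\|_{L^2_m}^2) = \|\nabla f(s)\|_{L^2_m}^2 + s\frac{d}{ds}\|\nabla f(s)\|_{L^2_m}^2$. The boundary term at $s=0$ vanishes, and the source $\int_0^t \|\nabla f(s)\|_{L^2_m}^2\,ds$ is finite by applying Lemma \ref{reg_lemma1} at weight $m+3$ in place of $m$; this is precisely where the higher moment assumption in Theorem \ref{thm_regularity} enters. Gronwall's inequality then produces
\begin{equation*}
t\|\nabla f(t)\|_{L^2_m}^2 + c_0 \int_0^t s\|\nabla^2 f(s)\|_{L^2_{m-3}}^2\,ds \le C(f_{in}, m, T),
\end{equation*}
and taking the supremum over $s \in [t,T]$, dividing by $t$, and using $s \ge t$ on the dissipation integral deliver the claimed estimate with the $1+1/t$ factor.

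The main obstacles are twofold. First, differentiating \eqref{LFD} in $v$ is not a priori justified for weak solutions, so the computation should be performed on the approximating smooth solutions $f_{\delta_1}$ of \eqref{FDL_approx}, using the extra $\delta_1\Delta$ smoothing, with every constant independent of $\delta_1$. Second, the weight bookkeeping must be arranged carefully so that all lower-order terms generated by differentiating $\brak{v}^m$, and by the nonlocal commutators in $R_i$ (in particular those involving $\partial_i A[f(1-f)]\nabla f$, which after integration by parts produce genuine second derivatives), are controlled by a weight $\brak{v}^{m-3}$ compatible with the coercive Hessian term, rather than requiring a heavier weight that would exceed the integrability available from Lemma \ref{step4_lem1} and Lemma \ref{reg_lemma1}.
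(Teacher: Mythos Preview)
Your strategy is correct and the estimate closes, but you handle two technical points differently from the paper, and it is worth flagging both.

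First, regarding the justification of the $v$-derivative: the paper does \emph{not} go back to the approximating sequence $f_{\delta_1}$. Instead, it works directly with the weak solution and uses divided differences. The test function is $\psi(v,t) = -\chi_{[t_1,t_2]}(t)\,\diff_{-h}\bigl(\brak{v}^m\diff_h f\bigr)$, and all estimates are carried out on $\diff_h f$; only at the end does one send $h\to 0^+$ to conclude that $\nabla f$ is weakly differentiable and obtain the Hessian bound. This avoids having to check that the gradient estimate passes through the $\delta_1\to 0$ limit. Your approach via the regularized problem is also legitimate, but it requires an extra limit step and care that all constants are uniform in $\delta_1$.

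Second, regarding the $1/t$ factor: the paper does \emph{not} use a parabolic time weight. Instead, after obtaining an integral inequality of the form
\[
\|\nabla f(t_2)\|_{L^2_m}^2 + \int_{t_1}^{t_2}\|\nabla^2 f\|_{L^2_{m-3}}^2\,ds \;\lesssim\; \|\nabla f(t_1)\|_{L^2_m}^2 + C(f_{in},m,T),
\]
the paper takes a supremum over $t_2\in[t,T]$ and then \emph{averages} in $t_1\in[0,t]$. Since Lemma~\ref{reg_lemma1} controls $\int_0^t\|\nabla f(t_1)\|_{L^2_m}^2\,dt_1$ (at the cost of higher moments, exactly as you note), the averaged right-hand side is bounded by $C(f_{in})/t + C(f_{in})$. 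Your time-weight trick is an equally standard device and yields the same conclusion; Gronwall is not actually needed since the source term is already integrable in time by Lemma~\ref{reg_lemma1}.

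One small caution on weight bookkeeping: in your displayed differential inequality the right-hand side should carry $\|\nabla f\|_{L^2_{m+3}}^2$ (and $\|f\|_{L^2_{m+6}}^2$) rather than $\|\nabla f\|_{L^2_m}^2$, coming from terms like $\partial_i A[f(1-f)]\,\nabla f$ after Young's inequality with the degenerate coercivity weight $\brak{v}^{-3}$. You anticipated this in your closing paragraph, but the inequality as written understates the weight loss.
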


\begin{proof}
%We would like to test \eqref{weak_FDL} with $-\partial_{v_i}\left(\brak{v}^m \partial_{v_i}f\right)$. However, this is not an admissible test function as we don't know $f\in H^2$. 
{Fix $i=1,2,3$ arbitrary.}
We first recall the notation for divided differences, 
$$\diff_h g := \frac{g(v + e_ih) - g(v)}{h},$$
for which the following discrete product formula holds,
$$\diff_h(fg) = g\diff_h f + f\diff_h g + h\diff_h f \diff_h g.$$
We test   \eqref{weak_FDL}  with $\psi(v,t) = -\chi_{[t_1,t_2]}(t)\diff_{-h}\left(\brak{v}^m\diff_h f\right)$ and obtain
\begin{align*}
    -\int_{t_1}^{t_2}\int_{\R^3} \partial_t f\diff_{-h}&\left(\brak{v}^m\diff_h f\right)\;dvdt \\
    &= \int_{t_1}^{t_2}\int_{\R^3} \nabla \diff_{-h}\left(\brak{v}^m\diff_h f\right) \cdot \left(A[f(1-f)]\nabla f - \nabla a[f]f(1-f)\right) \;dvdt.
\end{align*}
On the left hand side, we perform a discrete integration by parts:
\begin{equation*}
\begin{aligned}
	-\int_{t_1}^{t_2}\int_{\R^3} \partial_t f\diff_{-h}\left(\brak{v}^m\diff_h f\right) \;dvdt &= \frac{1}{2}\int_{t_1}^{t_2}\int_{\R^3} \brak{v}^m \partial_t\left[\diff_h f\right]^2 \;dvdt  \\
	&= \frac{1}{2}\|\brak{v}^m\diff_hf(t_2)\|_{L^2}^2 - \frac{1}{2}\|\brak{v}^m\diff_hf(t_1)\|_{L^2}^2.
\end{aligned}
\end{equation*}
%On the right hand side, we also perform a discrete integration by parts and use the discrete product formula,
%$$\diff_h(fg) = g\diff_h f + f\diff_h g + h\diff_h f \diff_h g$$
We decompose the right hand side as
{\small{\begin{equation*}
\begin{aligned}
	RHS		&= -\int_{t_1}^{t_2}\int_{\R^3} \brak{v}^m\nabla\diff_h f \cdot A[f(1-f)]\nabla\diff_h f  \;dvdt - \int_{t_1}^{t_2}\int_{\R^3} \brak{v}^m\nabla\diff_h f \cdot \left(\diff_h A[f(1-f)]\nabla f \right)  \;dvdt  \\
		&\qquad - \int_{t_1}^{t_2}\int_{\R^3} \brak{v}^m\nabla\diff_h f \cdot \left(h\diff_h A[f(1-f)]\nabla \diff_h f)\right) \;dvdt  + \int_{t_1}^{t_2}\int_{\R^3} \brak{v}^m\nabla\diff_h f \cdot \left(\diff_h\nabla a[f]f(1-f)\right)  \;dvdt  \\
		&\qquad + \int_{t_1}^{t_2}\int_{\R^3} \brak{v}^m\nabla\diff_h f \cdot \left(\nabla a[f]\diff_h(f(1-f))\right)  \;dvdt  + \int_{t_1}^{t_2}\int_{\R^3} \brak{v}^m\nabla\diff_h f \cdot \left(h\diff_h\nabla a[f]\diff_h[f(1-f)]\right)  \;dvdt  \\
		&\qquad - \int_{t_1}^{t_2}\int_{\R^3} \nabla\brak{v}^m\diff_h f \cdot \left(\diff_h A[f(1-f)]\nabla f \right) \;dvdt  - \int_{t_1}^{t_2}\int_{\R^3} \nabla\brak{v}^m\diff_h f \cdot \left(A[f(1-f)]\nabla \diff_h f \right)  \;dvdt  \\
		&\qquad - \int_{t_1}^{t_2}\int_{\R^3} \nabla\brak{v}^m\diff_h f \cdot \left(h\diff_h A[f(1-f)]\nabla \diff_hf \right) \;dvdt   + \int_{t_1}^{t_2}\int_{\R^3} \nabla\brak{v}^m\diff_h f \cdot \left(\diff_h\nabla a[f]f(1-f)\right)  \;dvdt  \\
		&\qquad + \int_{t_1}^{t_2}\int_{\R^3} \nabla\brak{v}^m\diff_h f \cdot \left(\nabla a[f]\diff_h(f(1-f))\right)  \;dvdt  + \int_{t_1}^{t_2}\int_{\R^3} \nabla\brak{v}^m\diff_h f \cdot \left(h\diff_h\nabla a[f]\diff_h(f(1-f))\right)  \;dvdt  \\
		&=: \sum_{j=1}^{12}I_j.
\end{aligned}	
\end{equation*}}}
For $I_1$ we use Lemma \ref{lemA_lower} to obtain
\begin{equation*}
	I_1 \le - C(f_{in})\int_{t_1}^{t_2}\int_{\R^3} \brak{v}^{m-3} |\nabla \diff_hf|^2 \;dvdt.
\end{equation*}
Next, for any $\delta>0$, we upper bound $I_2$ using Young's inequality and $\|\nabla A[f(1-f)\|_{L^\infty} \le C(f_{in})$,
\begin{equation*}
\begin{aligned}
	|I_2| &\lesssim \delta\int_{t_1}^{t_2}\int_{\R^3} \brak{v}^{m-3}|\nabla \diff_h f|^2\;dvdt  + \delta^{-1}\int_{t_1}^{t_2}\int_{\R^3} \brak{v}^{m+3}|\nabla f|^2 \;dvdt \\
		&\lesssim \delta\int_{t_1}^{t_2}\int_{\R^3} \brak{v}^{m-3}|\nabla \diff_h f|^2\;dvdt   + \delta^{-1}\|\nabla f\|_{L^2([t_1,t_2];L^2_{m+3})}^2.
\end{aligned}
\end{equation*}
In the same way, we bound $I_3$. We bound $I_4$ using Young's inequality and $\|\nabla^2 a[f]\|_{L^p} \lesssim \|f\|_{L^p}$ for $1 < p < \infty$ by Calderon-Zygmund (see Chapter 9.4 in \cite{GT98}),
\begin{equation*}
\begin{aligned}
	|I_4| &\lesssim \delta\int_{t_1}^{t_2}\int_{\R^3} \brak{v}^{m-3}|\nabla \diff_h f|^2\;dvdt  + \delta^{-1}\int_{t_1}^{t_2}\int_{\R^3} \brak{v}^{m+3}|\diff_h \nabla a[f]|^2f^2 \;dvdt \\
		&\lesssim \delta\int_{t_1}^{t_2}\int_{\R^3} \brak{v}^{m-3}|\nabla \diff_h f|^2 \;dvdt + \delta^{-1}\int_{t_1}^{t_2}\int_{\R^3} \left(|\diff_h\nabla a[f]|^{\frac{2(m+6)}{3}} + f^{\frac{2(m+6)}{m+3}}\brak{v}^{m+6}\right) \;dvdt \\
		&\lesssim \delta\int_{t_1}^{t_2}\int_{\R^3} \brak{v}^{m-3}|\nabla \diff_h f|^2 \;dvdt+ \delta^{-1}\|f\|_{L^2([t_1,{t_2}];L^2_{m+6})}^2.
\end{aligned}
\end{equation*}
For $\delta > 0$, we bound $I_5$, using Young's inequality and Lemma \ref{lema_upper}, as
\begin{equation*}
\begin{aligned}
	|I_5| &\lesssim \delta\int_{t_1}^{t_2}\int_{\R^3} \brak{v}^{m-3}|\nabla \diff_h f|^2 \;dvdt + \delta^{-1}\int_{t_1}^{t_2}\int_{\R^3} \brak{v}^{m+3}|\diff_h f(1-f)|^2 \;dvdt\\
		&\lesssim \delta\int_{t_1}^{t_2}\int_{\R^3} \brak{v}^{m-3}|\nabla \diff_h f|^2 \;dvdt + \delta^{-1}\|\diff_h f\|_{L^2([t_1,t_2];L^2_{m+3})}^2 .
\end{aligned}
\end{equation*} 
Again, we bound $I_6$ in a similar manner to $I_4$ and $I_5$. For $I_7$, we use $|\nabla\brak{v}^m)| \lesssim \brak{v}^{m-1}$, Lemma \ref{lemA_upper}, and $\|\diff_h f\|_{L^2_m} \lesssim \|\nabla f\|_{L^2_m}$ (via a simple modification to Proposition IX.9(iii)  in \cite{Brezis}), to obtain
\begin{equation*}
\begin{aligned}
	|I_7| &\lesssim \int_{t_1}^{t_2}\int_{\R^3} \brak{v}^{m-1}|\nabla f|^2 \;dvdt.		
\end{aligned}
\end{equation*}
Next, for $I_8$, we integrate by parts and use Lemma \ref{lemA_upper} and $|\nabla^2\varphi_R(|v|)\brak{v}^m| \lesssim \brak{v}^{m-2}$ to obtain
\begin{equation*}
	\begin{aligned}
		I_8 =& -\frac{1}{2}\int_{t_1}^{t_2}\int_{\R^3} \nabla \brak{v}^m \cdot A[f(1-f)]\nabla (\diff_h f)^2 \;dvdt \\
			=& \frac{1}{2}\int_{t_1}^{t_2}\int_{\R^3} \nabla \brak{v}^m \cdot (\nabla \cdot A)[f(1-f)](\diff_h f)^2\;dvdt  \\
			&+ \frac{1}{2}\int_{t_1}^{t_2}\int_{\R^3} \nabla^2\brak{v}^m : A[f(1-f)](\diff_h f)^2\;dvdt \\
			\lesssim & \|{\nabla}f\|_{L^2([t_1,t_2];L^2_{m-1})}^2 + \|{\nabla}f\|_{L^2([t_1,t_2];L^2_{m-2})}^2.
	\end{aligned}
\end{equation*}
We bound $I_9$ in a similar manner to $I_7$ and $I_8$. For $I_{10}$, we use Young's inequality and Calderon-Zygmund, to obtain
\begin{equation*}
	\begin{aligned}
		|I_{10}| &\lesssim  \|\diff_h f\|_{L^2([t_1,t_2];L^2_{m-1})}^2 + \int_{t_1}^{t_2}\int_{\R^3}\brak{v}^{m-1}|\diff_h\nabla a[f]|^2f^2 \;dvdt \\
			&\lesssim \|\diff_h f\|_{L^2([t_1,t_2];L^2_{m-1})}^2 + \|f\|_{L^2([t_1,t_2];L^2_m)}^2.
	\end{aligned}
\end{equation*}
For $I_{11}$, we use $\nabla a[f] \in L^\infty([0,T]\times \R^3)$ from Lemma \ref{lema_upper} to obtain
\begin{equation*}
	\begin{aligned}
		|I_{11}| &\lesssim \int_{t_1}^{t_2}\int_{\R^3} \brak{v}^{m-1}|\diff_h f|^2\;dvdt \lesssim \|\diff_hf\|_{L^2([t_1,t_2];L^2_{m-1})}^2.
	\end{aligned}
\end{equation*}
Finally, $I_{12}$ is bounded similarly to $I_{10}$ and $I_{11}$. Thus, we have shown (using once more that $\|\diff_h f\|_{L^2_m} \lesssim \|\nabla f\|_{L^2_m)}$),
\begin{equation*}
\begin{aligned}
	\int_{\R^3}\varphi_R(|v|)&\brak{v}^m \left[\diff_h f(t_2)^2 - \diff_h f(t_1)^2\right] \;dv+ (C - \delta)\int_{t_1}^{t_2}\int_{\R^3} \varphi_R(|v|)\brak{v}^{m-3}|\diff_h \nabla f|^2 \;dvdt\\
	 &\lesssim (1+\delta^{-1})\left(\|\nabla f\|_{L^2([t_1,t_2];L^2_{m-2} \cap L^2_{m+3})}^2 + \|f\|_{L^2([t_1,t_2];L^2_m \cap L^2_{m+6})}^2\right), 
\end{aligned}
\end{equation*}
where the implicit constants depend only on $f_{in},\ T, $ and $m$.
Now, taking $\delta < C/2$ and taking $h \rightarrow 0^+$, {$R\to\infty$,} we see $\nabla f$ is weakly differentiable and
\begin{equation*}
\begin{aligned}
	\int_{\R^3}\brak{v}^m &|\nabla f(t_2)|^2 \;dv+ \int_{t_1}^{t_2}\int_{\R^3} \brak{v}^{m-3}|\nabla^2 f|^2 \;dvdt \lesssim  \int_{\R^3}\brak{v}^m|\nabla f(t_1)|^2 \;dv\\
		&\qquad+ \left(\|\nabla f\|_{L^2([t_1,t_2];L^2_{m-2} \cap L^2_{m+3})}^2 + \|f\|_{L^2([t_1,t_2];L^2_m \cap L^2_{m+6})}^2\right). 
\end{aligned}
\end{equation*}
Next, taking a {supremum} over $t_2$ in $[t,T]$ and an average over $t_1\in [0,t]$, and applying Lemma \ref{reg_lemma1}, we get
\begin{equation*}
	\begin{aligned}
		\sup_{(t,T)} &\|\nabla f(\cdot)\|_{L^2_m}^2 + \int_{t}^T\|\nabla^2 f(s)\|_{L^2_{m-3}}^2\;ds\\
		&\lesssim {\frac{1}{t}C(f_{in},T,m)+}
		\|\nabla f\|_{L^2([0,T];L^2 \cap L^2_{m+3})}^2 + \|f\|_{L^2([0,T];L^2 \cap L^2_{m+6})}^2\\
		&\le C(f_{in},T,m) \left(1 + \frac{1}{t}\right).\\
	\end{aligned}
\end{equation*}
This concludes the proof of the lemma.
\end{proof}

Next, we show how to control the $L^\infty (t,T,H^2) \cap L^2(t,T,H^3)$-regularity of $f$:
\begin{lemma}\label{reg_lemma3}
Let $f$ be any weak solution to \eqref{weak_FDL} as in Theorem  \ref{thm_existence} with initial data as described at the beginning of this section. For any $t>0$ and $m\ge 3$, we have  
\begin{equation*}
    \sup_{( t, T)} \|\nabla^2 f(\cdot)\|_{L^2_m}^2 + \int_{t}^{T}\|\nabla^3 f(s)\|_{L^2_{m-3}}^2 \; ds \le C(f_{in})\left( 1 + \frac{1}{t^2}\right).
\end{equation*}
\end{lemma}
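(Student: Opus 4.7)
The plan is to replicate the argument of Lemma \ref{reg_lemma2} at one higher derivative order. Fix an index $i \in \{1,2,3\}$ and let $\diff_{h_1}, \diff_{h_2}$ denote two independent divided differences in direction $e_i$. Applying $\diff_{h_1}$ to the weak formulation \eqref{weak_FDL}, and expanding via the discrete Leibniz rule, one finds that $\diff_{h_1} f$ satisfies an equation whose principal part remains $\nabla\cdot(A[f(1-f)]\nabla \diff_{h_1} f)$ and whose forcing pairs $\diff_{h_1} A[f(1-f)]$ and $\diff_{h_1}\nabla a[f]$ with $\nabla f$, $f$, and $f(1-f)$. We then test this equation against $-\chi_{[t_1,t_2]}(t)\diff_{-h_2}\bigl(\varphi_R(|v|)\brak{v}^m \diff_{h_2}\diff_{h_1} f\bigr)$, following the same choice of test function as in Lemma \ref{reg_lemma2}, where $\varphi_R$ is the cut-off from Lemma \ref{step2_lem6}.

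The time derivative integrates (after a discrete integration by parts) to $\tfrac12\|\brak{v}^{m/2}\diff_{h_2}\diff_{h_1} f(t_2)\|_{L^2}^2 - \tfrac12\|\brak{v}^{m/2}\diff_{h_2}\diff_{h_1} f(t_1)\|_{L^2}^2$, and the leading diffusive contribution produces the coercive term $c(f_{in})\int_{t_1}^{t_2}\int \varphi_R\brak{v}^{m-3}|\nabla \diff_{h_2}\diff_{h_1} f|^2$ via Lemma \ref{lemA_lower}. The remaining error integrals have exactly the algebraic shape of $I_1,\dots, I_{12}$ in Lemma \ref{reg_lemma2}, with $f$ upgraded to $\diff_{h_1} f$ in all but one factor; they are controlled by the same combination of Young's inequality, the coefficient bounds from Lemmas \ref{lemA_upper} and \ref{lema_upper}, the Calderon--Zygmund estimate $\|\nabla^2 a[g]\|_{L^p} \lesssim \|g\|_{L^p}$, and the analogous bound $\|\nabla A[g]\|_{L^p} \lesssim \|g\|_{L^p}$ that follows by differentiating \eqref{defn_coef} and invoking Hardy--Littlewood--Sobolev. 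Sending $h_1, h_2 \to 0^+$ and $R\to\infty$ one obtains
\begin{equation*}
\|\brak{v}^{m/2}\nabla^2 f(t_2)\|_{L^2}^2 + c\int_{t_1}^{t_2}\|\brak{v}^{(m-3)/2}\nabla^3 f(s)\|_{L^2}^2\, ds \lesssim \|\brak{v}^{m/2}\nabla^2 f(t_1)\|_{L^2}^2 + \mathcal{R}(t_1,t_2),
\end{equation*}
where the remainder $\mathcal{R}(t_1,t_2)$ is bounded by weighted $L^2$ norms of $f$ and $\nabla f$ (with weight at most $m+6$) that were already controlled in Lemmas \ref{reg_lemma1} and \ref{reg_lemma2}.

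The final step is to take a supremum over $t_2 \in [t,T]$ and average $t_1$ over $[t/2, t]$. The averaging produces the factor $2/t$ on $\|\nabla^2 f(t_1)\|_{L^2_m}^2$; bounding the averaged boundary term via Lemma \ref{reg_lemma2} applied on $[t/4, T]$ contributes an additional factor of order $1 + 4/t$, so altogether this boundary contribution is $\lesssim 1 + 1/t^2$, which produces the claimed time-weight in the statement. The main obstacle will be the careful bookkeeping of the many cross terms generated by two successive divided differences passed through the nonlinear coefficients, and in particular ensuring that any term which would formally require $\nabla^2 A[f(1-f)]$ (and therefore three derivatives of $f$) is first rewritten via a further integration by parts that transfers one derivative onto the smooth weight $\varphi_R \brak{v}^m$ or onto the test function, so that the available estimates from Lemma \ref{reg_lemma2} on $\nabla f$ suffice to close the bound.
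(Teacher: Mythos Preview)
Your overall strategy matches the paper's: raise the order of the difference quotient by one, test, and bootstrap off Lemma \ref{reg_lemma2}. (The paper uses one genuine derivative $\partial_{v_i}$ together with one divided difference $\diff_h$, which is legitimate after Lemma \ref{reg_lemma2}; your two divided differences are an equivalent cosmetic choice.) The gap is in your treatment of the terms where \emph{both} difference quotients fall on the nonlinear coefficients, i.e.\ the analogues of the paper's $I_3$ and $I_4$, of the form
\[
\int_{t_1}^{t_2}\!\!\int_{\R^3}\brak{v}^m\,\nabla\diff_{h_2}\diff_{h_1} f\cdot\bigl(\diff_{h_2}\diff_{h_1}A[f(1-f)]\bigr)\nabla f
\quad\text{and}\quad
\int_{t_1}^{t_2}\!\!\int_{\R^3}\brak{v}^m\,\nabla\diff_{h_2}\diff_{h_1} f\cdot\bigl(\diff_{h_2}\diff_{h_1}\nabla a[f]\bigr)f(1-f).
\]
These cannot be absorbed using only weighted $L^2$ bounds on $f$ and $\nabla f$: unlike $\nabla A$, the second derivative $\nabla^2 A[f(1-f)]$ is \emph{not} in $L^\infty$, so the argument from $I_2$ of Lemma \ref{reg_lemma2} breaks. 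What the paper actually does is apply Young's inequality, then Calderon--Zygmund ($\|\nabla^2 A[g]\|_{L^p}\lesssim\|g\|_{L^p}$, $1<p<\infty$), H\"older, and the Sobolev embedding $H^1\hookrightarrow L^6$, which converts the bad factor into $\|\brak{v}^{(m+3)/2}\nabla f\|_{L^6}^2\lesssim \|\nabla^2 f\|_{L^2_{m+3}}^2+\|\nabla f\|_{L^2_{m+1}}^2$. In other words the remainder $\mathcal{R}$ unavoidably contains the $\nabla^2 f$ quantities supplied by Lemma \ref{reg_lemma2}, and in fact enters \emph{quadratically} (the paper's bound reads $\|\nabla f\|_{L^\infty_t L^2_{m+6}}^4+\|\nabla^2 f\|_{L^2_t L^2_{m+3}}^4$); it is this quartic dependence that produces the $(1+1/t)^2\sim 1+1/t^2$ in the final estimate, not only the averaging of the boundary term.

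Your proposed workaround---integrate by parts to shift one derivative off $\nabla^2 A$ onto the weight or the test function---does not close: moving the derivative lands it either on $\nabla f$ (producing a factor $\nabla^2 f$ anyway) or on $\nabla\diff_{h_2}\diff_{h_1} f$ (a fourth-order quantity you do not control). Note also that your parenthetical ``and therefore three derivatives of $f$'' is a misdiagnosis: $\nabla^2 A[f(1-f)]$ has, by Calderon--Zygmund, the same $L^p$ integrability as $f(1-f)$ itself for $1<p<\infty$; the obstruction is only the missing $L^\infty$ endpoint, and the cure is Sobolev embedding combined with the already-established $H^2$ control.
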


\begin{proof}
	Thanks to Lemma \ref{reg_lemma2}, we can take
	$$\psi(v,t) ;= \chi_{[t_1,t_2]}\diff_{-h}\partial_{v_i} \left(\brak{v}^m\diff_{h} f_{v_i}\right),$$ 
	as test function for \eqref{weak_FDL}, and obtain
\begin{equation*}
\begin{aligned}
    \int_{t_1}^{t_2}\int_{\R^3} \partial_t f\partial_{v_i}\diff_{-h}\left(\brak{v}^m\diff_h f_{v_i}\right)  \;dvdt&= -\int_{t_1}^{t_2}\int_{\R^3} \nabla \partial_{v_i}\diff_{-h}\left(\brak{v}^m\diff_h f_{v_i}\right)\cdot \\
    &\qquad \cdot \left(A[f(1-f)]\nabla f - \nabla a[f]f(1-f)\right) \;dvdt.
\end{aligned}
\end{equation*}
On the left hand side, we perform one discrete integration by parts and one standard integration by parts and get
\begin{equation*}
\begin{aligned}
	LHS &= \frac{1}{2}\int_{t_1}^{t_2}\int_{\R^3} \brak{v}^m \partial_t\left[\diff_h f_{v_i}\right]^2  \;dvdt= \frac{1}{2}\|\brak{v}^m\diff_hf_{v_i}(t_2)\|_{L^2}^2 - \frac{1}{2}\|\brak{v}^m\diff_hf_{v_i}(t_1)\|_{L^2}^2.
\end{aligned}
\end{equation*}
We also perform discrete and standard integration by parts to decompose the right hand side as
{\small{ \begin{align*}
	RHS %&= -\int_{t_1}^{t_2}\int_{\R^3}\nabla\left(\brak{v}^m\diff_h f_{v_i}\right) \cdot \left( \partial_{v_i}\diff_h(A[f(1-f)]\nabla f) - \partial_{v_i}\diff_h(\nabla a[f]f(1-f))\right)\\
		=& -\int_{t_1}^{t_2}\int_{\R^3} \brak{v}^m\nabla\diff_h f_{v_i} \cdot A[f(1-f)]\nabla\diff_h f_{v_i} \;dvdt \\
		&- \int_{t_1}^{t_2}\int_{\R^3} \brak{v}^m\nabla\diff_h f_{v_i} \cdot \left(\diff_h A[f(1-f)]\nabla f_{v_i} + \partial_{v_i} A[f(1-f)]\diff_{h}\nabla f \right) \;dvdt\\
		&- \int_{t_1}^{t_2}\int_{\R^3} \brak{v}^m\nabla\diff_h f_{v_i} \cdot \left(\partial_{v_i}\diff_h A[f(1-f)]\nabla f\right) \;dvdt + \int_{t_1}^{t_2}\int_{\R^3} \brak{v}^m\nabla\diff_h f_{v_i} \cdot \left(\diff_h \partial_{v_i}\nabla a[f]f(1-f)\right)\;dvdt \\
		& + \int_{t_1}^{t_2}\int_{\R^3} \brak{v}^m\nabla\diff_h f_{v_i} \cdot \left(\partial_{v_i}\nabla a[f]\diff_h(f(1-f)) + \diff_h\nabla a[f]\partial_{v_i}(f(1-f))\right) \;dvdt \\
		& + \int_{t_1}^{t_2}\int_{\R^3} \brak{v}^m\nabla\diff_h f_{v_i} \cdot \left(\nabla a[f]\partial_{v_i}\diff_h[f(1-f)]\right)\;dvdt - \int_{t_1}^{t_2}\int_{\R^3} \nabla\brak{v}^m\diff_h f_{v_i} \cdot \left(A[f(1-f)]\nabla\diff_h f_{v_i} \right) \;dvdt \\
		& - \int_{t_1}^{t_2}\int_{\R^3} \nabla\brak{v}^m\diff_h f_{v_i} \cdot \left(\diff_h A[f(1-f)]\nabla f_{v_i} + \partial_{v_i} A[f(1-f)]\diff_{h}\nabla f \right) \;dvdt \\
		& - \int_{t_1}^{t_2}\int_{\R^3} \nabla\brak{v}^m\diff_h f_{v_i} \cdot \left(\partial_{v_i}\diff_h A[f(1-f)]\nabla f \right)\;dvdt + \int_{t_1}^{t_2}\int_{\R^3} \nabla\brak{v}^m\diff_h f_{v_i} \cdot \left(\diff_h \partial_{v_i}\nabla a[f]f(1-f)\right) \;dvdt \\
		& + \int_{t_1}^{t_2}\int_{\R^3} \nabla\brak{v}^m\diff_h f_{v_i} \cdot \left(\partial_{v_i}\nabla a[f]\diff_h(f(1-f)) + \diff_h\nabla a[f]\partial_{v_i}(f(1-f))\right) \;dvdt \\
		& + \int_{t_1}^{t_2}\int_{\R^3} \nabla\brak{v}^m\diff_h f_{v_i} \cdot \left(\nabla a[f]\partial_{v_i}\diff_h[f(1-f)]\right)\;dvdt +\mathcal{E} \\
		&:= \sum_{j=1}^{12}I_j + \mathcal{E},
\end{align*}}}
where $\mathcal{E}$ denotes the error terms, which originate from the discrepancy between the product rules for $\diff_h$ and $\partial_{v_i}$. These terms are bounded identically to the others and so we omit the bound on $\mathcal{E}$.	
For $I_1$, our coercive term, we use Lemma \ref{lemA_lower} to obtain
\begin{equation*}
	I_1 \le - C(f_{in})\int_{t_1}^{t_2}\int_{\R^3} \brak{v}^{m-3} |\nabla \diff_hf_{v_i}|^2.
\end{equation*}
\begin{comment}
Next, for any $\delta>0$, we upper bound $I_2$ using Young's inequality and $\nabla A[f(1-f)] \in L^\infty$ by Lemma \ref{lemA_upper}, to obtain
\begin{equation}
\begin{aligned}
	|I_2| &\lesssim \delta\int_{t_1}^{t_2}\int_{\R^3} \varphi(|v|)\brak{v}^{m-3}|\nabla \diff_h f_{v_i}|^2 + \delta^{-1}\int_{t_1}^{t_2}\int_{\R^3} \brak{v}^{m+3}|\nabla f_{v_i} + \nabla \diff_h f|^2\\
		&\lesssim \delta\int_{t_1}^{t_2}\int_{\R^3} \varphi(|v|)\brak{v}^{m-3}|\nabla \diff_h f|^2 + \delta^{-1}\|\nabla^2 f + \diff_h\nabla f\|_{L^2([t_1,T];L^2_{m+3})}^2
\end{aligned}
\end{equation}
\end{comment}
For $I_3$, when two derivatives land on the kernel $A[f]$, we use Young's inequality, H\"older's inequality in space, the Sobolev embedding $H^1(\R^3) \embeds L^6(\R^3)$, and Calderon-Zygmund, to obtain for any $\delta > 0$, the estimate
\begin{equation*}
	\begin{aligned}
	|I_3| &\lesssim \delta\int_{t_1}^{t_2}\int_{\R^3} \brak{v}^{m-3}|\nabla\diff_h f_{v_i}|^2 \;dvdt  + \delta^{-1}\int_{t_1}^{t_2}\int_{\R^3} \brak{v}^{m+3}|\partial_{v_i}\diff_h A[f(1-f)]|^2|\nabla f|^2 \;dvdt  \\
		&\lesssim \delta\int_{t_1}^{t_2}\int_{\R^3} \brak{v}^{m-3}|\nabla\diff_h f_{v_i}|^2 \;dvdt  + \delta^{-1}\int_{t_1}^{t_2}\|f\|_{L^{3}}\|\brak{v}^{m+3}|\nabla f|^2\|_{L^3} \;dt \\
		&\lesssim \delta\int_{t_1}^{t_2}\int_{\R^3} \brak{v}^{m-3}|\nabla\diff_h f_{v_i}|^2\;dvdt   + \delta^{-1}\int_{t_1}^{t_2}\|\brak{v}^{\frac{m+3}{2}}\nabla f\|_{L^6}^2 \;dt \\
		&\lesssim \delta\int_{t_1}^{t_2}\int_{\R^3} \brak{v}^{m-3}|\nabla\diff_h f_{v_i}|^2 \;dvdt  + \delta^{-1}\int_{t_1}^{t_2}\left(\|\brak{v}^{\frac{m+1}{2}}\nabla f)\|_{L^2}^2 + \|\brak{v}^\frac{m+3}{2}\nabla^2 f\|_{L^2}^2\right) \;dt \\
		&\lesssim \delta\int_{t_1}^{t_2}\int_{\R^3} \brak{v}^{m-3}|\nabla\diff_h f_{v_i}|^2 \;dvdt+ \delta^{-1}\left(\|\nabla f\|_{L^2\left([t_1,t_2];L^2_{m+1}\right)}^2 + \|\nabla^2 f\|_{L^2\left([t_1,t_2];L^2_{m +3}\right)}^2\right).
	\end{aligned}
\end{equation*}
Similarly, for $I_4$, when two derivatives land on the kernel $\nabla a[f]$, we use Young's inequality, H\"older's inequality, Calderon-Zygmund, Lebesgue interpolation, the Sobolev embedding $H^1(\R^3) \embeds L^6(\R^3)$, and Lemma \ref{reg_lemma1} to obtain for any $\delta > 0$, the estimate
\begin{equation*}
\begin{aligned}
	|I_4| &\lesssim \delta\int_{t_1}^{t_2}\int_{\R^3} \brak{v}^{m-3}|\nabla \diff_h f_{v_i}|^2 \;dvdt + \delta^{-1}\int_{t_1}^{t_2}\int_{\R^3} \brak{v}^{m+3}|\diff_h \partial_{v_i}\nabla a[f]|^2f^2\;dvdt  \\
		&\lesssim \delta\int_{t_1}^{t_2}\int_{\R^3} \brak{v}^{m-3}|\nabla \diff_h f_{v_i}|^2\;dvdt   + \delta^{-1}\int_{t_1}^{t_2} \||\partial_{v_i}\diff_h a[\nabla f]|^2\|_{L^3}\|f^2\brak{v}^{m+3}\|_{L^{3/2}} \;dt \\
		&\lesssim \delta\int_{t_1}^{t_2}\int_{\R^3} \brak{v}^{m-3}|\nabla \diff_h f_{v_i}|^2 \;dvdt  + \delta^{-1}\int_{t_1}^{t_2} \|\partial_{v_i}\diff_h a[\nabla f]\|_{L^6}^2 \|f\brak{v}^{\frac{m+3}{2}}\|_{L^3}^2 \;dt \\
		&\lesssim \delta\int_{t_1}^{t_2}\int_{\R^3} \brak{v}^{m-3}|\nabla \diff_h f_{v_i}|^2\;dvdt  + \delta^{-1}\int_{t_1}^{t_2} \|\nabla f\|_{L^6}^2 \left(\|f\brak{v}^{\frac{m+3}{2}}\|_{L^2}\|f\brak{v}^{\frac{m+3}{2}}\|_{L^6}\right)\;dt \\
		&\lesssim \delta\int_{t_1}^{t_2}\int_{\R^3} \brak{v}^{m-3}|\nabla \diff_h f_{v_i}|^2 \;dvdt  + \delta^{-1}\int_{t_1}^{t_2} \|\nabla^2 f\|_{L^2}^2 \left(\|f\|_{L^2_{m+3}}\left\|\nabla \left(f\brak{v}^{\frac{m+3}{2}}\right)\right\|_{L^2}\right) \;dt \\
		&\lesssim \delta\int_{t_1}^{t_2}\int_{\R^3} \brak{v}^{m-3}|\nabla \diff_h f_{v_i}|^2\;dvdt  + \delta^{-1}\int_{t_1}^{t_2} \|\nabla^2 f\|_{L^2}^2 \|f\|_{L^2_{m+3}}\left(\|\nabla f\|_{L^2_{m+3}} + \|f\|_{L^2_{m+1}}\right) \;dt \\
		&\lesssim \delta\int_{t_1}^{t_2}\int_{\R^3} \brak{v}^{m-3}|\nabla \diff_h f_{v_i}|^2 \;dvdt  + \delta^{-1}\|\nabla^2 f\|_{L^2([t_1,t_2];L^2)}^2
{(}\|\nabla f\|_{L^\infty([t_1,t_2];L^2_{m+3})} {+1).}
\end{aligned}
\end{equation*}
To bound the remaining terms $I_2$ and $I_5, \cdots, I_{12}$, we modify the arguments from Lemma \ref{reg_lemma2} in a similar fashion, using the additional tool of the Sobolev embedding as necessary, to obtain
\begin{equation*}
	\begin{aligned}
		&\int_{\R^3} \brak{v}^m |\diff_h f_{v_i}(t_2)|^2 \;dv + (C-\delta)\int_{t_1}^{t_2}\int_{\R^3} \brak{v}^{m-3}|\nabla \diff_h f_{v_i}|^2\;dvdt  \lesssim 1 +  \int_{\R^3} \brak{v}^m |\diff_h f_{v_i}|^2 \;dv \\
		&\qquad\qquad +(1 +\delta^{-1})\left(\|\nabla f\|_{L^\infty([t_1,t_2];L^2_{m+6})}^4 + \|\nabla^2 f\|_{L^2([t_1,t_2];L^2_{m+3})}^4\right).
	\end{aligned}
\end{equation*}
 Thus, taking $\delta$ sufficiently small and taking {the limit} $h\rightarrow 0^+$, we conclude that $\nabla^2 f$ is weakly differentiable and we obtain
\begin{equation*}
	\begin{aligned}
		\|\nabla^2f(t_2)\|_{L^2_m}^2 + \int_{t_1}^{t_2} \|\nabla^3 f\|_{L^2_{m-3}}^2 \;ds &\lesssim 1 + \|\nabla^2 f(t_1)\|_{L^2_m}^2 +\|\nabla f\|_{L^\infty([t_1,t_2];L^2_{m+6})}^4\\
		&\qquad+ \|\nabla^2 f\|_{L^2([t_1,t_2];L^2_{m+3})}^4.
	\end{aligned}
\end{equation*}
 Taking a supremum over $t_2 \in [2t,T]$ and an average over $t_1\in [t,2t]$, and applying Lemma \ref{reg_lemma2}, we obtain
\begin{equation*}
	\begin{aligned}
		\sup_{(2t, T)} \|\nabla^2 f(\cdot)\|_{L^2_m}^2 + \int_{2t}^{T}\|\nabla^3 f\|_{L^2_{m-3}}^2\;ds  &\lesssim \frac{1}{t}\int_{t}^{T_2}\|\nabla^2 f\|_{L^2_m}^2 \;ds \\
			&\qquad+\left(\int_{t}^{T} \|\nabla^2 f\|_{L^2_{m+3}}^2 \; ds\right)^2\\
			&\qquad+\left(\sup_{(t,T)} \|\nabla f\|_{L^2_{m+6}}^2\right)^2\\
			&\lesssim \left( 1+ \frac{1}{t^2}\right).
	\end{aligned}
\end{equation*}
%Finally, fixing $T_1 = T_2/2$, we obtain the desired result. 
\end{proof}

\begin{remark}
From Lemma \ref{reg_lemma3}, one can continue to bootstrap spatial regularity, and obtain the corresponding higher regularity estimates, that provided $f_{in} \in L^2_{m+6k}$, for each $0 < t_0 < T$, $\nabla^k f \in L^\infty([t_0,T]; L^2_m)$, and moreover,
\begin{equation*}%\label{rmk_equation}
	\sup_{t_0 < t < T} \|\nabla^k f(\cdot)\|_{L^2_m} + \left(\int_{t_0}^{T}\|\nabla^{k+1}f(s)\|^2_{L^2_{m-3}} \;ds \right)^{1/2} \lesssim_{f_{in},k,m,T} \left(1 + \frac{1}{t_0}\right)^{k/2}.
\end{equation*}
If $f_{in}$ is rapidly decaying, i.e. $f_{in} \in L^2_m$ for each $m \ge 0$, then $f$ is Schwartz class in space. That is, $f\in L^\infty([t,T];\mathcal{S}(\R^3))$ for each $t > 0$.
\end{remark}

Instead of bootstrapping spatial regularity and deducing the corresponding time regularity from the equation, we use Lemma \ref{reg_lemma3} to conclude H\"older regularity of $f$. Combined with the parabolic divergence structure of \eqref{LFD}, we deduce spatial and temporal regularity simultaneously via classical Schauder estimates. As the initial step, we have the following lemma:

\begin{lemma}\label{holder_lem}
Let $f$ be any weak solution to \eqref{weak_FDL} as in Theorem  \ref{thm_existence}, with $f_{in} \in L^1_3 \cap L^2(12)$. Then, $f\in C^{\alpha/2}((0,T];C^\alpha(\R^3))$ for some $\alpha \in (0,1)$.
\end{lemma}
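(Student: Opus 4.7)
The plan is to combine the spatial Sobolev estimates of Lemma~\ref{reg_lemma3} with a time estimate coming from the equation itself, and then extract joint parabolic H\"older regularity via interpolation followed by Morrey's embedding in fractional Sobolev spaces.

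\textbf{Step 1 (spatial regularity).} Fix any $t_0 \in (0,T)$. Applying Lemmas~\ref{reg_lemma2} and~\ref{reg_lemma3} at an appropriate weight (the hypothesis $f_{in} \in L^1_3 \cap L^2_{12}$ is chosen precisely to close the weighted moment chain Lemma~\ref{reg_lemma1} $\to$ Lemma~\ref{reg_lemma2} $\to$ Lemma~\ref{reg_lemma3}), we obtain $\nabla f, \nabla^2 f \in L^\infty(t_0,T; L^2(\R^3))$, i.e.\ $f \in L^\infty(t_0,T; H^2(\R^3))$. The Sobolev embedding $H^2(\R^3) \hookrightarrow W^{1,6}(\R^3)$ then yields $f \in L^\infty(t_0,T; W^{1,6}(\R^3))$.

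\textbf{Step 2 (time regularity).} Since $0 \le f \le 1$ and $f \in L^1$, Lemmas~\ref{lemA_upper} and~\ref{lema_upper} guarantee that $A[f(1-f)]$ and $\nabla a[f]$ both belong to $L^\infty((0,T)\times\R^3)$. Combined with Step~1, both fluxes $A[f(1-f)]\nabla f$ and $\nabla a[f]\,f(1-f)$ lie in $L^\infty(t_0,T; L^6(\R^3))$, so that reading \eqref{LFD} in divergence form gives
\[
\partial_t f \in L^\infty(t_0,T; W^{-1,6}(\R^3)).
\]

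\textbf{Step 3 (interpolation and Morrey).} From Steps 1 and 2, for any $t_0 \le t_1 < t_2 \le T$,
\[
\|f(t_2)-f(t_1)\|_{W^{-1,6}} \lesssim |t_2-t_1|, \qquad \|f(t_2)-f(t_1)\|_{W^{1,6}} \lesssim 1.
\]
Complex interpolation between $W^{1,6}$ and $W^{-1,6}$, whose $\theta$-interpolate is $W^{1-2\theta,6}$, yields
\[
f \in C^\theta\!\left([t_0,T]; W^{1-2\theta,6}(\R^3)\right), \qquad \theta \in (0,1).
\]
Morrey's inequality gives $W^{1-2\theta,6}(\R^3) \hookrightarrow C^{0,\,1/2-2\theta}(\R^3)$ whenever $\theta < 1/4$. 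Setting $\theta = 1/8$ produces the matching relation $\theta = \alpha/2$ with $\alpha = 1/4$; since $t_0 > 0$ was arbitrary, we conclude $f \in C^{\alpha/2}((0,T]; C^\alpha(\R^3))$ with $\alpha = 1/4$.

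\textbf{Main obstacle.} The routine part is the interpolation–embedding trick in Step~3 once the two endpoint estimates are available. The most delicate accounting is in Step~1: tracking the polynomial weights through the chain of regularity lemmas of Section~\ref{sec_reg}, so that the assumption $f_{in} \in L^2_{12}$ is indeed sufficient to control $f$ in unweighted $H^2$ on $(t_0,T)$. Controlling $\nabla a[f]$ in $L^\infty$ (rather than just in $L^2$) at Step~2 is also crucial, and it is precisely the combination of the $L^1$ and $L^\infty$ bounds on $f$ that gives it via Lemma~\ref{lema_upper}.
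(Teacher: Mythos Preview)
Your proof is correct and follows essentially the same route as the paper: obtain $f\in L^\infty_{\mathrm{loc}}((0,T];W^{1,6})$ from Lemma~\ref{reg_lemma3} (via $H^2\hookrightarrow W^{1,6}$), read off $\partial_t f\in L^\infty_{\mathrm{loc}}((0,T];W^{-1,6})$ from the divergence-form equation, interpolate, and apply Morrey. The only cosmetic difference is that you do the time--space interpolation by hand through the elementary inequality $\|g\|_{[W^{1,6},W^{-1,6}]_\theta}\le \|g\|_{W^{1,6}}^{1-\theta}\|g\|_{W^{-1,6}}^{\theta}$, whereas the paper invokes an abstract real-interpolation result of Amann to land in $W^{s_1,6}_t W^{s_2,6}_v$; your version is slightly more direct and yields an explicit exponent.
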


\begin{proof}

By Lemma \ref{reg_lemma3}, we conclude $f\in L^\infty((0,T];W^{1,p})$ for each $2 \le p \le 6$. Therefore, by a duality argument, $\partial_t f$ belongs to  $L^\infty((0,T];W^{-1,p})$ for $2 \le p \le 6$. By a (real) interpolation of the Sobolev spaces $L^\infty((0,T);W^{1,p})$ and $W^{1,\infty}((0,T);W^{-1,p})$, we  obtain
$f\in W^{s_1,6}((0,T);W^{s_2,6})$ for $s_2$ strictly less, but as close as one wishes, than $1-2\theta$, and $s_1$ strictly less, but as close as one wishes, than $\theta$, for any $\theta \in (0,1)$ (see Theorem 3.1 in \cite{amann}). Hence, choosing $\theta < \frac{1}{4}$, Morrey's inequality implies $f\in C^{0,\alpha/2}((0,T);C^{0,\alpha} (\mathbb{R}^3))$, for some $\alpha >0$.
\end{proof}

Now, we are ready to apply a standard bootstrapping argument and conclude $f$ is smooth:
\begin{flushleft}
\underline{Proof of Theorem \ref{thm_regularity}}\\
\end{flushleft}
\vskip 1em
By Lemma \ref{holder_lem}, we conclude $f\in C^{\alpha/2}((0,T);C^\alpha(\R^3))$ for some $\alpha \in (0,1)$ and $f$ solves the divergence form parabolic equation,
\begin{equation}
\partial_t f = \nabla \cdot (A[f(1-f)]{\nabla f} - \nabla a[f](1-f)f),
\end{equation}
in the weak sense. Hence, Lemma 4.7 in \cite{GuGu15}  shows that $\nabla a[f]$ and  $A[f(1-f)] $ belong to $C^{0,\eta/2}((0,T];C^{0,\eta})$. Thus, $f$ satisfies a divergence-form parabolic equation with H\"older continuous coefficients. By Theorem 12.1 from Chapter 3 in \cite{LadSolUra1968}, we conclude $f\in C^{1,\mu/2}((0,T];C^{1,\mu})$. Bootstrapping the argument, we obtain higher regularity of the coefficients $A[f(1-f)]$ and $\nabla a[f]$,  from which $f\in C^{\infty}((0,T];C^\infty)$ follows, as desired.

\section{Long time behavior}\label{sec_ltb}

In this section we prove Theorem.~\ref{thm_longtime}. Without loss of generality, we can assume that $\eps=1$.
We first rewrite the initial value problem associated to \eqref{LFD} in the following compact form
\begin{equation}
\label{LFDII}
\begin{cases}
\pa_t f + \mathcal{T}[f] = 0 & v\in\R^3,\quad t>0,\\
f(0,v) = f_{in} & v\in\R^3,
\end{cases}
\end{equation}
where the Landau-Fermi-Dirac operator is defined by
\begin{align}
\label{LFD.2}
\mathcal{T}[f](v) &= -\nabla\cdot\int_{\R^3}\frac{\Pi(v-v^*)}{|v-v^*|}\left(
f^*(1-f^*)\nabla f - f(1-f)\nabla f^*\right)dv^*\\ \nonumber
&=-\nabla\cdot\left(
A[f(1-f)]\nabla f - f(1-f)\nabla a[f]\right),
%%
%\label{A}
%A[g](v) &\equiv \int_{\R^3}
%\frac{g(v^*)}{|v-v^*|}\Pi(v-v^*) dv^* ,\\
%%
%\label{a}
%a[g](v) &\equiv \frac{1}{2}\mbox{tr}(A[g](v)) = \int_{\R^3}
%\frac{g(v^*)}{|v-v^*|} dv^* ,\\
%%
%\label{Pi}
%\Pi(z) &= \mbox{I} - \frac{z\otimes z}{|z|^2}.
\end{align}
and the quantities $A[\cdot ]$, $a[\cdot ]$ are defined in \eqref{defn_coef}.

We first show unconditional convergence without rate towards the steady state for \eqref{LFD}, which is the first part of Theorem ~\ref{thm_longtime}.
\begin{proposition}[Convergence to the steady state]\label{lem.cnv}
	Given any initial datum $f_{in} : \R^3\to [0,1]$, $f_{in}\in L^1_2$, such that $H_1[f_{in}] < 0$, the solution $f$ to the (\ref{LFDII}) tends to the Fermi-Dirac distribution $\cM$ with same mass, momentum and energy as $f_{in}$ when $t\to\infty$.
\end{proposition}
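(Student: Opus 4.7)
\textbf{Proof plan for Proposition \ref{lem.cnv}.} The strategy is classical: extract from the H-theorem a sequence along which the entropy production vanishes, use compactness to pass to a weak limit, identify the limit as $\mathcal{M}$ via the structure of the dissipation and the conservation laws, and finally upgrade subsequential convergence to full $L^1$ convergence via the monotonicity of the relative entropy.

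\emph{Step 1 (time-integrability of the dissipation).} Since $0\le f\le 1$ and $f\in L^\infty_t L^1_2$, the entropy $H_1[f(t)]$ is bounded below by a constant depending only on the mass and the energy. Combined with the monotonicity $\frac{d}{dt}H_1[f(t)]\le 0$ from \eqref{H-thm}, this gives
\begin{equation*}
\int_0^{\infty} D[f(t)]\,dt \;=\; H_1[f_{in}] - \lim_{t\to\infty}H_1[f(t)] \;<\;\infty,
\end{equation*}
where $D[f]\ge 0$ denotes the entropy production from \eqref{H-thm}. Hence there is $t_n\to\infty$ with $D[f(t_n)]\to 0$.

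\emph{Step 2 (compactness and identification).} The uniform bounds $0\le f(t_n)\le 1$ and $\sup_n\|f(t_n)\|_{L^1_2}<\infty$ (from propagation of moments) yield both tightness and equi-integrability, so by Dunford--Pettis one has, along a subsequence, $f(t_{n_k})\weak g$ weakly in $L^1(\R^3)$, with $0\le g\le 1$ and $\int(1,v,|v|^2)g\,dv=\int(1,v,|v|^2)f_{in}\,dv$. By (weak) lower semicontinuity of $H_1$ and of the convex entropy production functional (which is a quadratic form in the variables $\sqrt{f(1-f)}\,\nabla\log\frac{f}{1-f}$), the limit satisfies $H_1[g]\le H_1[f_{in}]<0$ and $D[g]=0$. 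The zero-dissipation condition forces $\nabla\log\frac{g}{1-g}$ to be of the form $\alpha v+\beta$, so $g$ is a smooth Fermi-Dirac state or a saturated state $\mathcal{F}$; since $H_1[\mathcal{F}]=0$ but $H_1[g]<0$, the saturated case is excluded. Conservation of mass, momentum and energy then pins down the parameters and gives $g=\mathcal{M}$.

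\emph{Step 3 (upgrading to full convergence).} A direct computation shows that
\begin{equation*}
H_1[f|\mathcal{M}] \;=\; H_1[f] - H_1[\mathcal{M}] - \int_{\R^3}\bigl(f\log\mathcal{M} + (1-f)\log(1-\mathcal{M})\bigr)dv + \text{const},
\end{equation*}
where, since $\log\mathcal{M}$ is an affine combination of $1,v,|v|^2$ and $\log(1-\mathcal{M})$ depends only on $|v-u|^2$, the last integral is a linear function of the conserved mass, momentum and energy. Consequently, $H_1[f(t)|\mathcal{M}] = H_1[f(t)] - H_1[\mathcal{M}]$ is nonincreasing in $t$. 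Combined with $H_1[f(t_{n_k})|\mathcal{M}]\to 0$ (which follows from weak $L^1$ convergence of $f(t_{n_k})\to\mathcal{M}$ and convergence of the moments), monotonicity gives $H_1[f(t)|\mathcal{M}]\to 0$ as $t\to\infty$. A Csisz\'ar--Kullback-type inequality adapted to the Fermi-Dirac relative entropy then yields the desired strong $L^1$ convergence $f(t)\to\mathcal{M}$.

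\emph{Main obstacle.} The delicate point is Step 2, namely passing to the limit in the degenerate dissipation $D[\cdot]$, which is quadratic in $\nabla f$ with non-local coefficients. Establishing $D[g]\le\liminf D[f(t_{n_k})]$ requires care: one has to recast $D$ as a convex functional of $(f,\,\sqrt{f(1-f)}\,\nabla\log\frac{f}{1-f})$ and exploit the uniform lower bound on $A[f(1-f)]$ from Lemma \ref{lemA_lower} to recover enough weak compactness of the gradient fields before applying a lower-semicontinuity argument in the spirit of Desvillettes--Villani.
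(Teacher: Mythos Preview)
Your overall architecture is exactly the paper's: integrability of the dissipation gives a subsequence $t_n\to\infty$ with $D[f(t_n)]\to 0$; compactness plus $D[g]=0$ identifies the limit as $\mathcal{M}$; monotonicity of the relative entropy upgrades subsequential to full convergence. Step~1 and Step~3 are fine and match the paper essentially verbatim.

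The substantive difference is in Step~2, and here you are making your own life harder. You settle for weak $L^1$ compactness via Dunford--Pettis, and then correctly flag as the ``main obstacle'' that passing to the limit in $D[\cdot]$ is delicate because the coefficients $A[f_n(1-f_n)]$ are nonlinear and nonlocal in $f_n$. The paper resolves this by observing that the dissipation itself already delivers a gradient bound: writing $D[f]=\int \frac{A[f(1-f)]}{f(1-f)}|\nabla f|^2\,dv - 8\pi\int f^2\,dv$ and using the lower bound $A[f(1-f)]\gtrsim\brak{v}^{-3}$ from Lemma~\ref{lemA_lower} (together with $f(1-f)\le 1$) gives $\int \brak{v}^{-3}|\nabla f_n|^2\,dv\lesssim D[f_n]+\|f_n\|_{L^2}^2\lesssim 1$. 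Combined with the uniform $L^1_2\cap L^\infty$ bounds, this yields \emph{strong} $L^p$ compactness of $(f_n)$ for every $p<\infty$. With strong convergence of $f_n$ the nonlinear coefficients $A[f_n(1-f_n)]$ converge, and the paper then handles the degeneracy by a regularization $D_\delta$ (replace $f(1-f)$ by $f(1-f)+\delta$ in the denominator and restrict to $B_{1/\delta}$), applies a generalized Fatou lemma to get $D_\delta[f_\infty]\le\liminf D[f_n]=0$, and sends $\delta\to 0$ by monotone convergence. This is cleaner than the convexity/lsc route you sketch, and in fact your proposed route would still need strong convergence of $f_n$ to pass the nonlocal coefficients through.

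One further gap: you assert that the weak limit $g$ has the same second moment as $f_{in}$. Weak $L^1$ convergence together with a uniform $L^1_2$ bound only gives $\int|v|^2 g\,dv\le\liminf\int|v|^2 f(t_{n_k})\,dv$ by Fatou; you have not established the tightness needed for equality. The paper sidesteps this by using strong convergence and working directly with the entropy: $f(t_n)\to f_\infty$ strongly in $L^p$ implies $H_1[f(t_n)]\to H_1[f_\infty]$, so $H_1[f(t_n)|\mathcal{M}]\to 0$, and then monotonicity finishes.
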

\begin{proof}
We recall that $f$ satisfies a uniform in time bound in $L^1\cap L^\infty$, and therefore $\sup_{t\geq 0} \|f (t)\|_{L^2} < \infty$. In what follows we will often make use of this relation without mentioning it.	
	
	Integrating the entropy balance equation in time yields
	\begin{align*}
	H_1[f(t)] + \int_0^t D[f(\tau)]d\tau \leq H_1[f_{in}],\qquad t>0,
	\end{align*}
	with 
	\begin{align*}
	D[f] = \int_{\R^3}\frac{A[f(1-f)]}{f(1-f)}\nabla f\cdot \nabla f dv - 8\pi\int_{\R^3}f^2 dv \geq 0.
	\end{align*}
	Since $D[f(\cdot)]\in L^1(0,\infty)$, there exists a sequence $t_n\to\infty$ such that $D[f(t_n)]\to 0$ as $n\to\infty$. Define $f_n = f(t_n)$. Given the lower bound for $A$ we deduce
	\begin{align*}
	\int_{\R^3}|\nabla f_n|^2 \brak{v}^{-3}dv \lesssim \int_{\R^3}\frac{A[f_n(1-f_n)]}{f_n(1-f_n)}\nabla f_n\cdot \nabla f_n dv
	\lesssim\int_{\R^3}f_n^2 dv + D[f_n]\lesssim 1.
	\end{align*}
	Therefore $\brak{v}^{-3/2}\nabla f_n$ is bounded in $L^2$. However $f_n\nabla\brak{v}^{-3/2}$ is bounded in $L^2$, so the product $f_n\brak{v}^{-3/2}$ is bounded in $H^1$. Furthermore $f_n\brak{v}^2$ is bounded in $L^1$. We deduce via Sobolev embedding that $f_n$ is relatively compact in $L^2$, and more in general (via the $L^\infty$ bounds and the bound on the second moment of $f_n$) in $L^p$ for every $p\in [1,\infty)$. Let us denote with $f_\infty$ its limit.
	We have that  $\brak{v}^{-3/2}\nabla f_n\rightharpoonup\brak{v}^{3/2}\nabla f_\infty$ weakly in $L^2$. This is enough to deduce via a generalized Fatou argument \cite[Lemma A.4]{BJPZ} that 
	\begin{align*}
	D_\delta[f_\infty] \leq \liminf_{n\to\infty}D_\delta[f_n] 
	\leq \liminf_{n\to\infty}D[f_n] = 0,
	\end{align*}
	with
	\begin{align*}
	D_\delta[f] := \int_{B_{1/\delta}}\frac{A[f(1-f)]}{f(1-f)+\delta}\nabla f\cdot \nabla f dv - 8\pi\int_{\R^3}f^2 dv,
	\end{align*}
	and $\delta>0$ is arbitrary. Via monotone convergence we deduce
	\begin{align*}
	0\leq D[f_\infty] = \lim_{\delta\to 0}D_\delta[f_\infty]\leq 0.
	\end{align*}
	It follows that $D[f_\infty] = 0$. Since we know that $\int_{\R^3}f_n(1-f_n)dv\geq c>0$, it follows \cite{BL04} that
	$f_\infty = \cM$. This means that $f_n = f(t_n)\to\cM$ strongly in $L^p$ for $p\in [1,\infty)$. In particular the relative Fermi-Dirac entropy $H_1[f(t_n)\vert\cM] = H_1[f(t_n)]-H_1[\cM]\to 0$ as $n\to\infty$. On the other hand, we know that $t\mapsto H_1[f(t)\vert\cM]$ is non-increasing, so it must hold
	$\lim_{t\to\infty}H_1[f(t)\vert\cM]=0$. This easily implies the strong convergence $f(t)\to\cM$ as $t\to\infty$ in $L^1$.
	This finishes the proof of the Lemma.	
\end{proof}
Our next goal is to prove exponential convergence of the solution $f(t)$ to \eqref{LFD} towards the steady state $\cM$ in case the initial datum $f_{in}$ is close enough to $\cM$ in the norm $L^2(m)$. This is in the second part of Theorem ~\ref{thm_longtime}.
We linearize our equation around the steady state $\cM$. We will work in weighted Lebesgue spaces with weight $m$ defined by 
\begin{align}
\label{m}
m := \cM(1-\cM), 
\end{align}
where $\cM$ is the Fermi-Dirac distribution defined in \eqref{Fermi-Dirac}. Writing
\begin{align}
\label{def.h}
h:= \frac{f - \cM}{m} ,\qquad
\textrm{and} \quad -\frac{1}{m}\mathcal{T}[f] =: L h + \Gamma_2[h,h] + \Gamma_3[h,h,h]
\end{align}
it defines the linearized operator $L$ and the quadratic and cubic perturbations $\Gamma_2$, $\Gamma_3$, respectively.

Via straightforward computations \cite{ABDL_21_II} one finds
\begin{align}
\label{1}
(Lh)(v) &= \frac{1}{m(v)}\nabla\cdot\int_{\R^3}
\frac{m(v^*)m(v)}{|v-v^*|}\Pi(v-v^*)(\nabla h(v) - \nabla h(v^*)) dv^* ,\\
\label{1.G2}
\Gamma_2[h,h](v) &= \frac{1}{m(v)}\nabla\cdot\left(
A[(1-2\cM)mh]\nabla(mh) - A[m^2 h^2]\nabla\cM\right.\\
\nonumber
&\left.\qquad - (1-2\cM)mh\nabla a[mh] + m^2 h^2\nabla a[\cM]
\right),\\
\label{1.G3}
\Gamma_3[h,h,h](v) &= \frac{1}{m(v)}\nabla\cdot\left(
-A[m^2 h^2)]\nabla (mh) + m^2 h^2\nabla a[m h]\right).
\end{align}
Define the spaces
$$
L^2(m) := L^2(\R^3,m(v)dv),\quad H^1(m) := H^1(\R^3,m(v)dv),
$$
and recall that $\brak{v} = (1+|v|^2)^{1/2}$. 

Our goal is to prove a spectral gap estimate for the linearized operator $L$.
We will apply \cite[Lemma 10]{daus2016hypocoercivity}. In order to do so, we adapt the latter result's framework and therefore define for $k\geq 0$ the following Hilbert spaces
\begin{align*}
\mathcal{H}_0^k &= L^2(m \langle v \rangle^{k-1} ),\\
\mathcal{H}^k &= \left\{ h\in \mathcal{H}_0^k ~ : ~ \|h\|_{\mathcal{H}^k}^2 \equiv \|h\|_{ \mathcal{H}_0^k }^2 + 
\int_{\R^3}\nabla h\cdot A[m]\nabla h\,\langle v \rangle^{k} m dv<\infty \right\}.
\end{align*}
Clearly $\mathcal{H}^k\hookrightarrow \mathcal{H}_0^k$ with continuous embedding.

We split then the linearized operator $L$ into two contributions, in the following fashion:
%\red{It might be wrong because of decaying polinomial factor!}
\begin{align}
\label{L.split}
L =& \cK_k - \Lambda_k,\\
\label{Lambda}
(\Lambda_k h)(v) :=& -
\frac{1}{m(v)}\nabla\cdot\left[\left(\int_{\R^3}
\frac{m(v^*)m(v)}{|v-v^*|}\Pi(v-v^*) dv^* 
\right)\nabla h(v)\right] \\
\nonumber
& -8\pi m(v)h(v) + \xi\int_{\R^3}m h \langle v \rangle^k dv,\\
\label{Kappa}
(\cK_k h)(v) :=& -
\frac{1}{m(v)}\nabla\cdot\int_{\R^3}
\frac{m(v^*)m(v)}{|v-v^*|}\Pi(v-v^*)\nabla h(v^*) dv^* \\
\nonumber
& -8\pi m(v)h(v) + \xi\int_{\R^3}m h \langle v \rangle^k dv,
\end{align}
where $\xi>0$ is an arbitrary constant, to be specified later. 
We also recall the definition of the Maxwellian $M$:
\begin{align*}
M(v) = e^{-b|v-u|^2},\qquad v\in\R^3,
\end{align*}
and point out that $M\sim m$ (via direct computations).

We prove now the following coercivity estimate for $\Lambda_k$.
\begin{lemma}\label{lem.Lambda}
$\Lambda_k : \mathcal{H}^k \to (\mathcal{H}^k)'$ is bounded and 
$(\Lambda_k h, h)_{L^2(m\langle v \rangle^{k})} \gtrsim \|h\|_{\mathcal{H}^k}^2$ for every $h\in\mathcal{H}^k$, provided that $\xi>0$ is large enough.
\end{lemma}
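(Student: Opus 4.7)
I would establish both claims by a direct computation of the bilinear form attached to $\Lambda_k$ via integration by parts. For the symmetric case, this yields
\begin{align*}
(\Lambda_k h, h)_{L^2(m\langle v\rangle^k)} =& \int_{\R^3}\langle v\rangle^k A[m]\nabla h\cdot\nabla h\, m\, dv + \int_{\R^3} h\, A[m]\nabla h\cdot\nabla\langle v\rangle^k\, m\, dv \\
& - 8\pi\int_{\R^3} m^2 h^2 \langle v\rangle^k\, dv + \xi\left(\int_{\R^3} m h\langle v\rangle^k\, dv\right)^2.
\end{align*}
The first term generates the gradient part of $\|h\|_{\mathcal{H}^k}^2$; the main difficulty lies in controlling the sign-indefinite third term, which is precisely why the rank-one modification involving $\xi$ has been introduced.

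\textbf{Boundedness.} I would estimate the bilinear form $(\Lambda_k h,\varphi)_{L^2(m\langle v\rangle^k)}$ term by term via Cauchy--Schwarz, using $\|A[m]\|_{L^\infty}\lesssim 1$ (Lemma \ref{lemA_upper}), the pointwise bound $|\nabla\langle v\rangle^k|\lesssim\langle v\rangle^{k-1}$, and the fact that $m\langle v\rangle^{k+1}\in L^1(\R^3)$ thanks to the Gaussian-like decay of $m$. The only slightly subtle contribution is the rank-one term, for which
$$\left|\int_{\R^3} m h\langle v\rangle^k\, dv\right| \le \left(\int_{\R^3} m\langle v\rangle^{k+1}\, dv\right)^{1/2}\left(\int_{\R^3} m h^2\langle v\rangle^{k-1}\, dv\right)^{1/2} \lesssim \|h\|_{\mathcal{H}_0^k},$$
and similarly for $\varphi$, so the product is dominated by $\|h\|_{\mathcal{H}^k}\|\varphi\|_{\mathcal{H}^k}$.

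\textbf{Coercivity.} The cross term is absorbed via Young's inequality, using $|\nabla\langle v\rangle^k|^2/\langle v\rangle^k\lesssim\langle v\rangle^{k-2}$ and $\|A[m]\|_{L^\infty}\lesssim 1$:
$$\left|\int_{\R^3} h\, A[m]\nabla h\cdot\nabla\langle v\rangle^k\, m\, dv\right| \le \tfrac{1}{2}\int_{\R^3} \langle v\rangle^k A[m]|\nabla h|^2\, m\, dv + C\int_{\R^3} m h^2\langle v\rangle^{k-1}\, dv.$$
For the $-8\pi m^2 h^2$ term, the Gaussian decay of $m$ gives $\sup_v m(v)\langle v\rangle<\infty$, hence $\int m^2 h^2\langle v\rangle^k\, dv \le C_0\int m h^2\langle v\rangle^{k-1}\, dv$. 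Combining these,
$$(\Lambda_k h,h)_{L^2(m\langle v\rangle^k)} \ge \tfrac{1}{2}\int_{\R^3} A[m]|\nabla h|^2\langle v\rangle^k\, m\, dv - C_1\int_{\R^3} m h^2\langle v\rangle^{k-1}\, dv + \xi\left(\int_{\R^3} m h\langle v\rangle^k\, dv\right)^2.$$

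\textbf{Main obstacle.} The remaining task is to eliminate the negative $L^2(m\langle v\rangle^{k-1})$ contribution, and the hard part is securing a weighted Poincar\'e--type inequality of the form
$$\int_{\R^3} m h^2\langle v\rangle^{k-1}\, dv \le \tilde C\int_{\R^3} A[m]|\nabla h|^2\langle v\rangle^k\, m\, dv + \tilde C\left(\int_{\R^3} m h\langle v\rangle^k\, dv\right)^2.$$
I would establish this by contradiction and compactness: if it failed, a normalized sequence $(h_n)$ with vanishing right-hand side admits, via Rellich--Kondrachev on balls combined with the tightness provided by the exponential weight $m$, a subsequence converging strongly in $L^2(m\langle v\rangle^{k-1})$ to some limit $h_\infty$ satisfying $\nabla h_\infty\equiv 0$ and $\int m h_\infty\langle v\rangle^k\, dv=0$, which forces $h_\infty\equiv 0$ and contradicts the normalization. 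Feeding this inequality back, allocating a small fraction of the diffusion term to dominate the negative $L^2$ contribution and keeping the rest to recover the gradient seminorm in $\|h\|_{\mathcal{H}^k}$, and finally choosing $\xi$ sufficiently large so that the coefficient in front of $(\int m h\langle v\rangle^k\, dv)^2$ stays positive, yields the desired coercivity bound.
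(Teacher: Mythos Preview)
Your outline has a real gap at the final ``feeding back'' step. After Young's inequality you correctly arrive at
\[
(\Lambda_k h,h)_{L^2(m\langle v\rangle^k)} \ge \tfrac12\,\mathrm{grad} - C_1\|h\|_{\mathcal H_0^k}^2 + \xi\Big(\!\int mh\langle v\rangle^k\Big)^2,
\qquad \mathrm{grad}:=\int A[m]\,|\nabla h|^2\langle v\rangle^k m\,dv,
\]
and you then want to use your Poincar\'e inequality $\|h\|_{\mathcal H_0^k}^2\le\tilde C\big[\mathrm{grad}+(\int mh\langle v\rangle^k)^2\big]$ to absorb the negative term. But substituting gives
\[
(\Lambda_k h,h)\ge\big(\tfrac12-C_1\tilde C\big)\,\mathrm{grad}+\big(\xi-C_1\tilde C\big)\Big(\!\int mh\langle v\rangle^k\Big)^2,
\]
and the coefficient of $\mathrm{grad}$ is fixed: it does not involve $\xi$. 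If $C_1\tilde C\ge\tfrac12$ there is no way to close, regardless of how large $\xi$ is, because the rank-one term controls only a single direction while $-C_1\|h\|_{\mathcal H_0^k}^2$ is a full-rank negative quadratic. Your phrase ``allocating a small fraction of the diffusion term'' presupposes that an arbitrarily small piece of $\mathrm{grad}$ dominates $C_1\|h\|_{\mathcal H_0^k}^2$, which would require the $\varepsilon$-version $\|h\|_{\mathcal H_0^k}^2\le\varepsilon\,\mathrm{grad}+C_\varepsilon(\int mh\langle v\rangle^k)^2$ --- and that is \emph{false} (take any mean-zero $h$). A secondary issue: in your compactness proof of the Poincar\'e inequality, ``tightness provided by the exponential weight $m$'' is not automatic. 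The gradient control through $A[m]\gtrsim\langle v\rangle^{-3}$ only bounds $\int m|\nabla h_n|^2\langle v\rangle^{k-3}$, while the target norm carries the stronger weight $\langle v\rangle^{k-1}$; bridging this gap of two powers is exactly the content of the Gaussian log-Sobolev step, which you do not invoke.

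The paper circumvents both problems by a different architecture. It first proves (via Gross's log-Sobolev inequality for the Gaussian) the key gain
\[
\|h\|_{\mathcal H_0^k}^2\lesssim \int A[m]\nabla h\cdot\nabla h\,\langle v\rangle^k m\,dv+\int h^2 m\langle v\rangle^{k-3}\,dv,
\]
and then splits $\R^3=B_R\cup B_R^c$. On $B_R^c$ the lower-order term is $\le R^{-2}\|h\|_{\mathcal H_0^k}^2$, absorbed by taking $R$ large. On $B_R$ the gradient controls $\|h\|_{L^6(B_R)}$ by Sobolev/Poincar\'e and the rank-one term controls $\|h\|_{L^1(B_R)}$; interpolating $\|h\|_{L^2(B_R)}^2\le\|h\|_{L^1(B_R)}^{4/5}\|h\|_{L^6(B_R)}^{6/5}$ and applying Young shows the bad $L^2(B_R)$ contribution is absorbed once $\xi$ is large. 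This two-scale structure (choose $R$, then $\xi$) is what makes the argument close, and it is missing from your single global Poincar\'e scheme.
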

\begin{proof}
From \eqref{Lambda} and the definition \eqref{defn_coef} of $A$ it follows, via an integration by parts,
\begin{align}\nonumber
(\Lambda_k h_1, h_2)_{L^2(m\langle v \rangle^{k})} 
=& \int_{\R^3}\nabla (\langle v \rangle^{k}h_2(v))\cdot
\left(\int_{\R^3}
\frac{m(v^*)}{|v-v^*|}\Pi(v-v^*) dv^* 
\right)\nabla h_1(v)\, m(v) dv\\
\nonumber
& -8\pi\int_{\R^3} h_1 h_2 \langle v \rangle^{k}m^2 dv 
+ \xi \left( \int_{\R^3}h_1 \langle v \rangle^k m dv\right)
\left( \int_{\R^3}h_2 \langle v \rangle^{k}m dv\right)\\
\label{Lambda.quadform}
=& 8\pi\int_{\R^3}\nabla h_2(v)\cdot A[m]\nabla h_1(v)\, \langle v \rangle^{k}m(v) dv\\
\nonumber
& +8k\pi\int_{\R^3}\langle v \rangle^{k-2} h_2(v) v\cdot A[m]\nabla h_1(v)\, m(v) dv\\
\nonumber
& -8\pi\int_{\R^3} h_1 h_2 m^2 \langle v \rangle^{k} dv 
+ \xi \left( \int_{\R^3}h_1 m\langle v \rangle^{k} dv\right)
\left( \int_{\R^3}h_2 m\langle v \rangle^{k} dv\right),
\end{align}
for $ h_1, h_2 \in \mathcal{H}^k$. Being $A[m](v)$ symmetric and positive definite for $v\in\R^3$,  Cauchy-Schwartz yields
\begin{align*}
\Big|( \Lambda_k h_1,& h_2)_{L^2(\langle v \rangle^{k}m)}\Big|\\
\lesssim &
\int_{\R^3}
\left( \nabla h_2(v)\cdot A[m]\nabla h_2(v) \right)^{1/2}
\left( \nabla h_1(v)\cdot A[m]\nabla h_1(v) \right)^{1/2}\,
\langle v \rangle^{k} m(v) dv\\
& + \int_{\R^3} \left( \nabla h_1(v)\cdot A[m]\nabla h_1(v) \right)^{1/2}
\left( h_2(v)^2\langle v \rangle^{-4}
v\cdot A[m]v \right)^{1/2}
\langle v \rangle^{k} m(v) dv\\
& + \|h_1\|_{\mathcal{H}_0}\|h_2\|_{\mathcal{H}_0}\\
\lesssim & 
\left( \int_{\R^3} \nabla h_2\cdot A[m]\nabla h_2\,\langle v \rangle^k m dv \right)^{1/2}
\left( \int_{\R^3} \nabla h_1\cdot A[m]\nabla h_1\,\langle v \rangle^k m dv \right)^{1/2}\\
& + \left( \int_{\R^3} \nabla h_1(v)\cdot A[m]\nabla h_1(v)\langle v \rangle^{k} m(v) dv \right)^{1/2}
\left( \int_{\R^3}h_2(v)^2 \langle v \rangle^{k-4} v\cdot A[m]v\, m dv \right)^{1/2}\\
& + \|h_1\|_{\mathcal{H}_0^k}\|h_2\|_{\mathcal{H}_0^k}.
\end{align*}
Therefore
\begin{align*}
\left|(\Lambda_k h_1, h_2)_{L^2(m\langle v \rangle^k)}\right|\lesssim & 
\|h_1\|_{\mathcal{H}^k}\|h_2\|_{\mathcal{H}^k}.
\end{align*}
Via a duality argument it follows that $\Lambda$ is bounded as an operator $\mathcal{H}^k\to (\mathcal{H}^k)'$.

Choosing $h_1 = h_2 = h$ in \eqref{Lambda.quadform} yields
\begin{align}
\nonumber (\Lambda_k h, h)_{L^2(m\langle v \rangle^k)} =&
\; 8\pi\int_{\R^3}\nabla h\cdot A[m]\nabla h\,
\langle v \rangle^{k} m dv
-8\pi\; \int_{\R^3} h^2 \langle v \rangle^{k}m^2 dv 
+ \xi \left( \int_{\R^3}h \langle v \rangle^{k}m dv\right)^2 \\
&+8k\pi\int_{\R^3}\langle v \rangle^{k-2} h(v) v\cdot A[m]\nabla h(v)\, m(v) dv.\label{Lambda.lb.0}
\end{align}
The last integral can be estimated via Cauchy-Schwartz:
\begin{align}
\nonumber (\Lambda_k h, h)_{L^2(m\langle v \rangle^k)} \geq &
\; 4\pi\int_{\R^3}\nabla h\cdot A[m]\nabla h\,
\langle v \rangle^{k} m dv
-8\pi\int_{\R^3} h^2 \langle v \rangle^{k}m^2 dv 
+ \xi \left( \int_{\R^3}h \langle v \rangle^{k}m dv\right)^2 \\ \label{Lambda.lb}
&-C\int_{\R^3} h^2 \langle v \rangle^{k-5}m dv.
\end{align}
Let us focus on the first integral on the right-hand side of \eqref{Lambda.lb}.
Lemma \ref{lemA_lower} and the fact that $H_1[\cM]<0$ lead to
\begin{align*}
\int_{\R^3}\nabla h\cdot A[m]\nabla h\,\langle v \rangle^k m dv\gtrsim 
\int_{\R^3}|\nabla h|^2 m(v) {\brak{v}^{k-3}} dv .
\end{align*}
For every $R>0$, since $m(v)\langle v \rangle^{k-3}$ is uniformly positive on $B_R$ (with an $R-$dependent lower bound), it follows via (the standard) Sobolev's embedding and Poincar\'e's Lemma
\begin{align*}
\int_{\R^3}\nabla h\cdot A[m]\nabla h\,\langle v \rangle^k m dv &\geq
c_R\int_{B_R}|\nabla h|^2 dv \geq c_R \left\|h - \fint_{B_R}h dv \right\|_{L^6(B_R)}^2\\
&\geq c_R \|h\|_{L^6(B_R)}^2 - c_R'\left( \int_{B_R}h dv \right)^2\\
&\geq c_R \|h\|_{L^6(B_R)}^2 - c_R''\int_{B_R}h^2 dv .
\end{align*}
From \eqref{Lambda.lb} and the above inequality we deduce
\begin{align}\nonumber
(\Lambda_k h, h)_{L^2(m\langle v \rangle^k)} \geq &\int_{\R^3}\nabla h\cdot A[m]\nabla h\,\langle v \rangle^k m dv
+ c_R \|h\|_{L^6(B_R)}^2 - c_R''\int_{B_R}h^2 dv \\
\nonumber
& -8\pi\int_{B_R} h^2\langle v \rangle^k m^2 dv -8\pi\int_{\R^3\backslash B_R} h^2 \langle v \rangle^k m^2 dv 
+ \xi \left( \int_{\R^3}h\langle v \rangle^k m dv\right)^2\\
\nonumber
& -C\int_{B_R} h^2 \langle v \rangle^{k-5}m dv
-C\int_{\R^3\backslash B_R} h^2 \langle v \rangle^{k-5}m dv\\
\nonumber
\geq 
&\int_{\R^3}\nabla h\cdot A[m]\nabla h\,\langle v \rangle^k m dv
+ c_R \|h\|_{L^6(B_R)}^2 - c_R'''\|h\|_{L^2(B_R)}^2 \\
& \label{nova.1} -8\pi\int_{\R^3\backslash B_R} h^2\langle v \rangle^k m^2 dv 
+ \xi \tilde c_R \|h\|_{L^1(B_R)}^2
-C\int_{\R^3\backslash B_R} h^2 \langle v \rangle^{k-5}m dv.
\end{align}
Let us now consider
\begin{align*}
\|h\|_{\mathcal{H}_0^k}^2 = \int_{\R^3} h^2 \langle v\rangle^{k-1} m(v) dv\lesssim
\int_{\R^3} \langle v\rangle^{k-3}h^2 \langle v\rangle^{2} M(v) dv.
\end{align*}
Young's inequality with the convex conjugated functions 
$s\mapsto\frac{s}{\delta}\log\frac{s}{\eta} - \frac{s}{\delta}$, 
$s\mapsto\eta\delta^{-1}e^{\delta s}$ (with $\eta>0$ arbitrary and $\delta > 0$ fixed small enough such that $\int_{\R^3}e^{\delta |v|^2}M(v)dv < \infty$) leads to
\begin{align*}
\|h\|_{\mathcal{H}_0^k}^2 \lesssim
\int_{\R^3} [\delta^{-1}\langle v\rangle^{k-3}h^2\log(\eta^{-1}\langle v\rangle^{k-3}h^2) - \delta^{-1}\langle v\rangle^{k-3}h^2] M(v)dv + 
\eta\delta^{-1}\int_{\R^3} e^{\delta \langle v\rangle^{2}} M(v) dv.
\end{align*}
By defining $u = \langle v\rangle^{(k-3)/2}h$ and rescaling
$\eta\mapsto \|u\|_{L^2(G)}^2\eta$, the above inequality can be rewritten as
\begin{align*}
\|h\|_{\mathcal{H}_0^k}^2 \lesssim \int_{\R^3}u^2\log\frac{u^2}{\|u\|_{L^2(M)}^2} M(v) dv
-c\|u\|_{L^2(M)}^2(1 + \log\eta)
+ \eta\|u\|_{L^2(M)}^2 .
\end{align*}
By employing the log-Sobolev's inequality with Gaussian weight \cite{gross1975logarithmic} one obtains
\begin{align*}
\|h\|_{\mathcal{H}_0^k}^2 \lesssim \|\nabla u\|_{L^2(M)}^2
+\|u\|_{L^2(M)}^2(\eta -c - c\log\eta).
\end{align*}
Replacing $u$ with  $\langle v\rangle^{(k-3)/2}h$ and choosing $\eta>0$ the minimum point of $\eta -c - c\log\eta$, one finds
\begin{align}\label{pizza}
\|h\|_{\mathcal{H}_0^k}^2 \lesssim \int_{\R^3} |\nabla h|^2 \langle v \rangle^{k-3} M(v) dv +
\int_{\R^3}h^2 \langle v \rangle^{k-3}M(v) dv .
\end{align}
Lemma \ref{lemA_lower}, relation $m\sim M$ and \eqref{pizza} yield
\begin{align}\label{pizza.2}
\|h\|_{\mathcal{H}_0}^2 \lesssim 
\int_{\R^3}\nabla h\cdot A[m]\nabla h\,\langle v \rangle^{k} m dv
 + \int_{\R^3}h^2 \langle v \rangle^{k-3}m dv .
\end{align}
At this point, \eqref{nova.1} and \eqref{pizza.2} yield
\begin{align*}
(\Lambda_k h, h)_{L^2(m\langle v \rangle^{k})} \gtrsim 
& \|h\|_{\mathcal{H}_0^k}^2 +
\int_{\R^3}\nabla h\cdot A[m]\nabla h\,\langle v \rangle^{k} m dv
+ c_R \|h\|_{L^6(B_R)}^2 - c_R'''\|h\|_{L^2(B_R)}^2 \\
\nonumber
& -8\pi\int_{\R^3\backslash B_R} h^2\langle v \rangle^{k} m^2 dv 
+ \xi \tilde c_R \|h\|_{L^1(B_R)}^2
-C\int_{\R^3}h^2 \langle v \rangle^{k-3}m dv\\
& -C\int_{\R^3\backslash B_R} h^2 \langle v \rangle^{k-5}m dv,
\end{align*}
which implies, given that $m\lesssim \langle v \rangle^{-3}$,
\begin{align}\label{nova.2}
(\Lambda_k h, h)_{L^2(m\langle v \rangle^{k})} \gtrsim 
& \|h\|_{\mathcal{H}_0^k}^2 +
\int_{\R^3}\nabla h\cdot A[m]\nabla h\,\langle v \rangle^{k} m dv 
- C\int_{\R^3\backslash B_R} h^2 m \langle v \rangle^{k-3} dv\\
& + c_R \|h\|_{L^6(B_R)}^2 - c_R'''\|h\|_{L^2(B_R)}^2 
+ \xi \tilde c_R \|h\|_{L^1(B_R)}^2 .\nonumber
\end{align}
Choosing $R>0$, we absorb the third integral on the right-hand side of \eqref{nova.2} via $\|h\|_{\mathcal{H}_0}^2$, yielding
\begin{align}\label{nova.3}
(\Lambda_k h, h)_{L^2(m\langle v \rangle^{k})} \geq 
& \|h\|_{\mathcal{H}_0^k}^2 +
\int_{\R^3}\nabla h\cdot A[m]\nabla h\,\langle v \rangle^{k} m dv\\ 
& + \|h\|_{L^6(B_R)}^2 - K\|h\|_{L^2(B_R)}^2 
+ \xi \|h\|_{L^1(B_R)}^2 .\nonumber
\end{align}
By interpolating $L^2$ between $L^1$ and $L^6$ and applying Young's inequality one finds
\begin{align*}
K\|h\|_{L^2(B_R)}^2 \leq K \|h\|_{L^1(B_R)}^{4/5}\|h\|_{L^6(B_R)}^{6/5}\leq 
\frac{2}{5}\xi \|h\|_{L^1(B_R)}^{2} + \frac{3}{5} K^{5/3}\xi^{-2/3}\|h\|_{L^6(B_R)}^2.
\end{align*}
Therefore, for $\xi>0$ large enough, it holds $\|h\|_{L^6(B_R)}^2 - K\|h\|_{L^2(B_R)}^2 + \xi \|h\|_{L^1(B_R)}^2 \geq 0$.
We conclude
\begin{align*}
(\Lambda_k h, h)_{L^2(m\langle v \rangle^{k})} \geq 
& \|h\|_{\mathcal{H}_0^k}^2 +
\int_{\R^3}\nabla h\cdot A[m]\nabla h\,\langle v \rangle^{k} m dv = \|h\|_{\mathcal{H}^k}^2.
\end{align*}
This finishes the proof of the Lemma.
\end{proof}
Concerning $\cK_k$, we are going to prove the following result:
\begin{lemma}\label{lem.new} For $k\geq 0$ it holds
\begin{align}
\label{Kappa.2}
(\cK_k h)(v) &= \frac{\nabla m(v)}{m(v)}\cdot \left(
\tilde{\cK}\ast (h\nabla m) - \nabla\tilde{\cK}\ast (h m)\right)
+\nabla\tilde{\cK}\ast(h\nabla m) + \xi\int_{\R^3}mh \langle v \rangle^{k} dv,
\end{align}
with
\begin{align*}
\tilde{\cK}(v) = \frac{\Pi(v)}{|v|}.
\end{align*}
%
%OLD PART
%
%\begin{align}
%\label{Kappa.2}
%(\cK h)(v) &= \int_{\R^3}\tilde\cK(v,v^*)h(v^*)m(v^*)dv^* ,
%\end{align}
%with
%\begin{align*}
%\tilde\cK(v,v^*) :=& 
%\frac{\nabla m(v)}{m(v)}\cdot
%\frac{\nabla m(v^*)}{m(v^*)}
%\frac{\Pi(v-v^*)}{|v-v^*|} 
%+ \left( \frac{\nabla m(v^*)}{m(v^*)} - \frac{\nabla m(v)}{m(v)} \right) \cdot
%\nabla_{v}\left[\frac{\Pi(v-v^*)}{|v-v^*|} \right] + \xi.
%\end{align*}
Furthermore $\cK_k : \mathcal{H}_0^k\to \mathcal{H}_0^k$ is a compact operator and the following bound holds for $k\geq 0$
\begin{align}\label{bound.K1}
|(\cK_k h, h)_{L^2(m\langle v \rangle^k)}|\lesssim \|h\|_{L^2(m\langle v \rangle^{k-2})}^2.
\end{align}
%In particular
%\begin{align}
%\label{Ktilde.1}
%\int_{\R^3}\int_{\R^3}\tilde\cK(v,v^*)g(v)h(v^*)m(v)m(v^*)dv dv^*\lesssim\left( \int_{\R^3}g^2 m dv\right)^{\frac{1}{2}}
%\left( \int_{\R^3}h^2 m dv\right)^{\frac{1}{2}}
%\end{align}
%for all $g,h\in L^2(m)$.
\end{lemma}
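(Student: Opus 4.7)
The three claims build on each other with \eqref{Kappa.2} doing the heavy lifting, so I would establish it first. Starting from the definition \eqref{Kappa} and writing $I(v) := \int_{\R^3} \tilde{\cK}(v-v^*) m(v^*) \nabla h(v^*)\,dv^*$, I factor $m(v)$ out of the outer divergence to get
\begin{align*}
-\frac{1}{m(v)}\nabla_v \cdot (m(v) I(v)) = -\nabla_v \cdot I(v) - \frac{\nabla m(v)}{m(v)}\cdot I(v).
\end{align*}
An integration by parts in $v^*$ inside $I$, using $\nabla_{v^*}\tilde{\cK}(v-v^*) = -\nabla_v\tilde{\cK}(v-v^*)$, yields $I = -\tilde{\cK}\ast(h\nabla m) + (\nabla\cdot\tilde{\cK})\ast(hm)$, where $\nabla\cdot\tilde{\cK}$ is the vector-valued row-wise divergence of the matrix kernel. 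A direct computation gives $(\nabla\cdot\tilde{\cK})(w) = -2w/|w|^3 = 2\nabla(1/|w|)$, hence $\nabla\cdot(\nabla\cdot\tilde{\cK}) = 2\Delta(1/|w|) = -8\pi\delta_0$ distributionally. Substituting $I$ into $-\nabla_v\cdot I - (\nabla m/m)\cdot I$, the resulting term $-\nabla_v\cdot((\nabla\cdot\tilde{\cK})\ast(hm))$ produces a $+8\pi m h$ contribution which exactly cancels the $-8\pi m h$ in the definition of $\cK_k$; the remaining pieces rearrange into \eqref{Kappa.2}.

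For the bound \eqref{bound.K1}, I pair $\cK_k h$ against $h\,m\,\brak{v}^k$ using \eqref{Kappa.2} and estimate the four resulting contributions separately. The $\xi$-term is a rank-one form, bounded by $\|h\|_{L^2(m\brak{v}^{k-2})}^2$ by Cauchy--Schwarz. For the convolution terms, I use $|\nabla m(v)/m(v)|\lesssim \brak{v}$, so the prefactor costs at most one power of $\brak{v}$, while the convolution integrands $h\nabla m$ and $hm$ carry Gaussian decay. Splitting each convolution in $v^*$ into $\{|v-v^*|\le|v|/2\}$ (where $m(v^*)$ and $|\nabla m(v^*)|$ are exponentially smaller than $m(v)$ for $|v|\gg 1$) and its complement (where the kernels $\tilde{\cK}$ and $\nabla\tilde{\cK}$ are bounded uniformly by $2|v|^{-1}$ and $4|v|^{-2}$ respectively), one obtains pointwise Gaussian decay of $\cK_k h$ in $v$, which the weight $m\brak{v}^k$ in the pairing easily absorbs. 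Losing two powers of $\brak{v}$ on the right-hand side is then more than sufficient; the main obstacle is careful bookkeeping of the weights.

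Finally, for the compactness of $\cK_k:\mathcal{H}_0^k\to\mathcal{H}_0^k$, the rank-one term is trivially compact. For each of the three convolution terms, I would write the operator in integral form $h\mapsto \int_{\R^3} K(v,v^*) h(v^*)\,dv^*$, with kernel $K(v,v^*)$ equal to a singular factor in $v-v^*$ (of the form $|v-v^*|^{-1}$ or $|v-v^*|^{-2}$) multiplied by Gaussian-decaying factors coming from $m(v^*)$ or $\nabla m(v^*)$, and possibly the prefactor $\nabla m(v)/m(v)\sim v$. The weighted Hilbert--Schmidt norm on $\mathcal{H}_0^k = L^2(m\brak{v}^{k-1})$ is
\begin{align*}
\iint_{\R^3\times\R^3} |K(v,v^*)|^2 \,\frac{m(v)\brak{v}^{k-1}}{m(v^*)\brak{v^*}^{k-1}}\,dv\,dv^*,
\end{align*}
which is finite thanks to the Gaussian decay in $v^*$ (inherited from $m(v^*)^2$ in the integrand, which overpowers $m(v^*)^{-1}\brak{v^*}^{-(k-1)}$) combined with the local integrability of $|v-v^*|^{-2}$ in $\R^3$. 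Hence $\cK_k$ is Hilbert--Schmidt, and in particular compact, on $\mathcal{H}_0^k$.
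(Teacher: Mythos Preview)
Your derivation of \eqref{Kappa.2} is correct and essentially matches the paper's computation: integrate by parts in $v^*$ to shift the derivative off $\nabla h(v^*)$, and use the distributional identity $\nabla\cdot(\nabla\cdot\tilde{\cK}) = -8\pi\delta_0$ to produce the cancellation with the $-8\pi m h$ term. Your sketch for the bound \eqref{bound.K1} is rough but salvageable; the paper phrases it via the elementary estimate $\|h\nabla m\|_{L^p} + \|hm\|_{L^p} \lesssim \|h\|_{L^2(m\brak{v}^s)}$ for $1<p\le 2$ and arbitrary $s\in\R$, which cleans up the weight bookkeeping.

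The genuine gap is in your compactness argument. You claim $\cK_k$ is Hilbert--Schmidt, and for the term $\tfrac{\nabla m}{m}\cdot\tilde{\cK}\ast(h\nabla m)$ this is fine: the kernel has a $|v-v^*|^{-1}$ singularity, so $|K|^2\sim|v-v^*|^{-2}$ is locally integrable in $\R^3$. But for the two terms involving $\nabla\tilde{\cK}$ (i.e.\ the divergence $\nabla\cdot\tilde{\cK}(w) = -2w/|w|^3$), the kernel behaves like $|v-v^*|^{-2}$, so $|K(v,v^*)|^2$ carries a $|v-v^*|^{-4}$ singularity. In dimension three this is \emph{not} locally integrable, and the Gaussian factors $m(v)$, $m(v^*)$ are bounded away from zero on compact sets, so they do not rescue the diagonal divergence. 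Hence the Hilbert--Schmidt norm is infinite for those pieces, and your argument breaks down precisely where you invoke ``local integrability of $|v-v^*|^{-2}$'' --- you appear to have forgotten to square the stronger singularity.

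The paper sidesteps this by not attempting Hilbert--Schmidt at all: it truncates the kernel as $\nabla\tilde{\cK} = {\bf 1}_{B_R}\nabla\tilde{\cK} + {\bf 1}_{\R^3\setminus B_R}\nabla\tilde{\cK}$, uses that ${\bf 1}_{B_R}\nabla\tilde{\cK}\in L^1(\R^3)$ (since $|w|^{-2}$ \emph{is} locally $L^1$) so convolution against it is compact on $L^2$ of bounded sets, and then shows via Young's inequality that the tail ${\bf 1}_{\R^3\setminus B_R}\nabla\tilde{\cK}$ contributes an operator of norm $o(1)$ as $R\to\infty$. This gives compactness as a norm-limit of compact operators. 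You should replace your Hilbert--Schmidt step for the $\nabla\tilde{\cK}$ terms with an argument of this type.
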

\begin{proof}
An integration by parts yields
	\begin{align*}
	(\cK_k h)(v) =& -
	\frac{\nabla m(v)}{m(v)}\cdot\int_{\R^3}
	\frac{m(v^*)}{|v-v^*|}\Pi(v-v^*)\nabla h(v^*) dv^*\\
	& - \nabla\cdot\int_{\R^3}
	\frac{m(v^*)}{|v-v^*|}\Pi(v-v^*)\nabla h(v^*) dv^*
	-8\pi m(v)h(v) + \xi\int_{\R^3}m h\langle v \rangle^{k} dv\\
	=\;& \frac{\nabla m(v)}{m(v)}\cdot\int_{\R^3}
	\frac{\nabla m(v^*)}{|v-v^*|}\Pi(v-v^*) h(v^*) dv^*\\
	& + \frac{\nabla m(v)}{m(v)}\cdot\int_{\R^3}
	h(v^*) m(v^*) \nabla_{v^*}\left[\frac{\Pi(v-v^*)}{|v-v^*|} \right] dv^*\\
	&+ \nabla\cdot\int_{\R^3}
	\frac{h(v^*)}{|v-v^*|}\Pi(v-v^*)\nabla m(v^*) dv^*\\
	& - \nabla\cdot\int_{\R^3}
	\frac{\Pi(v-v^*)}{|v-v^*|}\nabla [m(v^*) h(v^*)] dv^* 
	-8\pi m(v)h(v) + \xi\int_{\R^3}m h\langle v \rangle^{k} dv.
	\end{align*}
	Since 
	\begin{align*}
	- \nabla\cdot\int_{\R^3}
	\frac{\Pi(v-v^*)}{|v-v^*|}\nabla f(v^*) dv^*
	= 8\pi f(v)\qquad\forall f\in C^\infty_c(\R^3),
	\end{align*}
	we deduce that \eqref{Kappa.2} holds.
	
	Let now $(h_n)_{n\in\N}$ be a bounded sequence in $\mathcal{H}_0^k = L^2(m\langle v \rangle^{k-1})$. For $1<p\leq 2$, $s\in\R$, we have
	\begin{align*}
	\|h_n\nabla m\|_{L^p(\R^3)}^p
	\lesssim\int_{\R^3} |h_n|^p |\nabla m^{1-1/p}|^p \langle v \rangle^{-s} ~ 
	m\langle v \rangle^{s} dv
	\lesssim \int_{\R^3} |h_n|^p m\langle v \rangle^{s} dv.
	\end{align*}
	H\"older's inequality yields 
	\begin{align}\label{hnp2}
	\|h_n\nabla m\|_{L^p(\R^3)}\lesssim 
	\|h_n\|_{L^2(m \langle v \rangle^s )},\qquad 1<p\leq 2,\quad s\in\R.
	\end{align}
	In a similar way, one shows
	\begin{align}\label{hnp2.bis}
	\|h_n m\|_{L^p(\R^3)}\lesssim 
	\|h_n\|_{L^2(m \langle v \rangle^s )},\qquad 1<p\leq 2,\quad s\in\R.
	\end{align}
	This means that $h_n\nabla m$, $h_n m$ are bounded in $L^p(\R^3)$ for $1<p\leq 2$. Let us now consider, for $R>0$ arbitrary,
	\begin{align*}
	\nabla\tilde{\cK}\ast(h_n\nabla m) = 
	({\bf 1}_{ B_R }\nabla\tilde{\cK})\ast(h_n\nabla m) 
	+ ({\bf 1}_{ \R^3\backslash B_R }\nabla\tilde{\cK})\ast(h_n\nabla m).
	\end{align*}
	Given that $\nabla\tilde{\cK}\in L^{1}(B_R)$, from \cite[Corollary 4.28]{Brezis} it follows that $({\bf 1}_{ B_R }\nabla\tilde{\cK})\ast(h_n\nabla m)$
	is relatively compact in $L^2(\Omega)$ for every measurable set $\Omega$ with finite measure. A Cantor diagonal argument yields the existence of a subsequence of $h_n$ (not relabeled) such that 
	$({\bf 1}_{ B_R }\nabla\tilde{\cK})\ast(h_n\nabla m)$
	is strongly convergent in $L^2(B_r)$ for every $r\in\N$. Given that $\int_{\R^3}m \langle v \rangle^{k-1}dv < \infty$, it is easily seen that
	\begin{align}\label{doccia.1}
	({\bf 1}_{ B_R }\nabla\tilde{\cK})\ast(h_n\nabla m)\to ({\bf 1}_{ B_R }\nabla\tilde{\cK})\ast(h\nabla m)\quad\mbox{  strongly in $L^2(m \langle v \rangle^{k-1}) = \mathcal{H}_0^k$.}
	\end{align}
	On the other hand, Young's inequality for convolutions yields
	\begin{align*}
	\|({\bf 1}_{ \R^3\backslash B_R }\nabla\tilde{\cK})\ast(h_n\nabla m)\|_{L^2(\R^3)}\leq 
	\|\nabla\tilde{\cK}\|_{L^q(\R^3\backslash B_R)}
	\|h_n\nabla m\|_{L^p(\R^3)},\quad \frac{3}{2} = \frac{1}{p} + \frac{1}{q},\quad 1<p<\frac{6}{5}.
	\end{align*}
	Since $q > 3/2$ then $\|\nabla\tilde{\cK}\|_{L^q(\R^3\backslash B_R)}\to 0$ as $R\to \infty$, while $\|h_n\nabla m\|_{L^p(\R^3)}\lesssim 1$ for $1<p<6/5$.
	From this fact and \eqref{doccia.1} we obtain
	\begin{align}\label{doccia.2}
	\nabla\tilde{\cK}\ast(h_n\nabla m)\to \nabla\tilde{\cK}\ast(h\nabla m)\quad\mbox{  strongly in $\mathcal{H}_0^k$.}
	\end{align}
	In a similar way one shows that 
	\begin{align}\label{doccia.3}
	\frac{\nabla m}{m}\cdot\nabla\tilde{\cK}\ast(h_n m)\to \frac{\nabla m}{m}\cdot\nabla\tilde{\cK}\ast(h m)\quad\mbox{  strongly in $\mathcal{H}_0^k$.}
	\end{align}
	Let us now deal with $\tilde{\cK}\ast (h_n\nabla m)$. 
	One can prove, via a similar argument as the one employed to show \eqref{doccia.1}, that (up to subsequences)
	\begin{align}\label{urca}
	({\bf 1}_{ B_R }\tilde{\cK})\ast (h_n\nabla m)\to 
	({\bf 1}_{ B_R }\tilde{\cK})\ast (h\nabla m)
	\quad\mbox{  strongly in $\mathcal{H}_0^k$.}
	\end{align}
	On the other hand, for $\zeta\in (0,1)$,
	\begin{align*}
	|( ({\bf 1}_{\R^3\backslash B_R }\tilde{\cK})\ast (h_n\nabla m))(v)|\leq R^{-\zeta}
	|( |\cdot|^{\zeta-1}\ast |h_n\nabla m| )(v)|,\qquad v\in\R^3,
	\end{align*}
	so Hardy-Littlewood-Sobolev's inequality yields
	\begin{align*}
	\|( ({\bf 1}_{\R^3\backslash B_R }\tilde{\cK})\ast (h_n\nabla m))\|_{L^q(\R^3)}\lesssim
	R^{-\zeta}\|h_n\nabla m\|_{L^p(\R^3)},\qquad 
	\frac{1}{p} + \frac{1-\zeta}{3} = 1 + \frac{1}{q},\quad 1<p\leq 2.
	\end{align*}
	This means that
	\begin{align*}
	\lim_{R\to \infty}\sup_{n\in\N}\|({\bf 1}_{\R^3\backslash B_R }\tilde{\cK})\ast (h_n\nabla m)\|_{L^q(\R^3)} = 0.
	\end{align*}
	Putting the above relation and \eqref{urca} together yields the strong convergence of $\tilde{\cK}\ast (h_n\nabla m)$ in $\mathcal{H}_0^k$.
	Finally, $\int_{\R^3}m h_n dv$ is obviously relatively compact in $\mathcal{H}_0$. Thus $\cK_k : \mathcal{H}_0^k\to \mathcal{H}_0^k$ is a compact operator for every $k\geq 0$. Bound \eqref{bound.K1} is a straightforward byproduct of the previous computations and of estimates \eqref{hnp2}, \eqref{hnp2.bis}.
	This finishes the proof of the Lemma.
\end{proof}

We now want to prove the following theorem:

%\red{{\bf NZ:} Maria, I think we should leave the constant $C_L$ here and not use the notation $\lesssim$, because it is a special constant and we use it later. I also left at the end some suitable constant as it appears in the expression for the convergence rate.}

\begin{theorem}[Spectral gap for $L$]\label{lem.L}
There exists a constant $C_{L}>0$ such that
\begin{align}
\label{spgap.L}
-(L h, h)_{L^2(m)} &\geq C_{L} \left(
\int_{\R^3}A[m]\nabla h\cdot\nabla h \, m dv +
\|h \|_{L^2(m\langle v \rangle^{-1})}^2 \right),\quad\forall h\in D(L)\cap N(L)^\perp .
\end{align}
\end{theorem}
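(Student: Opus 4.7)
The plan is to apply the abstract spectral gap criterion \cite[Lemma 10]{daus2016hypocoercivity} to the splitting $-L = \Lambda_0 - \cK_0$ introduced in \eqref{L.split}--\eqref{Kappa} at $k=0$. Both required ingredients are already in hand: Lemma \ref{lem.Lambda} provides the coercivity $(\Lambda_0 h, h)_{L^2(m)} \gtrsim \|h\|_{\mathcal{H}^0}^2$, in which the right-hand side is, by definition of $\mathcal{H}^0$, precisely the quantity $\int_{\R^3} A[m]\nabla h\cdot\nabla h\,m\,dv + \|h\|_{L^2(m\langle v\rangle^{-1})}^2$ appearing on the right of \eqref{spgap.L}. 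Lemma \ref{lem.new} in turn supplies the compactness $\cK_0:\mathcal{H}_0^0 \to \mathcal{H}_0^0$, which is the Fredholm-type property needed to pass from coercivity-modulo-a-compact-perturbation to a genuine spectral gap on the orthogonal complement of the finite-dimensional kernel $N(L)$.

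Concretely, I would argue by compactness and contradiction. Assuming \eqref{spgap.L} fails, one selects $(h_n) \subset D(L) \cap N(L)^\perp$ normalized by $\|h_n\|_{\mathcal{H}^0} = 1$ and with $-(Lh_n, h_n)_{L^2(m)} \to 0$. Writing $-(Lh_n, h_n)_{L^2(m)} = (\Lambda_0 h_n, h_n)_{L^2(m)} - (\cK_0 h_n, h_n)_{L^2(m)}$ and using Lemma \ref{lem.Lambda} forces $(\cK_0 h_n, h_n)_{L^2(m)} \geq c > 0$ along a subsequence. By weak compactness in $\mathcal{H}^0$ one may pass to $h_n \rightharpoonup h$, and Lemma \ref{lem.new} upgrades this to $\cK_0 h_n \to \cK_0 h$ strongly in $\mathcal{H}_0^0$. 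After justifying the passage to the limit in the mixed $L^2(m)$-pairing and invoking weak lower semicontinuity of the coercive form $(\Lambda_0\cdot,\cdot)_{L^2(m)}$, one concludes $-(Lh, h)_{L^2(m)} \leq 0$, and since $-L$ is nonnegative this identifies $h \in N(L)$. The orthogonality $h_n \perp N(L)$ in $L^2(m)$ passes to the limit because $N(L)$ is spanned by the finitely many collision invariants (lifted by $1/m$), so $h = 0$; this contradicts the positive lower bound on $(\cK_0 h_n, h_n)_{L^2(m)}$ and completes the proof.

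The chief technical difficulty I anticipate is the mismatch between the weight $m\langle v\rangle^{-1}$ natural for the compactness of $\cK_0$ and the weight $m$ in the inner product in which $L$ is symmetric. Making the limit passage $(\cK_0 h_n, h_n)_{L^2(m)} \to (\cK_0 h, h)_{L^2(m)}$ fully rigorous requires a slightly stronger statement than Lemma \ref{lem.new}, for instance that $\cK_0$ sends bounded sets of $\mathcal{H}_0^0$ into a relatively compact subset of $L^2(m\langle v\rangle^s)$ for some $s>0$. This refinement can be extracted by revisiting the proof of Lemma \ref{lem.new}: the explicit representation \eqref{Kappa.2} together with the Gaussian decay of $m$ and the Hardy--Littlewood--Sobolev bounds already used there allows one to trade powers of $\langle v\rangle$ for convolution integrability, yielding the improved compactness needed to close the contradiction argument.
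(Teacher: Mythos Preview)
Your approach is essentially the paper's: invoke the coercivity of $\Lambda_0$ (Lemma~\ref{lem.Lambda}), the compactness of $\cK_0$ (Lemma~\ref{lem.new}), and apply \cite[Lemma~10]{daus2016hypocoercivity}. The paper's proof is in fact just these three citations together with the observation from \cite{BL04} that $(Lh,h)_{L^2(m)} \leq 0$ with equality precisely on $N(L)$---you use this last fact implicitly in your contradiction step, and the weight-mismatch concern you raise is exactly what the abstract framework of \cite{daus2016hypocoercivity} is designed to absorb.
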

\begin{proof} 
From \cite{BL04} we know that for all $h\in\mathcal{H}^0$
\begin{align*}
(Lh,h)_{L^2(m)}\leq 0,
\end{align*}
and  equality holds if and only if $h\in N(L)$. This fact, Lemma \ref{lem.Lambda}, \ref{lem.new}, and \cite[Lemma 10]{daus2016hypocoercivity}
yield \eqref{spgap.L}.
\end{proof}
\begin{remark}
The constant $C_L$ appearing in the statement of Theorem ~\ref{lem.L} is not explicit. It is a consequence of 
\cite[Lemma 10]{daus2016hypocoercivity}, whose proof is non-constructive.
A similar estimate already appeared in \cite{guo2002landau,mouhot2006explicit} for the classical Landau equation.

%We also point out that both $L$ and $\Lambda$ are {\em unbounded} operators $L^2(m)\to L^2(m)$: more precisely, 
%they are bounded operators $H^1(m)\to H^{-1}(m)$. 
%In fact, considering $\int_{\R^3}h_1 \Lambda h_2\, m dv$ for $h_1, h_2\in C^\infty_c(\R^3)$ arbitrary, integrating by parts and exploiting Lemma \eqref{lemA_upper} easily yields 
%\begin{align*}
%\int_{\R^3}h_1 \Lambda h_2\, m dv \leq C \int_{\R^3}
%|\nabla h_1| |\nabla h_2|\, m dv\leq C \|h_1\|_{H^1(m)}\|h_2\|_{H^1(m)}
%\end{align*}
%where in the last step we applied Cauchy-Schwartz's inequality. This yields the boundedness of $\Lambda$ as an operator $H^1(m)\to H^{-1}(m)$. The corresponding property for $L$ follows immediately from this fact and the continuity of $\cK : L^2(m)\to L^2(m)$. For this reason we employed the duality product $(H^{-1}(m),H^1(m))$ in the statement and proof of Thr.~\ref{lem.L}, as we applied the operators $L$, $\Lambda$ to arbitrary functions $h\in H^1(m)$. On the other hand, being $\cK : L^2(m)\to L^2(m)$ bounded, it holds trivially 
%$(\cK h_1, h_2)_{(H^{-1}(m),H^1(m))} = (\cK h_1, h_2)_{L^2(m)}$ for every $h_1, h_2\in H^1(m)$.
\end{remark}
%
%\begin{remark} \textcolor{red}{About the spectral gap estimate. TO DO.}
%\end{remark}
%
Next, we show some bounds for $A$ and $\nabla a$. Define preliminarily for $p, q \geq 1$ and $g : \R^3\to\R$ arbitrary measurable function
\begin{align*}
%\label{Epq.perp}
\mathcal{E}_{p,q}^\perp[g] =&
\int_{\R^3} |g(w)|dw + 
\left(\int_{\R^3} |w|^{p}|g(w)|^p dw \right)^{\frac{1}{p}}
+ \left(\int_{\R^3} |w|^{q}|g(w)|^q dw \right)^{\frac{1}{q}},\\
%\label{Epq.par}
\mathcal{E}_{p,q}^\parallel[g] =&
\int_{\R^3}|w|^2 |g(w)|dw + 
\left(\int_{\R^3} |w|^{3p}|g(w)|^p dw \right)^{\frac{1}{p}}
+ \left(\int_{\R^3} |w|^{3q}|g(w)|^q dw \right)^{\frac{1}{q}},\\
%\label{Epq}
\mathcal{E}_{p,q}[g] =& \mathcal{E}_{p,q}^\perp[g] + \mathcal{E}_{p,q}^\parallel[g],\\
%
%\label{Etpq.perp}
\widetilde{\mathcal{E}}^\perp_{p,q}[g] =& 
\left( \int_{\R^3}|g(w)|^p dw \right)^{1/p} + 
\left( \int_{\R^3}|g(w)|^q dw \right)^{1/q} \\ \nonumber
&+
\left( \int_{\R^3}|w|^{2p}|g(w)|^{2p} dw \right)^{1/2p} + 
\left( \int_{\R^3}|w|^{2q}|g(w)|^{2q} dw \right)^{1/2q} ,\\
%\label{Etpq.par}
\widetilde{\mathcal{E}}^\parallel_{p,q}[g] =& 
\int_{\R^3} |g(w)|dw + 
\left(\int_{\R^3} |w|^{p}|g(w)|^p dw \right)^{\frac{1}{p}}
+ \left(\int_{\R^3} |w|^{q}|g(w)|^q dw \right)^{\frac{1}{q}}\\
\nonumber
&+ \left( \int_{\R^3}|w|^{4p}|g(w)|^{2p} dw \right)^{1/2p} + 
\left( \int_{\R^3}|w|^{4q}|g(w)|^{2q} dw \right)^{1/2q},\\
%\label{Etpq}
\widetilde{\mathcal{E}}_{p,q}[g] =& \widetilde{\mathcal{E}}_{p,q}^\perp[g] + \widetilde{\mathcal{E}}_{p,q}^\parallel[g].
\end{align*}
The following result holds.
\begin{lemma}[Bounds for $A$] \label{lem.A}
For every $p, q\in [1,\infty)$, $p<\frac{3}{2}<q$,
%there exist positive constants $C_i(p,q)$, $i=1,\ldots,4$ such that, 
and every $z\in\R^3$,
\begin{align}\label{A.bound}
z\cdot A[g](v)z &\lesssim_{p,q} 
\frac{\mathcal{E}_{p,q}^\perp[g]}{|v|} |z^\perp|^2 + \frac{\mathcal{E}_{p,q}^\parallel[g]}{|v|^{3}}|z^\parallel|^2 
\lesssim_{p,q}\mathcal{E}_{p,q}[g]\, z\cdot A[m](v)z ,\\
\label{a.bound}
|\Pi(v)\nabla a[g]| &\lesssim_{p,q} \widetilde{\mathcal{E}}^\perp_{p,q}[g]|v|^{-1},\qquad
\left|\frac{v}{|v|}\cdot\nabla a[g]\right| \lesssim_{p,q} \widetilde{\mathcal{E}}^\parallel_{p,q}[g]|v|^{-2},
\end{align}
with $z^\parallel = |v|^{-2}(v\cdot z)v$, $z^\perp = z - z^{\parallel} = \Pi(v)z$, for every $g \in L^1(\R^d)$ such that $\mathcal{E}_{p,q}^\perp[g]<\infty$, $\mathcal{E}_{p,q}^\parallel[g]<\infty$,
$\widetilde{\mathcal{E}}_{p,q}^\perp[g]<\infty$, $\widetilde{\mathcal{E}}_{p,q}^\parallel[g]<\infty$.
\end{lemma}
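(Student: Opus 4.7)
The plan is to decompose $z=z^\parallel+z^\perp$ with $z^\parallel=(v\cdot z/|v|^2)v$, and use $|\Pi(v-v_*)(z^\parallel+z^\perp)|^2\leq 2|\Pi(v-v_*)z^\parallel|^2+2|\Pi(v-v_*)z^\perp|^2$ to reduce the estimate on $z\cdot A[g]z$ to separate bounds on the two diagonal pieces. For the perpendicular piece, $|\Pi(v-v_*)z^\perp|^2\leq|z^\perp|^2$ reduces the problem to showing $\int g(v_*)|v-v_*|^{-1}dv_*\lesssim\mathcal{E}^\perp_{p,q}[g]/|v|$. I would split this integral into the far region $\{|v-v_*|\geq|v|/2\}$, bounded by $2\|g\|_{L^1}/|v|$, and the near region $\{|v-v_*|\leq|v|/2\}$, on which $|v_*|\geq|v|/2$ yields the weight-absorbing inequality $1\leq 2|v_*|/|v|$. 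Subdividing the near region into $\{|v-v_*|\leq 1\}$ (where H\"older with $L^q$ applies since $q^*<3$) and $\{1<|v-v_*|\leq|v|/2\}$ (where H\"older with $L^p$ applies since $p^*>3$) produces norms of $|v_*|g$ divided by $|v|$, completing the perpendicular estimate.

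For the parallel piece, direct computation gives $|\Pi(v-v_*)z^\parallel|^2=|z^\parallel|^2|v_*|^2\sin^2\theta/|v-v_*|^2$, where $\theta$ is the angle between $v$ and $v_*$. On the far region $\{|v-v_*|\geq|v|/2\}$, the trivial bound $\sin^2\theta\leq 1$ together with $|v-v_*|^{-3}\leq 8|v|^{-3}$ produces $\|\,|v_*|^2 g\|_{L^1}/|v|^3$, precisely the first term of $\mathcal{E}^\parallel$. On the near region, the geometric identity $|v\times v_*|=|v\times(v_*-v)|\leq|v||v-v_*|$ gives $|v_*|\sin\theta\leq|v-v_*|$ and so reduces the integrand to $g/|v-v_*|$; combining this with the stronger weight inequality $1\leq(2|v_*|/|v|)^3$ (valid since $|v_*|\geq|v|/2$ there) and the same H\"older subdivision as above produces $(\|\,|v_*|^3 g\|_{L^p}+\|\,|v_*|^3 g\|_{L^q})/|v|^3$. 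The second inequality in \eqref{A.bound} then follows from the classical refined Maxwellian lower bound $z\cdot A[m](v)z\gtrsim|z^\perp|^2/|v|+|z^\parallel|^2/|v|^3$ (for large $|v|$, with a uniform lower bound on compact sets of $|v|$), going back to \cite{DesVil2000a}.

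The estimate \eqref{a.bound} on $\nabla a[g]$ follows the same philosophy. Using $\Pi(v)v=0$ one has $\Pi(v)\nabla a[g]=\frac{1}{4\pi}\int\Pi(v)v_*|v-v_*|^{-3}g\,dv_*$, and the bound $|\Pi(v)v_*|=|v_*|\sin\theta\leq|v-v_*|$ reduces the kernel to $|v-v_*|^{-2}$. Because this kernel is more singular, the appropriate H\"older exponents shift to $2q$ on $\{|v-v_*|\leq 1\}$ (since one needs $(2q)^*<3/2$) and $2p$ on $\{1<|v-v_*|\leq|v|/2\}$, which explains the $L^{2p},L^{2q}$ norms in $\widetilde{\mathcal{E}}^\perp$. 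The parallel component $(v/|v|)\cdot\nabla a[g]$ is treated analogously; the new feature is the subregion $\{|v_*|\leq|v|/2\}\subset\{|v-v_*|\geq|v|/2\}$, on which the integrand is pointwise bounded by $2g/|v|^2$ (using $|(v/|v|)\cdot(v-v_*)|\leq|v-v_*|$), producing the $\|g\|_{L^1}/|v|^2$ contribution in $\widetilde{\mathcal{E}}^\parallel$. The main technical obstacle will be careful bookkeeping of exponents across the several subregions, ensuring that the resulting combinations of $L^1,L^p,L^q,L^{2p},L^{2q}$ norms of $g,|v_*|g,|v_*|^2 g,|v_*|^3 g$ fit exactly the definitions of $\mathcal{E}^\perp,\mathcal{E}^\parallel,\widetilde{\mathcal{E}}^\perp,\widetilde{\mathcal{E}}^\parallel$ and that every H\"older pairing is admissible under the assumption $p<3/2<q$.
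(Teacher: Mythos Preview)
Your approach is correct and yields the same bounds as the paper, but the mechanism for extracting powers of $|v|$ is different. Both proofs rest on the same structural identities: for the parallel piece of $A$ one uses $\Pi(v-v_*)v=\Pi(v-v_*)v_*$ (which is your sine formula $|\Pi(v-v_*)v|^2=|v|^2|v_*|^2\sin^2\theta/|v-v_*|^2$ rewritten), and for $\nabla a$ one uses $|\Pi(v)v_*|\leq|v-v_*|$. Where you split into near/far regions at $|v-v_*|=|v|/2$ and then use $|v_*|\geq|v|/2$ on the near region to absorb weights, the paper instead iterates the triangle inequality $|v|\leq|v-w|+|w|$: for instance, for the parallel piece of $A$, after bounding $v\cdot A[g]v\leq\int|w|^2|g|/|v-w|\,dw$ they multiply by $|v|$ and write $|v|\int\frac{|w|^2|g|}{|v-w|}\leq\int|w|^2|g|+\int\frac{|w|^3|g|}{|v-w|}$, which directly produces $\||w|^2 g\|_{L^1}$ and (after one H\"older split at $|v-w|=1$) the $\||w|^3 g\|_{L^p}+\||w|^3 g\|_{L^q}$ terms. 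The same triangle-inequality trick handles $\nabla a$ in one stroke, bypassing your subregion bookkeeping. The paper's route is somewhat cleaner in that it avoids the case analysis at small $|v|$ (your split at $|v-v_*|=1$ inside the near region $|v-v_*|\leq|v|/2$ only makes sense for $|v|>2$), but both are standard and valid; your far-region treatment of $\int_{\text{far}}|g|/|v-v_*|\,dw$ after one power of $|v-v_*|^{-1}\leq 2/|v|$ still needs the H\"older split at $|v-v_*|=1$ (not just $L^p$) when $|v|$ is small, which is the ``careful bookkeeping'' you flag.
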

\begin{proof}
The upper bound in \eqref{A.bound} is already known since $m$ can be estimated from below via the Maxwell-Boltzmann distribution. Therefore we only prove the lower bound.
	
We first observe that it is enough to prove the statement for $z = z^\parallel$ and $z = z^\perp$, since via Cauchy-Schwartz and Young's inequality it holds (remember that $A[g]$ is symmetric and positive definite)
\begin{align*}
z^\perp\cdot A[g]z^\parallel \leq\left( z^\perp\cdot A[g]z^\perp \right)^{1/2}\left( z^\parallel\cdot A[g]z^\parallel \right)^{1/2}\leq
\frac{1}{2}z^\perp\cdot A[g]z^\perp + 
\frac{1}{2}z^\parallel\cdot A[g]z^\parallel .
\end{align*}
Let now deal with the case $z=z^\parallel$. We start by considering $z=v$. It holds
\begin{align*}
v\cdot A[g](v)v &= \int_{\R^3}\frac{g(w)}{|v-w|}v\cdot\Pi(v-w)v\, dw\\
&= \int_{\R^3}\frac{g(w)}{|v-w|}w\cdot\Pi(v-w)w\, dw\\
&\leq \int_{\R^3}\frac{|w|^2 |g(w)|}{|v-w|}dw .
%= \int_{\{ |v-w|\geq |v|/2 \} }\frac{|w|^2 g(w)}{|v-w|}dw + 
%\int_{ \{ |v-w| < |v|/2 \} }\frac{|w|^2 g(w)}{|v-w|}dw .
\end{align*}
%
%% OLD ARGUMENT.
%However
%\begin{align*}
%\int_{ \{ |v-w|\geq |v|/2 \} }\frac{|w|^2 g(w)}{|v-w|}dw \leq 
%\frac{2}{|v|}\int_{\R^3}|w|^2 g(w) dw ,
%\end{align*}
%while H\uml older inequality yields for $p>3/2$
%\begin{align*}
%\int_{ \{ |v-w|< |v|/2 \} }\frac{|w|^2 g(w)}{|v-w|}dw \leq
%\left( \int_{\R^3}|w|^{2p}g(w)^p dw \right)^{1/p}
%\left( \int_{ \{ |v-w|< |v|/2 \} }|v-w|^{-p/(p-1)}dw \right)^{1-1/p}\\
%\leq C\left( \int_{\R^3}|w|^{2p}g(w)^p dw \right)^{1/p}
%|v|^{2-3/p}
%\end{align*}
%
Let us now consider
\begin{align*}
|v|\int_{\R^3}\frac{|w|^2 |g(w)|}{|v-w|}dw &\leq
\int_{\R^3} (|v-w| + |w|) \frac{|w|^2 |g(w)|}{|v-w|}dw \\
&\leq
\int_{\R^3}|w|^2 |g(w)|dw 
+ \int_{\R^3}\frac{|w|^3 |g(w)|}{|v-w|}dw.
\end{align*}
Since
\begin{align*}
\int_{\R^3}\frac{|w|^3 |g(w)|}{|v-w|}dw =
\int_{B_1(v)}\frac{|w|^3 |g(w)|}{|v-w|}dw +
\int_{\R^3\backslash B_1(v)}\frac{|w|^3 |g(w)|}{|v-w|}dw ,
\end{align*}
 H\uml older inequality yields
\begin{align*}
\int_{\R^3}\frac{|w|^3 |g(w)|}{|v-w|}dw\lesssim_{\ell_1,\ell_2}
\| |\cdot|^3 g \|_{3/2+\ell_1} + \| |\cdot|^3 g \|_{3/2-\ell_2},
\quad\forall \ell_1>0,~~ \forall\ell_2\in \left(0,\frac{1}{2}\right].
\end{align*}
It follows
\begin{align*}
&\frac{v}{|v|}\cdot A[g](v)\frac{v}{|v|} \lesssim_{p,q} 
\mathcal{E}_{p,q}^\parallel[g] \, |v|^{-3},\qquad
1\leq p < \frac{3}{2} < q .
\end{align*}
Let us now consider, for $z = z^\perp$, $|z|=1$,
\begin{align*}
|v|\, & z\cdot A[g](v)z = \int_{\R^3}\frac{|v| g(w)}{|v-w|}z\cdot\Pi(v-w)z\, dw  \\
&\leq \int_{\R^3}\frac{|v| |g(w)|}{|v-w|}\, dw
\leq \int_{\R^3}|g(w)|\, dw + 
\int_{\R^3}\frac{|w| |g(w)|}{|v-w|}\, dw\\
&\lesssim_{\ell_1,\ell_2} \|g\|_1 + 
\| |\cdot| g \|_{3/2+\ell_1} + \| |\cdot| g \|_{3/2-\ell_2},\quad\forall \ell_1>0,~~ \forall\ell_2\in \left(0,\frac{1}{2}\right].
\end{align*}
It follows
\begin{align*}
& z\cdot A[g](v)z \lesssim_{p,q}\mathcal{E}_{p,q}^\perp[g]\, |v|^{-1},\qquad
 1\leq p < \frac{3}{2} < q .
\end{align*}
Hence  \eqref{A.bound} holds. 

Let us now prove \eqref{a.bound}.
It holds (via Young's inequality for convolutions)
\begin{align*}
|v| |\nabla a[g](v)| &\leq \int_{\R^3}\frac{|v-w|+|w|}{|v-w|^2} |g(w)| dw\\
&= \int_{\R^3}\frac{|g(w)|}{|v-w|}dw + 
\int_{\R^3}\frac{|w||g(w)|}{|v-w|^2}dw\\
&\leq \widetilde{\mathcal{E}}_{p,q}^\perp[g],
\end{align*}
while 
\begin{align*}
|v|^2 \left|\frac{v}{|v|}\cdot\nabla a[g]\right| =&
|v| \left| \int_{\R^3}g(w)\frac{(v-w)\cdot v}{|v-w|^3}dw \right|\\
\leq & |v|\int_{\R^3}\frac{|g(w)|}{|v-w|}dw + 
|v|\int_{\R^3}\frac{|g(w)||w|}{|v-w|^2}dw\\
\leq & \int_{\R^3}|g(w)|dw + 2\int_{\R^3}\frac{|g(w)||w|}{|v-w|}dw
+ \int_{\R^3}\frac{|g(w)||w|^2}{|v-w|^2}dw\\
\leq & \widetilde{\mathcal{E}}_{p,q}^\parallel[g].
\end{align*}
This finishes the proof of the Lemma.	
\end{proof}
The next lemma deals with the nonlinear contributions $\Gamma_2$ and $\Gamma_3$. 
\begin{lemma}[Bounds for nonlinear terms]\label{lem.G.bound}
For every $p, q\geq 1$, $p < \frac{3}{2} < q$, $k\geq 0$
%a constant $C(p,q)>0$ exists such that
it holds
\begin{align}
( &\Gamma_2(h,h), h)_{L^2(m \langle v \rangle^k )} \lesssim_{p,q}
\Big[ \rho 
(\mathcal{E}_{p,q}[mh] 
+ \widetilde{\mathcal{E}}_{p,q}[mh] ) \label{G2.bound} \\ \nonumber
&+ \rho\|m^{1/2}h\|_{2}^{2/3} + \rho\|m^{1/2}h\|_{2}^{4/3}
+ \rho^{-1} \Big]\left(
\int_{\R^3}A[m]\nabla h\cdot\nabla h\,\langle v \rangle^k m dv + 
\int_{\R^3} h^2\langle v \rangle^{k-1} m dv \right) ,
\end{align}
\begin{align}
\label{G3.bound}
(\Gamma_3[h,& h,h], h)_{L^2(m\langle v \rangle^k)} \\
\nonumber
\lesssim_{p,q} &\; \rho (\mathcal{E}_{p,q}[mh] + \widetilde{\mathcal{E}}_{p,q}[mh])\left(
\int_{\R^3}A[m]\nabla h\cdot\nabla h\,\langle v \rangle^k m dv + 
\int_{\R^3} h^2\langle v \rangle^{k-1} m dv \right)\\
\nonumber
& + \rho^{-1}\int_{\R^3}A[m]\nabla h\cdot\nabla h\langle v \rangle^k m dv ,
\end{align}
for every $\rho>0$.
\end{lemma}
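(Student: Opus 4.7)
The plan is to expand the scalar products $(\Gamma_2[h,h],h)_{L^2(m\langle v\rangle^k)}$ and $(\Gamma_3[h,h,h],h)_{L^2(m\langle v\rangle^k)}$ directly from the explicit formulas \eqref{1.G2}--\eqref{1.G3}. The weight $m$ in the inner product cancels the prefactor $1/m$ of each operator, so after one integration by parts we are left with finitely many integrals of the form $\int \mathcal{F}\cdot\nabla(h\langle v\rangle^k)\,dv$, where $\mathcal{F}$ is one of the four (resp.\ two) vector fields appearing inside the divergence defining $\Gamma_2$ (resp.\ $\Gamma_3$). Using $\nabla(h\langle v\rangle^k)=\langle v\rangle^k\nabla h+kh\langle v\rangle^{k-2}v$, each piece will split into a ``gradient'' contribution and a ``weight'' contribution that decays one power of $|v|$ faster.

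For every piece, the key reduction is the comparison $z\cdot A[g]z\lesssim \mathcal{E}_{p,q}[g]\,z\cdot A[m]z$ and the pointwise bounds on $\nabla a[g]$ provided by Lemma~\ref{lem.A}. In the diffusive terms $A[(1-2\cM)mh]\nabla(mh)$ (quadratic) and $-A[m^2h^2]\nabla(mh)$ (cubic) we use $|1-2\cM|\le 1$ and expand $\nabla(mh)=m\nabla h+h\nabla m$; Cauchy--Schwartz \emph{in the $A[m]$-form}, followed by Young's inequality with parameter $\rho$, then yields the two ``good'' quadratic forms $\int A[m]\nabla h\cdot\nabla h\,\langle v\rangle^k m\,dv$ and $\int h^2\langle v\rangle^{k-1}m\,dv$ on the right of \eqref{G2.bound}--\eqref{G3.bound}, with prefactor $\mathcal{E}_{p,q}[mh]$ or $\mathcal{E}_{p,q}[m^2h^2]$. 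For the drift terms $(1-2\cM)mh\,\nabla a[mh]$ and $m^2h^2\,\nabla a[mh]$ I decompose $\nabla a[g]$ into its components parallel and perpendicular to $v$, so that the anisotropic decays $|v|^{-1}\widetilde{\mathcal{E}}^\perp$ and $|v|^{-2}\widetilde{\mathcal{E}}^\parallel$ from \eqref{a.bound} match exactly the anisotropic lower bound for $A[m]$ hidden in Lemma~\ref{lem.A}; after a second Young step the contribution is again absorbed into $\int A[m]\nabla h\cdot\nabla h\,\langle v\rangle^k m\,dv$ plus a remainder controlled by $\widetilde{\mathcal{E}}_{p,q}[mh]$ (or the cubic analogue).

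The terms of $\Gamma_2$ that carry $m^2h^2=(mh)^2$ rather than $mh$ are what generate the $\|m^{1/2}h\|_{L^2}$ contributions in \eqref{G2.bound}. To handle them I estimate $\mathcal{E}_{p,q}[m^2h^2]$ and $\widetilde{\mathcal{E}}_{p,q}[m^2h^2]$ by interpolating the weighted Lebesgue norms of $(mh)^2$ between $\|mh\|_{L^2}^2=\|m^{1/2}h\|_{L^2(m)}^2$ and higher-integrability norms, using the Gaussian decay of $m$ to absorb polynomial weights into $m^{1/2}$. Choosing the exponents $p=1$, $q$ slightly above $3/2$ in Lemma~\ref{lem.A} produces precisely the fractional powers $\|m^{1/2}h\|_{L^2}^{2/3}$ and $\|m^{1/2}h\|_{L^2}^{4/3}$ that appear in the lemma, while the additive $\rho^{-1}$ comes from the last Young step on those pieces where the prefactor $\mathcal{E}[m^2h^2]$ cannot be directly absorbed into the gradient form.

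Assembling the above ingredients proves \eqref{G2.bound}, and repeating the same argument for the two cubic vector fields in $\Gamma_3$ proves \eqref{G3.bound}; note that for $\Gamma_3$ no $\|m^{1/2}h\|_{L^2}$ contribution survives, because every resulting term already carries a gradient of $h$ that can be put into $A[m]$-form. The main obstacle I anticipate is purely combinatorial bookkeeping: the product rules for $\nabla(mh)$, $\nabla(h\langle v\rangle^k)$, and $\nabla\cM$ generate a large number of subterms, and one must systematically verify that in each subterm the anisotropic weights supplied by Lemma~\ref{lem.A} are compatible with the anisotropic gradient of the Maxwellian-type factors $\nabla m$ and $\nabla\cM$, so that nothing worse than $\int A[m]\nabla h\cdot\nabla h\,\langle v\rangle^k m\,dv$ and $\int h^2\langle v\rangle^{k-1}m\,dv$ appears after closing the estimates with $\rho$-Young.
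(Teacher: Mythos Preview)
Your overall architecture matches the paper's proof: expand via \eqref{1.G2}--\eqref{1.G3}, integrate by parts so that the $1/m$ cancels, split $\nabla(h\langle v\rangle^k)$, and control each piece by comparing $A[\cdot]$ and $\nabla a[\cdot]$ to $A[m]$ through Lemma~\ref{lem.A}, followed by $\rho$-Young. The paper organizes the quadratic contribution into ten integrals $I_1,\dots,I_5,I'_1,\dots,I'_5$, and your sketch treats $I_1,I_2,I_4$ (and their primed versions) essentially the same way, including the anisotropic parallel/perpendicular decomposition of $\nabla a[mh]$ for $I_4$.

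The gap is in your handling of the term $I_5=-\int\nabla h\cdot m^2h^2\,\nabla a[\cM]\,\langle v\rangle^k\,dv$, which is precisely where the fractional powers $\|m^{1/2}h\|_2^{2/3}$ and $\|m^{1/2}h\|_2^{4/3}$ originate. You attribute these exponents to interpolating $\mathcal{E}_{p,q}[m^2h^2]$ and $\widetilde{\mathcal{E}}_{p,q}[m^2h^2]$ with a particular choice of $p,q$ in Lemma~\ref{lem.A}; but in $I_5$ the factor $m^2h^2$ sits \emph{outside} $A[\cdot]$ and $\nabla a[\cdot]$ (it is $\nabla a[\cM]$, a fixed bounded vector field), so Lemma~\ref{lem.A} produces no such functional here. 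If you simply apply Young and the crude bound $(mh)^2\le 1$, the resulting $\rho$-term multiplying $\int h^2\langle v\rangle^{k-1}m$ carries a fixed constant rather than a small-in-$h$ prefactor, and the structure of \eqref{G2.bound} is violated. The paper instead peels off one power via $|mh|^{1/3}\le 1$, applies Cauchy--Schwarz, and then invokes the three-dimensional Gagliardo--Nirenberg inequality $\|g\|_{L^{10/3}}^{5/3}\lesssim\|g\|_{L^2}^{2/3}\|\nabla g\|_{L^2}$ with $g=m^{3/5}h$; this converts the surplus $|h|^{5/3}$ into one gradient (absorbed in the $A[m]$-form) times $\|m^{1/2}h\|_2^{2/3}$, and the $4/3$-power comes from the cross-term when expanding $\nabla(m^{3/5}h)$. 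Your plan is missing this Gagliardo--Nirenberg step, and pure Lebesgue interpolation cannot replace it. (Relatedly, your explanation for why $\Gamma_3$ avoids these fractional powers is slightly off: it is not that ``every term carries $\nabla h$'', but that in the cubic drift the kernel is $\nabla a[mh]$ rather than $\nabla a[\cM]$, so Lemma~\ref{lem.A} already supplies the small factor $\widetilde{\mathcal{E}}_{p,q}[mh]$.)
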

\begin{proof}
Let us first consider the contribution of the quadratic terms.
\begin{align*}
(\Gamma_2(h,h),h)_{L^2(m\langle v \rangle^k)} =& 
-\int_{\R^3}\langle v \rangle^k\nabla h\cdot
\left( A[(1-2\cM)mh]\nabla(mh) - A[m^2 h^2]\nabla\cM\right.\\
&\left.\qquad - (1-2\cM)mh\nabla a[mh] + m^2 h^2\nabla a[\cM]
\right)dv\\
&-\int_{\R^3}h \nabla\langle v \rangle^k\cdot
\left( A[(1-2\cM)mh]\nabla(mh) - A[m^2 h^2]\nabla\cM\right.\\
&\left.\qquad - (1-2\cM)mh\nabla a[mh] + m^2 h^2\nabla a[\cM]
\right)dv,
\end{align*}
that can be rewritten as 
\begin{align*}
\langle \Gamma_2(h,h), & h\rangle_{L^2(m)} = \sum_{j=1}^5 I_j
+ \sum_{j=1}^5 I'_j,\\
I_1 :=& -\int_{\R^3}A[(1-2\cM)mh]\nabla h\cdot\nabla h\,\langle v \rangle^k mdv, \\
I_2 :=& -\int_{\R^3}A[(1-2\cM)mh]\nabla h\cdot\nabla m\,\langle v \rangle^k h dv,\\
I_3 :=& +\int_{\R^3}\nabla h\cdot A[m^2 h^2]\nabla\cM\,\langle v \rangle^k dv,\\
I_4 :=& +\int_{\R^3}\nabla h\cdot (1-2\cM)mh\nabla a[mh]\,\langle v \rangle^k dv,\\
I_5 :=& -\int_{\R^3}\nabla h\cdot m^2 h^2\nabla a[\cM]\, \langle v \rangle^k dv ,\\
I'_1 :=& -\int_{\R^3}A[(1-2\cM)mh]\nabla h\cdot\nabla\langle v \rangle^k\, h mdv, \\
I'_2 :=& -\int_{\R^3}A[(1-2\cM)mh]\nabla \langle v \rangle^k\cdot\nabla m\,
h^2 dv,\\
I'_3 :=& +\int_{\R^3}\nabla \langle v \rangle^k\cdot A[m^2 h^2]\nabla\cM\,h dv,\\
I'_4 :=& +\int_{\R^3}\nabla \langle v \rangle^k\cdot (1-2\cM)mh\nabla a[mh]\,h dv,\\
I'_5 :=& -\int_{\R^3}\nabla \langle v \rangle^k\cdot m^2 h^3\nabla a[\cM]\, dv ,
\end{align*}
For every $1\leq p < \frac{3}{2} < q$, thanks to \eqref{A.bound}, we get 
\begin{align*}
I_1 \lesssim_{p,q}\mathcal{E}_{p,q}[mh]\int_{\R^3}A[m]\nabla h\cdot\nabla h\, m \langle v \rangle^k dv ,
\end{align*} 
while Cauchy-Schwartz and Young's inequality lead to
\begin{align*}
I_2 \lesssim\int_{\R^3}A[|mh|]\nabla h\cdot\nabla h\, \langle v \rangle^k m dv + 
\int_{\R^3}A[|mh|]\nabla\log{m}\cdot\nabla\log{m}\, h^2\langle v \rangle^k m dv.
\end{align*}
However, it is easy to see (via direct computation) that
$$
\nabla\log m(v) = b\frac{1 - a e^{-b|u-v|^2/2}}{1 + a e^{-b|u-v|^2/2}}(u-v),
$$
so, using \eqref{A.bound}, we obtain
\begin{align*}
A[|mh|]\nabla\log{m}\cdot\nabla\log{m}\lesssim_{p,q}\mathcal{E}_{p,q}[mh]\langle v \rangle^{-1},
\end{align*}
which implies 
\begin{align*}
I_2 \lesssim_{p,q}\int_{\R^3}A[|mh|]\nabla h\cdot\nabla h\,\langle v \rangle^k m dv + 
\mathcal{E}_{p,q}[mh]\int_{\R^3} h^2 \langle v \rangle^{k-1}m dv.
\end{align*}
Applying \eqref{A.bound} once again leads to
\begin{align*}
I_2 \lesssim_{p,q}\mathcal{E}_{p,q}[mh]\left(
\int_{\R^3}A[m]\nabla h\cdot\nabla h\,\langle v \rangle^k m dv + 
\int_{\R^3} h^2 \langle v \rangle^{k-1}m dv \right).
\end{align*}
Let us now consider, for arbitrary $\rho>0$,
\begin{align*}
I_3 &\leq \rho\int_{\R^3} A[m^2 h^2]\nabla h\cdot\nabla h\,\langle v \rangle^k m dv + \rho^{-1}
\int_{\R^3} A[m^2 h^2]\nabla\log\left( \frac{\cM}{1-\cM} \right)\cdot \nabla\log\left( \frac{\cM}{1-\cM} \right)\langle v \rangle^k m dv\\
&=\rho\int_{\R^3} A[m^2 h^2]\nabla h\cdot\nabla h\,\langle v \rangle^k m dv + 
\rho^{-1}b^2\int_{\R^3} A[m^2 h^2](u-v)\cdot (u-v)\langle v \rangle^k m dv .
\end{align*}
It is quite easy to see that 
\begin{align*}
\int_{\R^3} A[m^2 h^2](u-v)\cdot (u-v)\langle v \rangle^k m dv &\leq
\int_{\R^3} m^2(w)h^2(w)\int_{\R^3}\frac{m(v)|u-v|^2}{|v-w|}\langle v \rangle^k dv \, dw\\
&\lesssim \int_{\R^3}m^2 h^2 dv \lesssim \int_{\R^3}m h^2\langle v \rangle^{k-1} dv ,
\end{align*}
while, on the other hand,
\begin{align*}
\int_{\R^3} A[m^2 h^2]\nabla h\cdot\nabla h\,\langle v \rangle^k m dv\lesssim_{p,q}\mathcal{E}_{p,q}[m^2 h^2]\int_{\R^3}A[m]\nabla h\cdot\nabla h \,\langle v \rangle^k m dv .
\end{align*}
Since $|mh| = |f-\cM|\leq 1$, it follows
\begin{align*}
I_3\lesssim_{p,q} \rho^{-1}\int_{\R^3}m h^2\langle v \rangle^{k-1} dv + \rho \mathcal{E}_{p,q}[mh]\int_{\R^3}A[m]\nabla h\cdot\nabla h \,\langle v \rangle^k m dv .
\end{align*}
Let us now deal with $I_4$. Young's inequality yields
%An integration by parts yields
%\begin{align*}
%I_4 =& \frac{1}{2}\int_{\R^3}\nabla (h^2)\cdot (1-2\cM)m\nabla a[mh]\, dv\\
%=& -\frac{1}{2}\int_{\R^3}h^2\left(
%\nabla((1-2\cM)m)\cdot\nabla a[mh] + (1-2\cM)m\Delta a[mh]
%\right)dv\\
%=& \frac{1}{2}\int_{\R^3}h^2\left(
%-\nabla((1-2\cM)m)\cdot\nabla a[mh] + (1-2\cM)m^2 h
%\right)dv
%\end{align*}
\begin{align*}
I_4 =& \int_{\R^3}\nabla h\cdot (1-2\cM)mh\nabla a[mh]\,\langle v \rangle^k dv\\
=& \int_{\R^3}\Pi(v)\nabla h\cdot (1-2\cM)mh \Pi(v)\nabla a[mh]\,\langle v \rangle^k dv \\
&+ 
\int_{\R^3}\frac{v\otimes v}{|v|^2}\nabla h\cdot (1-2\cM)mh\frac{v\otimes v}{|v|^2}\nabla a[mh]\,\langle v \rangle^k dv\\
\lesssim & \frac{1}{\rho}\int_{\R^3}|\Pi(v)\nabla h|^2 \brak{v}^{k-1}m dv + 
\rho\int_{\R^3}h^2 |\Pi(v)\nabla a[mh]|^2 \brak{v}^{k+1} m dv\\
& + \frac{1}{\rho}\int_{\R^3}\left|\frac{v\otimes v}{|v|^2}\nabla h\right|^2 \brak{v}^{k-3}m dv + 
\rho\int_{\R^3}h^2 \left|\frac{v\otimes v}{|v|^2}\nabla a[mh]\right|^2 \brak{v}^{k+3} m dv.
\end{align*}
From \eqref{A.bound}, \eqref{a.bound} it follows
\begin{align*}
I_4 \lesssim & \frac{1}{\rho}\int_{\R^3}A[m]\nabla h\cdot\nabla h \langle v \rangle^k m dv + \rho \widetilde{\mathcal{E}}_{p,q}[mh]\int_{\R^3} m h^2\langle v \rangle^{k-1} dv.
\end{align*}
Finally, let us consider, for a generic $0<\eta<1/3$,
\begin{align*}
I_5 =& -\int_{\R^3}\nabla h\cdot m^2 h^2\nabla a[\cM]\,\langle v \rangle^k dv \\
\lesssim & \int_{\R^3} m^{1/2+\eta}|\nabla h| \, m^{3/2-\eta}|h|^2 \,\langle v \rangle^k dv\\
\lesssim &
 \int_{\R^3} m^{1/2+\eta}|\nabla h| \, m^{7/6-\eta}|h|^{5/3} \,\langle v \rangle^k dv,
\end{align*}
where the last inequality holds because $|mh|^{1/3} = |f-\cM|^{1/3}\leq 1$. It follows via Cauchy-Schwartz inequality
\begin{align*}
I_5 \lesssim & \|\langle v \rangle^k m^{1/2+\eta}|\nabla h|\|_2\|m^{7/6-\eta}|h|^{5/3}\|_2 \lesssim_\eta
\|\brak{v}^{(k-3)/2}m^{1/2}|\nabla h|\|_2
\|m^{7/10-3\eta/5}|h|\|_{10/3}^{5/3}.
\end{align*}
Gagliardo-Nirenberg inequality leads to
\begin{align*}
I_5 \lesssim_\eta &\|\brak{v}^{(k-3)/2}m^{1/2}|\nabla h|\|_2
\|m^{7/10-3\eta/5}h\|_{2}^{2/3}\|\nabla(m^{7/10-3\eta/5}h)\|_2\\
\lesssim_\eta & \|\brak{v}^{(k-3)/2}m^{1/2}|\nabla h|\|_2
\|m^{7/10-3\eta/5}h\|_{2}^{2/3}\left(
\|m^{7/10-3\eta/5}\nabla h\|_2 + 
\|h\nabla(m^{7/10-3\eta/5})\|_2\right)\\
\lesssim_\eta & \|\brak{v}^{(k-3)/2}m^{1/2}|\nabla h|\|_2^2\|m^{7/10-3\eta/5}h\|_{2}^{2/3} \\
& +  \|\brak{v}^{(k-3)/2}m^{1/2}|\nabla h|\|_2
\|m^{7/10-3\eta/5}h\|_{2}^{2/3}\|h\, m^{7/10-3\eta/5}\nabla\log m \|_2 .
\end{align*}
Choosing $\eta = 1/6$ yields
\begin{align*}
I_5 \lesssim & \|\brak{v}^{(k-3)/2}m^{1/2}|\nabla h|\|_2^2
\|m^{1/2}h\|_{2}^{2/3} \\
& +  \|\brak{v}^{(k-3)/2}m^{1/2}|\nabla h|\|_2
\|\langle v \rangle^{(k-1)/2}m^{1/2}h\|_{2}
\|m^{1/2}h\|_{2}^{2/3}\\
\lesssim & \rho ( \|m^{1/2}h\|_{2}^{2/3} + \|m^{1/2}h\|_{2}^{4/3})
\|\brak{v}^{(k-3)/2}m^{1/2}|\nabla h|\|_2^2 + 
\rho^{-1} \|\langle v \rangle^{(k-1)/2}m^{1/2}h\|_{2}^2 .
\end{align*}
From \eqref{A.bound} we conclude
\begin{align*}
I_5 \lesssim &\rho ( \|m^{1/2}h\|_{2}^{2/3} + \|m^{1/2}h\|_{2}^{4/3}) \int_{\R^3}A[m]\nabla h\cdot\nabla h\,\langle v \rangle^k m dv
+ \rho^{-1}\int_{\R^3}h^2 \langle v \rangle^{k-1}m dv .
\end{align*}
Since $|\nabla \langle v \rangle^k |\lesssim \langle v \rangle^{k-1}$,
the terms $I'_1,\ldots, I'_5$ can be estimated in a similar way as the terms
$I_1,\ldots, I_5$. Therefore we deduce that \eqref{G2.bound} holds.

Next, we deal with the contributions from the cubic terms:
\begin{align*}
\langle \Gamma_3[h,h,h], h\rangle_{L^2(m)} =&
\int_{\R^3} \nabla h\cdot\left(
A[m^2 h^2]\nabla (mh) - m^2 h^2\nabla a[m h]\right)\langle v \rangle^k dv \\
& +\int_{\R^3} \nabla\langle v \rangle^k \cdot\left(
A[m^2 h^2]\nabla (mh) - m^2 h^2\nabla a[m h]\right) h dv\\
=& I_6 + I_7 + I_8 + I'_6 + I'_7 + I'_8,\\
I_6 :=& \int_{\R^3}\nabla h\cdot A[m^2 h^2]\nabla h\,\langle v \rangle^k m dv,\\
I_7 :=& \int_{\R^3}\nabla h\cdot A[m^2 h^2]\nabla m\,\langle v \rangle^k h dv,\\
I_8 :=& -\int_{\R^3}\nabla h \cdot m^2 h^2\nabla a[m h] \langle v \rangle^k dv,\\
I'_6 :=& \int_{\R^3}\nabla \langle v \rangle^k\cdot A[m^2 h^2]\nabla h\,h m dv,\\
I'_7 :=& \int_{\R^3}\nabla \langle v \rangle^k\cdot A[m^2 h^2]\nabla m\,h^2 dv,\\
I'_8 :=& -\int_{\R^3}\nabla \langle v \rangle^k \cdot m^2 h^3\nabla a[m h] dv.
\end{align*}
From \eqref{A.bound} and relation $|mh|\leq 1$ it follows
\begin{align*}
I_6 \lesssim_{p,q} &\mathcal{E}_{p,q}[m h]\int_{\R^3}A[m]\nabla h\cdot \nabla h \,\langle v \rangle^k m dv.
\end{align*}
The term $I_7$ can be estimated like $I_2$ to obtain
\begin{align*}
I_7 \lesssim_{p,q}\mathcal{E}_{p,q}[mh]\left(
\int_{\R^3}A[m]\nabla h\cdot\nabla h\,\langle v \rangle^k m dv + 
\int_{\R^3} h^2 \langle v \rangle^{k-1}m dv \right).
\end{align*}
The term $I_8$ can be estimated like $I_4$ to obtain
\begin{align*}
I_8 \lesssim & \frac{1}{\rho}\int_{\R^3}A[m]\nabla h\cdot\nabla h \langle v \rangle^k m dv + \rho \widetilde{\mathcal{E}}_{p,q}[mh]\int_{\R^3} m^3 h^4\langle v \rangle^{k-1} dv,
\end{align*}
but, given that $m^2 h^2\leq 1$, it follows
\begin{align*}
I_8 \lesssim & \frac{1}{\rho}\int_{\R^3}A[m]\nabla h\cdot\nabla h \langle v \rangle^k m dv + \rho \widetilde{\mathcal{E}}_{p,q}[mh]\int_{\R^3} m h^2\langle v \rangle^{k-1} dv.
\end{align*}
Finally, since  $|\nabla \langle v \rangle^k |\lesssim \langle v \rangle^{k-1}$,
the terms $I'_6,\ldots, I'_8$ can be estimated in a similar way as the terms
$I_6,\ldots, I_8$.
Therefore we deduce that \eqref{G3.bound} holds. This finishes the proof of the Lemma.
\end{proof}
%
%We need the following conditional bound on $h$.
%\begin{lemma}\label{lem.mom.bound}
%If $\sup_{t>0}\|h(t)\|_{L^2(m)} < \infty$, then also $\sup_{t>0}\|h(t)\|_{L^2(m\langle v \rangle)} < \infty$.
%%\end{lemma}
%%
%\begin{proof} \textcolor{red}{TO DO}
%\end{proof}
%
We are now ready to prove the conditional algebraic convergence result, thereby concluding the proof of Thr.~\ref{thm_longtime}.
\begin{lemma}[Algebraic rate of convergence for initial data close to equilibrium]\label{lem.rate}
There exists a constant $\ell>0$ such that, if $\int_{\R^3}(f_{in} - \cM)^2 m^{-1}dv < \ell$, and if 
$\int_{\R^3}(f_{in} - \cM)^2 \langle v \rangle^N m^{-1}dv < \infty$
for some $N\geq 1$, then 
\begin{align*}
\int_{\R^3}(f(t) - \cM)^2 m^{-1}dv \lesssim (1+t)^{-N},\qquad t>0.
\end{align*}
\end{lemma}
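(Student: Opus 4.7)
The plan is to study the perturbation $h := (f-\cM)/m$, which by \eqref{def.h} satisfies
\begin{equation*}
\partial_t h + L h + \Gamma_2[h,h] + \Gamma_3[h,h,h] = 0,
\end{equation*}
and belongs to $N(L)^\perp$ in $L^2(m)$ for every $t\geq 0$ thanks to the conservation of mass, momentum and energy shared by $f(t)$ and $\cM$. Introducing the weighted norms
\begin{equation*}
u_k(t) := \int_{\R^3} h(t,v)^2 m(v) \langle v \rangle^k \, dv, \qquad D_k(t) := \int_{\R^3} A[m] \nabla h \cdot \nabla h \, \langle v \rangle^k m \, dv,
\end{equation*}
the argument rests on three building blocks: (i) a closed differential inequality $\dot u_0(t) \leq -c\, u_{-1}(t)$; (ii) a uniform-in-time bound $u_N(t) \lesssim 1$; and (iii) the H\"older interpolation
\begin{equation*}
u_0 \leq u_{-1}^{N/(N+1)}\, u_N^{1/(N+1)},
\end{equation*}
obtained from the decomposition $h^2 m = (h^2 m \langle v \rangle^{-1})^{N/(N+1)} (h^2 m \langle v \rangle^N)^{1/(N+1)}$ and H\"older's inequality with conjugate exponents $(N+1)/N$ and $N+1$.

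To establish (i), pair the equation for $h$ with $h$ in $L^2(m)$. Theorem \ref{lem.L} provides $-(Lh,h)_{L^2(m)} \geq C_L (D_0 + u_{-1})$, while Lemma \ref{lem.G.bound} with $k=0$ bounds the quadratic and cubic contributions by an expression of the form $[\rho\, S(h) + \rho^{-1}](D_0 + u_{-1}) + \rho^{-1} D_0$, where $S(h)$ gathers $\mathcal{E}_{p,q}[mh]$, $\widetilde{\mathcal{E}}_{p,q}[mh]$ and small powers of $\|m^{1/2} h\|_{L^2} = u_0^{1/2}$. Since $|mh| = |f-\cM| \leq 1$ pointwise and all polynomial moments of $f$ and $\cM$ are controlled, Cauchy-Schwartz combined with interpolation against the eventually bounded moment $u_N$ yields $S(h) \lesssim u_0^\alpha$ for some $\alpha>0$. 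Optimizing $\rho \sim S(h)^{-1/2}$ and choosing the threshold $\ell$ small enough, the nonlinear contribution is absorbed into half of the dissipation whenever $u_0(t) \leq \ell$, producing
\begin{equation*}
\dot u_0(t) \leq -c\, u_{-1}(t),
\end{equation*}
and in particular propagating the smallness $u_0(t) \leq u_0(0) \leq \ell$ for all $t \geq 0$.

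For (ii) --- the main technical obstacle --- I would test the equation for $h$ against $h \langle v \rangle^N m$ and use the splitting $L = \cK_N - \Lambda_N$ from \eqref{L.split}. Lemma \ref{lem.Lambda} gives $(\Lambda_N h, h)_{L^2(m\langle v \rangle^N)} \gtrsim D_N + u_{N-1}$, and Lemma \ref{lem.new} bounds $|(\cK_N h, h)_{L^2(m\langle v \rangle^N)}| \lesssim u_{N-2}$. Splitting $u_{N-2} = \int_{\{|v|\leq R\}} + \int_{\{|v|>R\}}$, the exterior contribution is at most $\langle R\rangle^{-1} u_{N-1}$ (absorbed upon choosing $R$ large) while the interior one is bounded by $\langle R\rangle^{N-2} u_0 \leq C_R \ell$. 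The nonlinear contribution from Lemma \ref{lem.G.bound} with $k = N$, thanks to the smallness of $u_0$ established in (i), is likewise absorbed into a fraction of $D_N + u_{N-1}$. The resulting estimate reads $\dot u_N(t) + c_1\bigl(D_N(t) + u_{N-1}(t)\bigr) \leq c_2 \ell$. Since $u_{N-1}$ is weaker than $u_N$ (the weights differ by one power of $\langle v \rangle$), the uniform bound $u_N(t) \leq \max(u_N(0), C)$ requires a bridging step: decomposing the tail $\{|v|>R\}$ of $u_N$ in terms of $u_{N-1}$ plus a controlled small fraction of $u_N$, and exploiting the time-integrability $\int_0^\infty u_{-1}(t)\, dt \lesssim u_0(0)$ inherited from (i) to close a Gronwall-type argument. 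Calibrating the weight exponents and the free parameter $\rho$ so that both the linear commutator loss $u_{N-2}$ and the nonlinear terms are controlled uniformly in time is the delicate point of the proof.

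Finally, once the uniform bound $u_N(t) \leq M$ is secured, the interpolation (iii) rewrites as $u_{-1}(t) \geq M^{-1/N} u_0(t)^{(N+1)/N}$, and substituting into the dissipation inequality of (i) gives
\begin{equation*}
\dot u_0(t) \leq -c\, M^{-1/N}\, u_0(t)^{(N+1)/N}.
\end{equation*}
Setting $y := u_0$ and $p := (N+1)/N > 1$, the relation $(y^{1-p})' = -(p-1) y^{-p} y' \geq (p-1)\, c M^{-1/N}$ integrates to $y(t)^{-1/N} \geq y(0)^{-1/N} + (c M^{-1/N}/N)\, t$, hence $u_0(t) \leq C(1+t)^{-N}$, which is the claimed algebraic rate of convergence.
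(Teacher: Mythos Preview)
Your overall architecture matches the paper's: derive $\dot u_0 \leq -c\,u_{-1}$ from the spectral gap plus the nonlinear bounds, obtain a uniform-in-time bound on $u_N$, then interpolate. Steps (i) and (iii) are essentially correct. One small point in (i): you should not invoke $u_N$ when bounding $S(h)$, since that would be circular. The paper controls $\mathcal{E}_{p,q}[mh]+\widetilde{\mathcal{E}}_{p,q}[mh]$ purely in terms of $\|m^{1/2}h\|_2$, using only $|mh|=|f-\cM|\leq 1$ and the fact that $|v|^k m^{1/2}$ is bounded for every $k\geq 0$; no higher weighted norm of $h$ enters.

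The genuine gap is in (ii). After absorbing the exterior part of $u_{N-2}$ into the dissipation, your estimate at level $N$ reads
\[
\dot u_N + c_1\,u_{N-1} \;\leq\; C_R\, u_0(t)\quad(\text{or }\leq c_2\ell).
\]
Integrating from $0$ to $T$ gives $u_N(T)\leq u_N(0)+C_R\int_0^T u_0\,dt$, and you have no reason for $u_0$ to be time-integrable: step (i) only yields $\int_0^\infty u_{-1}\,dt<\infty$, and $u_{-1}\leq u_0$, not the reverse. Your proposed ``bridging step'' of decomposing the tail of $u_N$ in terms of $u_{N-1}$ goes the wrong way, since on $\{|v|>R\}$ one has $h^2 m\langle v\rangle^N \geq h^2 m\langle v\rangle^{N-1}$.

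The paper resolves this by \emph{induction on the weight exponent}. For each $k=1,\ldots,N$, testing against $h$ in $L^2(m\langle v\rangle^k)$ and using Lemmas~\ref{lem.Lambda}, \ref{lem.new}, \ref{lem.G.bound} (the nonlinear terms being absorbed thanks to the smallness of $u_0$ already secured in (i)) gives
\[
\dot u_k + \lambda_k\,u_{k-1} \;\leq\; \mu_k\,u_{k-2}.
\]
Integrating in time, the right-hand side $\mu_k\int_0^\infty u_{k-2}\,dt$ is finite by the inductive hypothesis --- it is precisely the dissipation produced at level $k-1$. Each step therefore yields simultaneously $\sup_{t>0}u_k<\infty$ and $\int_0^\infty u_{k-1}\,dt<\infty$, and the induction climbs from $\int_0^\infty u_{-1}\,dt<\infty$ (step (i)) up to $\sup_{t>0}u_N<\infty$. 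With this in hand, your step (iii) works exactly as written.
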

\begin{proof}
	From \eqref{LFD}, \eqref{def.h} it follows that
the perturbation $h = (f-\cM)/m$ satisfies the equation
\begin{align}\label{eq.h}
\pa_t h = L h + \Gamma_2[h,h] + \Gamma_3[h,h,h].
\end{align}
Testing the above equation against $h$ in the sense of $L^2(m)$ yields
\begin{align*}
\frac{d}{dt}\frac{1}{2}\int_{\R^3}h^2 m dv = 
\langle L h, h\rangle_{L^2(m)} + \langle\Gamma_2[h,h],h\rangle_{L^2(m)} + \langle\Gamma_3[h,h,h],h\rangle_{L^2(m)}.
\end{align*}
From \eqref{spgap.L}, \eqref{G2.bound}, \eqref{G3.bound} it follows
that a suitable constant $C(p,q)>0$ exists such that
\begin{align*}
\frac{d}{dt} & \frac{1}{2}\int_{\R^3}h^2 m dv \leq 
\Big[ \rho C(p,q)
(\mathcal{E}_{p,q}[mh] 
+ \widetilde{\mathcal{E}}_{p,q}[mh] ) \\ \nonumber
&+ \rho\|m^{1/2}h\|_{2}^{2/3} + \rho\|m^{1/2}h\|_{2}^{4/3}
+ \rho^{-1} - C_L \Big]\left(
\int_{\R^3}A[m]\nabla h\cdot\nabla h\, m dv + 
\int_{\R^3} h^2 m\langle v \rangle^{-1} dv \right).
\end{align*}
%Choosing $\rho > 2/C_L$ leads to
%\begin{align*}
%\frac{d}{dt} & \frac{1}{2}\int_{\R^3}h^2 m dv \leq 
%\Big[ C(p,q)
%(\mathcal{E}_{p,q}[|mh|] + \mathcal{E}_{p,q}[|mh|^2]
%+ \widetilde{\mathcal{E}}_{p,q}[|mh|] ) \\ \nonumber
%&+ \|m^{1/2}h\|_{2}^{2/3} + \|m^{1/2}h\|_{2}^{4/3}
% - \frac{1}{2}C_L \Big]\left(
%\int_{\R^3}A[m]\nabla h\cdot\nabla h\, m dv + 
%\int_{\R^3} h^2 m dv \right).
%\end{align*}
We are now going to prove that 
\begin{equation}
\label{E.bound}
\exists\alpha_1>0:\quad
\mathcal{E}_{p,q}[mh]
+ \widetilde{\mathcal{E}}_{p,q}[mh]\lesssim_{p,q} 
%\rho^{-1} + \rho^{ \max\{s-1,1\} }\|m^{1/2}h\|_2^2.
\rho^{-2}+\rho^{\alpha_1}\|m^{1/2}h\|_2.
\end{equation}
Indeed, the left-hand side of \eqref{E.bound} is a sum of terms having the form
\begin{align*}
J_{k,s} = \left( \int_{\R^3}|v|^{k}|mh|^{s} dv\right)^{1/s},\quad k\geq 0,~~ s\geq 1.
\end{align*}
If $s\geq 2$ then from the property $|mh|\leq 1$ and the fact that $|v|^k\sqrt{m(v)}$ is bounded in $\R^3$ for every $k\geq 0$ it follows immediately that
\begin{align*}
J_{k,s}\leq \left( \int_{\R^3}|v|^{k}|mh|^{2} dv\right)^{1/s}\lesssim_k
\left(\int_{\R^3}m h^{2} dv\right)^{1/s} ,
\end{align*}
so via Young's inequality
\begin{align*}
J_{k,s}\lesssim_{s,k} \rho^{-2} + \rho^{s-2}\|m^{1/2}h\|_2.
\end{align*}
If $1\leq s < 2$, it suffices to notice that 
\begin{align*}
J_{k,s} = \| m^{1/2}h \|_{L^s(\R^3, |v|^k m^{s/2}(v)dv)}.
\end{align*}
Since $|v|^k m^{s/2}\in L^1\cap L^\infty(\R^3)$, Jensen's inequality yields
\begin{align*}
J_{k,s}\lesssim_{k,s}\| m^{1/2}h \|_{L^2(\R^3, |v|^k m^{s/2}(v)dv)}
\lesssim_{k,s}\| m^{1/2}h \|_{2}.
\end{align*}
Therefore \eqref{E.bound} holds. We therefore conclude that, for some suitable constant $C'(p,q)>0$ and $\alpha>1$,
\begin{align*}
\frac{d}{dt} & \int_{\R^3}h^2 m dv \leq
\Big[ C'(p,q)( \rho^{\alpha} \|m^{1/2}h\|_{2}^{2}
+ \rho^{-1}) - C_L \Big]\left(
\int_{\R^3}A[m]\nabla h\cdot\nabla h\, m dv + 
\int_{\R^3} h^2 m\langle v \rangle^{-1} dv \right),
\end{align*}
for every $\rho>1$. We point out that
\begin{align*}
C'(p,q)( \rho^{\alpha} \|m^{1/2}h\|_{2}^{2}
+ \rho^{-1}) - C_L = C'(p,q)\rho^\alpha\left(
\|m^{1/2}h\|_{2}^{2} - \tilde\ell(\rho)\right),\quad
\tilde\ell(\rho) = \left(\frac{C_L}{C'(p,q)}-\rho^{-1}\right)\rho^{-\alpha}.
\end{align*}
The maximum of $\tilde\ell(\rho)$ is achieved for $\rho = \frac{1+\alpha}{\alpha C_L}C'(p,q)$. Choosing $\rho$ in this way yields
\begin{align*}
\frac{d}{dt} & \int_{\R^3}h^2 m dv \leq C''(p,q)
\Big[ \|m^{1/2}h\|_{2}^{2}
 - \ell \Big]\left(
\int_{\R^3}A[m]\nabla h\cdot\nabla h\, m dv + 
\int_{\R^3} h^2 m\langle v \rangle^{-1} dv \right),
\end{align*}
for $C''(p,q) = \left(\frac{1+\alpha}{\alpha C_L}\right)^\alpha C'(p,q)^{1+\alpha}$ and
$$\ell := \frac{C_L}{(1+\alpha)C'(p,q)}\left(\frac{1+\alpha}{\alpha C_L}C'(p,q)\right)^{-\alpha} = 
\left( \frac{C_L}{C'(p,q)} \right)^{1+\alpha}\frac{\alpha^\alpha}{(1+\alpha)^{1+\alpha}}.
$$
Since $\|m^{1/2}h(\cdot, 0)\|_{2}^{2}
- \ell < 0$ by assumption on the initial data, we deduce that $\|m^{1/2}h(\cdot, t)\|_{2}\leq \|m^{1/2}h(\cdot, 0)\|_{2}$ for all $t>0$. 
It follows that, for some $\lambda > 0$,
\begin{align}\label{almost}
 \frac{d}{dt} & \int_{\R^3}h^2 m dv \leq -\lambda\left(
 \int_{\R^3}A[m]\nabla h\cdot\nabla h\, m dv + 
 \int_{\R^3} h^2 m\langle v \rangle^{-1} dv \right).
\end{align}
Integrating \eqref{almost} in time yields
\begin{align}\label{almost.int}
\sup_{t>0}\int_{\R^3}h^2 m dv +
\lambda\int_0^\infty \int_{\R^3} h^2 m \langle v \rangle^{-1} dv dt
\leq \int_{\R^3}h(\cdot,0)^2 m dv .
\end{align}
We will now show that $\sup_{t>0}\int_{\R^3}h^2 m \langle v \rangle^N dv < \infty$. We proceed iteratively, proving that 
\begin{align}\label{end.goal}
\sup_{t>0}\int_{\R^3}h^2 m \langle v \rangle^j dv
+ \int_0^\infty \int_{\R^3} h^2 m \langle v \rangle^{j-1} dv dt
 < \infty ,
\end{align}
for $j=0,\ldots,\lfloor N\rfloor$.
We argue by induction on $j$. Estimate \eqref{almost.int} and the assumption on the initial datum imply that \eqref{end.goal} holds for $j=0$. Let us now assume that \eqref{end.goal} holds for $j=0,\ldots,k-1$, $1\leq k\leq \lfloor N\rfloor$ generic.
By testing \eqref{eq.h} against $h$ in the sense of $L^2(m\langle v\rangle^k)$, exploiting Lemma \ref{lem.Lambda} and bound \eqref{bound.K1} and proceeding like in the proof of \eqref{almost} one finds
\begin{align}\label{almost.2}
\frac{d}{dt} \int_{\R^3}h^2 m \langle v \rangle^k dv \leq & -\lambda_k\left(
\int_{\R^3}A[m]\nabla h\cdot\nabla h\,\langle v \rangle^k m dv + 
\int_{\R^3} h^2 m\langle v \rangle^{k-1} dv \right)\\
\nonumber
&+\mu_k \int_{\R^3}h^2 m \langle v \rangle^{k-2}dv,
\end{align}
for some $\lambda_k$, $\mu_k>0$. By integrating \eqref{almost.2} in time we get
\begin{align}\label{almost.3}
\sup_{t>0}\int_{\R^3}h^2 m \langle v \rangle^k dv 
&+ \lambda_k\int_0^\infty \int_{\R^3} h^2 m\langle v \rangle^{k-1} dv dt\\
\nonumber
&\leq \mu_k\int_0^\infty \int_{\R^3}h^2 m \langle v \rangle^{k-2}dv dt 
+ \int_{\R^3}h(\cdot,0)^2 m \langle v \rangle^k dv.
\end{align}
From the assumption that $\int_{\R^3}h(\cdot,0)^2 m \langle v \rangle^k dv<\infty$ for $k\leq N$ as well as the inductive hypothesis it follows
that the right-hand side of \eqref{almost.3} is finite,
meaning that \eqref{end.goal} holds for $j=k$. Via the induction principle we deduce that \eqref{end.goal} holds for $j=0,\ldots,\lfloor N\rfloor$. 
Choosing $k=N$ in \eqref{almost.3} and exploiting \eqref{end.goal} for $j=\lfloor N \rfloor$ yields \eqref{end.goal} for $k=N$.
In particular
$$\sup_{t>0}\int_{\R^3}h^2 m \langle v \rangle^N dv < \infty . $$
Therefore via H\"older's inequality
\begin{align*}
\int_{\R^3}h^2 m dv \leq \left( \int_{\R^3}h^2 m\langle v \rangle^{-1} dv \right)^{ \frac{N}{N+1} }\left( \int_{\R^3}h^2 m\langle v \rangle^N dv \right)^{ \frac{1}{N+1} }
\lesssim \left( \int_{\R^3}h^2 m\langle v \rangle^{-1} dv \right)^{\frac{N}{N+1}},
\end{align*}
so from \eqref{almost} it follows
\begin{align*}
\frac{d}{dt} & \int_{\R^3}h^2 m dv \leq -\lambda\left(
\int_{\R^3}A[m]\nabla h\cdot\nabla h\, m dv + 
\left(\int_{\R^3} h^2 m dv\right)^{ \frac{N+1}{N} } \right).
\end{align*}
This (via Gronwall's inequality) finishes the proof of the lemma, and of Theorem \ref{thm_longtime}.
\end{proof}

\bibliography{landaurefs}
\bibliographystyle{plain}

\end{document}